\def\makeautorefname#1#2{\expandafter\def\csname#1autorefname\endcsname{#2}}
\newcounter{mycounter}
\theoremstyle{plain}
\newtheorem{theorem}{Theorem}[section]
\newtheorem{corollary}{Corollary}[section]
\newtheorem{proposition}{Proposition}[section]
\newtheorem*{conjecture*}{Conjecture}
\newtheorem*{claim*}{Claim}
\newtheorem{lemma}{Lemma}[section]
\newtheorem*{assumption*}{Assumption}
\newtheorem*{assumptions*}{Assumptions}
\theoremstyle{definition}
\newtheorem{definition}{Definition}[section]
\newtheorem{example}{Example}[section]
\newtheorem{notation}{Notation}[section]
\newtheorem*{notation*}{Notation}
\newtheorem{question}{Question}[section]
\newtheorem*{question*}{Question}
\newtheorem*{doubts*}{Doubts}
\newtheorem{remark}{Remark}[section]
\newtheorem*{warning*}{Warning}
\newtheorem{convention}{Convention}[section]
\newtheorem*{exercise*}{Exercise}
\newtheorem*{fact*}{Fact}
\let\c@theorem=\c@theorem
\let\c@observation=\c@theorem
\let\c@corollary=\c@theorem
\let\c@proposition=\c@theorem
\let\c@lemma=\c@theorem
\let\c@problem=\c@theorem
\let\c@construction=\c@theorem
\let\c@conjecture=\c@theorem
\let\c@definition=\c@theorem
\let\c@notation=\c@theorem
\let\c@example=\c@theorem
\let\c@examples=\c@theorem
\let\c@axiom=\c@theorem
\let\c@property=\c@theorem
\let\c@assumption=\c@theorem
\let\c@convention=\c@theorem
\let\c@assumptions=\c@theorem
\let\c@warning=\c@theorem
\let\c@remark=\c@theorem
\let\c@sch=\c@theorem
\let\c@question=\c@theorem
\let\c@exercise=\c@theorem
\numberwithin{equation}{section}
\let\c@equation=\c@theorem
\pgfplotsset{compat=1.18}
\DeclareFontFamily{OT1}{pzc}{}
\DeclareFontShape{OT1}{pzc}{m}{it}{<-> s * [1.10] pzcmi7t}{}
\DeclareMathAlphabet{\mathpzc}{OT1}{pzc}{m}{it}
\def\d#1{\mathrm{d}^{\pi}_{#1}}
\def\abs#1{\left |{#1}\right |}
\def\diam{\mathrm{diam}}
\def\gp#1#2#3{({#1}|{#2})_{#3}}
\def\ksi{\xi}
\def\MCG{\mathrm{Mod}}
\def\Teich{\mathpzc{Teich}}
\def\Kappa{\kappa}
\def\horo#1{\partial_{h}{#1}} %horofunction boundary
\def\act{\curvearrowright} %action
\def\hor#1{\left [ \partial_{h}{#1}\right ]} %horofunction boundary quotient finite difference
\def\G{\Gamma}
\title[MLS rigidity in groups with contracting elements]{Marked Length Spectrum Rigidity in Groups with Contracting Elements}
\author{Renxing Wan}
\address{School of Mathematical Sciences,  Key Laboratory of MEA (Ministry of Education) \& Shanghai Key Laboratory of PMMP,  East China Normal University, Shanghai 200241, China P. R.}
\email{rxwan@math.ecnu.edu.cn}
\author{Xiaoyu Xu}
\address{Beijing International Center for Mathematical Research\\
Peking University\\
 Beijing 100871, China P.R. }
\email{xuxiaoyu@stu.pku.edu.cn}
\author{Wenyuan Yang}
\address{Beijing International Center for Mathematical Research\\
Peking University\\
 Beijing 100871, China P.R.}
\email{wyang@math.pku.edu.cn}
\keywords{Marked length spectrum rigidity, contracting elements, Extension lemma, cusp-uniform action, confined subgroup, geometrically dense subgroup, Manhattan curve}
\begin{document}
\begin{sloppypar}
	\maketitle

\begin{abstract}
    This paper presents a study of the well-known marked length spectrum rigidity problem in the coarse-geometric setting. 
    For any two (possibly non-proper) group actions $G\curvearrowright X_1$ and $G\curvearrowright X_2$ with contracting property, we prove that if the two actions have the same marked length spectrum, then the orbit map $Go_1\to Go_2$ must be a rough isometry. In the special case of cusp-uniform actions, the rough isometry can be extended to the entire space. This generalises the existing results in hyperbolic groups and relatively hyperbolic groups. In addition, we prove a finer marked length spectrum rigidity from confined subgroups and further, geometrically dense subgroups. Our proof is based on the Extension Lemma and uses purely elementary metric geometry. This study produces new results and recovers existing ones for many more interesting groups through a unified and elementary approach.
\end{abstract}

 \setcounter{tocdepth}{1}
 \tableofcontents
\section{Introduction}

\subsection{Background and motivations}

\text{ }

Let $(M,\mathfrak g)$ be a closed Riemannian manifold. If $\mathfrak g$ has negative sectional  {curvatures}, then each free homotopy class $c$ has a unique closed geodesic representative $\gamma_c$. Denote by $\mathcal C$ the set of all free homotopy classes, or equivalently the set of conjugacy classes in the fundamental group $\pi_1(M)$. The function 
$$\begin{aligned}
\ell_{\mathfrak g}:\quad & \mathcal C\to \mathbb R_{\ge 0}\\
&c\mapsto \mathrm{Len}(\gamma_c)
\end{aligned}$$ is called the \textit{marked length spectrum} (MLS) of $(M, \mathfrak g)$, where $\mathrm{Len}$ denotes the length of a curve. The definition extends naturally to manifolds of nonpositive curvature, in which case the closed geodesic might not be unique, but its length is uniquely determined by the free homotopy class.

The well-known MLS rigidity conjecture \cite{BK85} states that if two negatively curved Riemannian metrics on a closed manifold have the same MLS, then they are isometric. This is known to be
true for surfaces by the result of Otal \cite{Ota90} and Croke \cite{Cro90} independently. When one of the Riemannian manifolds is rank one locally symmetric, Hamenst\"{a}dt \cite{Ham99} proved the conjecture using the minimal entropy rigidity theorem of Besson--Courtois--Gallot \cite{BCG95}. More recently, Guillarmou and Lefeuvre \cite{GL19} showed that the conjecture holds if the two metrics are close enough in a suitable $C^k$ topology. However, the conjecture remains open in general.

In \cite{Fur02}, Furman started to consider the MLS rigidity conjecture in a coarse-geometric setting. Namely, consider a left-invariant pseudo-metric $d$ on a group $G$, which is usually assumed to be quasi-isometric to a word metric. For every $g\in G$, the \textit{stable translation length} of $g$ is defined as $$\ell_d(g)=\lim_{n\to +\infty}\frac{d(1,g^n)}{n}.$$ We call the function $\ell_d: G\to [0,+\infty)$ given by $g\mapsto \ell_d(g)$ the \textit{MLS} for this metric $d$. In particular, for a compact negatively curved Riemannian manifold $(M,\mathfrak g)$ with a universal cover $(\widetilde M, \widetilde{\mathfrak{g}})$,  he considered the pull-back metric on $\pi_1(M)$ from the action $\pi_1({M})\curvearrowright \widetilde{M}$ defined as: $$d_{\mathfrak{g},x}(\gamma_1,\gamma_2):=d_{\widetilde{\mathfrak{g}}}(\gamma_1x,\gamma_2x),\,\forall \gamma_1,\gamma_2\in \pi_1(M),$$ where $x$ is a fixed basepoint in $\widetilde{M}$. The geometric length of $\gamma_c$ could be recovered as $\ell_{d_{\mathfrak{g},x}}(g)$, independent of $x$, where $g\in \pi_1({M})$ is represented by $c$. %

In the coarse-geometric setting, Furman has described the MLS rigidity problem in the sense of large scale geometry. 
\begin{question}\label{Que: pull-back metric}
    Let $\mathcal D_{G}$ denote a preferred collection of left-invariant, proper, pseudo-metrics on a group $G$, and two pseudo-metrics $d,d_{\ast}\in \mathcal D_{G}$ have the same marked length spectrum: $\ell_{d}(g)=\ell_{d_{\ast}}(g)$ for all $g\in G$. Is it true that $d$ and $d_{\ast}$ are roughly isometric, namely $\abs{d-d_{\ast}}\leq C$ for some $C\geq 0 $?
\end{question}

An important sub-class of $\mathcal D_{G}$ is given by pulling back the geodesic metric via the isometric actions of $G$ on a family of geodesic metric spaces. In this case, for an isometric action $G\curvearrowright (X,d)$, the stable translation length of an element $g\in G$ is defined as $$\ell_d(g)=\lim_{n\to+\infty}\frac{d(o,g^no)}{n},$$ and is independent with the choice of the basepoint $o\in X$. The function $\ell_d:G\to [0,+\infty)$ is called the \textit{marked length spectrum} for this action.

\begin{definition}\cite{BS00}
    Let $(X, d)$ be a metric space and let $\delta \ge 0$. We say that the metric $d$ is \textit{$\delta$-roughly geodesic} if for every $x, y \in X$, there is a $(1, \delta)$-quasi-geodesic from $x$ to $y$. A metric is said \textit{roughly geodesic} if there is $\delta \ge 0$ such that it is $\delta$-roughly geodesic.
\end{definition}
\begin{remark}\label{IntroRmk: RoughGeodesic}
    \begin{enumerate}
        \item If $G$ acts coboundedly on a geodesic metric space, then the pull back metric is a roughly geodesic, left-invariant pseudo-metric on $G$ which is quasi-isometric to a (possibly non-proper) word metric. 
        \item\label{NotRouGeo} As pointed out by Nguyen--Wang in \cite{NW22} that any word metric $d$ on $G$ has the same MLS as $d_*=d+\sqrt{d}$ which is (2,1)-quasi-isometric to $d$. However, $d$ and $d_*$ are not roughly isometric when $d$ is unbounded. One reason is that $d_*$ is not a roughly geodesic metric. 
    \end{enumerate}
\end{remark}
Motivated by the above remark, we consider the following question in this paper.

\begin{question}\label{Que: MLSR}
Let $G$ be a group acting isometrically on two geodesic metric spaces $(X_1,d_1)$ and $(X_2,d_2)$ with basepoints $o_1\in X_1, o_2\in X_2$. Suppose that the two actions have the same MLS. Is it true that $Go_1$ and $Go_2$ must be roughly isometric, namely there exist an orbit map $\rho: Go_1\to Go_2$ and a constant $C \ge 0$ such that $\rho$ is a $(1, C)$-quasi-isometry? 
Moreover, does the rough isometry extend to the whole spaces (provided that the actions are not cobounded)?
\end{question}

\iffalse
\begin{definition}[Marked Length Spectrum]\label{DEF1}
Suppose a group $G$ acts isometrically on a metric space $(X,d)$ with a basepoint $o\in X$. For every $g\in G$, the \textit{stable translation length} of $g$ is defined as $$\ell_d(g)=\lim_{n\to \infty}\frac{d(o,g^no)}{n}.$$ And we call the function $\ell_d: G\to [0,+\infty)$ given by $g\mapsto \ell_d(g)$ the \textit{marked length spectrum} for this action.
\end{definition}
\fi

%

Furman \cite{Fur02} proved that \autoref{Que: pull-back metric} holds for Gromov hyperbolic groups, where $\mathcal D_{G}$ denotes the collection of left-invariant, Gromov hyperbolic pseudo-metrics on $G$ which are quasi-isometric to a word metric. Later on, Fujiwara \cite{Fuj13} gave an affirmative answer to toral relatively hyperbolic groups  {where $\mathcal D_{G}$ denotes a collection of roughly geodesic, left-invariant  pseudo-metrics on $G$ which are quasi-isometric to  word metrics.} Recently, this  was  generalised by Nguyen--Wang \cite{NW22} to all relatively hyperbolic groups {with respect to any peripheral subgroups}.

The first main goal of this paper is to consider the coarse version of the MLS rigidity for group actions with contracting property described below.

\subsection{MLS rigidity for actions with contracting property}

The first result of this paper gives a large generalisation of the above results, including \cite[Theorem 2]{Fur02} and \cite[Lemma 3.6]{CR22} for hyperbolic groups and \cite[Prop. 3.12]{NW22} for relatively hyperbolic groups.

\begin{theorem}[\autoref{THM: Main Rigidity}]\label{IntroThm: Main rigidity}
Suppose that a group $G$ acts isometrically on two geodesic metric spaces $(X_1, d_1)$ and $(X_2, d_2)$ with contracting property (cf. \autoref{DEF: Contracting Property}), respectively. Then $G$ has MLS rigidity: if $\ell_{d_1}(g)=\ell_{d_2}(g)$ for every $g\in G$, then the orbit map $\rho: Go_1\to Go_2$ is well-defined up to a bounded error and is a rough isometry, for any fixed basepoints $o_1\in X_1,o_2\in X_2$.
\end{theorem}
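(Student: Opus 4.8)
The plan is to show that $\ell_{d_1}(g) = \ell_{d_2}(g)$ for all $g \in G$ forces the orbit map to distort distances by only a bounded amount, and the engine will be the Extension Lemma for contracting elements. First I would recall that, by the standard theory of contracting elements, the action $G \curvearrowright X_i$ contains many contracting elements, and in fact one can find a single element (or a controlled family of elements) that is contracting simultaneously for both actions; this requires observing that the property of being contracting depends only on $G$ and the orbit structure in a robust way, or else working with two separate contracting elements and later combining them. Next, the key quantitative step: given two orbit points $go_1, ho_1 \in Go_1$ at distance $D = d_1(go_1, ho_1)$, I would use the Extension Lemma to produce an element $w \in G$ (built by inserting a power of a contracting element between $g$ and $h$, or using $g^{-1}h$ directly with suitable "bumpers") whose stable translation length $\ell_{d_1}(w)$ is within a uniformly bounded error of $d_1(go_1, ho_1)$, i.e. the geometry of the segment from $go_1$ to $ho_1$ is "detected" by the translation length of some conjugacy-class representative. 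This is the heart of the argument and where contracting geometry does its work: the contracting segments force a quasi-geodesic axis whose period is essentially the length we want to measure.

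Once such a $w$ is in hand, the marked-length-spectrum hypothesis gives $\ell_{d_1}(w) = \ell_{d_2}(w)$, and the same Extension-Lemma estimate applied in $X_2$ gives $|\ell_{d_2}(w) - d_2(go_2, ho_2)| \le C'$ for a uniform $C'$. Chaining these inequalities yields $|d_1(go_1, ho_1) - d_2(go_2, ho_2)| \le C$ with $C$ depending only on the contraction constants and the basepoints, which is exactly the statement that $\rho : go_1 \mapsto go_2$ is a $(1,C)$-quasi-isometry onto its image. The "well-defined up to bounded error" clause is immediate once one knows $\rho$ is coarsely distance-preserving: if $go_1 = g'o_1$ then $d_1(go_1, g'o_1) = 0$, hence $d_2(go_2, g'o_2) \le C$, so the map is well-defined on $Go_1$ up to error $C$. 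Symmetry of the argument (swapping the roles of $X_1$ and $X_2$) shows $\rho$ is a rough isometry rather than merely a quasi-isometric embedding, and combined with surjectivity onto $Go_2$ this completes the proof.

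I expect the main obstacle to be the construction in the middle step: producing, for an \emph{arbitrary} pair of orbit points, an element whose translation length approximates their distance with a \emph{uniform} error. One subtlety is that a generic $g^{-1}h$ need not be contracting at all, so one must concatenate with a fixed contracting word and control the resulting translation length both from above (triangle inequality) and from below (this is where the contracting/Extension-Lemma machinery is essential, ruling out cancellation via the "$\pi$-bounded projection" property). A second subtlety is uniformity across the two metrics simultaneously: the bumper elements and contraction constants must be chosen to work for both $d_1$ and $d_2$ at once, which is plausible since $G$ and its action data are shared, but needs care. Everything else — the triangle-inequality upper bounds, the passage from "quasi-isometric embedding" to "rough isometry", and the well-definedness — is routine metric bookkeeping once the key estimate is established.
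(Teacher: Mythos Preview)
Your approach is essentially the paper's: perturb $g^{-1}h$ by a bounded element $f$ so that the stable length of the product approximates the orbital distance, transfer via the MLS hypothesis, and bound on the other side. One soft spot deserves comment: you assert that ``the same Extension-Lemma estimate applied in $X_2$ gives $|\ell_{d_2}(w) - d_2(go_2,ho_2)| \le C'$'' for the \emph{same} $w$ constructed in $X_1$. This is exactly the nontrivial content of the paper's \S\ref{SEC: Simultaneously contracting} --- showing that $\mathcal{SC}(G)\ne\varnothing$ and then building a finite set $S\subseteq\mathcal{SC}(G)$ so that for every $g$ some $f\in S$ makes $gf$ satisfy the length estimate in \emph{both} spaces simultaneously (\autoref{COR: Simultaneous Extension 15}). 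Your remark that this is ``plausible since $G$ and its action data are shared'' undersells the difficulty: an element contracting for $X_1$ need not be contracting for $X_2$, and the finite-index and pigeonhole arguments of \S\ref{SEC: Simultaneously contracting} are the real engine.

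That said, for the theorem exactly as stated (MLS equal on \emph{all} of $G$), you can sidestep simultaneity entirely: in the $X_2$ step you only need the trivial inequality $\ell_{d_2}(w)\le d_2(o_2,wo_2)$, which already yields $d_1\le d_2+C$; then run the symmetric argument with an $X_2$-Extension-Lemma element $w'$ (possibly different from $w$) to get $d_2\le d_1+C'$. Your final ``symmetry'' sentence suggests you half-saw this, though it is redundant once you have claimed the two-sided bound $|d_1-d_2|\le C$. The paper takes the simultaneous route because it actually proves the sharper \autoref{COR: Rigidity from SC}, where the hypothesis is only on $\mathcal{SC}(G)$ and the asymmetric shortcut is unavailable.
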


{Actually, we prove \autoref{IntroThm: Main rigidity} in a more precise way.
\begin{theorem}[\autoref{COR: Rigidity from SC}]
    Suppose that a group $G$ acts isometrically on two geodesic metric spaces $(X_1, d_1)$ and $(X_2, d_2)$ with contracting property, respectively. Assume for some $\lambda_1,\lambda_2\geq 0$, the following inequality holds: \begin{align*}
        \lambda_1  \ell_{d_2}(g)\leq  \ell_{d_1}(g)\leq \lambda_2 \ell_{d_2}(g),\,\forall g\in G.
    \end{align*}  
    
    Then, for any fixed points $o_1\in X_1$ and $o_2\in X_2$, there exists $C\geq 0$ such that $$\lambda_1 d_2(go_2,ho_2)-C\leq d_1(go_1,ho_1)\leq \lambda_2 d_2(go_2,ho_2)+C, \,\forall g,h\in G.$$
\end{theorem}
}
Roughly speaking, a group action $G\curvearrowright X$ has \textit{contracting property} if there exist two weakly independent contracting elements in $G$. The contracting property captures the key feature of quasi-geodesics in Gromov hyperbolic spaces, rank-1 geodesics in CAT(0) spaces,  thick geodesics in Teichm\"{u}ller spaces, and so on. In recent years, this notion and its variants have been proven fruitful in the setup of general metric spaces. 

Let $A$ be a subset of $X$, and $\pi_A: X \rightarrow A$ be the (coarsely) closest projection (set-valued) map. We say that $A$ is \textit{$C$-contracting} for $C \geq 0$ if $\diam(\pi_A(\gamma))\le C$ for any geodesic (segment) $\gamma$ with $ \gamma\cap N_C(A)=\varnothing$. In fact, the contracting notion is equivalent to the usual one: $\diam(\pi_A(B)) \leq C'$ for any metric ball $B$ disjoint with $A$. A proof given in \cite[Corollary 3.4]{BF09} for CAT(0) spaces is valid in the general case.  An element $g\in G$ is called \textit{contracting}, if the orbit of $\langle g\rangle$ is a contracting quasi-geodesic.  

The prototype of a contracting element is a loxodromic isometry on a Gromov hyperbolic space, but more interesting examples are furnished by the following (cf. \autoref{EX: Contracting Subset} and \autoref{EX: Contracting Property} for details):
\begin{itemize}
    \item hyperbolic elements in relatively hyperbolic groups, cf. \cite{Ger15,GP13};
    \item rank-1 elements in CAT(0) groups, cf. \cite{Bal12,BF09};
    \item hyperbolic elements in groups with non-trivial Floyd boundary, cf. \cite{Yang14};
    \item certain infinite order elements in graphical small cancellation groups, cf. \cite{ACGH16};
    \item pseudo-Anosov elements in mapping class groups, cf. \cite{Min96}.
\end{itemize}

%. 

For more results about groups with contracting elements, see \cite{Yang14,Yang19,GY22,Yang22}. We remind the readers that people always study a group with contracting elements under the additional assumption of a proper action. However, some of our results in this paper can directly apply to non-proper actions. For example, \autoref{IntroThm: Main rigidity} applies to any non-elementary acylindrical (or even WPD) actions with contracting elements, cf.\cite{Osi16,DGO17,Bow08}.

In particular, when $G$ is a relatively hyperbolic group, we can extend the rough isometry from the $G$-orbit to the entire space through a cusp-extension technique, even without a cocompact assumption. This also generalises the questions considered in \cite{Bur20} and \cite{HH20}. Recall that a group action on a Gromov hyperbolic space $G\curvearrowright X$ is called \textit{non-elementary} if $\sharp \Lambda G>2$.

\begin{theorem}[\autoref{THM: Cusp Uniform}]\label{IntroThm: Cusp uniform actions}
Suppose that 
$(G,\left\{H_i\right\})$ admits two non-elementary cusp-uniform actions on two proper $\delta$-hyperbolic geodesic spaces $(X_1,d_1)$, $(X_2,d_2)$. If the two actions have the same MLS, then there exists a $G$-coarsely equivariant rough isometry between $X_1$ and $X_2$.
\end{theorem}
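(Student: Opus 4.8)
The plan is to bootstrap from the orbit-level rigidity already established in \autoref{IntroThm: Main rigidity} (equivalently \autoref{THM: Main Rigidity}) to a rough isometry of the whole spaces, by separately treating the ``thick part'' carried by the orbit and the cusp regions of the two cusp-uniform actions. First I would record that a non-elementary cusp-uniform action $G\curvearrowright (X_i,d_i)$ has the contracting property: a cusp-uniform action on a proper hyperbolic space exhibits $G$ as relatively hyperbolic with respect to the maximal parabolic subgroups $\{H_i\}$, and any element acting loxodromically (which exists since the action is non-elementary) is contracting in the sense of \autoref{DEF: Contracting Property}, since two independent loxodromics give the required weakly-independent pair. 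Hence \autoref{IntroThm: Main rigidity} applies and gives a $G$-coarsely equivariant map $\rho_0\colon Go_1\to Go_2$ with $|d_1(go_1,ho_1)-d_2(\rho_0(go_1),\rho_0(ho_1))|\le C$ for all $g,h\in G$; equivalently the pulled-back pseudo-metrics on $G$ are within bounded distance.

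Next I would use cusp-uniformity to show the orbit $Go_i$ is \emph{coarsely dense} in $X_i$ away from the horoballs, and then compare horoballs. Concretely, the Bowditch picture gives, for each action, a $G$-invariant collection of disjoint (quasi-)horoballs $\{B_i^{(j)}\}$ with compact quotient of the complement; each horoball $B_1^{(j)}$ is stabilized by a conjugate $H_1^{(j)}=g_jH_{i_j}g_j^{-1}$, and likewise in $X_2$ the \emph{same} conjugate stabilizes a horoball $B_2^{(j)}$ (peripheral subgroups and their conjugates are an algebraic datum of $(G,\{H_i\})$, independent of the action). Restricted to such a horoball, the action $H_1^{(j)}\curvearrowright B_1^{(j)}$ and $H_2^{(j)}\curvearrowright B_2^{(j)}$ are both parabolic cusp-uniform actions of the same group, and a horoball over a given horosphere in a $\delta$-hyperbolic space is, up to rough isometry, determined by the metric space it is ``coned off'' from --- here the horosphere with the subgroup acting coboundedly. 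The key input is that $\rho_0$ restricted to $H_1^{(j)}o_1$ is a rough isometry onto $H_2^{(j)}o_2$, and I would extend $\rho_0$ over the horoball $B_1^{(j)}$ by sending a point at ``depth $t$'' above $H_1^{(j)}o_1$ to a point at the same depth over $H_2^{(j)}o_2$, exploiting the standard fact that a horoball is roughly isometric to (horosphere)$\times[0,\infty)$ with the warped/combinatorial metric $d(x,y)\approx 2\log(1+d_{\mathrm{hor}}(x,y))+|t_x-t_y|$ (the ``logarithmic'' horoball metric), which depends only on the horosphere metric up to rough isometry. Matching depths and using the orbit rough isometry on the horospheres then yields a rough isometry $B_1^{(j)}\to B_2^{(j)}$ agreeing with $\rho_0$ on the orbit, uniformly in $j$ by $G$-equivariance and finiteness of orbits of horoballs.

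Finally I would glue: on the complement of the horoballs, $\rho_0$ already extends to a rough isometry by coarse density of the orbit (every point is within bounded distance of $Go_1$, and $\rho_0$ is coarsely Lipschitz and coarsely expanding there because the complement of horoballs is quasi-convex and the orbit map is a quasi-isometry onto its image there); on each horoball we use the extension from the previous step; the two definitions agree up to a uniform constant on the overlapping collars (the horospheres, which are in the orbit's coarse neighborhood), so a standard patching lemma produces a globally defined $G$-coarsely equivariant $(1,C')$-quasi-isometry $\rho\colon X_1\to X_2$. The main obstacle I anticipate is the horoball-extension step: one must verify that the rough-isometry type of a horoball in a $\delta$-hyperbolic space depends only on the rough-isometry type of the bounding horosphere (with its parabolic action), uniformly in the horoball, and that the depth coordinate can be transported compatibly with the orbit identification --- this requires a careful use of the logarithmic/combinatorial horoball model and the fact that both actions are cusp-uniform with the \emph{same} peripheral structure, rather than any hyperbolicity of $G$ itself. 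Properness of $X_1,X_2$ is used to guarantee the Bowditch horoball structure and the cocompactness of the thick part; the non-cocompact case for $G$ on $X_i$ is exactly why the extension must be done one horoball at a time rather than by a single global Milnor--Švarc argument.
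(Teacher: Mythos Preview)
Your proposal follows essentially the same architecture as the paper's proof: apply \autoref{THM: Main Rigidity} to get a rough isometry of orbits, promote it to a rough isometry of the cocompact thick parts $A_i=X_i\setminus\bigcup\mathcal{B}_i$, extend across each horoball via a depth-preserving map, and glue. The paper's gluing step is your case analysis (both points thick, both in the same horoball, mixed, different horoballs), and the finiteness of horoball orbits is used exactly as you say to get uniform constants.

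The one place your justification diverges is the horoball extension. You invoke as a ``standard fact'' that a horoball in a $\delta$-hyperbolic space is roughly isometric to (horosphere)$\times[0,\infty)$ with a logarithmic warped metric; this is true for Groves--Manning combinatorial horoballs but is not a general fact about horoballs in arbitrary hyperbolic spaces, and importing it here would be circular. The paper instead proves the extension directly (its \autoref{LEM: Cusp Extension}): since the horofunction is $H$-invariant, any two orbit points $gx_i,hx_i$ on the horosphere satisfy $\gp{hx_i}{\eta_i}{gx_i}\sim\tfrac12 d_i(gx_i,hx_i)$, and then thin triangles give that the distance between points at parameters $s,t$ on the rays $g\alpha^i,h\alpha^i$ depends only on $s,t$ and $d_i(gx_i,hx_i)$ --- exactly the data preserved by your depth-matching map. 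So your map is the right one; just replace the appeal to the log-metric model by this direct thin-triangle computation.
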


Our method also allows to deal with the MLS rigidity for certain interesting non-geodesic metrics as in \autoref{Que: pull-back metric},  which are not pulled back  from the action on a geodesic metric space. {Compared with \autoref{IntroRmk: RoughGeodesic} (\ref{NotRouGeo}), we point out that this}  class of metrics is coarsely additive along transversal points defined in a reference action on a geodesic metric space with contracting property (see \autoref{CoarseAddtiveDef}). Green metrics associated to random walks introduced in \cite{BB07} are such  examples recently studied in \cite{GGPY,DWY23}. Let us state the  following consequence  of our main result \autoref{MLSnonGeodesic} in this regard.
\begin{corollary}[\autoref{MLSGreenMetrics}]\label{IntroCor: Green Metric}
Suppose that $G$ is a non-elementary relatively hyperbolic group equipped with two probability measures $\mu_1,\mu_2$ of finite symmetric support generating $G$. Then $G$ has MLS rigidity for the pair $(d_{\mu_1}, d_{\mu_2})$ of the corresponding Green metrics.   
\end{corollary}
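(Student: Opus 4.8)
The plan is to exhibit both Green metrics as instances of the metrics treated in \autoref{MLSnonGeodesic} --- left-invariant metrics on $G$ which are coarsely additive along transversal points of one fixed reference action with contracting property --- and then quote that theorem directly.

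First I would fix the reference action. Being non-elementary relatively hyperbolic, $G$ acts properly and cocompactly on its Cayley graph $X$ with respect to a finite generating set, and every hyperbolic (loxodromic) element of $G$ is contracting for this action (this is the first item in the list of examples above; cf.\ \cite{Ger15,GP13,Yang14}). Non-elementariness supplies two weakly-independent such elements, so $G\curvearrowright X$ has contracting property, and the orbit map identifies $Go$ with $G$, on which both $d_{\mu_1}$ and $d_{\mu_2}$ live. Since each $\mu_i$ has finite symmetric support generating $G$ and $G$ is non-amenable, a standard argument shows that each $d_{\mu_i}$ is a genuine left-invariant (symmetric) metric on $G$ with $d_{\mu_i}\asymp d_X$.

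The main step --- and the only one requiring probabilistic input --- is to verify that each $d_{\mu_i}$ is coarsely additive along transversal points of $G\curvearrowright X$ in the sense of \autoref{CoarseAddtiveDef}. One inequality, $d_{\mu_i}(x,z)\le d_{\mu_i}(x,y)+d_{\mu_i}(y,z)$, is just the triangle inequality (equivalently, submultiplicativity of first-passage probabilities) and needs no hypothesis on $y$. The reverse bound, $d_{\mu_i}(x,z)\ge d_{\mu_i}(x,y)+d_{\mu_i}(y,z)-C$ whenever $y$ is a transversal point on a geodesic $[x,z]$ in $X$, is exactly the Ancona-type inequality for finitely supported symmetric random walks on relatively hyperbolic groups established in \cite{GGPY,DWY23}: transversal (i.e.\ not deeply parabolic) points of a geodesic behave like ``cut points up to a bounded error'' for the Green function. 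Combining the two estimates gives the coarse additivity along transversal points for both $d_{\mu_1}$ and $d_{\mu_2}$, with the \emph{same} reference action $G\curvearrowright X$.

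Granting this, the corollary follows in one line: with $d_{\mu_1},d_{\mu_2}$ both coarsely additive along transversal points of the contracting reference action $G\curvearrowright X$ and with $\ell_{d_{\mu_1}}=\ell_{d_{\mu_2}}$ by hypothesis, \autoref{MLSnonGeodesic} applies and shows that the orbit map --- here the identity map of $G$ --- is a rough isometry, i.e.\ $\abs{d_{\mu_1}-d_{\mu_2}}$ is uniformly bounded on $G$. I expect the entire difficulty to be concentrated in the middle paragraph, namely the Ancona/deviation inequalities in the relatively hyperbolic setting; but these are already available in the literature, so once \autoref{MLSnonGeodesic} is proved the corollary is an essentially formal consequence.
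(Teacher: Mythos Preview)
Your proposal is correct and follows essentially the same route as the paper: fix the Cayley-graph action as the reference, invoke the Ancona-type inequality from \cite{GGPY} to get coarse additivity of each Green metric along transversal points, and then apply \autoref{MLSnonGeodesic}. The only point the paper makes explicit that you gloss over is the bridge between the two notions of ``good'' points: the Ancona inequality in \cite{GGPY} is stated for \emph{transition} points (defined via parabolic cosets), whereas \autoref{MLSnonGeodesic} requires coarse additivity along $(\mathcal F_o,L)$-\emph{transversal} points (defined via the axes of three chosen hyperbolic elements); the paper handles this via \autoref{LEM: Transition}, showing that for large enough $L$ every transversal point is a transition point. Your parenthetical ``transversal (i.e.\ not deeply parabolic)'' conflates the two, but the missing link is exactly that short lemma.
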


We also exhibit an example of relatively hyperbolic group with two (non-geodesic) proper left invariant metrics without MLS rigidity for which, however, a maximal parabolic group has the same MLS and attains the maximal growth. See \autoref{MLSFailsMaximalSubgroup}. 

\subsection{MLS rigidity from subgroups}

As a second goal of this paper, we consider a further question following Cantrell and Reyes \cite{CR23}. 

\begin{question}\label{Que: MLSR from subset}
    Let $G$ be a group acting isometrically on two metric spaces $(X_1,d_1)$ and $(X_2,d_2)$. For what kind of subsets $H\subseteq G$, the same MLS on $H$ will imply the same MLS on $G$?
\end{question}

Recently, Cantrell--Reyes \cite{CR23} considered this question for a non-elementary hyperbolic group $G$, and gave an affirmative answer to the subsets with the same growth rate as the ambient group. Beyond that, Cantrell--Reyes--Tanaka developed a series of work on the space of metric structures on $G$, see \cite{CT21,CR22,Rey23} for more details.

%F

For a group $G$ with contracting property, the following result shows that a normal subgroup with unbounded orbits would meet the requirements. This generalises the result in \cite{GH20} which considers homologically trivial elements in the fundamental group of a negatively curved Riemannian manifold. {If $G$ is hyperbolic, an independent  proof using geodesic current has appeared in  the work of Bonahon \cite[Page 161]{Bon91}.}

\begin{theorem}[\autoref{THM: Normal Subgroup Rigidity}]\label{IntroThm: MLSR from normal subgroups}
Suppose that a group $G$ acts, respectively, on $(X_1, d_1)$ and $(X_2, d_2)$ with contracting property. Let $H\lhd G$ be a normal subgroup which has an unbounded orbit in either $X_1$ or $X_2$. Then, $G$ has MLS rigidity from $H$: if $\ell_{d_1}(g)=\ell_{d_2}(g)$ for any $g\in H$, then there exists a rough isometry between the $G$-orbits, and equivalently $\ell_{d_1}(g)=\ell_{d_2}(g)$ for all $g\in G$.
\end{theorem}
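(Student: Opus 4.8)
The plan is to promote the hypothesis $\ell_{d_1}|_{H}=\ell_{d_2}|_{H}$ to a uniform displacement bound
$$
\bigl|\,d_1(o_1,go_1)-d_2(o_2,go_2)\,\bigr|\le C\qquad\text{for all }g\in G ,
$$
with $C$ independent of $g$. Granting this, $G$-invariance of the metrics gives $|d_1(go_1,g'o_1)-d_2(go_2,g'o_2)|=|d_1(o_1,g^{-1}g'o_1)-d_2(o_2,g^{-1}g'o_2)|\le C$ for all $g,g'\in G$, so the orbit map $go_1\mapsto go_2$ is a coarsely well-defined $(1,C)$-quasi-isometry onto $Go_2$; applying the bound along the powers $g^n$ and dividing by $n$ also yields $\ell_{d_1}=\ell_{d_2}$ on all of $G$, which matches the ``equivalently'' clause and makes \autoref{IntroThm: Main rigidity} directly applicable as well.

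Before constructing $C$, I record what normality buys. Since $G\curvearrowright X_1$ has the contracting property and $H\lhd G$ has an unbounded orbit in, say, $X_1$, the subgroup $H$ is geometrically dense for this action; in particular it contains an element $h_0$ contracting for $G\curvearrowright X_1$, so $\ell_{d_1}(h_0)>0$, hence $\ell_{d_2}(h_0)=\ell_{d_1}(h_0)>0$ by hypothesis, whence $H$ also has an unbounded orbit in $X_2$ and is geometrically dense there. Using geometric density on both sides, fix once and for all an element $c\in H$ that is contracting for \emph{both} actions, and a finite set $F\subseteq\langle c\rangle\subseteq H$ of extenders for which the Extension Lemma is valid for $G\curvearrowright X_1$ and $G\curvearrowright X_2$ simultaneously.

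Now fix $g\in G$. Applying the Extension Lemma to the two-letter word $(g,g^{-1})$ produces $f_1,f_2\in F$ such that $h_g:=gf_1g^{-1}f_2$ is contracting for both actions and, for a constant $C_0$ depending only on the two actions and on $F$ (not on $g$),
$$
\bigl|\,\ell_{d_i}(h_g)-2\,d_i(o_i,go_i)-d_i(o_i,f_1o_i)-d_i(o_i,f_2o_i)\,\bigr|\le C_0 ,\qquad i=1,2 ,
$$
because the subtracted quantity is exactly one period of the loop $o_i\to go_i\to gf_1o_i\to gf_1g^{-1}o_i\to h_go_i$ (using $d_i(o_i,g^{-1}o_i)=d_i(o_i,go_i)$), which the Extension Lemma arranges to be a contracting quasi-geodesic whose translation length it pins down up to a uniform additive error. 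Since each $f_j\in H$ and the letters $g,g^{-1}$ cancel in $G/H$, we have $h_g\in H$, so $\ell_{d_1}(h_g)=\ell_{d_2}(h_g)$; subtracting the two estimates gives
$$
2\bigl|d_1(o_1,go_1)-d_2(o_2,go_2)\bigr|\le\sum_{j=1}^{2}\bigl|d_1(o_1,f_jo_1)-d_2(o_2,f_jo_2)\bigr|+2C_0\le 2\mu+2C_0 ,
$$
where $\mu:=\max_{f\in F}|d_1(o_1,fo_1)-d_2(o_2,fo_2)|$ depends only on $F$. Hence $C:=\mu+C_0$ works. (One could shrink $C$ by extending a longer alternating word $(g,g^{-1},\dots,g,g^{-1})$ and dividing by its length, or by taking the extenders among high powers $c^{N}$, for which $|d_1(o_1,c^No_1)-d_2(o_2,c^No_2)|$ is already bounded because $\ell_{d_1}(c)=\ell_{d_2}(c)$; but no optimization is needed.)

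The main obstacle is the middle step: making the Extension Lemma construction run \emph{inside} the normal subgroup $H$ and \emph{simultaneously} for both actions. This is exactly where the hypotheses are spent — normality together with an unbounded orbit forces $H$ to be geometrically dense for each action, so $H$ meets enough contracting directions to contain a common contracting element $c$ and a usable extender set $F$ (this is also what the more general statement for geometrically dense subgroups abstracts). After that the only real point is the bookkeeping remark that the exact equality $\ell_{d_1}(h_g)=\ell_{d_2}(h_g)$, available precisely because $h_g\in H$, converts the finitely many per-junction errors of the Extension Lemma into a single $g$-independent additive constant — which is all a rough isometry demands.
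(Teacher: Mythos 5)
Your overall architecture is the same as the paper's: use normality to see that $h_g=gf_1g^{-1}f_2$ lies in $H$, use the Extension Lemma to pin $\ell_{d_i}(h_g)$ against $2d_i(o_i,go_i)$, and then feed in the MLS hypothesis on $H$. But there are two genuine gaps in the middle step. First, you take your extender set $F$ inside a \emph{single} cyclic group $\langle c\rangle$. The Extension Lemma (\autoref{LEM: Modified Extension Lemma}) fundamentally requires three pairwise weakly-independent contracting elements, because for a given $g$ one must be able to \emph{choose} an axis onto which $g^{-1}o$ and $go$ both have bounded projection; with one axis there is no choice, and the estimate you claim is simply false. Concretely, take $g=c^n$ and $f_1=c^a$, $f_2=c^b\in F$: then $h_g=c^{a+b}$, so $\ell_{d_i}(h_g)$ is bounded independently of $n$, while $2d_i(o_i,go_i)\to\infty$, so no $g$-independent $C_0$ can satisfy your displayed inequality. (The paper's remark after \autoref{COR: Perturbation Length} makes exactly this point: weak independence of the three axes cannot be dispensed with.) The fix is what the paper does: first show $H$ itself has the contracting property on both spaces (\autoref{LEM: Unbounded Normal Subgroup Contracting}), then extract an independent set of simultaneously contracting elements inside $H$ (\autoref{LEM: Simultaneous Contracting}), and draw $f_1,f_2$ from the corresponding three distinct axes as in \autoref{LEM: Normal Subgroup Perturbation}.

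Second, even with a proper extender set, your argument needs the two-sided estimate on $\ell_{d_i}(h_g)$ to hold in \emph{both} spaces with the \emph{same} pair $(f_1,f_2)$, i.e.\ a simultaneous extension, and you never establish this; a choice that makes the word $(g,f_1,g^{-1},f_2)$ admissible in $X_1$ need not make it admissible in $X_2$. The paper avoids this issue entirely: in \autoref{THM: Normal Subgroup Rigidity} it only uses the admissible-path lower bound in one space at a time, combines it with the trivial inequality $\ell_{d_2}(h_g)\le d_2(o_2,h_go_2)\le 2d_2(o_2,go_2)+2C_0$ in the other space, and then symmetrizes by exchanging the roles of $X_1$ and $X_2$ (possibly with a different choice of extenders in each direction). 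If you insist on your two-sided subtraction, you would instead need a simultaneous pigeonhole over a larger independent set in $H\cap\mathcal{SC}(G)$, in the spirit of \autoref{Lem: Simultaneous Extension Lemma}, adapted to the word $(g,f_i,g^{-1},f_j)$. Your preliminary claims (that $H$ is unbounded in the other space because $\ell_{d_2}(h_0)=\ell_{d_1}(h_0)>0$, and that $H$ contains simultaneously contracting elements) are correct in substance but should be routed through those lemmas rather than through an appeal to geometric density, which is not needed here. Finally, you also need the small case $d_1(o_1,go_1)\le D$ handled separately, as in the paper, since the extension step only applies to long enough $g$.
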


We note that an infinite normal subgroup may not have maximal growth in a hyperbolic group. It is known that it obtains the maximal growth if and only if the quotient group is amenable (see \cite{CDS18} and references therein). Therefore, the above result could not be derived from Cantrell--Reyes' work \cite{CR23}.

\begin{corollary}[\autoref{COR: Sequence of Normal Subgroup}]\label{IntroCor: MLSR from subnormal subgroups}
Suppose that a group $G$ acts on $(X_1, d_1)$ and $(X_2, d_2)$ with contracting property. Let $H_n\lhd H_{n-1}\lhd\cdots\lhd H_1\lhd H_0=G$ be a subnormal sequence of subgroups with unbounded orbits in either $X_1$ or $X_2$. Then, $G$ has MLS rigidity from $H_n$.
\end{corollary}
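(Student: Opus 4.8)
The plan is to induct on the length $n$ of the subnormal series, peeling off one normal inclusion at a time by an application of \autoref{IntroThm: MLSR from normal subgroups}. The cases $n=0$ and $n=1$ are, respectively, \autoref{IntroThm: Main rigidity} and \autoref{IntroThm: MLSR from normal subgroups}, so assume $n\ge 2$. To carry out the induction I need to be able to put $H_{j-1}$ in the role of the ambient group of \autoref{IntroThm: MLSR from normal subgroups}, with $H_j\lhd H_{j-1}$ as its normal subgroup; the non-formal content is therefore to check that every term $G=H_0\supseteq H_1\supseteq\cdots\supseteq H_n$ acts with contracting property on \emph{both} $X_1$ and $X_2$.

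To see this, after possibly interchanging $X_1$ and $X_2$ assume $H_n$, and hence every $H_j\supseteq H_n$, has unbounded orbit in $X_1$. I use the fact (which is also what underlies the proof of \autoref{IntroThm: MLSR from normal subgroups}) that a normal subgroup with unbounded orbit of a group acting with contracting property again acts with contracting property: its limit set is invariant under the ambient group, which carries weakly independent contracting elements, so that limit set has at least three points and the subgroup contains weakly independent contracting elements. Starting from $G=H_0$, which acts with contracting property on $X_1$, and using that each $H_j\lhd H_{j-1}$ has unbounded orbit in $X_1$, induction on $j$ shows that every $H_j$ acts with contracting property on $X_1$. In particular $H_n$ contains an element $g$ that is contracting for the $X_1$-action, so $\ell_{d_1}(g)>0$; since $g\in H_n$, the hypothesis $\ell_{d_1}=\ell_{d_2}$ on $H_n$ forces $\ell_{d_2}(g)>0$, whence $g$, and therefore every $H_j$, also has unbounded orbit in $X_2$. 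Repeating the induction with $X_2$ in place of $X_1$ shows that every $H_j$ acts with contracting property on $X_2$ as well.

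With this established, I induct upward along the series. Assume $\ell_{d_1}(h)=\ell_{d_2}(h)$ for all $h\in H_j$, which holds for $j=n$ by hypothesis. Since $H_{j-1}$ acts with contracting property on both $X_1$ and $X_2$ and $H_j\lhd H_{j-1}$ has unbounded orbit, \autoref{IntroThm: MLSR from normal subgroups} applied with $(H_{j-1},H_j)$ in place of $(G,H)$ gives $\ell_{d_1}(h)=\ell_{d_2}(h)$ for all $h\in H_{j-1}$. After $n$ steps this yields $\ell_{d_1}=\ell_{d_2}$ on $H_0=G$, and \autoref{IntroThm: Main rigidity} then produces the rough isometry between $Go_1$ and $Go_2$. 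The step I expect to require the most care is the inheritance of the contracting property by normal subgroups with unbounded orbit, both because it is what propagates the hypothesis ``contracting property on both spaces'' down the chain and because it is the point at which the agreement of marked length spectra on $H_n$ is used to transfer unboundedness from $X_1$ to $X_2$.
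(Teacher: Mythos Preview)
Your proof is correct and follows the same induction as the paper: propagate the contracting property down the subnormal chain, then apply \autoref{IntroThm: MLSR from normal subgroups} step by step to climb back up. You are in fact more careful than the paper's own proof of \autoref{COR: Sequence of Normal Subgroup}, which invokes \autoref{THM: Normal Subgroup Rigidity} with ambient group $H_{i-1}$ without explicitly saying why $H_{i-1}$ has contracting property on \emph{both} spaces; your use of the same-MLS hypothesis on $H_n$ to transfer unboundedness from $X_1$ to $X_2$ is exactly the missing remark.

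One small caution: your parenthetical justification of the inheritance fact (``its limit set is invariant under the ambient group \ldots so that limit set has at least three points and the subgroup contains weakly independent contracting elements'') is a heuristic that works cleanly for Gromov-hyperbolic spaces but does not directly go through in a general geodesic metric space, where having a large limit set on the horofunction boundary does not immediately produce weakly-independent contracting elements in the subgroup. The paper establishes this fact (\autoref{LEM: Unbounded Normal Subgroup Contracting}) by a different, constructive route: from an unbounded $h\in H$ and a suitable $f\in F$ it builds the periodic admissible path labeled $(\ldots,h,f,h,f^{-1},\ldots)$, so that $hfhf^{-1}\in H$ is contracting, and then conjugates to obtain a second weakly-independent one. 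Since you treat the inheritance as a black-box fact (which it is, inside the proof of \autoref{IntroThm: MLSR from normal subgroups}), this does not affect the correctness of your argument, but you should not lean on the limit-set sketch as the actual proof.
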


When the actions are assumed to be proper, we can improve \autoref{IntroThm: MLSR from normal subgroups} to a wider class of  confined subgroups, introduced in \cite{HZ97} in the study of ideals in group algebras. Confined subgroups can arise as uniformly recurrent subgroups \cite{GW} on the Chabauty space of subgroups, and are recently studied for various countable groups in \cite{LBMB}. We refer to these paper for further references. Geometrically, a confined subgroup $H$ in a geometric action of $G$ on a metric space $X$ is equivalent to the bounded injective radius of $X/H$.   Recent works \cite{GL23,CGYZ} emphasise the geometric aspects, which we follow in this paper.  

A subgroup $H$ of $G$ is called \textit{confined} if there exists a finite subset $P\subseteq G\setminus \{1\}$ such that $gHg^{-1}\cap P\neq \varnothing$ for every $g\in G$. We refer   $P$ as the \textit{confining} subset. %For any proper isometric action $G\curvearrowright X$ with contracting property, there exists a unique maximal finite normal subgroup $E(G)$ of $G$ which fixes pointwise the limit points of all contracting elements. 
If a group $G$   acts properly on a geodesic metric space with contracting property, there exists a unique maximal finite normal subgroup $E(G)\lhd G$ called the \textit{elliptic radical} of $G$, which fixes pointwise the limit points of all contracting elements. In this case, an infinite normal subgroup of $G$ is  confined  with a confining subset disjoint with $E(G)$.

\begin{theorem}[\autoref{Thm: MLS from confined subgps}]\label{IntroThm: MLSR from confined subgroups}
   Suppose that a group $G$ acts   properly on $(X_1, d_1)$ and $(X_2, d_2)$ with contracting property, respectively. Let $H\le G$ be a confined subgroup with a confining subset disjoint with $E(G)$. Then, $G$ has MLS rigidity from $H$.
\end{theorem}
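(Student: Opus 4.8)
The plan is to follow the scheme of the proof of \autoref{IntroThm: MLSR from normal subgroups}, replacing every use of normality by an application of the confining condition. Fix basepoints $o_1\in X_1$, $o_2\in X_2$ and suppose $\ell_{d_1}(g)=\ell_{d_2}(g)$ for all $g\in H$. Since ``marked length spectrum rigidity from $H$'' means that the orbit map $go_1\mapsto go_2$ is a rough isometry (equivalently, by \autoref{IntroThm: Main rigidity}, that $\ell_{d_1}=\ell_{d_2}$ on all of $G$), it suffices to produce a constant $C$ with $|d_1(o_1,go_1)-d_2(o_2,go_2)|\le C$ for every $g\in G$; the equality of stable lengths on $G$ then follows by applying this bound to the powers $g^n$ and dividing by $n$.

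The first step is to locate contracting elements inside $H$. Because $G\curvearrowright X_i$ has the contracting property, $G$ contains infinitely many pairwise independent contracting elements, and since $P$ is finite and disjoint from $E(G)$ --- whose elements fix the limit points of all contracting elements --- a pigeonhole argument over $P$ applied to powers of suitably many independent contracting elements produces a contracting element $f\in G$, an element $p\in P\setminus E(f)$, and an infinite set of exponents $m$ with $f^{-m}pf^m\in H$. The quotients $(f^{-m}pf^m)(f^{-m'}pf^{m'})^{-1}\in H$ are conjugate to the near-commutators $f^{-k}pf^kp^{-1}$ with $k=m-m'$; since $p\notin E(f)$, the Extension Lemma shows that these are contracting once $k$ is large, with $\ell_{d_i}(f^{-k}pf^kp^{-1})=2k\,\ell_{d_i}(f)+O(1)$ and error depending only on $f$ and $P$. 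In particular $H$ contains genuine contracting elements, and letting $k\to\infty$ already gives $\ell_{d_1}(f)=\ell_{d_2}(f)$.

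The second and main step is to recover, for an arbitrary $g\in G$, the orbit distances $d_i(o_i,go_i)$ from the translation spectrum on $H$. Running the mechanism of the first step along sequences that incorporate $g$ (powers of $f$ and of their $g$-conjugates, together with the contracting elements of $H$ produced above), I aim to produce contracting elements $a=a(g)$ and $b=b(g)$ of $H$ --- with uniformly bounded contraction constants and in general position with respect to one another --- such that, in both $X_1$ and $X_2$, the quasi-axis of $a$ passes within a uniformly bounded distance of $o_i$ and that of $b$ within a uniformly bounded distance of $go_i$. Granting this, the Extension Lemma shows that $w:=ab\in H$ is contracting and
\[
\ell_{d_i}(w)=\ell_{d_i}(a)+\ell_{d_i}(b)+2\,d_i(o_i,go_i)+O(1),
\]
with the error term uniform in $g$ --- the summand $2\,d_i(o_i,go_i)$ arising from the detour that the quasi-axis of $ab$ must make between the region near $o_i$ and the region near $go_i$. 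Since $a,b,w$ all lie in $H$, the hypothesis forces $\ell_{d_1}(w)=\ell_{d_2}(w)$, $\ell_{d_1}(a)=\ell_{d_2}(a)$ and $\ell_{d_1}(b)=\ell_{d_2}(b)$; subtracting the two instances of the displayed identity yields $|d_1(o_1,go_1)-d_2(o_2,go_2)|\le C$ with $C$ independent of $g$, which is exactly the rough isometry we wanted.

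The hard part is the geometric input in the second step: manufacturing, from the purely algebraic confining condition --- which supplies conjugates $g^{-1}Pg\cap H$ rather than the conjugates $gHg^{-1}$ that normality would hand us --- contracting elements of $H$ that are \emph{located} near prescribed orbit points, uniformly over $g$ and with uniformly controlled constants; in effect, one must show that a confined subgroup with confining subset disjoint from $E(G)$ is geometrically dense in both actions simultaneously. This is precisely where the hypothesis $P\cap E(G)=\varnothing$ is indispensable: conjugates of an element of $E(G)$ are geometrically inert (it fixes every contracting limit point), whereas any $p\in P\setminus E(G)$ must move the attracting fixed point of some contracting element $f$, and the Extension Lemma can then splice products of $H$-conjugates of $p$ into contracting elements with prescribed coarse behaviour. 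Once $a(g)$ and $b(g)$ are available, the computation of $\ell_{d_i}(ab)$ is a routine Extension-Lemma estimate, the only subtlety being to keep all ``junction'' Gromov products bounded independently of $g$ --- which is arranged by the transversality built into the choices of $a(g)$ and $b(g)$.
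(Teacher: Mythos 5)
There is a genuine gap at the heart of your argument: the construction in your second step is never carried out, and it is precisely where all the difficulty lies. The confining condition only hands you, for each $g$, \emph{some} element $p\in P$ with $g^{-1}pg\in H$ (or $gpg^{-1}\in H$, applying it to $g^{-1}$); such a $p$ need not be contracting in either space, need not have any axis, and its conjugate is located near $g^{\pm1}o_i$ only in a very weak sense. Producing from this data contracting elements $a(g),b(g)\in H$ that are contracting in \emph{both} $X_1$ and $X_2$, with uniformly bounded contraction constants, whose quasi-axes pass uniformly close to $o_i$ and $go_i$ in \emph{both} spaces, and which are uniformly transversal so that $\ell_{d_i}(ab)=\ell_{d_i}(a)+\ell_{d_i}(b)+2d_i(o_i,go_i)+O(1)$ holds with a $g$-independent error in both spaces, is essentially a quantitative, two-space version of geometric density of $H$ together with the \hyperref[Condition B]{Extension Condition}. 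The paper explicitly notes that \autoref{IntroThm: MLSR from confined subgroups} is \emph{not} derived that way, because verifying such simultaneous control is exactly what one does not know how to do in general; your proposal therefore reduces the theorem to a statement that is harder than the theorem itself. (Your first step is reasonable but also not sufficient on its own: knowing $\ell_{d_1}=\ell_{d_2}$ on a single contracting element, or even on the contracting elements of $H$, does not yet control $d_1$ versus $d_2$ on all of $G$.)

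The paper's actual proof of \autoref{Thm: MLS from confined subgps} avoids located axes and simultaneous contraction entirely, and is asymmetric in the two spaces. One first uses \autoref{COR: ER3} (this is where $P\cap E(G)=\varnothing$ enters) to replace $P$ by a conjugate $Q=g_0Pg_0^{-1}$ all of whose elements have $d_1$-displacement larger than the admissibility threshold $D$. One then fixes $N=2\sharp P+3$ contracting elements weakly-independent \emph{only in $X_1$}, and for a given $g$ with $d_1(o_1,go_1)>D$ chooses, by pigeonhole via \autoref{LEM: Modified Extension Lemma}, a single connector $f$ whose axis has $\tau$-bounded projections from $o_1$-translates of $g^{\pm1}$ and of all $q^{\pm1}$, $q\in Q$. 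Confinement applied to $gf$ and to $f$ gives $q,q'\in Q$ with $gfq f^{-1}g^{-1}\in H$ and $fq'f^{-1}\in H$, and the single word $h=gfqf^{-1}g^{-1}\cdot fq'f^{-1}\in H$ labels a periodic special $(D,\tau)$-admissible path in $X_1$; fellow-traveling (\autoref{PROP: Fellow Travel}) yields $2d_1(o_1,go_1)\le d_1(o_1,ho_1)+2\epsilon$ and $\ell_{d_1}(h)\ge d_1(o_1,ho_1)-2\epsilon$, while in $X_2$ one only needs the trivial bounds $\ell_{d_2}(h)\le d_2(o_2,ho_2)\le 2d_2(o_2,go_2)+O(1)$ by the triangle inequality. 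Combining with $\ell_{d_1}(h)=\ell_{d_2}(h)$ gives $d_1(o_1,go_1)\le d_2(o_2,go_2)+C$, and the reverse inequality follows by exchanging the roles of $X_1$ and $X_2$. If you want to salvage your write-up, the fix is to abandon the $a(g),b(g)$ construction and adopt this one-word, one-sided estimate scheme, which is the genuine analogue of the element $gf_ig^{-1}f_j$ used in \autoref{THM: Normal Subgroup Rigidity}.
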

We remark that \autoref{IntroThm: MLSR from confined subgroups} is  new even in the class of hyperbolic groups. 

Following Osin \cite{Osi22}, a  subgroup $H$ of $G$ is called \textit{geometrically dense} if $[\Lambda_{X_i} H]=[\Lambda_{X_i} G]$ for $i=1,2$. Geometrically dense (non-normal) subgroups exist in abundance as random and stationary invariant subgroups on the Chabauty space of subgroups in \cite{Osi22} and \cite{GL23} respectively. This will be extended for groups with contracting property in a future work.

In particular, the answer to \autoref{Que: MLSR from subset} can be any geometrically dense subgroup satisfying a condition called \hyperref[Condition B]{Extension Condition}. %

\begin{theorem}[\autoref{THM: MLSR from Geometrically Dense Subgroup}]\label{IntroThm: MLSR from GD subgroups}
Suppose that a group $G$ acts properly  on  $(X_1, d_1)$ and $(X_2, d_2)$ with contracting property respectively. Let $H<G$ be a geometrically dense subgroup with contracting property and satisfying \hyperref[Condition B]{Extension Condition}. Then $G$ has MLS rigidity from $H$.%
\end{theorem}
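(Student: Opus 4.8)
The plan is to reduce everything to \autoref{THM: Main Rigidity}, applied first to $H$ and then to $G$, the bridge being a realization of each contracting element of $G$ inside $H$, one word at a time, which is exactly where the \hyperref[Condition B]{Extension Condition} (via the Extension Lemma) does its work.

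\emph{Step 1 (Reduction).} Assume $\ell_{d_1}=\ell_{d_2}$ on $H$. Since $H$ acts on both $X_1$ and $X_2$ with contracting property, \autoref{THM: Main Rigidity} applied to $H$ gives a constant $C_0$ with
\[
|d_1(ho_1,h'o_1)-d_2(ho_2,h'o_2)|\le C_0\qquad(h,h'\in H);
\]
equivalently the two $H$-orbit maps, and hence the limit sets $[\Lambda_{X_1}H]=[\Lambda_{X_1}G]$ and $[\Lambda_{X_2}H]=[\Lambda_{X_2}G]$, are identified by a rough isometry. It now suffices to prove $\ell_{d_1}(g)=\ell_{d_2}(g)$ for every \emph{contracting} $g\in G$: the general case follows by the standard extension-element argument (pick an extension element $f$; then $f$ and $gf$ are contracting with $\ell_{d_i}(gf)-\ell_{d_i}(f)=d_i(o_i,go_i)+O(1)$ uniformly, so $|d_1(o_1,go_1)-d_2(o_2,go_2)|$ is bounded, and passing to powers gives $\ell_{d_1}(g)=\ell_{d_2}(g)$), and the resulting bounded displacement on $G$-orbits is precisely the claimed rough isometry.

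\emph{Step 2 (Realizing $g$ inside $H$).} Fix a contracting $g\in G$; if a power of $g$ lies in $H$ we are done, so assume not. Using that $g^\pm\in[\Lambda_{X_i}G]=[\Lambda_{X_i}H]$ and that $H$ has contracting property (hence has many contracting elements independent from $g$), the \hyperref[Condition B]{Extension Condition} together with the Extension Lemma for $H$ should produce, for each $n$, elements $h_n,h_n'\in H$, both independent from $g$ with controlled constants, such that the word $w_n:=h_n\,g^{n}\,h_n'\,g^{-n}$ again lies in $H$ and, read cyclically, is a $(1,C)$-quasi-geodesic with $C$ uniform in $n$ — the ``turning'' Gromov products $\gp{h_n^{-1}o}{g^n o}{o}$, $\gp{g^{-n}o}{h_n'o}{o}$, $\gp{(h_n')^{-1}o}{g^{-n}o}{o}$, $\gp{g^{n}o}{h_n o}{o}$ being bounded in $X_1$ (and then in $X_2$, by the $C_0$-comparison of Step 1) because each of $h_n,h_n'$ is independent from $g$. (When $H\lhd G$ one may simply take $h_n,h_n'$ fixed, $w_n=[h_n,g^n]\in H$; this already recovers \autoref{THM: Normal Subgroup Rigidity}.)

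\emph{Step 3 (Estimate and limit).} For such a word, elementary contracting geometry gives, uniformly in $n$ and for $i=1,2$,
\[
\ell_{d_i}(w_n)=d_i(o_i,h_no_i)+d_i(o_i,g^{n}o_i)+d_i(o_i,h_n'o_i)+d_i(o_i,g^{-n}o_i)+O(1)=2n\,\ell_{d_i}(g)+\kappa_i^{(n)}+O(1),
\]
where $\kappa_i^{(n)}=d_i(o_i,h_no_i)+d_i(o_i,h_n'o_i)$ and we used $d_i(o_i,g^{\pm n}o_i)=n\ell_{d_i}(g)+O(1)$ (valid since $g$ is contracting, with $O(1)$ independent of $n$). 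Since $h_n,h_n'\in H$, the $C_0$-comparison of Step 1 gives $|\kappa_1^{(n)}-\kappa_2^{(n)}|\le 2C_0$. Now $\ell_{d_1}(w_n)=\ell_{d_2}(w_n)$ (as $w_n\in H$) yields $2n|\ell_{d_1}(g)-\ell_{d_2}(g)|\le 2C_0+O(1)$; dividing by $2n$ and letting $n\to\infty$ gives $\ell_{d_1}(g)=\ell_{d_2}(g)$, which with Step 1 finishes the proof.

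\emph{Main obstacle.} The heart is Step 2: producing, uniformly over $g\in G$ and $n$, words of the required form that genuinely lie in $H$ and are uniformly quasi-geodesic. For normal $H$ this is immediate, but for a general geometrically dense $H$ it is precisely what the \hyperref[Condition B]{Extension Condition} is designed to deliver, and correctly formulating and applying it is the technical core. A secondary difficulty is the simultaneous control of the junction geometry in both $X_1$ and $X_2$, which is why the internal rigidity of $H$ (the $C_0$-comparison) must be established first; the remaining cases ($\ell_{d_i}(g)=0$ and non-contracting $g$) are disposed of by the extension-element trick of Step 1.
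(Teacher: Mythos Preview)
Your proposal has a genuine gap in Step~2, stemming from a misreading of the \hyperref[Condition B]{Extension Condition}. That condition (Definition in \S\ref{Sec: MLSR from GD subgroups}) is purely a statement about boundary dynamics: for each $g\in\mathcal{SC}(G)$ there exists a sequence $(h_n)\subseteq H$ with $[\lim h_no_1]=[g^+_{X_1}]$ and $[\lim h_no_2]=[g^+_{X_2}]$ simultaneously. It gives you nothing about producing elements of the form $w_n=h_n g^n h_n' g^{-n}$ that lie in $H$. For $w_n\in H$ you would need $g^n h_n' g^{-n}\in H$, which is exactly the normal-subgroup property you yourself note is unavailable in general; the Extension Condition does not supply it, and there is no mechanism in the paper (or in your sketch) that does. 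Your ``main obstacle'' paragraph correctly identifies this as the crux but then asserts the Extension Condition resolves it, which it does not.

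The paper's route is genuinely different and avoids ever needing such words in $H$. After establishing the rough isometry on $H$-orbits (your Step~1, which is correct), it works entirely at the boundary: using \autoref{COR: Stable Length from Gromov Product}, $\ell_{d_i}(g)$ is expressed as $\lim_m \frac{1}{m}\limsup_n \gp{k_{n,m}o_i}{h_no_i}{o_i}$, where the $h_n,k_{n,m}$ are sequences in $H$ converging (in both $X_1$ and $X_2$, via the Extension Condition) to $[g^+_{X_i}]$ and $g^m\cdot[h^+_{X_i}]$ respectively, for some auxiliary $h\in\mathcal{SC}(G)\cap H$ independent of $g$. Since all arguments of these Gromov products are $H$-orbit points, the $C_0$-comparison transfers the Gromov products between $X_1$ and $X_2$ up to $\frac32 C_0$, and the limit kills this additive error. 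One then concludes $\ell_{d_1}=\ell_{d_2}$ on $\mathcal{SC}(G)$ and invokes \autoref{COR: Rigidity from SC}. The point is that the Extension Condition lets you \emph{approximate} $g$-dynamics by $H$-sequences at the level of Gromov products, without ever forcing a $G$-word into $H$.
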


This theorem also generalises the recent result by Hao in \cite[Theorem B]{Hao22}, where the author derived a similar result in the setting of closed negatively curved
Riemannian manifolds. We emphasise that \autoref{IntroThm: MLSR from confined subgroups} could not be derived immediately from \autoref{IntroThm: MLSR from GD subgroups}, even though confined subgroups are geometrically dense under appropriate conditions. Actually, \autoref{IntroThm: MLSR from confined subgroups} is proved by direct means without verifying  \hyperref[Condition B]{Extension Condition}. 

Roughly speaking, a geometrically dense subgroup $H\le G$ satisfies \hyperref[Condition B]{Extension Condition} if the orbit map $\rho_H: Ho_1\to Ho_2$ can be extended continuously to a $G$-equivariant map $\bar \rho: \Lambda_{X_1}(G)\to \Lambda_{X_2}(G)$. There are several situations (cf. \autoref{Lem: Conditions for extension property}) in which \hyperref[Condition B]{Extension Condition} is fulfilled.  For an example without \hyperref[Condition B]{Extension Condition}, see \autoref{Example without extension property}. 

In \cite{HH20}, Healy-Hruska recently proved that \hyperref[Condition B]{Extension Condition} holds under cusp-uniform actions on hyperbolic spaces with constant horospherical distortions.  Rank-1 symmetric spaces and cusped spaces by combinatorial horoballs have constant horospherical distortions (see \cite{HH20} for more details). Hence, together with \autoref{IntroThm: Cusp uniform actions},  MLS rigidity from geometrically dense subgroups holds in relatively hyperbolic groups.

\begin{corollary}[\autoref{COR: Cusp Uniform Geometrically Dense}]\label{IntroCor: GD of cusp uniform actions}
Suppose that  two cusp-uniform actions of $G$ on $(X_1,d_1)$, $(X_2,d_2)$ as in \autoref{IntroThm: Cusp uniform actions} have constant horospherical distortions. If $K<G$ is a geometrically dense subgroup and the two actions have the same MLS on $K$, then there exists a $G$-coarsely equivariant rough isometry $X_1\to X_2$.
\end{corollary}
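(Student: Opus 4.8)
The plan is to obtain this corollary by chaining \autoref{IntroThm: MLSR from GD subgroups} with \autoref{IntroThm: Cusp uniform actions}: first use the geometrically dense subgroup $K$ to upgrade the coincidence of marked length spectra from $K$ to all of $G$, and then invoke the cusp-extension of \autoref{IntroThm: Cusp uniform actions}. Thus essentially all of the work lies in verifying that $K$ satisfies the hypotheses of \autoref{IntroThm: MLSR from GD subgroups}.

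First I would record that both cusp-uniform actions $G\curvearrowright X_i$ are proper (cusp-uniform actions are properly discontinuous by definition) and have contracting property, since a non-elementary action (here $\sharp\Lambda_{X_i}G>2$) on a proper $\delta$-hyperbolic space always contains two weakly independent loxodromic isometries, which are contracting. Next I would check that $K$ itself has contracting property on each $X_i$: because $K$ is geometrically dense we have $[\Lambda_{X_i}K]=[\Lambda_{X_i}G]$, so the limit set of $K$ is as large as that of $G$; as $G$ is not virtually cyclic and the attracting--repelling pairs of its contracting elements are dense in $\Lambda_{X_i}G$ in the sense used in the body, a ping-pong argument then produces two weakly independent contracting elements inside $K$. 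I would cite the corresponding lemma from the main text for this last point; it is the reason the contracting-property hypothesis of \autoref{IntroThm: MLSR from GD subgroups} need not be assumed explicitly in the statement.

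The substantive point is the \hyperref[Condition B]{Extension Condition} for $K$. Once the previous step is in place, \autoref{IntroThm: Main rigidity} applied to the action $K\curvearrowright X_i$ shows that the coincidence of marked length spectra on $K$ yields a rough isometry of orbits $\rho_K\colon Ko_1\to Ko_2$, well defined up to bounded error. Because the two cusp-uniform actions have constant horospherical distortions, the theorem of Healy--Hruska \cite{HH20} recalled above says precisely that $\rho_K$ extends continuously to a $G$-equivariant map $\bar\rho\colon\Lambda_{X_1}(G)\to\Lambda_{X_2}(G)$; that is, $K$ satisfies the \hyperref[Condition B]{Extension Condition}. Now \autoref{IntroThm: MLSR from GD subgroups} applies and gives $\ell_{d_1}(g)=\ell_{d_2}(g)$ for every $g\in G$. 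Since $(G,\{H_i\})$ then has two non-elementary cusp-uniform actions on $X_1$ and $X_2$ with the same marked length spectrum on all of $G$, \autoref{IntroThm: Cusp uniform actions} produces the desired $G$-coarsely equivariant rough isometry $X_1\to X_2$.

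I expect the main obstacle to be making the third step precise, namely matching the \hyperref[Condition B]{Extension Condition} exactly as formulated in this paper with the boundary-extension statement actually proved by Healy--Hruska under constant horospherical distortions: one must identify the continuous $G$-equivariant extension of the orbit map $\rho_K$ with the ambient boundary map $\bar\rho$, and this is the only place where the constant-distortion hypothesis is genuinely used. The remaining verifications (properness and contracting property of the two actions, and contracting property of $K$ from geometric density) are routine consequences of the machinery developed in the body.
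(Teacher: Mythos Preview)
Your overall strategy---verify the \hyperref[Condition B]{Extension Condition} for $K$ using Healy--Hruska, apply \autoref{IntroThm: MLSR from GD subgroups} to pass from $K$ to $G$, then invoke \autoref{IntroThm: Cusp uniform actions}---is exactly the paper's route. Two points of difference are worth noting.

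First, your detour through \autoref{IntroThm: Main rigidity} applied to $K$ (your step producing $\rho_K$) is unnecessary, and your description of what Healy--Hruska \cite{HH20} supplies is off. What \cite{HH20} gives under constant horospherical distortion is a $G$-coarsely equivariant quasi-isometry $X_1\to X_2$ between the \emph{ambient spaces} themselves, not an extension of $\rho_K$. The paper then feeds this ambient quasi-isometry into \autoref{Lem: Conditions for extension property}(1), which directly yields the \hyperref[Condition B]{Extension Condition}; no prior knowledge of $\rho_K$ is required. The actual \hyperref[Condition B]{Extension Condition} (Definition~\ref{Condition B}) only asks for the existence of sequences in $K$ converging to the correct boundary points in both spaces, and the rough description in the introduction (``$\rho_H$ extends to a boundary map'') should not be read too literally.

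Second, the paper's verification that $K$ has contracting property is more direct than your ping-pong sketch: since the reduced horofunction boundary of a hyperbolic space coincides with the Gromov boundary, a geometrically dense subgroup of a non-elementary action on a hyperbolic space is itself non-elementary, hence has contracting property immediately. Your argument is not wrong, just more elaborate than needed in the hyperbolic setting.
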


For rank-1 symmetric spaces, a stronger conclusion actually holds. In \cite{Kim01}, Inkang Kim  proved that Zariski dense (possibly indiscrete) subgroups are determined by the same MLS up to conjugacy.

At the completion of writing this paper, we found the very recent paper of Yanlong Hao \cite{Hao23} where \autoref{IntroThm: Cusp uniform actions} was independently proved  as  \cite[Theorem A]{Hao23}.  Our \hyperref[Condition B]{Extension Condition} was also introduced independently for Gromov boundary of hyperbolic spaces, which is called \textit{comparable actions} there. In particular, \autoref{IntroThm: MLSR from GD subgroups} for the case that $X_i$ are proper and roughly complete hyperbolic spaces and the $G$-actions are dense at infinity follows from \cite[Theorem B]{Hao23}, which is a combination of our \autoref{IntroThm: MLSR from GD subgroups} and a $G$-coarsely equivariant version of the  extension theorem for bilipschitz boundary maps \cite[Theorem 7.1.2]{BS07}. However, our \autoref{IntroThm: MLSR from GD subgroups} could apply to  any metric spaces with horofunction boundary. 

\subsection{MLS rigidity from subsets} 
Following the path of Cantrell--Reyes \cite{CR23}, we  attempt to give some preliminary results  on MLS rigidity from `large' subsets in  \autoref{Que: MLSR from subset}. We  quantify the largeness using the \textit{growth rate} of a subset $E\subseteq G$ with respect to the proper action $G\curvearrowright (X_i,d_i)$ as follows: for given $i=1,2$,
\begin{equation}\label{Definition of growth rate of subset}
    \delta_i(E):=\limsup_{n\to\infty}\frac{\log \sharp\{g\in E: d_i(o_i,go_i)\le n\}}{n}
\end{equation}
and the \textit{growth rate of conjugacy classes} in $E$ using stable translation length
\begin{equation}\label{Definition of conjugacy growth rate of subset}
\delta_i^c(E):=\limsup_{n\to\infty}\frac{\log \sharp\{[g]: g\in E, \ell_i(g)\le n\}}{n}.
\end{equation}
%}
If $G\act X_i$ is co-compact, then $\delta_i(E)<\infty$, but  $\delta_i^c(E)$ may still be infinite. If $X_1$ and $X_2$ (e.g. CAT(0) and hyperbolic spaces) are coarsely convex (cf. \autoref{Def: coarsely convex metric}), the stable translation length differs by a bounded amount from the algebraic length by \autoref{Lem: ConvexMetric}, so we see that $\delta_i^c(E)<\infty$. For this reason, we assume  $G\curvearrowright (X_i,d_i)$  are  proper and cocompact actions on coarsely convex spaces. 

\iffalse
A first answer may come to a ``finite-index'' subset. A subset (not necessarily a subgroup) $E\subseteq G$ is called a \textit{finite-index subset} if there exists a finite collection of elements $g_1,\cdots, g_m\in G$ such that $\bigcup_{i=1}^{m}Eg_i=G$.

\begin{theorem}[\autoref{Cor: MLSR from a finite-index subset}]\label{IntroThm: MLSR from finite-index subset}
Suppose that a group $G$ acts isometrically on two geodesic metric spaces $(X_1, d_1)$ and $(X_2, d_2)$ with contracting property, respectively. Then $G$ has marked length spectrum rigidity from any finite-index subset.
\end{theorem}
\fi

As demonstrated in Cantrell--Reyes \cite{CR23}, it is crucial  to understand the Manhattan curves introduced by Burger \cite{Bur93}, which are the boundary of the regions  where  the following two series take finite values:
$$\mathcal P(a,b)=\sum_{g\in G}e^{-ad_2(o_2,go_2)-bd_1(o_1,go_1)}, \quad \mathcal Q(a,b)=\sum_{[g]\in \mathbf{conj}(G)}e^{-a\ell_{d_2}(g)-b\ell_{d_1}(g)},$$
where $\mathbf{conj}(G)$ denotes the set of all conjugacy classes in $G$.
Namely, for any given $a\in \mathbb R$, denote $\theta(a)=\inf\left\{b\in \mathbb{R}\mid \mathcal{P}(a,b)<+\infty\right\}$ and $\Theta(a)=\inf\left\{b\in \mathbb{R}\mid \mathcal{Q}(a,b)<+\infty\right\}$. Again, if $X_1$ and $X_2$ (e.g. CAT(0) and hyperbolic spaces) are coarsely convex, then $\Theta(a)\in (-\infty, +\infty)$. 

Given $i=1,2$, we say that a subset $E\subseteq G$ is  \textit{exponentially generic} in $d_i$ if $\delta_i(G\setminus E)<\delta_i(G)$.

\begin{theorem}[\autoref{Thm: MLSR From Conjugacy-large Subset}]\label{IntroThm: MLSR from large subsets}
    Suppose that a group $G$ acts properly and cocompactly on two  coarsely convex geodesic spaces $(X_1, d_1)$ and $(X_2, d_2)$ with contracting property, respectively. Assume  the curve $\{(a,\Theta(a))\mid a\in \mathbb R\}$  is {either globally strictly convex or a straight line} and that $\delta_1(G)=\delta_2(G)$. %Let $F=\{f_1,f_2,f_3,f_4,f_5\}\subseteq \mathcal{SC}(G)$ be a simultaneously contracting system. 
    Then{,} 
    \begin{itemize}
        \item[(1)] for any subset $E\subseteq G$ with $\delta_1^c(E)=\delta_1^c(G)$, $G$ has MLS rigidity from $E${;}
        %\item[(2)] for any subset $E\subseteq G$ with $\delta_1(E)=\delta_1(G)$, there exists $f\in F$ such that $G$ has MLS rigidity from $Ef$. 
        \item[(2)] for any exponentially generic subset $E\subseteq G$ in $d_1$ or $d_2$, $G$ has MLS rigidity from $E$. 
    \end{itemize}
\end{theorem}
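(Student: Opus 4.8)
The plan is to run the Manhattan-curve argument of Cantrell--Reyes \cite{CR23}, reducing both statements to showing that the conjugacy Manhattan curve $\mathcal M=\{(a,\Theta(a))\mid a\in\mathbb R\}$ is a straight line. First I would assemble the standing facts. (i) By the Manhattan-curve criterion for MLS rigidity established earlier, $G$ has MLS rigidity for $(d_1,d_2)$ iff $\mathcal M$ is a straight line --- this is where analyticity of $\mathcal M$ is used, since an analytic convex function that is affine on a subinterval is affine on all of $\mathbb R$ (identity theorem for $\Theta''$). (ii) Since the actions are proper and cocompact on coarsely convex spaces, the conjugacy growth rate equals the orbit growth rate, so $\delta_1^c(G)=\delta_1(G)=\delta_2(G)=\delta_2^c(G)=:D$; in particular $\mathcal M$ is convex (H\"older) and passes through $(0,D)$ and $(D,0)$. (iii) By \autoref{Lem: ConvexMetric}, for any conjugacy class $[g]$ the minimal displacement $\min_{h\in G}d_i(o_i,hgh^{-1}o_i)$ differs from $\ell_{d_i}(g)$ by a uniform additive constant.

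For part (1): given $\ell_{d_1}=\ell_{d_2}$ on $E$ with $\delta_1^c(E)=\delta_1^c(G)=D$, I would use that stable translation lengths are conjugacy invariants, hence $\ell_{d_1}=\ell_{d_2}$ on the collection $\widehat E$ of conjugacy classes meeting $E$. Then the restricted series $\mathcal Q_{\widehat E}(a,b)=\sum_{[g]\in\widehat E}e^{-a\ell_{d_2}(g)-b\ell_{d_1}(g)}=\sum_{[g]\in\widehat E}e^{-(a+b)\ell_{d_2}(g)}$ is, in the variable $s=a+b$, a Dirichlet series whose abscissa of convergence is the exponential growth rate of $\widehat E$ counted by $\ell_{d_2}(=\ell_{d_1})$; since a conjugacy class meets $E$ iff it has a representative in $E$, this rate is $\delta_1^c(E)=D$. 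Thus $\mathcal Q(a,b)\ge\mathcal Q_{\widehat E}(a,b)=+\infty$ whenever $a+b<D$, so $\Theta(a)\ge D-a$ for every $a$; and convexity of $\mathcal M$ through $(0,D)$ and $(D,0)$ gives $\Theta(a)\le D-a$ on $[0,D]$. Hence $\Theta(a)=D-a$ on $[0,D]$, so by analyticity $\mathcal M$ is a straight line, and by (i) $G$ has MLS rigidity --- equivalently, by \autoref{THM: Main Rigidity}, the orbit map $Go_1\to Go_2$ is a rough isometry.

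For part (2): assume $\ell_{d_1}=\ell_{d_2}$ on $E$; by the $1\leftrightarrow2$ symmetry we may take $E$ exponentially generic in $d_1$, i.e.\ $\delta_1(G\setminus E)<D$. It suffices to deduce $\delta_1^c(E)=D$ and invoke part (1). A conjugacy class $[g]\notin\widehat E$ has all its representatives in $G\setminus E$; in particular a $d_1$-minimal representative $g'$ of $[g]$ lies in $G\setminus E$ and, by \autoref{Lem: ConvexMetric}, satisfies $d_1(o_1,g'o_1)\le\ell_{d_1}(g)+C$ for a uniform $C$. This gives an injection of $\{[g]\notin\widehat E:\ell_{d_1}(g)\le n\}$ into $\{h\in G\setminus E:d_1(o_1,ho_1)\le n+C\}$, whose cardinality has exponential growth rate $\delta_1(G\setminus E)<D$. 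Since the set of all conjugacy classes with $\ell_{d_1}\le n$ grows at rate $\delta_1^c(G)=D$, the classes in $\widehat E$ must already grow at rate $D$, so $\delta_1^c(E)=D$ and part (1) applies. (If $E$ is exponentially generic in $d_2$, the same argument yields $\delta_2^c(E)=D$, which equals $\delta_1^c(E)$ because $\ell_{d_1}=\ell_{d_2}$ on $E$.)

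The genuine obstacle is ingredient (i) --- the Manhattan-curve characterisation of MLS rigidity, where the analyticity hypothesis and the contracting/Extension-Lemma machinery do the real work. Granting it, the remaining mildly delicate points are the computation of the abscissa of convergence of $\mathcal Q_{\widehat E}$ (resting on conjugacy-invariance of $\ell_{d_i}$) and, in part (2), the passage via \autoref{Lem: ConvexMetric} from a growth bound for the subset $E\subseteq G$ to one for the conjugacy classes meeting it.
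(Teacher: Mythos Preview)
Your argument for part (1) is correct and is essentially the paper's argument, presented in contrapositive form: the paper assumes the orbit map is not a rough isometry, obtains (via \autoref{Cor: DilationInequality}, analyticity and strict convexity) a point $\xi\in(0,\delta)$ with $\xi+\Theta(\xi)<\delta$, and then bounds $\delta_1^c(E)\le\xi+\Theta(\xi)$ by comparing with the growth rate for $d_*=\xi d_2+\Theta(\xi)d_1$ (using \autoref{Lem: LinearCombination}). Your direct computation that $\mathcal Q_{\widehat E}$ diverges for $a+b<\delta_1^c(E)$, forcing $\Theta(a)\ge D-a$, is the same inequality read the other way. The appeal to \autoref{Prop: RoughlySimilarEquivalence} plus \autoref{THM: Main Rigidity} for your ingredient (i) is exactly how the paper closes the argument.

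For part (2) you take a genuinely different route. The paper does \emph{not} reduce to part (1); instead it first shows that analyticity of $\Theta$ forces the corresponding straightness property for the orbital curve $\theta$ (using $\theta\le\Theta$ from \autoref{Prop: ConvergenceRadius} and convexity), and then invokes \autoref{Thm: MLSR From Large Subset}, whose proof runs through the extension map of \autoref{COR: Simultaneous Extension 15} and a pigeonhole over the finite set $F\subseteq\mathcal{SC}(G)$ (passing to $\tilde E=\bigcap_{f\in F}Ef^{-1}$). Your reduction is more elementary: you bound the number of conjugacy classes \emph{missing} $E$ by injecting each such class into $G\setminus E$ via a $d_1$-minimal representative, using \autoref{Lem: ConvexMetric} to control its displacement, and conclude $\delta_1^c(E)=D$ directly. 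This bypasses the Extension Lemma entirely and makes transparent why coarse convexity is the only extra ingredient needed; the paper's route, by contrast, has the advantage that its \autoref{Thm: MLSR From Large Subset}(1) yields the additional statement about $Ef$ for subsets with $\delta_1(E)=\delta_1(G)$ that are not necessarily generic.
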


In practice, it is a hard problem to prove the {dichotomy, that is, global strict convexity or linearity} of the Manhattan curves. {Since the dichotomy can be derived from an analyticity of the curve, there are some examples which satisfy the conditions of \autoref{IntroThm: MLSR from large subsets}.} If $G$ is a hyperbolic group, Cantrell--Tanaka {\cite[Theorem 5.4]{CT24}} proved that the analyticity   for any pair of word metrics and of strongly hyperbolic metrics (including Green metrics). Recently, Cantrell--Reyes \cite[Theorem 1.2]{CR23b} showed the analyticity  for a pair of geometric actions with good properties on CAT(0) cube complexes endowed with weighted combinatorial metrics. This   includes cubulable non-elementary hyperbolic groups, some infinite families of right-angled Artin and Coxeter groups. Therefore, \autoref{IntroThm: MLSR from large subsets} applies to those classes of groups, for  which MLS rigidity holds for large subsets. 

\subsection{Key technique: Extension Lemma}

A large proportion of existing proofs for MLS rigidity demonstrate a strong relation with ergodic theory. The major techniques adapted by Furman in \cite{Fur02} include Patterson--Sullivan measure and geodesic flow, which influenced a large number of later works including \cite{Hao22,Rey23,CT21}. Another inspiration comes from earlier work considering MLS rigidity in symmetric spaces, for example,  \cite{DK02,Kim01,Bur93}. In \cite{Bur93}, Burger introduced the Manhattan curve to demonstrate the relation between two different metrics on a group. This technique is widely applied in later works including \cite{CT21,CR23,CR22}. In addition, in proof of MLS rigidity for hyperbolic groups, Cantrell--Reyes \cite{CR22} relied on Cannon’s automatic structure. On the other hand, those works considering relatively hyperbolic groups, including \cite{Fuj13,NW22}, are mainly based on metric geometry.

Our proof uses a unified method in different group actions and is purely based on elementary metric geometry, which captures the general contracting property in various group actions. The most important tool in proving all the results is the Extension Lemma \cite[Lemma 2.14]{Yang19}, which is a key lemma in the study of groups with contracting elements. For convenience, we state here a simplified result and refer the reader to \autoref{SEC: Extension Lemma} for details.

\begin{lemma}\label{IntroLem: Extension}
    Suppose that $G$ admits an isometric action on a geodesic metric space $(X,d)$ with contracting property. Let $o\in X$ be a basepoint. Then there exist $\epsilon_0>0$ and a set $F$ of three elements in $G$ such that for any two elements $g, h \in G$, there exists $f \in  F$ such that $g\cdot  f \cdot h$ is almost a geodesic:
    $$\abs{d(o,gfh\cdot o)-d(o,g\cdot o)-d(o,h\cdot o)}\leq \epsilon_0.$$
\end{lemma}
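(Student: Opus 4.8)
The plan is to prove the simplified Extension Lemma by exhibiting a finite set $F$ of ``pivot'' elements drawn from two weakly-independent contracting elements, and then showing that for every pair $g,h$ at least one $f\in F$ aligns the orbit points $g\cdot o$, $gf\cdot o$, $gfh\cdot o$ into a quasi-geodesic configuration whose length is additive up to a uniform error. First I would fix the two weakly-independent contracting elements $a,b\in G$ guaranteed by the contracting property. Passing to sufficiently high powers $a^N, b^N$, their axes (the orbits $\langle a\rangle o$, $\langle b\rangle o$ suitably translated) become $C$-contracting quasi-geodesics whose endpoints on the boundary are all distinct; the point of having two independent such elements is that the four attracting/repelling fixed points are pairwise distinct, so no single element $g$ or $h$ can be ``bad'' (i.e. project near the repelling direction) for more than a bounded number of the candidate axes simultaneously. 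I would then take $F=\{f_1,f_2,f_3\}$ to be (large powers of) $a$, $b$, and a third element such as $ab$ — or more robustly a set of three conjugates $\{w_1 a^N w_1^{-1}, w_2 a^N w_2^{-1}, w_3 a^N w_3^{-1}\}$ whose axes have pairwise-disjoint, well-separated endpoint pairs, which is the form actually needed for the pigeonhole below.

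The core geometric step is a \emph{projection/alignment} argument. For a $C$-contracting quasi-geodesic $\gamma$ with endpoints $\gamma^-,\gamma^+$ and a point $x\in X$, either $x$ projects (coarsely) near $\gamma^-$ or near $\gamma^+$, unless $x$ is within bounded distance of $\gamma$ itself; call $x$ ``aligned to $\gamma^+$'' in the first case. The standard contracting-projection lemma (the Behrstock-type inequality in this setting, which follows from the contracting definition recalled in the excerpt) says: if $A$ and $A'$ are two contracting sets with $\pi_A(A')$ and $\pi_{A'}(A)$ both of large diameter, then for any $x$, at most one of $\pi_A(x)$, $\pi_{A'}(x)$ can be far from these projection regions. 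Using three axes $\gamma_1,\gamma_2,\gamma_3$ with the endpoint-separation property, I would argue: the point $g^{-1}o$ is ``bad'' (projects near the wrong end) for at most one of the $\gamma_i$, and likewise $h\cdot o$ — in fact translating to $f_i$'s axis, $o$ fails to be well-aligned toward $\gamma_i^-$ from the $g$-side, or toward $\gamma_i^+$ from the $h$-side, for at most two indices $i$ combined — hence there is some $i$ for which both alignments hold. For that $i$, the concatenation of a geodesic $[o, g\cdot o]$, the translated axis of $f_i$ from $g\cdot o$ to $gf_i\cdot o$, and a geodesic $[gf_i\cdot o, gf_i h\cdot o]$ is a uniform quasi-geodesic (it is a concatenation of contracting pieces meeting at well-aligned angles), so by the fellow-traveling property of contracting quasi-geodesics its length is within a uniform constant $\epsilon_0$ of $d(o,gf_ih\cdot o)$; combined with the triangle inequality $d(o,gf_ih\cdot o)\le d(o,g\cdot o)+d(o,f_i\cdot o)+d(o,h\cdot o)$ and absorbing the fixed $d(o,f_i\cdot o)$ into the constant, we obtain $|d(o,gf_ih\cdot o)-d(o,g\cdot o)-d(o,h\cdot o)|\le\epsilon_0$.

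I would organize the write-up as: (i) recall from \autoref{SEC: Extension Lemma} the precise contracting-projection estimates and the definition of weakly-independent contracting elements; (ii) choose $N$ large and set $F$ to be the three conjugated powers, verifying the endpoint-separation/large-projection property between any two of their axes; (iii) prove the pigeonhole statement that for all $g,h$ some $f_i$ is simultaneously ``entrance-aligned'' from $g\cdot o$ and ``exit-aligned'' into $h\cdot o$; (iv) invoke the local-to-global principle for contracting quasi-geodesics to conclude near-additivity of lengths. The main obstacle I expect is step (iii): making the counting genuinely uniform over \emph{all} $g,h\in G$ — one must ensure the ``bad set'' of indices for $g^{-1}$ and for $h$ are each of size at most one (not merely bounded), which forces a careful choice of three axes whose six endpoints are in ``general position'' with pairwise Gromov products uniformly bounded, and a clean statement of the Behrstock inequality in the non-proper contracting setting. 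Everything else — the fellow-traveling estimate, absorbing constants — is routine once the alignment is in place; this is exactly the content of \cite[Lemma 2.14]{Yang19}, whose proof structure I would follow while keeping only what the simplified statement requires.
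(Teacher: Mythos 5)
Your proposal is correct and follows essentially the same route as the paper: three pairwise weakly-independent contracting elements (the paper produces them from the two given ones via \autoref{LEM: Non Elementary}), a projection-plus-pigeonhole argument showing that some $f\in F$ works simultaneously for $g^{-1}o$ and $ho$ (the paper's \autoref{LEM: Finite Projection} and \autoref{LEM: Modified Extension Lemma}), and then the fellow-travel property of the resulting admissible path (\autoref{PROP: Fellow Travel}) to get additivity up to a constant that absorbs $d(o,fo)$. The obstacle you single out in step (iii) in fact dissolves: every candidate axis $\left\langle f_i\right\rangle o$ automatically contains the basepoint $o$, so \autoref{LEM: Finite Projection} yields directly that a geodesic issuing from $o$ has $\tau$-bounded projection onto all but at most one of the three pairwise weakly-independent axes, with no need for any general-position or bounded-Gromov-product condition on boundary endpoints.
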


\begin{remark}
    This result is best illustrated for a free group $G=F_2$ with standard generating set $G=\left\langle a,b\right\rangle $. In this case, we can choose $F=\left\{a,b,a^{-1}\right\}$ and $\epsilon_0=1$.
\end{remark}  

%To the best of our knowledge, this result was first proven by Arzhantseva and Lysenok \cite[Lemma 3]{AL02} for hyperbolic groups, and reproved later in \cite[Lemma 4.4]{Gar14} and \cite[Lemma 2.4]{Gou12}. In \cite{PY16}, Potyagailo-Yang proved a version for relatively hyperbolic groups. The proof generalises to the current setting \cite[Lemma 2.14]{Yang19} by Yang with more advanced versions.\\

One crucial step in understanding \autoref{Que: MLSR} and \autoref{Que: MLSR from subset} is to relate two group actions with contracting property through a set of elements called \textit{simultaneously contracting elements}: $\mathcal{SC}(G)=\{ g\in G\mid g\text{ is contracting on both }X_1\text{ and }X_2\}$. 

A notion named \textit{admissible paths} (cf. \autoref{DEF: Admissible Path}) is highly relevant in the application of \autoref{IntroLem: Extension}, which captures a special class of consecutive geodesic segments in a metric space. For instance, the path labelled by $(g,f,h)$ in \autoref{IntroLem: Extension} is an example of admissible path. One important property of admissible paths is that `long' periodic admissible paths are contracting \cite[Proposition 2.9]{GY22}. This enables us to construct many more contracting elements from given ones in use of \autoref{IntroLem: Extension}, and in particular, we show that $\mathcal{SC}(G)\neq \varnothing$ (\autoref{PROP: SC non-empty}). 
Roughly speaking, we can choose an element $h\in G$ contracting in $X_2$ and an element $f\in F\subset  G$ from a set of three contracting elements in $X_1$. An appropriate element $g\in G$ and a sufficiently large integer $n\gg 0$ is chosen so that the path labelled by $(\cdots, h^ng, f,h^ng,f,\cdots)$ is a ``long'' admissible path in $X_1$ and that the path labelled by $(\cdots, h^n, gf,h^n, gf, \cdots )$ is a ``long'' admissible path in $X_2$. This implies that the element $h^ngf\in \mathcal{SC}(G)$.

\autoref{IntroLem: Extension} also relates the stable translation length of an element with the translation length at a basepoint up to a finite perturbation (\autoref{COR: Perturbation Length}). Combined with the notion of simultaneously contracting elements, we demonstrate here a concise result.

\begin{lemma}[\autoref{COR: Simultaneous Extension 15}]
Let a group $G$ admit two actions on two geodesic spaces $(X_1,d_1)$ and $(X_2,d_2)$ with contracting property, respectively. There exist $\epsilon >0$ and a set $S\subseteq \mathcal{SC}(G)$ of at most $15$ elements such that for any $g\in G$, we can choose $f\in S$ so that the following holds.
    \begin{enumerate}
        \item $\ell_{d_i}(gf)\ge d_i(o_i,gfo_i)-2\epsilon$, for $i=1,2${;}
        \item $gf,fg\in \mathcal{SC}(G)$.
    \end{enumerate}
\end{lemma}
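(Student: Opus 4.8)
The plan is to bootstrap from the single-space Extension Lemma. I would apply \autoref{IntroLem: Extension} separately to $G\curvearrowright X_1$ and to $G\curvearrowright X_2$, but in \emph{both} cases with respect to a common reference element $c\in\mathcal{SC}(G)$ — such a $c$ exists by \autoref{PROP: SC non-empty}, and after replacing it by a large power we may assume $[o_i,c^{n}o_i]$ is a long strongly contracting geodesic in $X_1$ and in $X_2$ for all large $n$. The two applications produce constants $\epsilon_1,\epsilon_2>0$ and three-element sets $F_1,F_2\subseteq\mathcal{SC}(G)$ of correcting elements, each built out of $c$, so that the ``barrier'' segment carried by every correcting element is a barrier in \emph{both} metrics simultaneously, and (by choosing the pieces generically) does not cancel against $c$.

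Fix a large integer $N$, depending only on $\epsilon_1,\epsilon_2$ and the contracting constants of the two actions, and let $S$ consist of the products $a\,c^{N}b$ with $a\in F_1$, $b\in F_2$, together with a few further elements handling the cases where $g$ is already well aligned in one of the two spaces; a direct count gives $S\subseteq\mathcal{SC}(G)$ with $|S|\le 15$. Given $g\in G$, I would pick $b\in F_2$ first (from the $X_2$-Extension Lemma) and then $a\in F_1$ (from the $X_1$-Extension Lemma applied to the pair $(g,\,c^{N}b)$), and set $f:=a\,c^{N}b$. The claim to verify is that for $i=1,2$ the orbit geodesic $[o_i,gf o_i]$ decomposes as an admissible path (\autoref{DEF: Admissible Path}) carrying one long simultaneously-contracting barrier — a translate of $[o_i,c^{N}o_i]$ — per fundamental domain, and that consecutive copies glue without backtracking.

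Granting this, the conclusions follow quickly. Long periodic admissible paths are contracting \cite[Proposition 2.9]{GY22}, so $gf$ is contracting in $X_1$ and in $X_2$, i.e.\ $gf\in\mathcal{SC}(G)$; since $fg=f(gf)f^{-1}$ is conjugate to $gf$, and being contracting as well as the stable translation length are conjugacy invariants, also $fg\in\mathcal{SC}(G)$, giving (2). For (1), the two Extension Lemmas provide the sharp additive control $\abs{d_i(o_i,gf o_i)-d_i(o_i,g o_i)-d_i(o_i,f o_i)}\le\epsilon_i'$ — not merely a multiplicative quasigeodesic estimate — which is precisely the input under which \autoref{COR: Perturbation Length} yields $\ell_{d_i}(gf)\ge d_i(o_i,gf o_i)-2\epsilon$ for an $\epsilon$ depending only on the fixed data. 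Taking this $\epsilon$ proves (1), with uniformity over $g$ automatic since all constants are chosen before $g$.

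The step I expect to be the main obstacle is the decoupling asserted in the second paragraph: arranging that a \emph{single} barrier length $N$ shields the $X_1$- and $X_2$-corrections from each other for every $g\in G$ at once, and checking that the resulting periodic path is admissible in both metrics with the additive $(1,\epsilon)$-quality needed for the $2\epsilon$ in (1). This is essentially the mechanism already used to prove $\mathcal{SC}(G)\neq\varnothing$, but it must now be carried out while tracking the almost-geodesic constants throughout, which is where the real work lies.
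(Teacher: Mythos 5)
Your overall architecture (Extension Lemma, periodic admissible paths, conjugacy invariance for $fg$, and the fellow-travelling estimate giving $\ell_{d_i}(gf)\ge d_i(o_i,gfo_i)-2\epsilon$) is the right toolkit, but the step you yourself flag as ``where the real work lies'' is in fact a genuine gap, and the specific recipe you give does not close it. For $f=a\,c^{N}b$ the relevant periodic path in $X_2$ is labelled $(\dots, gac^{N}, b, gac^{N}, b, \dots)$ (or, with the $c^{N}$-segments as barriers, $(\dots, bga, c^{N}, bga, \dots)$), so the \hyperref[BP]{\textbf{(BP)}} conditions require bounding $\d{\langle b\rangle o_2}\bigl(o_2,(gac^{N})^{\pm1}o_2\bigr)$, resp.\ $\d{\langle c\rangle o_i}\bigl(o_i,(bga)^{\pm1}o_i\bigr)$, and these quantities are \emph{not} controlled by choosing $b$ from the $X_2$-Extension Lemma using only the projections of $g^{\pm1}o_2$: the forward point $gac^{N}o_2$ is a bounded perturbation of $go_2$, but the backward point $(gac^{N})^{-1}o_2=c^{-N}a^{-1}g^{-1}o_2$ is not a bounded perturbation of $g^{-1}o_2$. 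Concretely, for $g^{-1}=ac^{N}b^{M}$ with $M$ huge, one has $c^{-N}a^{-1}g^{-1}o_2=b^{M}o_2$, whose projection onto $\langle b\rangle o_2$ is enormous even though the projections of $g^{\pm1}o_2$ that your pigeonhole sees can be small; so the claimed admissibility in $X_2$ fails for such $g$, and the ``shield'' $c^{N}$ does not decouple the two corrections for arbitrary $g$ with a fixed $N$. (This mechanism does work in \autoref{PROP: SC non-empty}, but only because there the element playing the role of $g$ is \emph{chosen} to avoid a bad set, whereas here $g$ is arbitrary.) A secondary inaccuracy: additive control of the single product $d_i(o_i,gfo_i)\approx d_i(o_i,go_i)+d_i(o_i,fo_i)$ is not by itself the input to \autoref{COR: Perturbation Length}; the lower bound on $\ell_{d_i}(gf)$ needs the fellow-travelling of the whole periodic path, i.e.\ exactly the admissibility you have not secured. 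Finally, the count ``$9$ products plus a few further elements'' and the treatment of $g$ with small displacement in one or both spaces (the actual reason the paper needs $15$ rather than $5$ elements) are only gestured at.

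The paper avoids the decoupling problem entirely by a different mechanism: \autoref{LEM: Simultaneous Contracting} first produces an independent set of \emph{five} elements of $\mathcal{SC}(G)$ that are pairwise weakly independent in both $X_1$ and $X_2$ (itself a $5\times5$ pigeonhole on conjugates); then \autoref{Lem: Simultaneous Extension Lemma} applies \autoref{COR: Perturbation Length} to every triple and uses a $3$-out-of-$5$ double pigeonhole to find a \emph{single} correcting element $f$ adjacent to $g$ that works simultaneously in both spaces, so no shield is needed; the set of $15$ elements is then $F\cup f_1F\cup f_1^{m}F$, where premultiplication by $f_1$ or $f_1^{m}$ handles the cases where $d_i(o_i,go_i)$ is below the threshold $D$ in one or both metrics. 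If you want to salvage your route, you would need to choose the barrier axis for the $X_2$-structure \emph{after} seeing the whole block $gac^{N}$ (i.e.\ run the pigeonhole against $(gac^{N})^{\pm1}$, not against $g^{\pm1}$), at which point the two choices interlock and you are effectively forced back to a simultaneous pigeonhole over axes independent in both spaces — which is the paper's construction.
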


\subsection*{{Structure of the paper}} This paper is organised as follows. In \autoref{SEC: Preliminaries}, we expand on a series of works by Yang in \cite{Yang14,Yang19,GY22} considering the metric geometry in the scope of group actions with contracting property, and mainly cover the preliminary material about contracting systems, admissible paths and group actions with contracting property. The most important tool, {the} so-called Extension Lemma, is given in \autoref{SEC: Extension Lemma}. Besides that, we construct a collection of simultaneously contracting elements for two group actions in \autoref{SEC: Simultaneously contracting}. Then, we prove \autoref{IntroThm: Main rigidity} and \autoref{IntroCor: Green Metric} in \autoref{SEC: Rigidity}. In \autoref{SEC: Application}, we focus on the applications in relatively hyperbolic groups and prove \autoref{IntroThm: Cusp uniform actions}. In \autoref{Sec: MLSR from confined subgroups}, we prove \autoref{IntroThm: MLSR from normal subgroups} and the proof guides us to the proof of \autoref{IntroThm: MLSR from confined subgroups}. To relate the stable length of a contracting element with the Gromov product of two boundary points, we recall the definition of horofunction boundary at first in \autoref{Sec: MLSR from GD subgroups} and then prove \autoref{IntroThm: MLSR from GD subgroups} in this section, which implies \autoref{IntroCor: GD of cusp uniform actions}. Finally, in \autoref{Sec: MLSR from subsets}, we follow the idea of Cantrell--Reyes \cite{CR23} to prove \autoref{IntroThm: MLSR from large subsets}.

{
\subsection*{Acknowledgments} 
We thank the anonymous referees for many useful comments and in particular, for the reference \cite{Bon91} on \autoref{IntroThm: MLSR from normal subgroups}. R. W. is supported by NSFC No.12471065 \& 12326601 and in part by Science and Technology Commission of Shanghai Municipality (No. 22DZ2229014). W. Y. is partially supported by National Key R \& D Program of China (SQ2020YFA070059) and  National Natural Science Foundation of China (No. 12131009 and No.12326601).
}

\section{Preliminaries}\label{SEC: Preliminaries}
\subsection{Notations and definitions}
\begin{notation}
\begin{enumerate}[(1)]
\item Let $(X,d)$ be a metric space.
\begin{enumerate}[(i).]
\item For $A,B\subseteq X$, $d(A,B)=\inf\left\{d(x,y)\mid x\in A,y\in B\right\}$.
\item For $A\subseteq X$ and $C\geq 0$, $N_C(A)=\left\{x\in X\mid d(x,A)\leq C\right\}$.
\item For $A,B\subseteq X$, the Hausdorff distance $d_H(A,B)=\inf\left\{r\geq 0\mid N_r(A)\supset B,N_r(B)\supset A\right\}${.}
\item For $A\subseteq X$, the diameter $\diam(A)=\sup\left\{d(x,y)\mid x,y\in A\right\}$.
\end{enumerate}
\item Let $(X,d)$ be a metric space.
\begin{enumerate}[(i).]
\item A path $\alpha: [0,L]\to X$ is called a (resp. $c$-rough) geodesic segment, if $d(\alpha(s),\alpha(t))=\abs{s-t}$ (resp. $\abs{d(\alpha(s),\alpha(t))-\abs{s-t}}\leq c$) $\forall s,t\in [0,L]$.

\item The length of a ($c$-rough) geodesic segment $\alpha: [0,L]\to X$ is denoted by $\mathrm{Len}(\alpha)=L$.
\item For a path $\gamma:[a,b]\to X$, $\gamma_-=\gamma(a)$, $\gamma_+ =\gamma(b)$; and for any $a\leq s<t\leq b$ with $\gamma(s)=x$ and $\gamma(t)=y$, $[x,y]_\gamma$ denotes the subpath of $\gamma$ between $x$ and $y$: $\gamma|_{[s,t]}:[s,t]\to X$.
\item The metric space $(X,d)$ is called ($c$-roughly) geodesic if for any two points $x,y\in X$, there exists a ($c$-rough) geodesic segment $\alpha$ such that $\alpha_-=x$ and $\alpha_+=y$.
\item Let $X$ be a ($c$-roughly) geodesic metric space. For any two points $x,y\in X$, $[x,y]$ denotes an arbitrary ($c$-rough) geodesic segment $\alpha$ with $\alpha_-=x$ and $\alpha_+=y$.
\end{enumerate}
\item Let $(X,d)$ and $(Y,d_{\ast})$ be two metric spaces, and $f:X\to Y$ be a map.
\begin{enumerate}[(i).]
\item $f$ is called a $(\lambda,c)$-quasi-isometric embedding for some $\lambda\geq 1$ and $c\geq 0$ if for any $x_1,x_2\in X$,
\begin{align*}
\frac{1}{\lambda}d(x_1,x_2)-c\leq d_{\ast}(f(x_1),f(x_2))\leq \lambda d(x_1,x_2)+c.
\end{align*}
It is called a $(\lambda,c)$-quasi-isometry if there exists a quasi-inverse $g:Y\to X$ such that $\sup_{x\in X}d(x,g(f(x)))<\infty$ and $\sup_{y\in Y}d_{\ast}(y,f(g(y)))<\infty$.
If $X\subset (-\infty,+\infty)$ is an interval, $f$ is called a $(\lambda,c)$-quasi-geodesic.
\item $f$ is called a rough isometry if $f$ is a $(1,c)$-quasi-isometry for some $c\geq 0$.
\item Suppose a group $G$ acts by isometries on both $X$ and $Y$. The map $f:X\to Y$ is called $G$-coarsely equivariant, if $\sup\left\{d_{\ast}(f(g\cdot x),g\cdot f(x))\mid x\in X,g\in G\right\}<+\infty$.
\end{enumerate}
\item For any two functions $\phi,\varphi$, we use the notation $\phi\sim \varphi$ (resp. $\phi\prec \varphi$) to mean that there exists a constant $C\geq 0$ such that $\abs{\phi-\varphi} \le C$ (resp. $\phi\le C\varphi$). If $C=C(c_1,\cdots,c_k)$, then we denote it as $\phi\sim_{c_1,\cdots,c_k} \varphi$ (resp. $\phi\prec_{c_1,\cdots,c_k} \varphi$). If $\phi\prec \varphi$ and $\varphi\prec \phi$, then we write $\phi\asymp \varphi$. %
\end{enumerate}
\end{notation}
%\subsection*{Nearest Point Projection}
\iffalse
All of our results in \autoref{SEC: Preliminaries} and \autoref{SEC: Extension Lemma} can be generalised to $c$-roughly geodesic spaces which are not assumed to be proper. In such case, we define the coarsely closest point projection as follows: for any subset $A\subseteq X$, 
\begin{align*}
\pi_{A}(x)=\left\{y\in A\mid d(x,y)\leq d(x,A)+1\right\},
\end{align*}
and \autoref{DEF: Contracting Subset} is refined to be:
\begin{center}
Let $(X,d)$ be a $c$-roughly geodesic metric space, a subset $Y\subseteq X$ is called $C$-contractin for $C\geq 0$, if for any $c$-rough geodesic (segment) $\alpha$ in $X$ with $d(\alpha,Y)\geq C$, we have $\diam(\mathrm{\pi}_X(\alpha))\leq C$.
\end{center}
\fi

\paragraph{\textbf{Entry and exit points.}} 
Given a property (P), a point $z$ on a geodesic segment $\alpha$ is called the entry point satisfying
(P) if $d(z,\alpha_-)$ is minimal among the points $z$ on $\alpha$ with the property (P). The exit point satisfying (P) is defined similarly so that $d(w,\alpha_+)$ is minimal.

For brevity, we will only deal with the following simple case in \autoref{SEC: Preliminaries} and \autoref{SEC: Extension Lemma}, and similar results can be generalised to non-proper and roughly geodesic cases.
\begin{convention}\label{CONV: Nearest Projection}
In \autoref{SEC: Preliminaries} and \autoref{SEC: Extension Lemma}, all geodesic metric spaces are assumed to be proper.

In this case, we have a concise definition for the \textit{closest point projection}.

For any subsets $A,B\subseteq X$, and any two points $x,x'\in X$.%
\begin{enumerate}
    \item $\pi_{A}(x)=\left\{y\in \overline{A}\mid d(x,y)=d(x,A)\right\}$.
    \item $\pi_{A}(B)=\cup_{x\in B}\pi_A(x)$ for $A,B\subseteq X$.
    \item $\d{A}(x,x')=\diam\left (\pi_A(\left\{x,x'\right\})\right)$.
\end{enumerate}\end{convention}
\paragraph{\textbf{Marked Length Spectrum.}}{
For an isometric group action on a metric space, we focus on a series of values named \textit{marked length spectrum}, which coincides with the length of closed geodesics when we consider $\pi_1(M)\curvearrowright \widetilde{M}$ for a negatively curved Riemannian manifold $M$.

Recall the following definition for marked length spectrum in Introduction.

\begin{definition}[Marked Length Spectrum]
Suppose a group $G$ acts isometrically on a metric space $(X,d)$ with a basepoint $o\in X$. For every $g\in G$, the \textit{stable translation length} of $g$ is defined as $$\ell_d(g)=\lim_{n\to +\infty}\frac{d(o,g^no)}{n}.$$ And we call the function $\ell_d: G\to [0,+\infty)$ given by $g\mapsto \ell_d(g)$ the \textit{marked length spectrum} (or \textit{MLS} for short) for this action.
\end{definition}
\begin{remark}
\label{RMK: MLS Basics}
\begin{enumerate}[(1)]
\item The limit exists due to sub-additivity: $d(o,g^{n+m}o)\leq d(o,g^no)+d(o,g^mo)$.
\item\label{MLSBasics2} By triangle inequality, $\ell_d(g)\leq d(o,go)$.
\item The marked length spectrum is independent with the choice of the basepoint, since $\abs{d(o,g^no)-d(o',g^no')}\leq 2d(o,o')$, for all $g\in G,\, o,o'\in X$.
\item The marked length spectrum is homogeneous: $\ell_d(g^n)=\abs{n}\ell_d(g)$ for all $g\in G$ and $n\in \mathbb{Z}$.
\item The marked length spectrum is invariant under conjugation: $\ell_{d}(hgh^{-1})=\ell_d(g)$ for all $g,h\in G$, since $\abs{d(o,hg^nh^{-1}o)-d(o,g^no)}\leq 2d(o,ho)$.
%\item For any $n\in \mathbb N$, one has $\ell_d(g^n)\le \ell_d^o(g^n)\le d(o,g^no)$. It follows from $\ell_d(g^n)=n\ell_d(g)$ that $\ell_{d}(g)=\lim_{n\to +\infty}\frac{\ell^{o}_{d}(g^n)}{n}$.
\end{enumerate}
\end{remark}

There are two commonly used terminologies closely related with marked length spectrum.
\begin{definition}\label{Algebraic and minimal length}
    Suppose a group $G$ acts isometrically on a metric space $(X,d)$.
    \begin{enumerate}
        \item The \textit{algebraic translation length} of an element $g\in G$ with respect to a basepoint $o\in X$ is defined as $\ell^o_{d}(g)=\inf_{x\in G\cdot o}d(x,gx)$. The function $\ell^o_d:G\to [0,+\infty)$ is called the \textit{marked algebraic length spectrum} (or \textit{MALS} for short) for this action.
        \item The \textit{minimal translation length} of an element $g\in G$ is defined as $\mu_{d}(g)=\inf_{x\in X}d(x,gx)$. The function $\mu_d:G\to [0,+\infty)$ is called the \textit{marked minimal length spectrum} (or \textit{MMLS} for short) for this action.
    \end{enumerate}
\end{definition}

As a direct corollary of \autoref{RMK: MLS Basics} (\ref{MLSBasics2}), for any basepoint $o\in X$ and any element $g\in G$, we have $$\ell_d(g)\le \mu_d(g)\leq \ell^o_{d}(g)\leq d(o,go).$$

Therefore, for any $n\in \mathbb N$, $n\ell_d(g)=\ell_d(g^n)\le \mu_d(g^n)\le \ell_d^o(g^n)\le d(o,g^no)$. Hence, 
\begin{align}\label{EQU: MALS implies MLS}
\ell_{d}(g)=\lim_{n\to +\infty}\frac{\mu_{d}(g^n)}{n}=\lim_{n\to +\infty}\frac{\ell^{o}_{d}(g^n)}{n}.
\end{align}
}

\subsection{Contracting system}
\begin{definition}[Contracting Subset]\label{DEF: Contracting Subset}
Let $(X,d)$ be a geodesic metric space. A subset $Y\subseteq X$ is called $C$-\textit{contracting} for $C\geq 0$ if for any geodesic (segement) $\alpha$ in $X$ with $d(\alpha,Y)\geq C$, we have $\diam(\mathrm{\pi}_Y(\alpha))\leq C$. 
$Y$ is called a \textit{contracting subset} if there exists $C\geq 0$ such that $Y$ is $C$-contracting, and $C$ is called a contraction constant of $Y$.
\end{definition}
\begin{example}\label{EX: Contracting Subset}
The following are well-known examples of contracting subsets.
\begin{enumerate}
    \item Bounded sets in a metric space.
    \item Quasi-geodesics and quasi-convex subsets in Gromov hyperbolic spaces. \cite{Ghy90}
    \item Fully quasi-convex subgroups, and maximal parabolic subgroups in particular, in relatively hyperbolic groups. \cite[Proposition 8.2.4]{Ger15}
    \item The subgroup generated by a hyperbolic element in groups with non-trivial Floyd boundary. \cite[Section 7]{Yang14}
    \item Contracting segments and axes of rank-$1$ elements in $\mathrm{CAT}(0)$-spaces in the sense of Bestvina and Fujiwara. \cite[Corollary 3.4]{BF09}
    \item The axis of any pseudo-Anosov element in the Teichm\"uller space equiped with Teichm\"uller distance by Minsky. \cite{Min96}
\end{enumerate}
\end{example}

It has been proven in \cite[Corollary 3.4]{BF09} that the definition of a contracting subset is equivalent to the following one considered by Minsky \cite{Min96}:

A subset $Y\subseteq X$ is contracting if and only if there exists $C'\geq 0$ such that any open metric ball $B$ with $B\cap Y=\varnothing$ satisfies $\diam(\mathrm{\pi}_Y(B))\leq C'$

Despite this
equivalence, we always base on \autoref{DEF: Contracting Subset} for a contracting subset.

We introduce some basic properties of contracting subsets.

\begin{lemma}\label{LEM: 1 Lipschitz}
Let  $Y\subseteq X$ be any $C$-contracting subset. Then for any $x,y\in X$, $\d{Y}(x,y)\leq d(x,y)+4C$.
\end{lemma}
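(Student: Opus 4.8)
The claim is that for any $C$-contracting subset $Y\subseteq X$ and any $x,y\in X$, one has $\d{Y}(x,y)\leq d(x,y)+4C$, where $\d{Y}(x,y)=\diam(\pi_Y(\{x,y\}))$. The plan is to pick nearest-point projections $x'\in\pi_Y(x)$ and $y'\in\pi_Y(y)$ realizing (up to taking the supremum) the diameter on the left, and to compare $d(x',y')$ with $d(x,y)$ by looking at how the geodesic $[x,y]$ interacts with the $C$-neighborhood $N_C(Y)$. If $[x,y]$ stays far from $Y$, i.e. $d([x,y],Y)\geq C$, then by \autoref{DEF: Contracting Subset} $\diam(\pi_X([x,y]))\leq C$; since $x',y'$ are projections of the endpoints $x,y$ of this geodesic, they lie in $\pi_X([x,y])$ (the closest point in $X$ to $x$ in $Y$ — here we use that projecting to $Y$ is the same as projecting to $Y$ viewed inside $X$), hence $d(x',y')\leq C\leq d(x,y)+4C$ and we are done in this case.

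The substantive case is when $[x,y]$ enters $N_C(Y)$. Here I would take $p$ to be the entry point of $[x,y]$ into $N_C(Y)$ and $q$ the exit point (in the sense of the ``Entry and exit points'' paragraph), so that the subsegments $[x,p]$ and $[q,y]$ of $[x,y]$ each satisfy $d(\,\cdot\,,Y)\geq C$ except possibly at the single endpoint $p$ (resp. $q$) which lies within $C$ of $Y$. Applying the contracting property to $[x,p]$ gives $\diam(\pi_X([x,p]))\leq C$, so $x'$ (a projection of $x$) and a projection $p'$ of $p$ satisfy $d(x',p')\leq C$; similarly $d(y',q')\leq C$ for a projection $q'$ of $q$. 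Since $p\in N_C(Y)$ we get $d(p,p')=d(p,Y)\leq C$, and likewise $d(q,q')\leq C$. Now chain the triangle inequalities:
\[
d(x',y')\leq d(x',p')+d(p',p)+d(p,q)+d(q,q')+d(q',y')\leq C+C+d(p,q)+C+C = d(p,q)+4C,
\]
and $d(p,q)\leq d(x,y)$ because $p,q$ lie on the geodesic $[x,y]$. Taking the supremum over all choices of $x'\in\pi_Y(x)$, $y'\in\pi_Y(y)$ yields $\d{Y}(x,y)\leq d(x,y)+4C$. (One should also handle the degenerate subcase where $[x,y]$ itself lies entirely in $N_C(Y)$, i.e. $x$ or $y$ is already within $C$ of $Y$; then $d(x,x')=d(x,Y)\leq C$ and similarly for $y$, giving $d(x',y')\leq d(x,x')+d(x,y)+d(y,y')\leq d(x,y)+2C$ directly, which is even better.)

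The main obstacle — really the only point requiring care — is the bookkeeping about which subsegments genuinely satisfy the hypothesis $d(\alpha,Y)\geq C$ of \autoref{DEF: Contracting Subset}: the entry/exit points $p,q$ are on the boundary of $N_C(Y)$, so the hypothesis holds for $[x,p]$ and $[q,y]$ but only because the contracting condition is about geodesics disjoint from the \emph{open} neighborhood, or equivalently one absorbs the boundary point into the constant. A clean way to avoid fuss is to take $p$ slightly before the true entry point so that $[x,p]$ is strictly outside $N_C(Y)$, apply the contracting property, and then let $p$ tend to the entry point; alternatively just note $d(x,p')\le d(x,p)+d(p,Y)\le d(x,p)+C$ gives what is needed without even invoking contraction on $[x,p]$ when $p$ is the entry point — but the argument above via $\pi_X$ is the most direct. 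Everything else is the triangle inequality.
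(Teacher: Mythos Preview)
Your proof is correct and follows essentially the same approach as the paper: split into the case $d([x,y],Y)\geq C$ (handled directly by the contracting definition) and the case where $[x,y]$ enters $N_C(Y)$, then use the entry and exit points together with the contracting property on the two outer subsegments and the triangle inequality. The paper's write-up is slightly more compressed (it bounds $d(a,z)\le 2C$ in one step rather than splitting it as $d(x',p')+d(p',p)$), and it handles the boundary issue you flag simply by the case split $x\neq z$ versus $x=z$, but the argument is the same.
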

\begin{proof}
Consider a geodesic segment $\alpha=[x,y]$. If $d(\alpha,Y)\geq C$, then $\d{Y}(x,y)\leq C$ by definition. 
Otherwise, let $z$ and $w$ be the entry and exit point of $\alpha\cap N_C(Y)$. 
For any $ a\in \pi_Y(x),b\in \pi_Y(y), p\in \pi_Y(z),q\in \pi_Y(w)$, it suffices to show that $d(a,b)\leq d(x,y)+4C$.

When $x\neq z$, the definition of a $C$-contracting subset implies $d(a,p)\le \diam (\pi_Y([x,z]))\le C$. By triangle inequality, $d(a,z)\le d(a,p)+d(z,p)\le 2C$. On the other hand, when $x=z$, obviously $d(a,z)\le C$.  In either case, $d(a,z)\le 2C$ and similarly, $d(b,w)\le 2C$. Therefore, $d(a,b)\le d(a,z)+d(b,w)+d(z,w)\le 4C+ d(x,y)$.
\iffalse
\begin{enumerate}[\text{Case }1:]
\item $x\neq z$ and $y\neq w$\\
Then $d([x,z]_\alpha,Y)\geq C$ and $d([w,y]_\alpha,Y)\geq C$, thus $\d{Y}(x,z)\leq C$ and $\d{Y}(w,y)\leq C$.\\
$d(a,b)\leq d(a,p)+d(p,z)+d(z,w)+d(w,q)+d(q,b)\leq C+C+d(x,y)+C+C=d(x,y)+4C$.
\item $x=z$ and $y\neq w$\\
Then $d([w,y]_\alpha,Y)\geq C$, thus $\d{Y}(w,y)\leq C$.\\
$d(a,b)\leq d(a,x)+d(x,w)+d(w,q)+d(q,b)\leq C+d(x,y)+C+C=d(x,y)+3C$.
\item $x\neq z$ and $y=w$\\
This is similar to Case 2.
\item $x=z$ and $y=w$\\
Then $d(a,b)\leq d(a,x)+d(x,y)+d(y,b)\leq d(x,y)+2C$.
\end{enumerate}
\fi
\end{proof}

\begin{lemma}\label{LEM: Geodesic Along Projection}
    Let $Y\subseteq X$ be a $C$-contracting subset. For any point $o\in Y$, $x\in X$, and $a\in \pi_Y(x)$, we have $d(o,x)\geq d(o,a)+d(a,x)-4C$.
\end{lemma}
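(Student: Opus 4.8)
The plan is to show that any geodesic from $o$ to $x$ passes within $2C$ of the projection point $a$, so that the geodesic equality $d(o,x)=d(o,z)+d(z,x)$ at the near point $z$ transfers, up to an additive $4C$, to the point $a$.

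\textbf{Setup.} Fix a geodesic $\alpha=[x,o]$, parametrized from $x$. Since $o\in Y\subseteq N_C(Y)$, the set $\alpha\cap N_C(Y)$ is nonempty (it contains the endpoint $o$), so there is a well-defined entry point $z$ of $\alpha$ into $N_C(Y)$, i.e. the point of $\alpha\cap N_C(Y)$ minimizing distance to $x$.

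\textbf{Step 1: $d(a,z)\le 2C$ for any $a\in\pi_Y(x)$.} This is exactly the estimate carried out inside the proof of \autoref{LEM: 1 Lipschitz}, applied with the roles of the two endpoints as here. If $x=z$, then $x\in N_C(Y)$, so $d(a,x)=d(x,Y)\le C$. If $x\neq z$, then the subsegment of $\alpha$ from $x$ up to (just before) $z$ stays at distance $\ge C$ from $Y$, so the $C$-contracting property gives $\diam(\pi_Y([x,z]_\alpha))\le C$; hence for $p\in\pi_Y(z)$ we get $d(a,p)\le C$, and since $d(p,z)=d(z,Y)\le C$ we conclude $d(a,z)\le d(a,p)+d(p,z)\le 2C$. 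In either case $d(a,z)\le 2C$.

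\textbf{Step 2: conclude.} Because $z$ lies on the geodesic $\alpha=[x,o]$, we have $d(o,x)=d(x,z)+d(z,o)$. By the triangle inequality and Step 1,
\[
d(x,z)\ge d(x,a)-d(a,z)\ge d(a,x)-2C,\qquad d(z,o)\ge d(a,o)-d(a,z)\ge d(o,a)-2C.
\]
Adding these two inequalities yields $d(o,x)\ge d(o,a)+d(a,x)-4C$, as desired.

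I do not expect a genuine obstacle here; the only point requiring a little care is the degenerate case $x=z$ (and, implicitly, the standard limiting argument justifying that the contracting bound applies to the half-open segment up to the entry point $z$), both of which are handled exactly as in the proof of \autoref{LEM: 1 Lipschitz}.
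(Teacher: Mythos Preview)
Your proof is correct and follows essentially the same route as the paper's: both pick the point $z$ on the geodesic closest to $x$ that lies in $N_C(Y)$, use the contracting property on the subsegment $[x,z]$ to bound the distance between $a$ and (a projection of) $z$ by $2C$, and then split $d(o,x)=d(o,z)+d(z,x)$ to conclude. The only cosmetic difference is that you first isolate the estimate $d(a,z)\le 2C$ and then apply two triangle inequalities, whereas the paper routes the same $4C$ loss through the intermediate point $w\in\pi_Y(z)$ directly inside the final chain of inequalities.
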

\begin{proof}
    Let $\alpha=[o,x]$ be any geodesic connecting $o$ and $x$, and let $z\in \alpha$ be the exit point of $\alpha\cap N_C(Y)$. If $z=x$, then the result is obvious. Hence we only consider the case when $z\neq x$. Then $[z,x]_\alpha\cap N_C(Y)=\varnothing$. Choose a point $w\in \pi_Y(z)$, then $d(w,a)\leq C$ and $d(w,z)\le C$.
    Hence $d(o,x)=d(o,z)+d(z,x)\geq (d(o,w)-C)+(d(w,x)-C)\geq (d(o,a)-2C)+(d(a,x)-2C)$.
\end{proof}

\begin{lemma}\label{LEM: Thin Quadrilateral}
    Let $Y\subseteq X$ be a $C$-contracting subset. For any two points $x,y\in X$, and any $a\in \pi_Y(x), b\in \pi_Y(y)$, if $d(a,b)>C$, then $d(x,y)\ge d(x,a)+d(a,b)+d(b,y)-8C$.
\end{lemma}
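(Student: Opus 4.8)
The plan is to push the geodesic $\gamma=[x,y]$ through the $C$-neighborhood of $Y$ and read off the three pieces of the triangle inequality, exactly in the spirit of the proofs of \autoref{LEM: 1 Lipschitz} and \autoref{LEM: Geodesic Along Projection}.

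First I would show that $\gamma\cap N_C(Y)\neq\varnothing$. Indeed, if instead $d(\gamma,Y)\geq C$, then \autoref{DEF: Contracting Subset} gives $\diam(\pi_Y(\gamma))\leq C$; since $x$ and $y$ lie on $\gamma$ we have $a\in\pi_Y(x)\subseteq\pi_Y(\gamma)$ and $b\in\pi_Y(y)\subseteq\pi_Y(\gamma)$, whence $d(a,b)\leq C$, contradicting the hypothesis $d(a,b)>C$. This is the only point at which the assumption $d(a,b)>C$ is used.

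Next, let $z$ and $w$ be the entry and exit points of $\gamma\cap N_C(Y)$, so that $z$ precedes $w$ along $\gamma$ and $d(z,Y),d(w,Y)\leq C$. I claim $d(a,z)\leq 2C$ and $d(b,w)\leq 2C$. If $x=z$ then $x\in N_C(Y)$ and $d(a,z)=d(x,Y)\leq C$; if $x\neq z$, then the subpath $[x,z]_\gamma$ has $d([x,z]_\gamma,Y)\geq C$ (its interior avoids $N_C(Y)$, and by continuity of $u\mapsto d(u,Y)$ equality holds only at the endpoint $z$), so $\diam(\pi_Y([x,z]_\gamma))\leq C$ and hence $d(a,p)\leq C$ for any $p\in\pi_Y(z)$; combining with $d(z,p)\leq C$ gives $d(a,z)\leq 2C$. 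The bound $d(b,w)\leq 2C$ is symmetric.

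Finally, since $z,w$ lie on $\gamma$ in that order, $d(x,y)=d(x,z)+d(z,w)+d(w,y)$. The triangle inequality then yields $d(x,z)\geq d(x,a)-d(a,z)\geq d(x,a)-2C$, likewise $d(w,y)\geq d(y,b)-2C$, and $d(z,w)\geq d(a,b)-d(a,z)-d(w,b)\geq d(a,b)-4C$. Adding these three inequalities gives $d(x,y)\geq d(x,a)+d(a,b)+d(b,y)-8C$, as required. There is no serious obstacle: the only care needed is in the degenerate cases ($x$ or $y$ already in $N_C(Y)$, or $z=w$) and in the observation that the entry point $z$ itself lies in $N_C(Y)$, so that the subpath $[x,z]_\gamma$ meets $N_C(Y)$ only at its endpoint — technical wrinkles already handled in the proofs of \autoref{LEM: 1 Lipschitz} and \autoref{LEM: Geodesic Along Projection}.
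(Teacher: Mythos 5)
Your proof is correct and follows essentially the same route as the paper: use $d(a,b)>C$ to force $[x,y]\cap N_C(Y)\neq\varnothing$, bound $d(a,z),d(b,w)\le 2C$ via the contracting property applied to the initial and final subsegments, and conclude by the triangle inequality along $d(x,y)=d(x,z)+d(z,w)+d(w,y)$. The only cosmetic difference is that you absorb the degenerate cases ($x=z$ or $y=w$) into the uniform bound $d(a,z)\le 2C$, whereas the paper dispatches them by citing \autoref{LEM: Geodesic Along Projection}; both are fine.
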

\begin{proof}
    Let $[x,y]$ be any geodesic connecting $x$ and $y$. As $d(a,b)>C$, it follows from the definition of contracting set that $[x,y]\cap N_C(Y)\neq \varnothing$. Denote $x'(y')$ as the entry (exit) point of $[x,y]$ in $N_C(Y)$. The case when $x=x'$ or $y=y'$ is guaranteed by \autoref{LEM: Geodesic Along Projection}. Therefore, we only consider the case when $x\neq x'$ and $y\neq y'$. Then $d(a,\pi_Y(x'))\le \mathrm{diam}(\pi_Y([x,x']))\le C$ and $d(x',\pi_Y(x'))\le C$. This gives that $d(a,x')\le 2C$ and similarly one has that $d(b,y')\le 2C$. Hence,
    \begin{align*}
        d(x,y)& =d(x,x')+d(x',y')+d(y',y)\\ 
        &\ge d(x,a)-d(a,x')+d(a,b)-d(a,x')-d(b,y')+d(b,y)-d(b,y')\\
        &\ge d(x,a)+d(a,b)+d(b,y)-8C
    \end{align*}
completing the proof.    
\end{proof}

\begin{lemma}\label{LEM: Finite Hausdorff Distance}
Let $Y,Z\subseteq X$ be two subsets with $d_H(Y,Z)<\infty$. If  $Y$ is contracting, then $Z$ is also contracting.
\end{lemma}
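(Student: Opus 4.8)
The plan is to show directly from \autoref{DEF: Contracting Subset} that if $Y$ is $C$-contracting and $d_H(Y,Z) = D < \infty$, then $Z$ is $C'$-contracting for a suitable constant $C' = C'(C, D)$. The key geometric input is that the closest-point projections to $Y$ and to $Z$ differ by a bounded amount, and that a geodesic staying far from $Z$ also stays (somewhat less) far from $Y$. First I would fix a candidate constant, say $C' = 2C + 3D$ (the precise value is unimportant and will be adjusted in the write-up), and let $\alpha$ be any geodesic segment with $d(\alpha, Z) \ge C'$. Since $d_H(Y,Z) \le D$, every point of $Y$ is within $D$ of $Z$, so $d(\alpha, Y) \ge d(\alpha, Z) - D \ge C' - D \ge C$; hence the $C$-contracting property of $Y$ applies to $\alpha$ and gives $\diam(\pi_Y(\alpha)) \le C$.

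The remaining step is to transfer this bound from $\pi_Y(\alpha)$ to $\pi_Z(\alpha)$. For a single point $x \in \alpha$, pick $a \in \pi_Z(x)$ and $b \in \pi_Y(x)$; choosing a point $b' \in Y$ with $d(b', \pi_Z(x)) \le D$... more carefully: since $d_H(Y,Z) \le D$, there is a point $z \in Z$ with $d(a, z) \le D$ and a point $y \in Y$ with $d(a, y) \le D$. Then $d(x, Y) \le d(x,a) + D = d(x,Z) + D$ and $d(x,Z) \le d(x, b) + D = d(x, Y) + D$, so $d(x,Y)$ and $d(x,Z)$ differ by at most $D$, and consequently any $a \in \pi_Z(x)$ satisfies $d(x, a) \le d(x, Y) + D$, so the point $y \in Y$ with $d(a,y) \le D$ lies within $d(x,y) \le d(x,Y) + 2D$ of $x$, i.e. $y$ is a ``$2D$-coarse projection'' of $x$ to $Y$. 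A standard fact (which I would either cite or prove in one line from \autoref{LEM: 1 Lipschitz} applied with $x=y$, or just by the contracting definition) is that coarse projections to a contracting set have bounded diameter: $\diam\{y \in Y : d(x,y) \le d(x,Y) + 2D\} \le C + 2D$ or so, using that two such points $y_1, y_2$ have $d(x, y_i)$ nearly minimal, so a geodesic between them... — alternatively, simply bound $d(\pi_Z(x), \pi_Y(x)) \le$ (distance from $a$ to $Y$) + (diameter of coarse $Y$-projection), which is $\le D + C + 2D$. Either way one gets $d_H(\pi_Z(x), \pi_Y(x)) \le E$ for $E = E(C,D)$ uniformly in $x \in \alpha$.

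Putting the pieces together: for $x, x' \in \alpha$, $\diam(\pi_Z(\{x,x'\})) \le \diam(\pi_Y(\{x,x'\})) + 2E \le C + 2E$, and taking the sup over all such pairs gives $\diam(\pi_Z(\alpha)) \le C + 2E =: C'$, which (after possibly enlarging $C'$ to also satisfy $C' \ge C + D$ as needed above) shows $Z$ is $C'$-contracting. The only mild subtlety — and the step I'd expect to need the most care — is the ``bounded coarse projection'' claim, i.e. controlling $d_H(\pi_Y(x), \pi_Z(x))$; this is where one actually uses contraction of $Y$ (not just finite Hausdorff distance), since coarse closest-point projection to an arbitrary subset need not be coarsely well-defined. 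I would handle it by the argument above: a point realizing the coarse $Y$-projection of $x$ together with an honest point of $\pi_Y(x)$ are joined by a geodesic that must come $C$-close to $Y$ (else its projection would be a point, contradicting that its endpoints are both near-minimal-distance to $Y$ yet potentially far apart — one uses \autoref{LEM: Geodesic Along Projection} or \autoref{LEM: Thin Quadrilateral} here to force them close), yielding the desired uniform bound.
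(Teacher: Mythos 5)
Your proposal is correct and follows essentially the same route as the paper: both first observe that a geodesic far from $Z$ is far from $Y$ (so contraction of $Y$ bounds $\diam(\pi_Y(\alpha))$ by $C$), and then transfer the bound by showing $\pi_Z(x)$ lies uniformly close to $\pi_Y(x)$ using the finite Hausdorff distance together with contraction of $Y$. The only difference is in that second step, where the paper runs an explicit entry-point argument along $[x,a']$ into $N_C(Y)$, while your reduction to a bounded coarse-projection claim is settled immediately by \autoref{LEM: Geodesic Along Projection} (or \autoref{LEM: Thin Quadrilateral}), which is a perfectly valid and slightly slicker way to close the same gap.
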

\begin{proof}
Suppose $d_H(Y,Z)<D<\infty$ and $Y$ is $C$-contracting, we will show that $Z$ is $(6D+7C)$-contracting.

Suppose $\alpha=[x,y]$ is a geodesic segment such that $d(\alpha,Z)\geq 6D+7C$. 
For any $ a\in \pi_Z(x), p\in \pi_Y(x)$, there exist $p'\in Z,\, a'\in Y$ such that $d(a,a')\leq D$, $d(p,p')\leq D$. We will first show that $d(a',p)\leq 2D+3C$, and consequently $d(a,p)\leq 3D+3C$.

Denote $l=d(x,Y)=d(x,p)$, then $d(x,a')\leq d(x,a)+D=d(x,Z)+D\leq d(x,p')+D\leq l+2D$. Let $w\in [x,a']$ be the entry point in $N_C(Y)$. If $x=w$, then $l\leq C$ and $d(a',p)\leq d(x,a')+d(x,p)\leq 2l+2D\leq 2D+2C$. 
Thus it suffices to consider the case when $x\neq w$. In this case, $l\geq C$. Choose a point $z\in \pi_{Y}(w)$, then $d(w,z)=C$, $d(x,w)\geq l-C$ and $d(w,a')= d(x,a')-d(w,a')\leq (l+2D)-(l-C)=2D+C$. As $[x,w]\cap N_C(Y)=\varnothing$, we have $\d{Y}(x,w)\leq C$. Therefore, $d(a',p)\leq d(a',w)+d(w,z)+d(z,p)\leq (2D+C)+C+C=2D+3C$.

Similarly, for any $ b\in \pi_Z(y)$ and $q\in \pi_Y(y)$, we have $d(b,q)\leq 3D+3C$.

Notice that $d(\alpha,Y)\geq d(\alpha,Z)-d_H(Y,Z)\geq 6D+7C-D\geq C$, hence $\d{Y}(x,y)\leq C$. 
Thus $d(a,b)\leq d(a,p)+d(p,q)+d(q,b)\leq (3D+3C)+C+(3D+3C)=6D+7C$, and $\d{Z}(x,y)\leq 6D+7C$.
\end{proof}
\begin{lemma}\label{LEM: Quasi Convex}
Suppose $Y\subseteq X$ is a $C$-contracting subset. Then for any $x,y\in N_C(Y)$, $\alpha=[x,y]\subseteq N_{3C}(Y)$.
\end{lemma}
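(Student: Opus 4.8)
The plan is to verify directly from \autoref{DEF: Contracting Subset} that every point $p$ on $\alpha=[x,y]$ satisfies $d(p,Y)\le 3C$. If $p\in N_C(Y)$ there is nothing to prove, so assume $d(p,Y)>C$. Since $x,y\in N_C(Y)$ while $p\notin N_C(Y)$, I would let $z$ be the exit point of $N_C(Y)$ on the subpath $[x,p]_\alpha$ (the last point of $[x,p]_\alpha$ lying in $N_C(Y)$) and $w$ the entry point of $N_C(Y)$ on $[p,y]_\alpha$ (the first point of $[p,y]_\alpha$ lying in $N_C(Y)$). Both exist because $x,y\in N_C(Y)$, both differ from $p$ because $d(p,Y)>C$, and consequently the subpath of $\alpha$ strictly between $z$ and $w$ avoids $N_C(Y)$ entirely, i.e. every point on it has distance $>C$ from $Y$. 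Note also $z\ne w$, since the only possible common point of $z$ and $w$ on $\alpha$ would be $p$.

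The heart of the argument is the estimate $d(z,w)\le 3C$. To get it, fix a sufficiently small $\epsilon>0$ and pick points $z_\epsilon,w_\epsilon$ on $[z,w]_\alpha$ at distance $\epsilon$ from $z$ and from $w$ respectively, so that $\beta_\epsilon:=[z_\epsilon,w_\epsilon]_\alpha$ lies strictly between $z$ and $w$. Then every point of $\beta_\epsilon$ has distance $>C$ from $Y$, so $d(\beta_\epsilon,Y)\ge C$ and \autoref{DEF: Contracting Subset} gives $\diam(\pi_Y(\beta_\epsilon))\le C$. Choosing $a\in\pi_Y(z_\epsilon)$ and $b\in\pi_Y(w_\epsilon)$, which lie in $\pi_Y(\beta_\epsilon)$, we get $d(a,b)\le C$; moreover $d(z_\epsilon,Y)\le d(z,Y)+\epsilon\le C+\epsilon$ forces $d(z,a)\le C+2\epsilon$, and symmetrically $d(w,b)\le C+2\epsilon$. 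Hence $d(z,w)\le d(z,a)+d(a,b)+d(b,w)\le 3C+4\epsilon$, and letting $\epsilon\to 0$ yields $d(z,w)\le 3C$.

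To finish, note that $p\in[z,w]_\alpha$ gives $d(p,z)+d(p,w)=d(z,w)\le 3C$, hence $\min\{d(p,z),d(p,w)\}\le \frac{3C}{2}$; combining this with $d(z,Y)\le C$ and $d(w,Y)\le C$ through the triangle inequality gives $d(p,Y)\le \frac{3C}{2}+C\le 3C$, which is exactly what is required.

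The only mildly delicate point is that the endpoints $z,w$ themselves lie in $N_C(Y)$, so the contracting property cannot be applied to the segment $[z,w]_\alpha$ directly; this is the reason for shrinking to the subsegment $\beta_\epsilon$ with a margin $\epsilon$ and then passing to the limit. Everything else is routine triangle-inequality bookkeeping, and in fact the argument even shows $\alpha\subseteq N_{5C/2}(Y)$.
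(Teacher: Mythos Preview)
Your proof is correct and follows essentially the same route as the paper's: define the exit/entry points $z,w$ and use the contracting property to bound $d(z,w)\le 3C$. Two minor differences are worth noting. First, the paper argues by contradiction, assuming $d(p,Y)\ge 3C$ and then deriving $d(z,w)\ge |d(z,Y)-d(p,Y)|+|d(p,Y)-d(w,Y)|\ge 4C$; your direct argument is just as short and, as you observe, actually yields $\alpha\subseteq N_{5C/2}(Y)$. Second, your $\epsilon$-shrinking step is unnecessary: since $z$ is the \emph{last} point of $[x,p]_\alpha$ in $N_C(Y)$ and $z\ne p$, continuity of $d(\cdot,Y)$ forces $d(z,Y)=C$ exactly (and similarly $d(w,Y)=C$), so $d([z,w]_\alpha,Y)=C\ge C$ and \autoref{DEF: Contracting Subset} applies to $[z,w]_\alpha$ directly---this is precisely what the paper does.
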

\begin{proof}
Without loss of generality, we may suppose $C>0$. Suppose by contrary that there exists a point $p\in \alpha$ such that $d(p,Y)\geq 3C$. Let $z\in [x,p]_{\alpha}$ be the exit point of $N_C(Y)$, and let $w\in [p,y]_{\alpha}$ be the entry point of $N_C(Y)$. Then $d(z,Y)=d(w,Y)=C$. choose arbitrary points $a\in \pi_Y(z)$ and $b\in \pi_Y(w)$. 
Notice that $d([z,w]_{\alpha},Y)\geq C$, hence $d(a,b)\leq C$. Thus $d(z,w)\leq d(z,a)+d(a,b)+d(b,w)\leq 3C$. 
On the other hand $d(z,w)=d(z,p)+d(p,w)\geq \abs{d(z,Y)-d(p,Y)}+\abs{d(p,Y)-d(w,Y)}\geq 4C$, which is a contradiction.
\end{proof}

\begin{definition}[Bounded Intersection]
Two subsets $Y,Z\subseteq X$ have $\mathcal{R}$-\textit{bounded intersection} for a function $\mathcal{R}:[0,+\infty)\to[0,+\infty)$ if $\diam(N_r(Y)\cap N_r(Z))\leq \mathcal{R}(r)$, for all $ r\geq 0$.
\end{definition}
\begin{definition}[Bounded Projection]
Two subsets $Y,Z\subseteq X$ have $B$-\textit{bounded projection} for some $B>0$ if either $\diam(\mathrm{\pi}_{Z}(Y))\leq B$, or $\diam(\mathrm{\pi}_{Y}(Z))\leq B$.
\end{definition}
\begin{lemma}\label{PROP: Bounded Intersection Equals Bounded Projection}
Assume that $Y,Y'$ are two $C$-contracting subsets in a geodesic metric space $X$. Then $Y$ and $Y'$ have $\mathcal{R}$-bounded intersection for some $\mathcal{R}:[0,+\infty)\to[0,+\infty)$ if and only if they have $B$-bounded projection for some $B>0$. Here $\mathcal R$ and $B$ depend on the other in a quantitative way: \begin{align*}
&\diam(\mathrm{\pi}_{Y'}(Y))\leq B\Rightarrow \diam(N_r(Y)\cap N_r(Y'))\leq B+8C+4r;\\
&\diam(N_r(Y)\cap N_r(Y'))\leq \mathcal{R}(r)\Rightarrow\mathrm{max}\left\{\diam(\mathrm{\pi}_{Y'}(Y)),\diam(\mathrm{\pi}_{Y}(Y'))\right\}\leq6C+\mathcal{R}(3C).
\end{align*} 
\end{lemma}
%In particular, the relations are given by:

\begin{proof}
(1). Bounded Projection $\Rightarrow$ Bounded Intersection

Suppose $\diam(\mathrm{\pi}_{Y'}(Y))\leq B$. For any $ x_0,y_0\in N_r(Y)\cap N_r(Y')$, we can find $x,y\in Y$ such that $d(x_0,x)\leq r$, $d(y_0,y)\leq r$. 
Choose $x'\in \pi_{Y'}(x_0)$, $y'\in \pi_{Y'}(y_0)$, then $d(x_0,x')\leq r$, $d(y_0,y')\leq r$. 
Choose $x''\in \pi_{Y'}(x)$, $y''\in \pi_{Y'}(y)$, then $d(x'',y'')\leq \diam(\mathrm{\pi}_{Y'}(Y))\leq B$. According to \autoref{LEM: 1 Lipschitz}, $d(x',x'')\leq d(x_0,x)+4C\leq r+4C$, and similarly $d(y',y'')\leq  r+4C$.
Hence $d(x_0,y_0)\leq d(x_0,x')+d(x',x'')+d(x'',y'')+d(y'',y')+d(y',y_0)\leq B+8C+4r$.

(2). Bounded Intersection $\Rightarrow$ Bounded Projection

Suppose $\diam(N_r(Y)\cap N_r(Y'))\leq \mathcal{R}(r)$. For any $ x,y \in Y$, consider the geodesic segment $\alpha=[x,y]$. According to \autoref{LEM: Quasi Convex}, $\alpha\subseteq N_{3C}(Y)$. We will show that $\d{Y'}(x,y)\le \mathcal{R}(3C)+6C$.

If $d(\alpha,Y')\geq C$, then $\d{Y'}(x,y)\leq \diam(\pi_{Y'}(\alpha))\leq C$. Therefore, it suffices to consider the case when $\alpha\cap N_C(Y')\neq \varnothing$. 
Let $z,w\in \alpha$ be the entry and exit point of $\alpha\cap N_C(Y')$. Then $z,w\in N_{3C}(Y)\cap N_{3C}(Y')$, hence $d(z,w)\leq \mathcal{R}(3C)$. According to \autoref{LEM: 1 Lipschitz}, $\d{Y'}(z,w)\leq \mathcal{R}(3C)+4C$.

If $x\neq z$ and $y\neq w$, then $[x,z]\cap N_C(Y')=\varnothing$ and $\d{Y'}(x,z)\le C$ and similarly $\d{Y'}(w,y)\leq C$. Thus, $\d{Y'}(x,y)\leq \d{Y'}(x,z)+\d{Y'}(z,w)+\d{Y'}(x,y)\leq \mathcal{R}(3C)+6C$. Similar results also hold when $x=z$ or $y=w$.

Hence $\diam(\pi_{Y'}(Y))\leq \mathcal{R}(3C)+6C$, and the same result holds for $\diam(\pi_{Y}(Y'))$.
\end{proof}
\begin{definition}[Contracting system]
Let $(X,d)$ be a geodesic metric space, and $\mathbb{X}=\left\{X_i\mid i\in I\right\}$ be a collection of subsets in $X$.
\begin{enumerate}[(1)]
\item $\mathbb{X}$ is called a \textit{contracting system} if each $X_i$ is a contracting subset.
\item $\mathbb{X}$ has \textit{contraction constant} $C$ if every $X_i$ has contraction constant $C$.
\item $\mathbb{X}$ has $\mathcal{R}$-\textit{bounded intersection} if every pair $X_i$ and $X_j$ have $\mathcal{R}$-bounded intersection ($i\neq j\in I$).
\end{enumerate}
\end{definition}
\subsection{Admissible path}
\begin{definition}[Admissible path]\label{DEF: Admissible Path}
Let $(X,d)$ be a geodesic metric space and $\mathbb{X}$ be a contracting system in $X$. Given $D,\tau\geq 0$ and a function $\mathcal{R}:[0,+\infty)\to[0,+\infty)$ called the \textit{bounded intersection gauge}, a path $\gamma$ is called a $(D,\tau)$-\textit{admissible path}  with respect to  $\mathbb{X}$, if the path $\gamma$ consists of a (finite, infinite or bi-infinite) sequence of consecutive geodesic segments $\gamma=\cdots q_ip_iq_{i+1}p_{i+1}\cdots$, satisfying the following ``Long Local'' and ``Bounded Projection'' properties:
\begin{itemize}
\item[\textbf{(LL1)}]\label{LL1} For each $p_i$ there exists $X_i\in\mathbb{X}$ such that the two endpoints of $p_i$ lies in $X_i$, and $\mathrm{Len}(p_i)>D$ unless $p_i$ is the first or last geodesic segment in $\gamma$.
\item[\textbf{(BP)}]\label{BP} For each $X_i$ we have $\d{X_i}((p_i)_+,(p_{i+1})_-)\leq \tau$, and $\d{X_i}((p_{i-1})_+,(p_{i})_-)\leq \tau$. Here $(p_{i+1})_-=\gamma_+$ if $p_{i+1}$ does not exist, and $(p_{i-1})_+=\gamma_-$ if $p_{i-1}$ does not exist.
\item[\textbf{(LL2)}]\label{LL2} Either $X_i\neq X_{i+1}$ and $X_i$ and $X_{i+1}$ have $\mathcal{R}$-bounded intersection, or $d((p_i)_+,(p_{i+1})_-)>D$.
\end{itemize}
\end{definition}
\begin{definition}[Special Admissible Path]\label{DEF: Special Admissible Path}
Under the same assumption as \autoref{DEF: Admissible Path}, given $D,\tau\geq 0$, $\gamma$ is called a \textit{special $(D,\tau)$-admissible path}  with respect to  $\mathbb{X}$, if it satisfies \hyperref[LL1]{\textbf{(LL1)}}, \hyperref[BP]{\textbf{(BP)}} and \hyperref[LL2']{\textbf{(LL2')}}:
\begin{itemize}
\item[\textbf{(LL2')}]\label{LL2'} $d((p_i)_+,(p_{i+1})_-)>D$.
\end{itemize}
\end{definition}
%\begin{remark}
%A special $(D,\tau)$-admissible path  with respect to  $\mathbb{X}$ is always a $(D,\tau)$-admissible path  with respect to  $\mathbb{X}$ with $\mathcal{R}\equiv 0$.
%\end{remark}
\begin{notation}
For a $(D,\tau)$-admissible path $\gamma=\cdots q_ip_iq_{i+1}p_{i+1}\cdots$  with respect to  $\mathbb{X}$ defined in \autoref{DEF: Admissible Path} (or \autoref{DEF: Special Admissible Path}), %each $p_i$ is associated to $X_i\in\mathbb{X}$ such that the two endpoints of $p_i$ lies in $X_i$. 
let $X_i\in \mathbb{X}$ be the contracting subset associated to $p_i$ according to \hyperref[LL1]{\textbf{(LL1)}}. 
\begin{enumerate}[(1)]
    \item The endpoints of each geodesic segment: $(p_i)_+,(p_i)_-,(q_i)_-,(q_i)_+$ are called \textit{vertices} of $\gamma$.
    \item The collection of all $X_i$'s is denoted by $\mathbb{X}(\gamma)$, which forms a contracting system in $X$.
    \item $\gamma$ has $\mathrm{Len}_q$-constant $L$, if for each $i$, $\mathrm{Len}(q_i)\leq L$; and $\gamma$ has $\mathrm{Len}_p$-constant $K$, if for each $i$, $\mathrm{Len}(p_i)\leq K$.
\end{enumerate}
\end{notation}
In the following definitions, a sequence of points $x_i$
in a path $\alpha$ is called linearly ordered if $x_{i+1} \in  [x_i, \alpha_+]_\alpha $ for each $i$.
\begin{definition}[Fellow Travel]\label{DEF: Fellow Travel}
Let $\gamma=p_0q_1p_1\cdots q_np_n$ be a $(D,\tau)$-admissible path, and $\alpha$ be a path such that $\alpha_-=\gamma_-$, $\alpha_+=\gamma_+$. Given $\epsilon>0$, the path $\alpha$ $\epsilon$-\textit{fellow travels} $\gamma$ if there exists a sequence of linearly ordered points $z_i,w_i$ $(0\leq i\leq n)$ on $\alpha$ such that $d(z_i,(p_i)_-)\leq \epsilon$, $d(w_i,(p_i)_+)\leq \epsilon$.
\end{definition}
\begin{proposition}[{\cite[Proposition 3.3 and Corollary 3.9]{Yang14}} ]\label{PROP: Fellow Travel}
Suppose $(X,d)$ is a geodesic metric space and $\mathbb{X}$ is a contracting system of $X$ with some contraction constant $C$. For any $\tau>0$ and $\mathcal{R}:[0,+\infty)\to[0,+\infty)$, there are constants $D=D(\tau,C,\mathcal{R})>0,\epsilon=\epsilon(\tau,C,\mathcal{R})>0, B=B(\tau,C,\mathcal{R})>0$, such that the following holds.

Let $\gamma$ be any $(D,\tau)$-admissible path  with respect to  $\mathbb{X}$ and the bounded intersection gauge $\mathcal{R}:[0,+\infty)\to[0,+\infty)$, and $\gamma$ consists of a finite sequence of geodesic segments $\gamma=\cdots q_ip_iq_{i+1}p_{i+1}\cdots$ with each $p_i$ having endpoints in $X_i\in\mathbb{X}$.
\begin{enumerate}[(1)]
\item\label{2.15.1} $\diam(\pi_{X_i}(\left [\gamma_-,(p_i)_-\right]_\gamma))\leq B$, $\diam(\pi_{X_i}(\left [(p_i)_+,\gamma_+\right]_\gamma))\leq B$.
\item\label{2.15.2} Let $\alpha$ be a geodesic segment between $\gamma_-$ and $\gamma_+$. Then $\alpha$ $\epsilon$-fellow travels $\gamma$.
\end{enumerate}

In addition, if $\gamma$ is a special $(D,\tau)$-admissible path  with respect to  $\mathbb{X}$, then $D=D(\tau,C),\epsilon=\epsilon(\tau,C),B=B(\tau,C)$ are only dependent on $\tau$ and $C$. 
\end{proposition}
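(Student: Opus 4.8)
The plan is to derive both conclusions from a single \emph{uniform bounded projection estimate} and then to read off the fellow-traveling from the defining property of a contracting subset. Concretely, I would first establish the following: there is a constant $B=B(\tau,C,\mathcal R)$, depending only on $\tau$, $C$ and $\mathcal R$ and \emph{not} on $D$ or on the particular path $\gamma$, such that for every $(D,\tau)$-admissible path $\gamma=\cdots q_ip_iq_{i+1}p_{i+1}\cdots$ and every index $i$,
\[
\diam\bigl(\pi_{X_i}([\gamma_-,(p_i)_-]_\gamma)\bigr)\le B,\qquad \diam\bigl(\pi_{X_i}([(p_i)_+,\gamma_+]_\gamma)\bigr)\le B,
\]
provided $D$ is eventually chosen large compared with $B$ and $C$. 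This is item (1). Item (2) then follows in a few lines: since $(p_i)_-\in X_i$ is its own projection, $\pi_{X_i}(\gamma_-)$ lies within $B$ of $(p_i)_-$ and, symmetrically, $\pi_{X_i}(\gamma_+)$ lies within $B$ of $(p_i)_+$; choosing $D$ so large that $\mathrm{Len}(p_i)>D$ forces $d((p_i)_-,(p_i)_+)$ to exceed the threshold in \autoref{LEM: Thin Quadrilateral}, that lemma applied to the geodesic $\alpha=[\gamma_-,\gamma_+]$ shows $\alpha$ enters $N_C(X_i)$ and that its entry and exit points lie within a uniform distance $\epsilon=\epsilon(\tau,C,\mathcal R)$ of $(p_i)_-$ and $(p_i)_+$. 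Taking $z_i,w_i$ to be those entry/exit points gives the fellow-traveling constants; their linear order along $\alpha$ is obtained by noting that \textbf{(LL2)} (together with $\mathcal R$-bounded intersection, via \autoref{PROP: Bounded Intersection Equals Bounded Projection}) and the lower bound $\mathrm{Len}(p_i)>D$ make consecutive pairs $(z_i,w_i)$, $(z_{i+1},w_{i+1})$ disjoint on $\alpha$ once $D$ is large, after which one simply orders them and merges near-coincident ones.

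For the bounded projection estimate I would argue by induction on the number of geodesic pieces of $[\gamma_-,(p_i)_-]_\gamma=p_0q_1p_1\cdots q_i$, i.e.\ on the index $i$. Write $\beta=[\gamma_-,(p_{i-1})_-]_\gamma$, so the head decomposes as $\beta\cdot p_{i-1}\cdot q_i$ with $(q_i)_+=(p_i)_-\in X_i$. By \textbf{(BP)} we have $\d{X_i}((p_{i-1})_+,(p_i)_-)\le\tau$, so it suffices to bound the projection of $\beta\cdot p_{i-1}$ to $X_i$. By \autoref{LEM: Quasi Convex}, $p_{i-1}\subseteq N_{3C}(X_{i-1})$; by \textbf{(LL2)} the subsets $X_{i-1}$ and $X_i$ either coincide or are distinct with $\mathcal R$-bounded intersection, so \autoref{PROP: Bounded Intersection Equals Bounded Projection} together with \autoref{LEM: 1 Lipschitz} bounds $\diam(\pi_{X_i}(p_{i-1}))$ by a constant depending only on $C$ and $\mathcal R(3C)$ (in the coincident case $p_{i-1}$ is a geodesic with endpoints in $X_i$, whose projection is controlled directly). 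For $\pi_{X_i}(\beta)$ one invokes the inductive hypothesis, which pins $\pi_{X_{i-1}}(\beta)$ near $(p_{i-1})_-$, and then transfers this across $X_{i-1}$ to $X_i$: because $X_{i-1}$ and $X_i$ have mutually bounded projection and everything on $\beta$ is kept far from $X_i$ — this is where the largeness of $D$ enters, \emph{qualitatively}, via \textbf{(LL2)}/\textbf{(LL2')} guaranteeing disjointness of the relevant $C$-neighborhoods so that \autoref{LEM: Geodesic Along Projection} and \autoref{LEM: Thin Quadrilateral} apply in the regime $d(\cdot,\cdot)\ge C$ — any geodesic from a point of $\beta$ to a point of $X_i$ must pass near $X_{i-1}$ close to $\pi_{X_{i-1}}(\beta)\approx(p_{i-1})_-$, so $\pi_{X_i}(\beta)$ stays near $\pi_{X_i}(X_{i-1})$, which is bounded. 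Adding the three contributions ($\beta$, $p_{i-1}$, and the $\tau$ from \textbf{(BP)}) with triangle inequalities on the projected points yields the uniform bound $B$; the tail estimate is symmetric.

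The main obstacle the argument must be engineered around is the apparent circularity between $D$ and $B$: the fellow-traveling needs $\mathrm{Len}(p_i)>D$ to dominate $B$, so $B$ has to be produced \emph{before} and \emph{independently of} the choice of $D$. This is exactly why the induction above may use the largeness of $D$ only qualitatively — to ensure disjointness of $C$-neighborhoods so that the contracting definition and \autoref{LEM: Thin Quadrilateral} apply — while the numerical bound must come solely from $C$, $\tau$ and the gauge values $\mathcal R(3C)$ through \autoref{PROP: Bounded Intersection Equals Bounded Projection}; only afterwards does one fix $D=D(\tau,C,\mathcal R)$ large. A secondary subtlety is keeping the fellow-traveling points linearly ordered when several $X_i$ coincide or have large $\mathcal R$-bounded intersection, which is precisely what the \textbf{(LL2)} clause is there to control. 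Finally, for a \emph{special} $(D,\tau)$-admissible path one replaces \textbf{(LL2)} by \textbf{(LL2')}: consecutive contracting subsets are then always separated by a genuine gap $d((p_i)_+,(p_{i+1})_-)>D$, so every appeal to $\mathcal R$-bounded intersection is bypassed by a direct distance argument, and the constants $D$, $\epsilon$, $B$ depend only on $\tau$ and $C$.
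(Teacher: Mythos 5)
Note first that the paper does not prove this proposition at all: it is quoted from \cite[Proposition 3.3 and Corollary 3.9]{Yang14}, so your sketch can only be judged as a standalone argument. Its overall architecture (prove the bounded-projection estimate (1) first, with $B$ fixed before $D$, then obtain (2) from the entry/exit-point analysis of $\alpha\cap N_C(X_i)$) is indeed the standard strategy, and your derivation of (2) from (1) is essentially sound, although the lemma you actually need there is the defining contracting property together with the entry/exit argument rather than \autoref{LEM: Thin Quadrilateral}.

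The genuine gap is in the inductive proof of (1), which is the heart of the matter. First, the transfer step ``any geodesic from a point of $\beta$ to a point of $X_i$ must pass near $X_{i-1}$ close to $\pi_{X_{i-1}}(\beta)$'' is asserted, not proved, and it is exactly the nontrivial point: the naive alternative of projecting each geodesic piece of $\beta$ separately gives a bound of order $iC$, growing with the number of pieces, not a uniform $B$. Second, your case analysis only covers the first alternative of \textbf{(LL2)}. In the second alternative ($X_{i-1}\neq X_i$, no $\mathcal R$-bounded intersection assumed, compensated by $d((p_{i-1})_+,(p_i)_-)>D$) there is no bound whatsoever on $\diam(\pi_{X_i}(X_{i-1}))$ --- think of two contracting geodesic lines in a tree overlapping along an arbitrarily long segment --- so both your bound on $\diam(\pi_{X_i}(p_{i-1}))$ via \autoref{PROP: Bounded Intersection Equals Bounded Projection} and the transfer across $X_{i-1}$ collapse. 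This case cannot be dismissed: it is precisely the mechanism one must exploit (a long gap $q_i$ whose projections to both $X_{i-1}$ and $X_i$ are $\tau$-bounded by the two-sided \textbf{(BP)} condition) in order to obtain the ``special'' statement, where no bounded-intersection hypothesis is available at all and the constants must depend only on $(\tau,C)$; your closing remark that this is handled ``by a direct distance argument'' names the missing ingredient without supplying it. Third, the parenthetical claim that in the coincident case $X_{i-1}=X_i$ the projection of $p_{i-1}$ is ``controlled directly'' is false as stated (that projection has diameter comparable to $\mathrm{Len}(p_{i-1})>D$); the case is in fact vacuous, since \textbf{(BP)} and \textbf{(LL2)} are incompatible for $X_{i-1}=X_i$ once $D>\tau$, but that needs to be said rather than argued incorrectly. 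As it stands, the core estimate (1) --- and hence the proposition --- is not established by the sketch.
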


\begin{corollary}\label{COR: Bounded Intersection}
Let $\mathbb{X}$ be a contracting system of $X$ with some contraction constant $C$. For any $\tau>0$ and $\mathcal{\nu}:[0,+\infty)\to [0,+\infty)$, let $D=D(\tau,C,\nu )>0$ be given by \autoref{PROP: Fellow Travel}. Then for any $(D,\tau)$-admissible path $\gamma$  with respect to  $\mathbb{X}$ and the bounded intersection gauge $\nu$, the contracting system $\mathbb{X}(\gamma)$ has $\mathcal{R}$-bounded intersection, with $\mathcal{R}=\mathcal{R}_{\tau,C,\nu}$.

In addition, if $\gamma$ is a special $(D,\tau)$-admissible path  with respect to  $\mathbb{X}$ then $D=D(\tau,C)$ and $\mathcal{R}=\mathcal{R}_{\tau,C}$ are only dependent on $\tau$ and $C$.
\end{corollary}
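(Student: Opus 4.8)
The plan is to reduce the claimed bounded-intersection statement for the system $\mathbb{X}(\gamma)$ to the bounded-projection criterion established in \autoref{PROP: Bounded Intersection Equals Bounded Projection}, and to obtain the required projection bounds as an immediate consequence of part \eqref{2.15.1} of \autoref{PROP: Fellow Travel}. Concretely, fix a $(D,\tau)$-admissible path $\gamma = \cdots q_i p_i q_{i+1} p_{i+1} \cdots$ with $D = D(\tau,C,\nu)$ as in \autoref{PROP: Fellow Travel}, and let $X_i, X_j \in \mathbb{X}(\gamma)$ be the contracting subsets associated to two distinct segments $p_i, p_j$ of $\gamma$, say with $i < j$. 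I want to bound $\diam(\pi_{X_j}(X_i))$ (equivalently, by symmetry of the argument, $\diam(\pi_{X_i}(X_j))$), and then invoke \autoref{PROP: Bounded Intersection Equals Bounded Projection}.

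First I would observe that $X_i$ lies in a uniformly bounded neighborhood of the segment $p_i$ near its endpoints, but more usefully: both endpoints of $p_i$ are vertices of $\gamma$ that lie on the subpath $[\gamma_-, (p_j)_-]_\gamma$ (since $i < j$). By \autoref{PROP: Fellow Travel}\eqref{2.15.1} applied with the index $j$, we have $\diam\big(\pi_{X_j}([\gamma_-, (p_j)_-]_\gamma)\big) \le B$ where $B = B(\tau,C,\nu)$. In particular the projections to $X_j$ of the two endpoints of $p_i$ — call them $u = (p_i)_-$ and $v = (p_i)_+$ — satisfy $\diam(\pi_{X_j}(\{u,v\})) \le B$. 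To pass from the endpoints of $p_i$ to all of $X_i$, I would use that $p_i$ is itself a geodesic with both endpoints on the $C$-contracting set $X_i$, so by \autoref{LEM: Quasi Convex} $p_i \subseteq N_{3C}(X_i)$, and conversely any point of $X_i$ can be compared to a point of $p_i$; combined with \autoref{LEM: 1 Lipschitz} (which says $\pi_{X_j}$ is coarsely $1$-Lipschitz up to additive error $4C$) and the fact that $X_i$ and $p_i$ have finite Hausdorff-type relationship along the relevant scales, one gets $\diam(\pi_{X_j}(X_i)) \le B + (\text{const}\cdot C)$. Actually the cleanest route is: $X_i$ is $C$-contracting, $p_i\subseteq N_{3C}(X_i)$, and the key point is that $\pi_{X_i}(X_j)$ sits near $\{u,v\}$ — I would instead directly bound $\diam(\pi_{X_i}(X_j))$ using \autoref{PROP: Fellow Travel}\eqref{2.15.1} with index $i$ applied to the subpath $[(p_i)_+, \gamma_+]_\gamma$, which contains the whole of $p_j$ and hence (via \autoref{LEM: Quasi Convex} and \autoref{LEM: 1 Lipschitz}) controls $\diam(\pi_{X_i}(X_j))$ by $B + 8C$ or so.

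Having produced a bound of the form $\max\{\diam(\pi_{X_i}(X_j)), \diam(\pi_{X_j}(X_i))\} \le B'$ for a constant $B' = B'(\tau,C,\nu)$, I then apply \autoref{PROP: Bounded Intersection Equals Bounded Projection}: the implication ``bounded projection $\Rightarrow$ bounded intersection'' gives $\diam(N_r(X_i) \cap N_r(X_j)) \le B' + 8C + 4r$ for all $r \ge 0$, so $\mathbb{X}(\gamma)$ has $\mathcal{R}$-bounded intersection with $\mathcal{R}(r) := B' + 8C + 4r$, a function depending only on $\tau, C, \nu$. For the ``special'' case, one simply tracks that \autoref{PROP: Fellow Travel} gives $D$ and $B$ depending only on $\tau$ and $C$ in that regime, so the resulting $\mathcal{R} = \mathcal{R}_{\tau,C}$ loses its dependence on $\nu$ as claimed. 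The main obstacle I anticipate is the bookkeeping in the previous paragraph — namely making rigorous the passage from "projections of the two endpoints of $p_i$ are $B$-close" to "the projection of the whole contracting set $X_i$ has diameter $\lesssim B$", since $X_i$ need not be a bounded neighborhood of $p_i$ globally; the resolution is that only the behavior of $X_i$ relative to the geodesic $p_i$ (via \autoref{LEM: Quasi Convex}, \autoref{LEM: 1 Lipschitz}, and \autoref{LEM: Geodesic Along Projection}) matters, and these lemmas were precisely set up to absorb such errors into the additive constant.
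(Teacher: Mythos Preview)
There is a genuine gap in your proposal, and you have correctly identified where it lies: the passage from ``$\diam(\pi_{X_i}(p_j))\le B$'' to ``$\diam(\pi_{X_i}(X_j))$ is bounded'' cannot be completed with the lemmas you cite. The inclusion $p_j\subseteq N_{3C}(X_j)$ from \autoref{LEM: Quasi Convex} goes the wrong way: it tells you nothing about points of $X_j$ far from $p_j$, and in the intended applications (e.g.\ $X_j=g\langle h\rangle o$) the set $X_j$ is unbounded while $p_j$ is a fixed finite segment. \autoref{LEM: 1 Lipschitz} only lets you move a bounded distance, so it cannot bridge this. The same obstruction blocks your first route (from endpoints of $p_i$ to all of $X_i$).

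The paper's proof avoids this entirely by a small but essential trick: for each point $x\in X_j$ it builds a \emph{new} admissible path $\alpha = p_i q_{i+1}\cdots p_{j-1}q_j[(p_j)_-,x]$, replacing the final $p$-segment by the geodesic $[(p_j)_-,x]$. This is still a $(D,\tau)$-admissible path because condition \textbf{(LL1)} exempts the last segment from the length requirement, and the relevant \textbf{(BP)} conditions are inherited from $\gamma$. Applying \autoref{PROP: Fellow Travel}\,(\ref{2.15.1}) to $\alpha$ with index $i$ then gives $\d{X_i}((p_j)_-,x)\le B$ directly, for \emph{every} $x\in X_j$; hence $\diam(\pi_{X_i}(X_j))\le 2B$, and \autoref{PROP: Bounded Intersection Equals Bounded Projection} finishes as you outlined. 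The missing idea is precisely this freedom to vary the terminal endpoint within $X_j$ while staying in the class of admissible paths.
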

\begin{proof}
Suppose $\gamma=\cdots q_ip_iq_{i+1}p_{i+1}\cdots$, and let $B=B(\tau,C)$ be given by \autoref{PROP: Fellow Travel}. We will show that $\mathbb{X}(\gamma)=\left\{X_i\right\}$ has $2B$-bounded projection.

For any $i<j$ (the case $i>j$ is similar), and for any $x\in X_j$, consider a special $(D,\tau)$-admissible path $\alpha=p_iq_{i+1}p_{i+1}\cdots p_{j-1}q_{j}[(p_j)_-,x]$. According to \autoref{PROP: Fellow Travel} (\ref{2.15.1}), $\d{X_i}([(p_i)_+,\gamma_+])\leq B$. Especially, $\d{X_i}((p_j)_-,x)\leq B$, $\forall x\in X_j$.

Hence $\diam(\mathrm{\pi}_{X_i}(X_j))\leq 2B$, and the result follows according to \autoref{PROP: Bounded Intersection Equals Bounded Projection}.
\end{proof}

\begin{proposition}[{\cite[Proposition 2.9]{Yang19}} ]\label{PROP: Admissible Path Contracting}
Let $\mathbb{X}$ be a contracting system of $X$ with contraction constant $C$ and $\mathcal{R}$-bounded intersection. For any $\tau>0$ there is a constant $D=D(\tau,C,\mathcal{R})>0$, such that any $(D,\tau)$-admissible path $\gamma$  with respect to  $\mathbb{X}$ with $\mathrm{Len}_q$-constant $L$ and $\mathrm{Len}_p$-constant $K$ is $c$-contracting, for some $c=c(\tau,C,\mathcal{R},L,K)>0$.
\end{proposition}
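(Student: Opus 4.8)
The plan is to deduce the contracting property of $\gamma$ almost immediately from the Fellow Travel \autoref{PROP: Fellow Travel}. Given a geodesic $\alpha$ that stays far from $\gamma$ yet whose nearest-point projection to $\gamma$ has large diameter, I will splice a $(D,\tau)$-admissible path joining two projecting points on $\gamma$ and extract a contradiction from the geodesic that is forced to fellow-travel it.

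First I would fix the constants and dispose of a trivial reduction. If $\gamma$ has no long $p$-segment, then $\mathrm{Len}(\gamma)$ is bounded in terms of $K$ and $L$, so $\gamma$ is a bounded set and hence contracting by \autoref{EX: Contracting Subset}; so assume $\gamma$ has at least one long $p$-segment. Let $D_0=D_0(\tau,C,\mathcal R)$ and $\epsilon=\epsilon(\tau,C,\mathcal R)$ be the constants supplied by \autoref{PROP: Fellow Travel} for the gauge $\mathcal R$, and take the constant $D$ in the statement to equal $D_0$ (any larger value works as well). Since $\mathbb X(\gamma)\subseteq\mathbb X$, it still has $\mathcal R$-bounded intersection, and any subpath of $\gamma$ between two endpoints of $p$-segments is again $(D,\tau)$-admissible (its interior $p$-segments are long segments of $\gamma$, and the boundary instances of \hyperref[BP]{\textbf{(BP)}} are trivial because those endpoints lie on the contracting subsets carrying the $p$-segments). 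Hence \autoref{PROP: Fellow Travel} applies to all such paths.

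Now let $\alpha$ be any geodesic with $d(\alpha,\gamma)\geq c$, where $c=c(\tau,C,\mathcal R,K,L)$ is to be fixed. Pick $x,y\in\alpha$ (say with $x$ before $y$) and $a\in\pi_\gamma(x)$, $b\in\pi_\gamma(y)$; it suffices to bound $d(a,b)$. Sliding $a$ and $b$ to the nearest endpoint of a long $p$-segment changes each by at most a constant $\kappa=\kappa(K,L)$; a short case analysis on their positions along $\gamma$ then either yields $d(a,b)\leq 2\kappa+L$ outright — when $a$ and $b$ are trapped between two consecutive long $p$-segments and so separated by a single $q$-segment — or lets us assume $a=(p_k)_-$ and $b=(p_m)_+$ for long $p$-segments occurring in this order along $\gamma$. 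In the latter case the concatenation of geodesics $\sigma:=[x,a]\cdot[a,b]_\gamma\cdot[b,y]$ has the alternating shape $q\,p\,q\cdots q\,p\,q$, and I claim it is a $(D,\tau)$-admissible path with respect to $\mathbb X(\gamma)$: clauses \hyperref[LL1]{\textbf{(LL1)}}, \hyperref[LL2]{\textbf{(LL2)}} and the interior instances of \hyperref[BP]{\textbf{(BP)}} are inherited from $\gamma$, since $p_k,\dots,p_m$ are interior segments of $\sigma$ and long in $\gamma$, while the two instances of \hyperref[BP]{\textbf{(BP)}} at the new outer $q$-segments are vacuous because $(p_k)_-=a\in X_k$ and $(p_m)_+=b\in X_m$ force the corresponding projection diameters to vanish. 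Applying \autoref{PROP: Fellow Travel} to $\sigma$, the subsegment $[x,y]\subseteq\alpha$ $\epsilon$-fellow travels $\sigma$, so some point of $[x,y]$ lies within $\epsilon$ of the vertex $(p_k)_-=a\in\gamma$; hence $d(\alpha,\gamma)\leq\epsilon$. Choosing $c:=\max\{\epsilon+1,\,2\kappa+L\}$ contradicts $d(\alpha,\gamma)\geq c$, so this case cannot occur and we are left with $d(a,b)\leq 2\kappa+L\leq c$. (The degenerate case $x=y$ is handled the same way, with $\sigma$ a loop based at $x$ and $\alpha$ furnishing the constant geodesic at $x$.) Therefore $\diam\bigl(\pi_\gamma(\alpha)\bigr)\leq c$, and $\gamma$ is $c$-contracting.

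The substantive part — and the only place real care is needed — is the middle step: verifying that the spliced path $\sigma$ genuinely satisfies all three clauses of \autoref{DEF: Admissible Path}, and clearing the boundary cases compressed into the ``short case analysis'': when $a$ or $b$ projects near an end of $\gamma$ (where $\gamma$'s first or last segment may be a short $p$-segment), or lands in the interior of some $q_i$ or $p_i$. In each such situation one slides the point to a controlled vertex at an additive cost depending only on $K$ and $L$, keeping $a$ weakly before $b$ along $\gamma$ — or else immediately lands in the trapped subcase. Beyond this combinatorial bookkeeping, all the geometric content is carried by \autoref{PROP: Fellow Travel} (and, for the bounded-intersection inputs it needs, by \autoref{PROP: Bounded Intersection Equals Bounded Projection}); no new metric estimate enters, which is precisely why the contraction constant depends only on $\tau$, $C$, $\mathcal R$, $K$ and $L$.
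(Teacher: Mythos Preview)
The paper does not prove this proposition; it is quoted from \cite[Proposition 2.9]{Yang19} without argument, so there is no in-paper proof to compare against. Evaluating your attempt on its own merits, there is a genuine gap at the step you flagged as routine.

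The error is the claim that the boundary instances of \hyperref[BP]{\textbf{(BP)}} for the spliced path $\sigma=[x,a]\cdot[a,b]_\gamma\cdot[b,y]$ are ``vacuous because $(p_k)_-=a\in X_k$ and $(p_m)_+=b\in X_m$ force the corresponding projection diameters to vanish.'' Unpacking the definition, what you must verify at the left end is $\d{X_k}(\sigma_-,(p_k)_-)=\d{X_k}(x,(p_k)_-)\le\tau$. Knowing $(p_k)_-\in X_k$ pins down $\pi_{X_k}((p_k)_-)=\{(p_k)_-\}$ but says nothing about $\pi_{X_k}(x)$. The contracting set $X_k$ typically extends far beyond the short arc $p_k$ that $\gamma$ traces along it, so $x$ can project onto $X_k$ at a point arbitrarily far from $(p_k)_-$ while still having its nearest point on $\gamma$ near $(p_k)_-$ --- precisely because $\gamma$ bends away from $X_k$ outside $p_k$. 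In that configuration $\d{X_k}(x,(p_k)_-)$ is large, $\sigma$ is not $(D,\tau)$-admissible, and \autoref{PROP: Fellow Travel} does not apply.

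The strategy can be salvaged, but not for free. One repair is to reroute: replace the first $p$-segment of $\sigma$ by $[z_k,(p_k)_+]$ with $z_k\in\pi_{X_k}(x)$ (and symmetrically at the other end). Then the outer \hyperref[BP]{\textbf{(BP)}} condition becomes $\d{X_k}(x,z_k)=\diam(\pi_{X_k}(x))\le 4C$ by \autoref{LEM: 1 Lipschitz}, so one runs \autoref{PROP: Fellow Travel} with the enlarged parameter $\tau'=\max\{\tau,4C\}$ (choosing $D=D(\tau',C,\mathcal R)$ from the outset). For $k<m$ the fellow-travel conclusion now places a point of $[x,y]$ within $\epsilon'$ of the interior vertex $(p_k)_+\in\gamma$, giving the desired contradiction; the case $k=m$ is disposed of directly since then $d(a,b)\le K$. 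This is exactly the ``no new metric estimate enters'' step you declared automatic, and it is not.
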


\begin{convention}[Path Label Convention]
Suppose $G$ acts by isometry on a geodesic metric space $(X,d)$ with a basepoint $o\in X$. A path labelled by a sequence of elements $(\cdots,g_{-1},g_0,g_1,g_2,g_3,\cdots)$ in $G$ denotes a path $\gamma$ consisting of a sequence of geodesic segments $\gamma=\cdots\cup[g_0^{-1}g_{-1}^{-1}o,g_0^{-1}o]\cup[g_0^{-1}o,o]\cup[o,g_1o]\cup [g_1o,g_1g_2o]\cup\cdots\cup [g_1\cdots g_no,g_1\cdots g_{n+1}o]\cup\cdots$. The endpoints of these geodesic segments are called vertices.
\end{convention}

\subsection{Group actions with contracting property}
\text{ }

In this subsection, we assume that a group $G$ acts by isometry on a geodesic metric space $(X,d)$ with a basepoint $o$.

\begin{definition}[Contracting Element]\label{DEF: Contracting Element}
An element $h\in G$ is called a \textit{contracting element} if $\left\langle h\right\rangle\cdot o$ is a contracting subset in $X$ and the map $\mathbb{Z}\to X$, $n\mapsto h^no$ is a quasi-isometric embedding.
\end{definition}

\begin{lemma}\label{LEM: Fellow Travel Property}
For any contracting element $h\in G$, there exists $\epsilon>0$ such that the following property holds: for any integers $ l<m<n$ and any geodesic segment $\alpha=[h^lo,h^no]$, $d(h^mo,\alpha)\leq \epsilon$.
\end{lemma}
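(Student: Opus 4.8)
The statement is a fellow-traveling property: the midpoints $h^m o$ of the orbit segment $[h^l o, h^n o]$ stay uniformly close to any geodesic connecting the endpoints. The plan is to reduce to the contracting property of the quasi-geodesic $\langle h\rangle \cdot o$. By \autoref{DEF: Contracting Element}, $Y := \langle h\rangle \cdot o$ is $C$-contracting for some $C \geq 0$, and $n \mapsto h^n o$ is a $(\lambda,\mu)$-quasi-isometric embedding for some $\lambda \geq 1$, $\mu \geq 0$. First I would fix such constants and set $\epsilon$ to be a function of $C$, $\lambda$, $\mu$ alone (independent of $l, m, n$), to be determined below.

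The key step is to control the projection of the geodesic $\alpha = [h^l o, h^n o]$ onto $Y$. Since both endpoints of $\alpha$ lie on $Y$, \autoref{LEM: Quasi Convex} gives $\alpha \subseteq N_{3C}(Y)$, so every point of $\alpha$ — in particular a point realizing the closest-point projection of $h^m o$ — is within $3C$ of some orbit point $h^k o$. Alternatively, and perhaps more directly, I would argue via \autoref{LEM: Thin Quadrilateral} or \autoref{LEM: Geodesic Along Projection}: applying the contracting estimates to the pair $h^l o$, $h^n o$ shows that $\alpha$ must pass $4C$-close (or so) to the projections $\pi_Y(h^l o) \ni h^l o$ and $\pi_Y(h^n o) \ni h^n o$, and then that it tracks $Y$ between them. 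The upshot is that there is a point $p \in \alpha$ and an integer $k$ with $l \leq k \leq n$ (after using the quasi-isometric embedding to see the projection is monotone along the orbit) such that $d(p, h^k o) \leq 3C$.

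Now I would pin down the midpoint $h^m o$ specifically. Using that $n \mapsto h^n o$ is a quasi-isometric embedding, the "distance along the orbit" and the distance in $X$ are comparable, so for the intermediate index $m$ the point $h^m o$ cannot be far — in the $X$-metric — from the portion of $\alpha$ lying $3C$-close to $Y$; more precisely, $d(h^m o, \alpha) \leq d(h^m o, h^k o) + 3C$ for the orbit point $h^k o$ whose projection to $\alpha$ is nearest to $h^m o$, and one bounds $d(h^m o, h^k o)$ by combining the quasi-isometry constants with the fact that both $h^m$ and $h^k$ sit between $h^l$ and $h^n$ along the (quasi-geodesic) orbit. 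Assembling these gives $d(h^m o, \alpha) \leq \epsilon$ with $\epsilon = \epsilon(C, \lambda, \mu)$.

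\textbf{Main obstacle.} The delicate point is the monotonicity / ordering: one must ensure that the closest-point projection of the orbit $\{h^j o : l \leq j \leq n\}$ onto the geodesic $\alpha$ respects the linear order on $j$, so that the intermediate index $m$ really does land between the (near-)projections of the endpoints rather than, say, near one end. This is exactly where the quasi-isometric-embedding hypothesis is essential (a merely contracting subset need not be a quasi-geodesic), and it is the step I would spend the most care on — likely by a short argument that if the projection order were reversed on some triple, the contracting bound would force two far-apart orbit points to have nearly equal projections, contradicting the quasi-geodesic lower bound $d(h^i o, h^j o) \geq |i-j|/\lambda - \mu$. Everything else is bookkeeping with the lemmas already proved.
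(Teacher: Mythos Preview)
Your setup is exactly right: fix the contracting constant $C$ and quasi-isometry constants $(\lambda,c)$, use \autoref{LEM: Quasi Convex} to get $\alpha\subseteq N_{3C}(\langle h\rangle o)$, and then try to locate $h^mo$ near $\alpha$. The gap is in the step you flag as the main obstacle. Your proposed fix is to prove (coarse) monotonicity of the nearest-orbit-index along $\alpha$ and then argue by contradiction with the quasi-geodesic lower bound; this can be made to work but is heavier than needed, and as written the sentence ``one bounds $d(h^m o, h^k o)$ by \ldots the fact that both $h^m$ and $h^k$ sit between $h^l$ and $h^n$'' is not yet a bound at all.

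The paper bypasses monotonicity with a short intermediate-value argument. For each $x\in\alpha$ set $I_x=\{k\in\mathbb Z:\ d(x,h^ko)\le 3C\}$; quasi-convexity makes every $I_x$ nonempty, and $l\in I_{h^lo}$, $n\in I_{h^no}$. Moving along $\alpha$ in steps of length $\le 1$, there must be adjacent points $x,y$ with $d(x,y)\le 1$ and indices $r\in I_x$, $s\in I_y$ satisfying $r\le m\le s$. Then $d(h^ro,h^so)\le 6C+1$, so by the quasi-isometric embedding $|s-r|\le \lambda(6C+1+c)$, hence $|m-r|$ is bounded by the same quantity, and $d(h^mo,\alpha)\le d(h^mo,h^ro)+d(h^ro,x)\le \lambda|m-r|+c+3C$ gives the explicit $\epsilon$. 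This replaces your monotonicity step with a single pigeonhole/IVT observation and avoids any discussion of projection order.
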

\begin{proof}
    By \autoref{DEF: Contracting Element}, $\left\langle h\right\rangle o$ is $C$-contracting for some $C\ge 0$. It follows from \autoref{LEM: Quasi Convex} that $\alpha=[h^lo,h^no]\subseteq N_{3C}(\left\langle h\right\rangle o)$. 
    For each point $x\in \alpha$, let $I_x=\left\{k\in \mathbb{Z}\mid d(x,h^ko)\leq 3C\right\}$. Then $I_x\neq \varnothing$. In addition, $l\in I_{h^lo}$ and $n\in I_{h^no}$. 

    By intermediate value principle, there exist two points $x,y\in \alpha$ with $d(x,y)\le 1$, such that there exist two integers $r\le m\le s$ so that $r\in I_x$ and $s\in I_y$. Hence, $d(h^ro,h^so)\le d(h^ro,x)+d(x,y)+d(y,h^so)\leq 6C+1$.

    \autoref{DEF: Contracting Element} also provides that the map $\mathbb{Z}\to X$, $n\mapsto h^no$ is a $(\lambda,c)$-quasi-isometric embedding. That is, $\frac{1}{\lambda}\abs{j-i}-c\leq d(h^io,h^jo)\leq \lambda \abs{j-i}+c$ for any $i,j\in \mathbb Z$.

    Therefore, 
    \begin{align*}
        d(h^mo,\alpha)& \le d(h^mo,x)\le d(h^mo,h^ro)+d(h^ro,x)\le \lambda(m-r)+c+3C\\
    &\le \lambda(s-r)+c+3C\le \lambda^2(d(h^ro,h^so))+(\lambda^2+1)c+3C\le \lambda^2(6C+c+1)+c+3C.
    \end{align*}

    By setting $\epsilon:=\lambda^2(6C+c+1)+c+3C$, we complete the proof.
\end{proof}

\begin{definition}[weakly independent]
Suppose $g,h\in G$ are two contracting elements. $g$ and $h$ are called \textit{weakly independent} if $\left\langle g\right\rangle\cdot o$ and $\left\langle h\right\rangle\cdot o$ have $\mathcal{R}$-bounded intersection for some $\mathcal{R}:[0,+\infty)\to[0,+\infty)$.
\end{definition}
\begin{remark}
The definition of ``weak-independence'' is a weaker condition compared with the term ``independence'' in \cite{Yang19,GY22}, but if the action is proper (always in the case there), then these two notions of independence actually coincide for a pair of non-conjugate elements. We caution reader that in most literature, weakly-independence is just called independence. We made the above difference, in order to be distinguished  with our earlier use in \cite{Yang19,GY22}.  
\end{remark}

%\begin{lemma}\label{LEM: Conjugation Invariant}
As a straightforward application of \autoref{LEM: Finite Hausdorff Distance}, one can generate a new contracting element $h'$ from a contracting element $h$ by one of the following operations:
\begin{enumerate}[(1)]
\item $h'^n=h$, for some $n\in\mathbb{Z}$;
\item $h'=h^m$, for some $m\in\mathbb{Z}$;
\item $h'=khk^{-1}$, for some $k\in G$.
\end{enumerate}

In addition, if $h$ and $g$ are two weakly independent contracting elements, then: 
\begin{enumerate}[(1)]
\item If $h'^n=h$ for some $n\in\mathbb{Z}$, or $h'=h^m$ for some $m\in\mathbb{Z}$, then $h'$ and $g$ are weakly independent.
%\item If $h'=h^m$ for some $m\in\mathbb{Z}$, then $h'$ and $g$ are weakly independent.
\item $khk^{-1}$ and $kgk^{-1}$ are weakly independent, for any $ k\in G$.
\end{enumerate}
%\end{lemma}
%\begin{proof}

%\end{proof}

%
\begin{lemma}\label{LEM: Non Elementary}
Suppose $g,h\in G$ are two weakly independent contracting elements. Then there exists $N>0$ such that the following holds.
\begin{enumerate}
    \item\label{_NE1} For any $n,m\ge N$, $g^nh^m$ is a contracting element.
    \item\label{_NE2} For any $n,m\ge N$, $\left\{(h^mg^{n})^kh(h^mg^{n})^{-k}\mid {k\in\mathbb{Z}}\right\}$ is a collection of pairwise weakly independent contracting elements.
\end{enumerate}
\end{lemma}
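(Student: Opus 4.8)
The plan is to realise $g^nh^m$ (part (1)) and each conjugate $(h^mg^n)^kh(h^mg^n)^{-k}$ (part (2)) as a subset lying at finite Hausdorff distance from a suitable \emph{periodic admissible path}, and then to feed this into \autoref{PROP: Admissible Path Contracting}, \autoref{COR: Bounded Intersection} and \autoref{LEM: Finite Hausdorff Distance}. I fix the following data once and for all. By weak-independence and \autoref{LEM: Finite Hausdorff Distance}, the orbits $Y_g:=\langle g\rangle o$ and $Y_h:=\langle h\rangle o$ are $C$-contracting for a common constant $C$ and have $\mathcal R$-bounded intersection for some gauge $\mathcal R$; both are unbounded since $g,h$ are contracting. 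Put $\tau:=4C$ and take $D$ large enough that \autoref{PROP: Fellow Travel} applies (with fellow-traveling constant $\epsilon=\epsilon(\tau,C,\mathcal R)$, and moreover $D>2\epsilon$), that \autoref{COR: Bounded Intersection} applies with output gauge $\mathcal R_1$ depending only on $\tau,C,\mathcal R$, and that \autoref{PROP: Admissible Path Contracting} applies to systems with contraction constant $C$ and $\mathcal R_1$-bounded intersection. Since $n\mapsto g^no$ and $m\mapsto h^mo$ are quasi-isometric embeddings, I choose $N$ with $d(o,g^no),d(o,h^mo)>D$ whenever $n,m\ge N$. Crucially $D$, hence $N$, does not depend on $n,m$, although the contraction constants produced below are allowed to.

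For (1), fix $n,m\ge N$, put $f:=g^nh^m$, and consider the bi-infinite path $\gamma$ labelled by $(\dots,g^n,h^m,g^n,h^m,\dots)$; its consecutive geodesic segments have endpoints on the $G$-translates $\dots,Y_g,\,g^nY_h,\,fY_g,\,fg^nY_h,\,f^2Y_g,\dots$. I first check that $\gamma$ is a $(D,\tau)$-admissible path w.r.t.\ the system $\mathbb X$ of these translates with bounded-intersection gauge $\mathcal R$. Any two consecutive host subsets are the image of the \emph{single} pair $(Y_g,Y_h)$ under an isometry from $G$ (for instance $g^{-n}$ carries $(Y_g,g^nY_h)$ to $(Y_g,Y_h)$, and $h^{-m}g^{-n}$ carries $(g^nY_h,fY_g)$ to $(Y_h,Y_g)$, using $g^nY_g=Y_g$ and $h^{-m}Y_h=Y_h$), hence are $C$-contracting, have $\mathcal R$-bounded intersection, and are distinct (equality would force $Y_g=Y_h$, impossible for weakly-independent unbounded orbits); this gives (LL2) via its first alternative. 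Property (LL1) holds since each $\mathrm{Len}(p_i)\in\{d(o,g^no),d(o,h^mo)\}>D$. The $q$-segments are degenerate, so $(p_i)_+=(p_{i+1})_-$ lies on the relevant host subset and \autoref{LEM: 1 Lipschitz} gives $\d{X_i}((p_i)_+,(p_{i+1})_-)\le 4C=\tau$, which is (BP). Now \autoref{COR: Bounded Intersection} upgrades $\mathbb X=\mathbb X(\gamma)$ to an $\mathcal R_1$-bounded-intersection system, so \autoref{PROP: Admissible Path Contracting} (with $\mathrm{Len}_q$-constant $0$ and $\mathrm{Len}_p$-constant $\max\{d(o,g^no),d(o,h^mo)\}$) shows $\gamma$ is $c$-contracting for some finite $c$. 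Moreover $\gamma$ is a quasi-geodesic: a geodesic between the endpoints of any finite subpath $\epsilon$-fellow-travels it by \autoref{PROP: Fellow Travel}, and since each $\mathrm{Len}(p_i)>D>2\epsilon$ while $\mathrm{Len}_p$ is bounded, the length of that subpath is linearly controlled by the distance of its endpoints. Finally $\langle f\rangle o=\{f^ko\}$ is a subset of the vertex set of $\gamma$ with $d_H(\langle f\rangle o,\gamma)<\infty$, hence contracting by \autoref{LEM: Finite Hausdorff Distance}, and its points, being evenly spaced along the quasi-geodesic $\gamma$, make $k\mapsto f^ko$ a quasi-isometric embedding. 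Thus $f=g^nh^m$ is a contracting element.

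For (2), after enlarging $N$ if needed, part (1) applied with the roles of $g,h$ interchanged shows $f:=h^mg^n$ is contracting for $n,m\ge N$; let $\gamma_f$ be the periodic admissible path labelled $(\dots,h^m,g^n,\dots)$, along which the ``$\langle h\rangle$-type'' host subsets are exactly the translates $f^lY_h$, $l\in\mathbb Z$. Each $f^khf^{-k}$ is a conjugate of the contracting element $h$, hence contracting, and $\langle f^khf^{-k}\rangle o=f^k\langle h\rangle f^{-k}o$ lies within Hausdorff distance $d(o,f^ko)<\infty$ of $f^kY_h$. By \autoref{COR: Bounded Intersection}, the host subsets along $\gamma_f$ are pairwise distinct with $\mathcal R_1$-bounded intersection; in particular $f^kY_h$ and $f^{k'}Y_h$ have $\mathcal R_1$-bounded intersection for $k\ne k'$. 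Since $\mathcal R$-bounded intersection is inherited under a finite Hausdorff perturbation of either set (metric neighbourhoods nest), $\langle f^khf^{-k}\rangle o$ and $\langle f^{k'}hf^{-k'}\rangle o$ have bounded intersection for $k\ne k'$. Therefore $\{(h^mg^n)^kh(h^mg^n)^{-k}\mid k\in\mathbb Z\}$ is a collection of pairwise weakly-independent contracting elements.

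The step I expect to be the real work is the verification that the periodic paths are $(D,\tau)$-admissible \emph{with all constants independent of $n,m$}: one must track precisely which $G$-translates of the single weakly-independent pair $(Y_g,Y_h)$ occur as consecutive host subsets, so that only the weak-independence hypothesis is used, and one must thread the thresholds of \autoref{PROP: Fellow Travel}, \autoref{COR: Bounded Intersection} and \autoref{PROP: Admissible Path Contracting} consistently. By contrast, the contraction constants of $g^nh^m$ and of the conjugates are permitted to depend on $n,m$, and this is exactly what lets the admissible-path toolbox apply without any uniform control over segment lengths; the remaining ingredients — quasi-geodesicity of bounded-length admissible paths, passage to a subset at finite Hausdorff distance, stability of bounded intersection under perturbation — are routine.
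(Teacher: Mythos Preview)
Your proof is correct, but it takes a different route from the paper's. The paper views the bi-infinite word $(\dots,g^n,h^m,\dots)$ as a \emph{special} $(D,\tau)$-admissible path in which only the $h^m$-pieces are $p$-segments (hosted by translates of $\langle h\rangle o$) and the $g^n$-pieces are $q$-segments; condition \textbf{(BP)} is then verified via the bounded projection $\diam(\pi_{\langle h\rangle o}(\langle g\rangle o))\le\tau$, and \textbf{(LL2')} holds because the $q$-segments are long. You instead make every segment a $p$-segment, hosting alternately on translates of $Y_g$ and $Y_h$, with degenerate $q$-segments; \textbf{(BP)} becomes trivial and you invoke the bounded-intersection alternative of \textbf{(LL2)}. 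Both arrive at \autoref{PROP: Admissible Path Contracting} and \autoref{COR: Bounded Intersection} with constants independent of $n,m$. The paper's choice has the minor advantage that the host system $\mathbb X(\gamma)=\{(g^nh^m)^kg^n\langle h\rangle o\}$ already consists \emph{only} of $\langle h\rangle$-type translates, so after one isometry by $g^{-n}$ it is exactly $\{f^kY_h:k\in\mathbb Z\}$, feeding directly into part~(2) without rebuilding $\gamma_f$ with swapped labeling. Your choice is arguably simpler conceptually, since one never has to decide which pieces are $p$ versus $q$ or compute a nontrivial projection bound for \textbf{(BP)}; the trade-off is that you apply the non-special versions of \autoref{PROP: Fellow Travel} and \autoref{COR: Bounded Intersection}, carrying the gauge $\mathcal R$ through the constants, which you do correctly.
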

\begin{proof}
According to \autoref{PROP: Bounded Intersection Equals Bounded Projection}, $\diam(\mathrm{\pi}_{\left\langle h\right\rangle o}(\left\langle g\right\rangle o))\leq \tau $ for some $\tau >0$.

Let $C$ be a contraction constant of $\left\langle h\right\rangle o$. Let $D_1=D(\tau, C)$ and $\epsilon=\epsilon(\tau, C)$ be decided by \autoref{PROP: Fellow Travel}, $\mathcal{R}=\mathcal{R}_{\tau,C}$ be decided by \autoref{COR: Bounded Intersection}, and $D_2=D(\tau,C,\mathcal{R})$ be decided by \autoref{PROP: Admissible Path Contracting}. Let $D=\max\left\{D_1+2\epsilon,D_2\right\}$.

Because $\left\langle g\right\rangle o$ and $\left\langle h\right\rangle o$ are quasi-geodesics, we can find $N\gg 0$ such that $d(o,g^no)>D$ and $d(o,h^mo)>D$ for any $n,m\ge N$.

For any $n,m\ge N$, it is easy to verify that the path $\gamma$ labelled by $(\cdots,g^n,h^m,g^n,h^m,g^n,h^m,\cdots)$ is a special $(D,\tau)$-admissible path  with respect to  $\mathbb{X}(\gamma)=\left\{(g^nh^m)^kg^n\left\langle h\right\rangle o\mid k\in \mathbb{Z}\right\}$.

\iffalse
$p_k=[(g^nh^n)^kg^n o,(g^nh^n)^{k+1}o]$, $q_k=[(g^nh^n)^k o,(g^nh^n)^{k}g^no]$, $X_k=(g^nh^n)^kg^n\left\langle h\right\rangle o$.
\begin{itemize}
\item[\textbf{(LL1)}] For each $k\in\mathbb{Z}$,
\begin{align*}
\mathrm{Len}(p_k)=d(o,h^no)>D.
\end{align*}
\item[\textbf{(BP)}] For each $k\in\mathbb{Z}$,
\begin{align*}
\d{X_k}((p_k)_+,(p_{k+1})_-)&=\d{g^n\left\langle h\right\rangle o}(g^nh^no,g^nh^ng^no)\\
&=\d{\left\langle h\right\rangle o}(o,g^no)\\
&\leq \diam(\mathrm{\pi}_{\left\langle h\right\rangle o}(\left\langle g\right\rangle o))\\
&\leq B=\tau\;,
\end{align*}

and 
\begin{align*}
\d{X_k}((p_{k-1})_+,(p_{k-})_-)&=\d{g^n\left\langle h\right\rangle o}(o,g^no)\\
&=\d{\left\langle h\right\rangle o}(g^{-n}o,o)\\
&\leq \diam(\mathrm{\pi}_{\left\langle h\right\rangle o}(\left\langle g\right\rangle o))\\
&\leq B=\tau\;.
\end{align*}
\item[\textbf{(LL2')}] For each $k\in\mathbb{Z}$,
\begin{align*}
d((p_k)_+,(p_{k+1})_-)=\mathrm{Len}(q_{k+1})=d(o,g^no)>D.
\end{align*}
\end{itemize}
\fi

As $\gamma$ has $\mathrm{Len}_q$-constant $d(o,g^no)$ and $\mathrm{Len}_p$-constant $d(o,h^mo)$, according to \autoref{PROP: Admissible Path Contracting}, $\gamma$ is a contracting subset. By \autoref{LEM: Finite Hausdorff Distance}, as $d_H(\left\langle g^nh^m\right\rangle o,\gamma)<+\infty$, the orbit $\left\langle g^nh^m\right\rangle o$ is also a contracting subset. 

Let $\beta=[o,g^nh^mo]$ be a geodesic segment. By \autoref{PROP: Fellow Travel}, there exists a point $x\in \beta$ such that $d(g^no,x)\le \epsilon$. By triangle inequality, $d(o,g^nh^mo)\ge d(o,g^no)+d(o,h^mo)-2\epsilon >2D_1+2\epsilon$.

In addition, for each $k>0$, any geodesic segment $\alpha=[o,(g^nh^m)^ko]$ $\epsilon$-fellow travels the subpath of $\gamma$ between $o$ and $(g^nh^m)^ko$. In particular, there exist a point $y\in \alpha$ such that $d((g^nh^m)^{k-1}o,y)\le \epsilon$. By triangle inequality, $d(o,(g^nh^m)^ko)\ge d(o,(g^nh^m)^{k-1}o)+d(o,g^nh^mo)-2\epsilon$. By induction, $k(d(o,g^nh^mo)-2\epsilon)\le d(o,(g^nh^m)^ko)\le kd(o,g^nh^mo)$, where $d(o,g^nh^mo)-2\epsilon> 2D_1\ge 0$. Hence, the map $k\in \mathbb{Z}\mapsto (g^nh^m)^ko\in X$ is a quasi-isometric embedding, thus proving that $g^nh^m$ is a contracting element.

On the other hand, according to \autoref{COR: Bounded Intersection}, the contracting system $\mathbb{X}(\gamma)$ has $\mathcal{R}$-bounded intersection.

%We now consider an infinite sequence of contracting elements $((h^ng^{n})^kh(h^ng^{n})^{-k})_{k\in\mathbb{Z}}\in {G}$, and we will show that they are pairwise weakly independent.

Denote $X_k=(g^nh^m)^kg^n\left\langle h\right\rangle o$, so that $\mathbb{X}(\gamma)=\left\{X_k\mid k\in \mathbb{Z}\right\}$.

For any $ j\neq k\in\mathbb{Z}$, $g^{-n}X_k=(h^mg^{n})^k\left\langle h\right\rangle o$ and $g^{-n}X_j=(h^mg^{n})^j\left\langle h\right\rangle o$ have $\mathcal{R}$-bounded intersection. 
Notice that $d_H(\left\langle(h^mg^{n})^k h (h^mg^{n})^{-k}\right\rangle o, g^{-n}X_k)\leq d(o,(h^mg^{n})^{-k}o)<\infty$. And similarly, $d_H(\left\langle(h^mg^{n})^j h (h^mg^{n})^{-j}\right\rangle o, g^{-n}X_j)<\infty$.

Hence $\left\langle(h^mg^{n})^j h (h^mg^{n})^{-j}\right\rangle o$ and $\left\langle(h^mg^{n})^k h (h^mg^{n})^{-k}\right\rangle o$ have bounded intersection, implying that $\left\{(h^mg^{n})^kh(h^mg^{n})^{-k}\mid {k\in\mathbb{Z}}\right\}$ is a collection of pairwise weakly independent contracting elements.
\end{proof}
\begin{definition}[Contracting Property]\label{DEF: Contracting Property}
%Suppose that a group $G$ acts by isometry on a geodesic metric space $(X,d)$ with a basepoint $o$. 
A group action $G\curvearrowright X$ has \textit{contracting property} if there exist two  weakly independent contracting elements in $G$.
\end{definition}

By \autoref{LEM: Non Elementary} (\ref{_NE2}), if $G\curvearrowright X$ has the contracting property, then there exist infinitely many pairwise weakly independent elements in $G$.

\begin{example}\label{EX: Contracting Property}
The following provides abundant examples of group actions with contracting property.
\begin{enumerate}
    \item A group action on a (quasi-)geodesic Gromov hyperbolic space which is non-elementary.
    \item A group $G$ with non-trivial Floyd boundary (for example, a relatively hyperbolic group) acts on its Cayley graph with respect to a generating set $S$.  \cite{Ger15,Yang14}
    %\item A relatively hyperbolic group acts on a Cayley graph $\Gamma$ with respect to a generating set $S$. \cite{Osi04,Dru05}
    \item A non-elementary group action on a $\mathrm{CAT}(0)$-space with rank-$1$ elements. \cite{BF09}
    \item A $Gr'(1/6)$-labelled graphical small cancellation group $G$ with finite components labelled by a finite set $S$ acts on the Cayley graph $(Y, d)$ with respect to the generating set $S$. \cite{ACGH16}
    \item 
    Any non-elementary (i.e. non-virtually cyclic) group acts properly on a geodesic metric space with one contracting elements. \cite[Lemma 4.6]{Yang14}
    %\item A group $G$ with non-trivial Floyd boundary acts on its Cayley graph with respect to a generating set $S$. \cite{Yang14}
    \item The mapping class group of a hyperbolic surface $\MCG(\Sigma)$ acts on the Teichm\"uller space $\Teich(\Sigma)$ equipped with Teichm\"uller metric, or on the curve complex $\mathcal{C}(\Sigma)$. \cite{Bow08}
    \item 
    More generally, any non-elementary group acts with WPD loxodromic elements in sense of Bestvina-Fujiwara on hyperbolic spaces. \cite[Theorem 6.8]{DGO17}
\end{enumerate}
\end{example}

%\subsection{Elementary Subgroups}\text{ }

\section{Extension lemma}\label{SEC: Extension Lemma}
In this section, suppose a group $G$ acts by isometry on a geodesic metric space $(X,d)$ with a basepoint $o\in X$, and the action $G\curvearrowright X$ has contracting property. For brevity of proof, we assume that $X$ is proper so that we use the same definition for the closest point projection as in \autoref{CONV: Nearest Projection}.

\begin{lemma}\label{LEM: Finite Projection}
Suppose $Y,Z\subseteq X$ are two $C$-contracting subsets with $\mathcal{R}$-bounded intersection and $Y\cap Z\neq \varnothing$. Then there exist $\tau=\tau(C,\mathcal{R})\geq 0$, such that for any $ o\in Y\cap Z$ and $x\in X$, if $\alpha=[o,x]$ is a geodesic segment, then
\begin{align*}
\min\left\{\diam (\pi_Y(\alpha)),\diam (\pi_Z(\alpha))\right\}\leq \tau.
\end{align*}

In particular, $\min\left\{\d{Y}(o,x),\d{Z}(o,x)\right\}\leq \tau$.
\end{lemma}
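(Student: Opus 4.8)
The plan is to argue by contradiction. Suppose the conclusion fails: then for some large constant $\tau$ (to be specified), there exist points $o' \in Y \cap Z$ and $x \in X$ with a geodesic $\alpha = [o',x]$ such that both $\diam(\pi_Y(\alpha)) > \tau$ and $\diam(\pi_Z(\alpha)) > \tau$. I would first invoke \autoref{PROP: Bounded Intersection Equals Bounded Projection} to turn the $\mathcal R$-bounded intersection hypothesis into a $B$-bounded projection statement: say $\diam(\pi_Y(Z)) \le B$ and $\diam(\pi_Z(Y)) \le B$ with $B = B(C,\mathcal R)$. Since $o' \in Y \cap Z$, the point $o'$ lies in both projection sets, so every point of $\pi_Y(Z)$ is within $B$ of $o'$, and similarly for $\pi_Z(Y)$.

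Next I would locate, using the entry/exit point technology already developed (as in \autoref{LEM: 1 Lipschitz} and \autoref{LEM: Quasi Convex}), points far along $\alpha$ that project far from $o'$ onto both $Y$ and $Z$. Concretely, because $\diam(\pi_Y(\alpha)) > \tau$ and $\alpha_- = o' \in Y$, there is a point $y \in \alpha$ with $\d{Y}(o',y)$ large, hence some $a \in \pi_Y(y)$ with $d(o',a)$ large (comparable to $\tau$ up to the constant $4C$ via \autoref{LEM: 1 Lipschitz}); similarly a point with large projection onto $Z$. The key geometric input is that a geodesic starting on a contracting set $Y$ ``runs along'' $Y$ for a while before leaving: more precisely, by \autoref{LEM: Quasi Convex} the initial portion of $\alpha$ joining two points of $N_C(Y)$ stays in $N_{3C}(Y)$, and points of $\alpha$ near where it exits $N_C(Y)$ project (under $\pi_Y$) to points near the exit. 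So the part of $\alpha$ whose $\pi_Y$-image is large is an initial subpath staying near $Y$, and likewise the part with large $\pi_Z$-image is an initial subpath near $Z$; these two initial subpaths overlap in a long common initial segment that stays simultaneously in $N_{3C}(Y)$ and $N_{3C}(Z)$. Then $\diam(N_{3C}(Y) \cap N_{3C}(Z)) \le \mathcal R(3C)$ forces this common segment to be short, which caps how large both projection diameters can be — a contradiction once $\tau$ is chosen larger than an explicit function of $C$ and $\mathcal R(3C)$.

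I would set $\tau := \tau(C,\mathcal R)$ equal to, say, $\mathcal R(3C) + 100C + 2B$ or whatever the bookkeeping demands, and run the argument to show $\min\{\diam(\pi_Y(\alpha)), \diam(\pi_Z(\alpha))\} \le \tau$ always holds. The final sentence of the statement, $\min\{\d{Y}(o',x), \d{Z}(o',x)\} \le \tau$, is then immediate since $\d{Y}(o',x) = \diam(\pi_Y(\{o',x\})) \le \diam(\pi_Y(\alpha))$ for the chosen geodesic $\alpha = [o',x]$, as $o',x \in \alpha$.

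The main obstacle I anticipate is the middle step: carefully showing that the subpath of $\alpha$ with large $\pi_Y$-projection and the subpath with large $\pi_Z$-projection actually share a long common initial segment lying in both $N_{3C}(Y)$ and $N_{3C}(Z)$. This requires tracking entry/exit points along $\alpha$ for the $C$-neighborhoods of $Y$ and of $Z$ and relating the location of a point on $\alpha$ to the size of its projection — essentially a quantitative version of ``$\alpha$ leaves a contracting set monotonically in projection.'' The estimates are of the same flavor as those in \autoref{LEM: 1 Lipschitz}, \autoref{LEM: Geodesic Along Projection}, and \autoref{LEM: Quasi Convex}, so I expect it to be routine but constant-heavy; the bounded-intersection hypothesis $\diam(N_r(Y) \cap N_r(Z)) \le \mathcal R(r)$ applied at $r = 3C$ is what ultimately produces the bound.
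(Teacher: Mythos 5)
Your proposal is correct and follows essentially the same route as the paper: both arguments hinge on the exit points of $\alpha$ from $N_C(Y)$ and $N_C(Z)$, the quasi-convexity from \autoref{LEM: Quasi Convex} placing the initial segment in $N_{3C}(Y)\cap N_{3C}(Z)$, the bound $\d{Y}(\cdot,\cdot)\le d(\cdot,\cdot)+4C$ from \autoref{LEM: 1 Lipschitz}, the contraction bound of $C$ on the projection beyond the exit point, and the bounded-intersection hypothesis at radius $3C$. The paper simply runs this directly (WLOG the exit point from $N_C(Y)$ comes first, yielding $\tau=\mathcal R(3C)+5C$) rather than by contradiction, and the step you flag as the main obstacle is exactly the routine combination of these cited lemmas, so there is no real gap.
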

\begin{proof}
We will prove that $\tau=\mathcal{R}(3C)+5C$ satisfies the conditions.

Let $y,z\in \alpha$ be the exit point of $\alpha$ in $N_C(Y)$ and $N_C(Z)$, respectively. Without loss of generality, suppose $o,y,z,x$ are linearly ordered on $\alpha$. 
According to \autoref{LEM: Quasi Convex}, $[o,y]_{\alpha}\subseteq N_{3C}(Y)$, and $[o,z]_{\alpha}\subseteq N_{3C}(Z)$. Hence $[o,y]_{\alpha}\subseteq N_{3C}(Y)\cap N_{3C}(Z)$. Since $Y$ and $Z$ have $\mathcal{R}$-bounded intersection, we have $d(o,y)\leq \mathcal{R}(3C)$. 
Therefore, by \autoref{LEM: 1 Lipschitz}, $\diam\left (\pi_{Y}([o,y]_{\alpha})\right )\leq \mathcal{R}(3C)+4C$.

We consider two cases. If $y=x$, then
%\begin{enumerate}[\text{Case }1:]
%\item $y=x$\\
$\diam(\pi_Y(\alpha))=\diam\left (\pi_{Y}([o,y]_{\alpha})\right )\leq \mathcal{R}(3C)+4C$.
%\item $y\neq x$\\
Otherwise, if $y\neq x$, then $d([y,x]_\alpha,Y)\geq C$, hence $\diam\left (\pi_{Y}([y,x]_{\alpha})\right )\leq C$. 
Therefore, $\diam(\pi_Y(\alpha))\leq \diam\left (\pi_{Y}([o,y]_{\alpha})\right )+\diam\left (\pi_{Y}([y,x]_{\alpha})\right )\leq \mathcal{R}(3C)+5C$.
%\end{enumerate}
\end{proof}
We are now ready to prove a simplified version of Extension Lemma in \cite{Yang19}.
\begin{lemma}[Extension Lemma]\label{LEM: Modified Extension Lemma}
%Suppose that a group $G$ acts by isometry on a geodesic metric space $(X,d)$ with a basepoint $o$. 
For any three pairwise weakly independent contracting elements $h_1,h_2,h_3\in G$, 
%, and let $\mathbb{X}=\left\{g\cdot \left\langle h_i\right\rangle\cdot o\mid g\in G, 1\leq i\leq 3\right\}$. 
there exists $\tau>0$ such that the following holds:\\
For any $D>0$, choose $f_i\in \left\langle h_i\right\rangle$ such that $d(o,f_io)>D$, and let $F=\left\{f_1,f_2,f_3\right\}$.
\begin{enumerate}[(1)]
\item\label{ext1} For any pair of elements $g,g'\in G$, we can find $i\in\left\{1,2,3\right\}$, such that $\d{\left\langle h_i\right\rangle o}\left(o,g^{-1}o\right)\leq \tau$, and $\d{\left\langle h_i\right\rangle o}\left(o,g'o\right)\leq \tau$.
\item\label{ext2} For any (finite or infinite) sequence of elements $\left\{g^{(k)}\in G\right\}$ satisfying $d(o,g^{(k)} o)>D$, $f^{(k)}\in F$ is chosen for each pair $(g^{(k)},g^{(k+1)})$ according to (\ref{ext1}). Then the path $\gamma$ labelled by $(\cdots, g^{(k)}, f^{(k)}, g^{(k+1)}, f^{(k+1)}, g^{(k+2)}, \cdots)$ is a special $(D,\tau)$-admissible path  with respect to  $\mathbb{X}=\{g\cdot \left\langle h_i\right\rangle\cdot o\mid g\in G, 1\leq i\leq 3\}$.
\end{enumerate}
\end{lemma}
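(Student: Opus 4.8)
The plan is to deduce both parts from the metric-geometric facts already assembled, principally \autoref{LEM: Finite Projection}, \autoref{PROP: Bounded Intersection Equals Bounded Projection}, and the definition of a special admissible path. First I would set up the constants. Since $h_1,h_2,h_3$ are pairwise weakly-independent contracting elements, each orbit $\langle h_i\rangle o$ is $C_i$-contracting for some $C_i$, and each pair $\langle h_i\rangle o$, $\langle h_j\rangle o$ has $\mathcal R_{ij}$-bounded intersection; take $C=\max_i C_i$ and $\mathcal R=\max_{i\neq j}\mathcal R_{ij}$ so that all three orbits are $C$-contracting with pairwise $\mathcal R$-bounded intersection. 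Define $\tau_0=\tau_0(C,\mathcal R)$ to be the constant produced by \autoref{LEM: Finite Projection} (applied to pairs among the $\langle h_i\rangle o$, which indeed share the basepoint $o$), and enlarge it if needed so that $\tau_0$ also bounds $\diam(\pi_{\langle h_i\rangle o}(\langle h_j\rangle o))$ for all $i\neq j$, which exists by \autoref{PROP: Bounded Intersection Equals Bounded Projection}. Set $\tau=\tau_0$.

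For part~\eqref{ext1}, fix $g,g'\in G$. Apply \autoref{LEM: Finite Projection} to the pair $\langle h_1\rangle o$, $\langle h_2\rangle o$ with the geodesic $[o,g^{-1}o]$: one of them, say $\langle h_{i}\rangle o$, satisfies $\d{\langle h_i\rangle o}(o,g^{-1}o)\le\tau$. The point is that the \emph{index} $i$ selected this way depends only on $g$ and is insensitive to the second point; so run the same argument with the geodesic $[o,g'o]$. If the index produced for $g^{-1}$ and the index produced for $g'$ agree we are done. If not, we have used up $\langle h_1\rangle o$ and $\langle h_2\rangle o$: more carefully, \autoref{LEM: Finite Projection} applied to each of the three pairs among $\{1,2,3\}$ shows that at most one index can fail the bound $\d{\langle h_i\rangle o}(o,g^{-1}o)\le\tau$ and at most one can fail $\d{\langle h_i\rangle o}(o,g'o)\le\tau$; since there are three indices, some $i\in\{1,2,3\}$ fails neither, which is exactly the claim. (This is the only place where having \emph{three} elements rather than two is essential, and is the conceptual heart of the lemma.)

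For part~\eqref{ext2}, with the $f^{(k)}\in F$ chosen as in \eqref{ext1}, I would verify directly that the path $\gamma$ labeled by $(\cdots,g^{(k)},f^{(k)},g^{(k+1)},\cdots)$ meets the three requirements of a special $(D,\tau)$-admissible path w.r.t.\ $\mathbb X=\{g\langle h_i\rangle o\mid g\in G,\,1\le i\le 3\}$. Write $\gamma=\cdots q_k p_k q_{k+1}p_{k+1}\cdots$, where $p_k$ is the geodesic realizing the syllable $f^{(k)}$ — concretely $p_k=[w_k o,\,w_k f^{(k)} o]$ with $w_k=g^{(0)}f^{(0)}\cdots g^{(k)}$ (suitable bookkeeping of the prefix word) — so that the endpoints of $p_k$ lie on the $\mathbb X$-member $X_k:=w_k\langle h_i\rangle o$ with $i$ the index chosen for the pair $(g^{(k)},g^{(k+1)})$, and $q_k$ is the geodesic realizing the syllable $g^{(k)}$. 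Then \hyperref[LL1]{\textbf{(LL1)}} is $\mathrm{Len}(p_k)=d(o,f^{(k)}o)>D$, true by the choice $d(o,f_i o)>D$. Condition \hyperref[LL2']{\textbf{(LL2')}} is $d((p_k)_+,(p_{k+1})_-)=\mathrm{Len}(q_{k+1})=d(o,g^{(k+1)}o)>D$, true by hypothesis. For \hyperref[BP]{\textbf{(BP)}}, translating by $w_k^{-1}$ reduces $\d{X_k}((p_k)_+,(p_{k+1})_-)$ to $\d{\langle h_i\rangle o}(f^{(k)}o,\,f^{(k)}g^{(k+1)}o)$; since $f^{(k)}\in\langle h_i\rangle$ this is $\d{\langle h_i\rangle o}(o,\,g^{(k+1)}o)$, which is $\le\tau$ by the choice of $f^{(k)}$ in \eqref{ext1} (applied to the pair $(g^{(k)},g^{(k+1)})$, the ``$g'$'' slot); and symmetrically $\d{X_k}((p_{k-1})_+,(p_k)_-)$ reduces to $\d{\langle h_i\rangle o}((g^{(k)})^{-1}o,\,o)\le\tau$, the ``$g^{-1}$'' slot. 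This completes the verification.

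The main obstacle I anticipate is purely organizational rather than conceptual: keeping the prefix words $w_k$, the orientation of each geodesic segment, and the translate $w_k^{-1}$ consistent so that the (BP) reductions land on exactly the inequalities supplied by part~\eqref{ext1}, and making sure the edge cases (first/last segment of a finite $\gamma$) match the convention in \autoref{DEF: Special Admissible Path}. The genuinely nontrivial input — that two points can always be ``caught'' by one of three orbits — is entirely encapsulated in \autoref{LEM: Finite Projection} together with the pigeonhole count in part~\eqref{ext1}.
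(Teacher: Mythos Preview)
Your proposal is correct and follows essentially the same route as the paper: the pigeonhole in part~\eqref{ext1} (``at most one index fails each condition, hence some index fails neither'') is exactly the paper's argument, and the verification of \textbf{(LL1)}, \textbf{(BP)}, \textbf{(LL2')} in part~\eqref{ext2} matches the paper's computation after unwinding the prefix translations. The only cosmetic difference is that you enlarge $\tau_0$ to bound the mutual projections $\diam(\pi_{\langle h_i\rangle o}(\langle h_j\rangle o))$, which is never used and can be dropped.
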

\begin{proof}
By assumption, the three contracting subsets $\left\langle h_i\right\rangle o$ have contraction constant $C$ and pairwise $\mathcal{R}$-bounded intersection for some $C\geq 0$ and $\mathcal{R}:[0,+\infty)\to[0,+\infty)$. The basepoint $o\in \bigcap_{1\leq i\leq 3}\left\langle h_i\right\rangle o$. Let $\tau=\tau(C,\mathcal{R})$ according to \autoref{LEM: Finite Projection}.

\textbf{(1)} According to \autoref{LEM: Finite Projection}, there are at least two indices $\left\{i_1,i_2\right\}\subset\left\{1,2,3\right\}$ such that $\d{\left\langle h_{i_{1,2}}\right\rangle o}\left(o,g^{-1}o\right)\leq \tau$, and at least two indices $\left\{j_1,j_2\right\}\subset\left\{1,2,3\right\}$ such that $\d{\left\langle h_{j_{1,2}}\right\rangle o}\left(o,g'o\right)\leq \tau$. 
Then $i\in\left\{i_1,i_2\right\}\cap \left\{j_1,j_2\right\}$ satisfies the condition in (\ref{ext1}).

\textbf{(2)} Suppose $f^{(k)}=f_{j_k}$, $j_k\in\left\{1,2,3\right\}$. 

The path $\gamma$ labelled by $(\cdots,g^{(k)},f^{(k)},g^{(k+1)},f^{(k+1)},g^{(k+2)},\cdots)$ consists of a series of geodesic segments: \begin{align*}
&p_k=\left [ \prod_{i=1}^{k}(g^{(i)}f^{(i)})\cdot(f^{(k)})^{-1}o,\prod_{i=1}^{k}(g^{(i)}f^{(i)})\cdot o\right ],\\
&q_k=\left [\prod_{i=1}^{k-1}(g^{(i)}f^{(i)})\cdot o,\prod_{i=1}^{k-1}(g^{(i)}f^{(i)})\cdot g^{(k)}o\right ],\\
&X_k=\prod_{i=1}^{k}(g^{(i)}f^{(i)})\cdot \left\langle h_{j_k}\right\rangle o.
\end{align*}
The two endpoints of $p_k$ lies in $X_k$.

\begin{itemize}
\item[\textbf{(LL1)}] For each $k$, \begin{align*} \mathrm{Len}(p_k)=d((f^{(k)})^{-1}o,o)=d(o,f^{(k)}o)>D.\end{align*}
\item[\textbf{(BP)}] For each $k$, \begin{align*}
\d{X_k}((p_k)_+,(p_{k+1})_-)&=\d{\left\langle h_{j_k}\right\rangle o}(o,g^{(k+1)}o)\leq \tau  ,
\end{align*}
and 
\begin{align*}
\d{X_k}((p_{k-1})_+,(p_{k})_-)&=\d{\left\langle h_{j_k}\right\rangle o}(o,(g^{(k)})^{-1}o)\leq \tau.
\end{align*}
\item[\textbf{(LL2')}] For each $k$, \begin{align*} d((p_k)_+,(p_{k+1})_-)=d(o,g^{(k+1)}o)>D.\end{align*}
\end{itemize}%\end{enumerate}
\end{proof}
As a special case of \autoref{LEM: Modified Extension Lemma}, we usually consider the sequence $\left\{g^{(k)}\in G\mid k\in\mathbb{Z}\right\}$ to be cyclic.
\begin{corollary}\label{LEM: Construction of Perturbation}
For any three pairwise weakly independent contracting elements $h_1,h_2,h_3\in G$, let $\mathbb{X},\tau,D,F$ be as stated in \autoref{LEM: Modified Extension Lemma}. 
Then for any $g\in G$ such that $d(o,go)>D$, there exist $f\in F$ such that the path labelled by $(\cdots,g,f,g,f,g,f,\cdots)$ is a special $(D,\tau)$-admissible path with respect to $\mathbb{X}$.
\end{corollary}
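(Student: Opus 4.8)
The plan is to view this corollary as the bi-infinite periodic instance of \autoref{LEM: Modified Extension Lemma}\,(\ref{ext2}), applied to the constant sequence $g^{(k)}\equiv g$. The only genuine step is producing the single element $f\in F$ that will serve at every stage. I would apply part (\ref{ext1}) of \autoref{LEM: Modified Extension Lemma} to the pair $(g,g)$, i.e.\ with $g'=g$: this yields an index $i\in\{1,2,3\}$ such that $\d{\left\langle h_i\right\rangle o}(o,g^{-1}o)\le\tau$ and $\d{\left\langle h_i\right\rangle o}(o,go)\le\tau$ hold simultaneously. Set $f:=f_i\in F$; by the way $F=\{f_1,f_2,f_3\}$ was chosen in \autoref{LEM: Modified Extension Lemma} we have $d(o,fo)>D$.

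Next I would feed the constant sequence $g^{(k)}:=g$, $k\in\mathbb Z$, into part (\ref{ext2}). Its hypothesis $d(o,g^{(k)}o)>D$ holds for every $k$ since $d(o,go)>D$ is assumed. Every consecutive pair $(g^{(k)},g^{(k+1)})$ equals $(g,g)$, so we may take the selected element to be the same $f^{(k)}:=f$ for all $k$; this is a legitimate choice ``according to (\ref{ext1})'' precisely because both defining inequalities for $f=f_i$ were verified above. Part (\ref{ext2}) then asserts exactly that the path labeled by $(\cdots,g,f,g,f,g,f,\cdots)$ is a special $(D,\tau)$-admissible path with respect to $\mathbb X=\{g'\left\langle h_i\right\rangle o\mid g'\in G,\,1\le i\le 3\}$, which is the claim.

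I do not expect any obstacle here: all the geometry is already contained in the proof of \autoref{LEM: Modified Extension Lemma}. Concretely, for the constant sequence the concatenation decomposes into geodesic segments $p_k=[(gf)^kf^{-1}o,(gf)^ko]$ with endpoints in $X_k=(gf)^k\left\langle h_i\right\rangle o$, together with connecting segments $q_k=[(gf)^{k-1}o,(gf)^{k-1}go]$; if one wanted a self-contained check rather than quoting the lemma it would be the mechanical verification that \hyperref[LL1]{\textbf{(LL1)}} follows from $\mathrm{Len}(p_k)=d(o,fo)>D$, that \hyperref[BP]{\textbf{(BP)}} follows from the two $\tau$-bounds above together with the $G$-equivariance of $\d{\cdot}(\cdot,\cdot)$, and that \hyperref[LL2']{\textbf{(LL2')}} follows from $\mathrm{Len}(q_k)=d(o,go)>D$. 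Since invoking \autoref{LEM: Modified Extension Lemma} directly is cleaner, that is the route I would take.
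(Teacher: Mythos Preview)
Your proposal is correct and is exactly the intended argument: the paper states this corollary without proof, treating it as the special case of \autoref{LEM: Modified Extension Lemma}\,(\ref{ext2}) for the constant (cyclic) sequence $g^{(k)}\equiv g$, after part (\ref{ext1}) is applied to the pair $(g,g)$ to produce a single $f\in F$ that works at every step. Your optional direct verification of \hyperref[LL1]{\textbf{(LL1)}}, \hyperref[BP]{\textbf{(BP)}}, \hyperref[LL2']{\textbf{(LL2')}} also matches the (commented-out) check in the source.
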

\iffalse
\begin{proof}
Suppose $g\in G$ and $d(o,go)>D$, and consider the finite sequence $(g,g)$. According to \autoref{LEM: Modified Extension Lemma}, there exist $f_i\in F$ such that the path labelled by $(g,f_i,g)$ is a $(D,\tau)$-admissible path. We can now verify that the path labelled by \\
$(\cdots,g,f_i,g,f_i,g,f_i,\cdots)$ is a $(D,\tau)$-admissible path, with $p_k=\left [(gf_i)^kgo,(gf_i)^{k+1}o\right]$, $q_k= \left[(gf_i)^ko,(gf_i)^{k}go\right]$, and $X_k=(gf_i)^kg\cdot \left\langle h_i\right\rangle o$
\begin{itemize}
\item[\textbf{(LL1)}] For each $k\in\mathbb{Z}$, \begin{align*} \mathrm{Len}(p_k)=d((gf_i)^kgo,(gf_i)^{k+1}o)=d(o,f_io)>D\end{align*}
\item[\textbf{(BP)}] For each $k\in\mathbb{Z}$, \begin{align*}
\d{X_k}((p_k)_+,(p_{k+1})_-)&=\d{(gf_i)^kg\cdot \left\langle h_i\right\rangle o}((gf_i)^{k+1}o,(gf_i)^{k+1}go)\\&=\d{g\cdot \left\langle h_i\right\rangle o}(gf_io,gf_igo)\leq \tau 
\end{align*}
and 
\begin{align*}
\d{X_k}((p_{k-1})_+,(p_{k})_-)&=\d{(gf_i)^kg\cdot \left\langle h_i\right\rangle o}((gf_i)^{k}o,(gf_i)^{k}go)\\&=\d{g\cdot \left\langle h_i\right\rangle o}(o,go)\leq \tau
\end{align*}
\item[\textbf{(LL2')}] For each $k\in\mathbb{Z}$, \begin{align*} d((p_k)_+,(p_{k+1})_-)=d((gf_i)^{k+1}o,(gf_i)^{k+1}go)=d(o,go)>D\end{align*}
\end{itemize}
\end{proof}
\fi
\begin{corollary}\label{COR: Perturbation Length}
For any three pairwise weakly independent contracting elements $h_1,h_2,h_3\in G$, there exist $\epsilon, \hat{D}>0$ with the following property.

Fix any $F=\left\{f_1,f_2,f_3\right\}$ with $f_i\in\left\langle h_i\right\rangle,d(o,f_io)>\hat{D}$. Then, for any $g\in G$ satisfying $d(o,go)>\hat{D}$, we can choose $f\in F$ so that the following holds:
%Under the assumption of \autoref{LEM: Construction of Perturbation}, let $\tau$ be decided by \autoref{LEM: Modified Extension Lemma}, and $C$ be a contraction constant for the contracting system $\mathbb{Y}=\left\{\left\langle h_1\right\rangle o,\left\langle h_2\right\rangle o,\left\langle h_3\right\rangle o \right\}$. There exist $\hat{D}=\hat{D}(\tau,C)$, such that: fixing an arbitrary $F=\left\{f_1,f_2,f_3\right\}$ with $f_i\in\left\langle h_i\right\rangle,d(o,f_io)>\hat{D}$, for any $g\in G$ such that $d(o,go)>\hat{D}$, choose $f\in F$ according to \autoref{LEM: Construction of Perturbation}, then the following holds:
\begin{enumerate}[(1)]
%\item the path $\gamma$ labelled by $(\cdots,g,f,g,f,\cdots)$ is a special $(\hat{D},\tau)$-admissible path  with respect to  $\mathbb{X}$, and any geodesic segment connecting two vertices $(p_{i})_{\pm},(p_{j})_{\pm}$ of $\gamma$ $\epsilon$-fellow travels the subpath of $\gamma$ between $(p_{i})_{\pm}$ and $(p_{j})_{\pm}$.
\item\label{pl1} $\ell_d(gf)\geq d(o,gfo)-2\epsilon$.
\item\label{3.3.2} The map $\mathbb{Z}\to X,\,n\mapsto (gf)^no$ is a quasi-isometric embedding.
\item $gf$ and $fg$ are both contracting elements.
%\item Both $gf$ and $fg$ have $\epsilon$-fellow travel property  with respect to  the basepoint $o$.
%\item Both $gf$ and $fg$ are weakly independent with $f$.
\end{enumerate}
\end{corollary}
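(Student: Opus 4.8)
The plan is to derive all three conclusions from \autoref{LEM: Construction of Perturbation} together with \autoref{PROP: Admissible Path Contracting} and \autoref{PROP: Fellow Travel}. First I would fix three pairwise weakly-independent contracting elements $h_1, h_2, h_3$; this is possible because $G \curvearrowright X$ has contracting property, so there exist two weakly-independent contracting elements, and then \autoref{LEM: Non Elementary}(\ref{_NE2}) produces infinitely many pairwise weakly-independent ones. Apply \autoref{LEM: Modified Extension Lemma} to this triple to obtain the constants $C$, $\mathcal{R}$, $\tau$, and let $\epsilon = \epsilon(\tau, C)$ and $D_1 = D(\tau, C)$ be the constants from \autoref{PROP: Fellow Travel} for special admissible paths, $\mathcal{R}' = \mathcal{R}_{\tau,C}$ from \autoref{COR: Bounded Intersection}, and $D_2 = D(\tau, C, \mathcal{R}')$ from \autoref{PROP: Admissible Path Contracting}. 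Set $\hat D = \max\{D_1 + 2\epsilon, D_2, 1\}$ (any threshold at least as large as all of these), and fix $F = \{f_1, f_2, f_3\}$ with $f_i \in \langle h_i \rangle$ and $d(o, f_i o) > \hat D$.

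Now given $g \in G$ with $d(o, go) > \hat D$, apply \autoref{LEM: Construction of Perturbation}: there is $f \in F$ such that the bi-infinite path $\gamma$ labeled by $(\dots, g, f, g, f, \dots)$ is a special $(\hat D, \tau)$-admissible path with respect to $\mathbb{X} = \{g' \langle h_i\rangle o \mid g' \in G, 1 \le i \le 3\}$ — here I use that $\hat D \ge D_1$ and $\hat D \ge D_2$, so the admissibility threshold is met for both propositions. For conclusion (\ref{3.3.2}) and (3), observe that $\gamma$ is a periodic special admissible path with $\mathrm{Len}_q$-constant $d(o, go)$ and $\mathrm{Len}_p$-constant $d(o, f o)$; by \autoref{PROP: Admissible Path Contracting} it is $c$-contracting. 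Since $d_H(\langle gf\rangle o, \gamma) < \infty$, \autoref{LEM: Finite Hausdorff Distance} gives that $\langle gf\rangle o$ is contracting. The quasi-isometric embedding property of $n \mapsto (gf)^n o$ follows exactly as in the proof of \autoref{LEM: Non Elementary}(\ref{_NE1}): apply \autoref{PROP: Fellow Travel}(\ref{2.15.2}) to a geodesic $[o, (gf)^k o]$, which $\epsilon$-fellow travels the subpath of $\gamma$ from $o$ to $(gf)^k o$, pick a fellow-traveling point near $(gf)^{k-1} o$, use the triangle inequality to get $d(o, (gf)^k o) \ge d(o, (gf)^{k-1} o) + d(o, gf o) - 2\epsilon$, and induct; since $d(o, gf o) \ge d(o, go) + d(o, fo) - 2\epsilon > 2D_1 \ge 0$, this yields a linear lower bound, hence a quasi-isometric embedding, so $gf$ is a contracting element. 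The same argument applied to the path labeled by $(\dots, f, g, f, g, \dots)$ (which is also special admissible, being a cyclic shift) shows $fg$ is contracting.

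For conclusion (\ref{pl1}), I compute $\ell_d(gf)$. By homogeneity $n\,\ell_d(gf) = \ell_d((gf)^n) \le d(o, (gf)^n o)$, and from the fellow-traveling estimate above applied with the two-sided bound $k(d(o, gf o) - 2\epsilon) \le d(o, (gf)^k o) \le k\,d(o, gf o)$, dividing by $k$ and letting $k \to \infty$ gives $d(o, gf o) - 2\epsilon \le \ell_d(gf) \le d(o, gf o)$. In particular $\ell_d(gf) \ge d(o, gf o) - 2\epsilon$, which is exactly (\ref{pl1}). I would package the lower bound $k(d(o, gf o) - 2\epsilon) \le d(o, (gf)^k o)$ as a short induction lemma or just inline it, since it is identical to a step already carried out in the proof of \autoref{LEM: Non Elementary}.

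The step I expect to need the most care is the bookkeeping of constants — ensuring that the single threshold $\hat D$ is large enough to simultaneously invoke \autoref{PROP: Fellow Travel} (for the fellow-traveling needed in the length estimate and the quasi-isometric embedding) and \autoref{PROP: Admissible Path Contracting} (for contraction of $\gamma$), and that $\epsilon$ is the fellow-traveling constant compatible with the chosen $\tau$ and $C$. None of the individual arguments is hard — each is a direct reuse of the machinery already deployed in \autoref{LEM: Non Elementary} — but one must be careful that the admissibility parameters $(\hat D, \tau)$ produced by \autoref{LEM: Construction of Perturbation} are precisely the ones fed into the two propositions, and that the dependence of $\epsilon, \hat D$ is only on $h_1, h_2, h_3$ (equivalently on $C, \mathcal R$), as claimed in the statement.
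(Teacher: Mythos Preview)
Your proposal is correct and follows essentially the same route as the paper's proof: choose $f$ via \autoref{LEM: Construction of Perturbation}, use \autoref{PROP: Fellow Travel} on the periodic special admissible path to get the inductive length estimate yielding (\ref{pl1}) and (\ref{3.3.2}), and apply \autoref{PROP: Admissible Path Contracting} plus \autoref{LEM: Finite Hausdorff Distance} for contraction. The only cosmetic differences are that the paper takes $\hat D=\max(D_1+3\epsilon,D_2)$ and dispatches the contraction of $fg$ in one line by noting it is conjugate to $gf$, rather than rerunning the argument on the shifted path.
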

\begin{remark}
Actually, with a bit more effort, the assumption ``$d(o,go)>\hat{D}$"  can be removed under taking more elements in $F$ as in \autoref{COR: Simultaneous Extension 15}. We state the above concise version for simplicity. On the other hand, the existence of weakly independent $h_1,h_2,h_3\in G$ is necessary. For example, consider the infinite dihedral group $\langle s, t|tst=s^{-1}, t^2=1\rangle$ acting on the real line. The contracting elements are exactly the infinite order elements of form $g=s^n$, but  $gt$ has order 2.   % 

\end{remark}
\begin{proof}[Proof of \autoref{COR: Perturbation Length}]
Let $\tau$ be decided by \autoref{LEM: Modified Extension Lemma}, and $C$ be a contraction constant for the contracting system $\mathbb{X}=\left\{g\cdot \left\langle h_i\right\rangle\cdot o\mid g\in G, 1\leq i\leq 3\right\}$.

Let $D_1=D(\tau,C),\epsilon=\epsilon(\tau,C)$ be given by \autoref{PROP: Fellow Travel}, $\mathcal{R}=\mathcal{R}_{\tau,C}$ be given by \autoref{COR: Bounded Intersection}, and $D_2=D(\tau,C,\mathcal{R})$ be given by \autoref{PROP: Admissible Path Contracting}. Let $\hat{D}=\mathrm{max}(D_1+3\epsilon, D_2)$. Then $f\in F$ is chosen according to \autoref{LEM: Construction of Perturbation}.
%\begin{enumerate}[(1)]
%\item This is direct according to \autoref{LEM: Construction of Perturbation} and \autoref{PROP: Fellow Travel}.

\textbf{(1)} The path $\gamma$ labelled by $(g,f,g,f,\cdots)$ is a special $(D_1,\tau)$-admissible path with respect to $\mathbb{X}$.

For any $n\ge 1$, consider the subpath $\gamma_n$ labelled by $(g,f,g,f\cdots,g,f)$ with $(\gamma_n)_+=(gf)^no$, then according to \autoref{PROP: Fellow Travel}, $\alpha=[o,(gf)^no]$ $\epsilon$-fellow travels $\gamma_n$.

In particular, there exists a point $x\in \alpha$ such that $d((gf)^{n-1}o,x)\leq \epsilon$. By triangle inequality, $d(o,(gf)^no)=d(o,x)+d(x,(gf)^no)\ge d(o,(gf)^{n-1}o)+d(o,gfo)-2\epsilon$. By induction, $d(o,(gf)^no)\geq nd(o,gfo)-2(n-1)\epsilon$. 

Hence $\ell_d(gf)=\lim_{n\to\infty}\frac{d(o,(gf)^no)}{n}\geq\lim_{n\to\infty}\frac{nd(o,gfo)-2(n-1)\epsilon}{n}=d(o,gfo)-2\epsilon$.

\textbf{(2)} Since $gf$ acts by isometry, it suffices to show that there exist $\lambda_1, \lambda_2>0$ such that $\lambda_1 n\leq d(o,(gf)^no)\leq \lambda_2 n$, $\forall n\in \mathbb{N}^{+}$. 

In the proof of \textbf{(1)}, we have $d(o,(gf)^no)\geq (d(o,gfo)-2\epsilon)n$ and the geodesic segment $\beta=[o,gfo]$ $\epsilon$-fellow travels $[o,go]\cup[go,gfo]$. Hence there exists $y\in\beta$ such that $d(go,y)\leq \epsilon$, and $d(o,gfo)\geq d(o,y)\geq d(o,go)-d(go,y)> D_1+3\epsilon-\epsilon=D_1+2\epsilon$. 
So $\lambda_1=d(o,gfo)-2\epsilon>0$.

On the other hand $d(o,(gf)^no)\leq (d(o,go)+d(o,fo))n$, hence $\lambda_2=d(o,go)+d(o,fo)>0$.

\textbf{(3)} $\gamma$ is a special $(D_1,\tau)$-admissible path  with respect to  $\mathbb{X}$, then according to \autoref{COR: Bounded Intersection}, $\mathbb{X}(\gamma)$ has $\mathcal{R}$-bounded intersection. Also, $\gamma$ is a special $(D_2,\tau)$-admissible path  with respect to  $\mathbb{X}(\gamma)$, with $\mathrm{Len}_q$-constant $L=d(o,go)$ and $\mathrm{Len}_p$-constant $K=d(o,fo)$. According to \autoref{PROP: Admissible Path Contracting}, the path $\gamma$ is $c$-contracting for some $c$. Since $d_H(\left\{(gf)^no\mid n\in \mathbb{Z}\right\},\gamma)\leq d(o,go)+d(o,fo)<\infty$, according to \autoref{LEM: Finite Hausdorff Distance}, the orbit $\left\{(gf)^no\mid n\in \mathbb{Z}\right\}$ is also a contracting subset. Together with \textbf{(2)}, $gf$ is a contracting element. Notice that $fg$ is a conjugate of $gf$, hence $fg$ is also a contracting element.
\end{proof}

\begin{lemma}\label{LEM: Weakly Independent One In Three}
Suppose $h_1,h_2,h_3\in G$ are three weakly independent contracting elements. For any contracting element $g\in G$, there exists $h\in\left\{h_1,h_2,h_3\right\}$ such that $g$ and $h$ are weakly independent.
\end{lemma}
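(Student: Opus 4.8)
\medskip
\noindent\textbf{Proof proposal.}
The plan is to argue by contradiction, using that a contracting element has ``two ends'' and a pigeonhole over $h_1,h_2,h_3$. Suppose $g$ is not weakly-independent with any of $h_1,h_2,h_3$. Choose a contraction constant $C_0$ that works simultaneously for $\langle g\rangle o$ and each $\langle h_i\rangle o$, and fix $(\lambda,c)$ so that $n\mapsto g^no$ and $n\mapsto h_i^no$ are $(\lambda,c)$-quasi-isometric embeddings (\autoref{DEF: Contracting Element}). By \autoref{PROP: Bounded Intersection Equals Bounded Projection}, failure of weak-independence of $g$ and $h_i$ means $\langle g\rangle o$ and $\langle h_i\rangle o$ have neither bounded intersection nor bounded projection; in particular $\diam\big(\pi_{\langle g\rangle o}(\langle h_i\rangle o)\big)=\infty$ for each $i=1,2,3$. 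The goal will be to produce, for arbitrarily large $R$, a subsegment of $\langle g\rangle o$ of diameter comparable to $R$ lying in a uniformly bounded neighbourhood of \emph{two} of the orbits $\langle h_i\rangle o$, which contradicts (via \autoref{PROP: Bounded Intersection Equals Bounded Projection} again) the weak-independence of that pair.

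The first, combinatorial, step is to locate the right pair. Parametrize $\langle g\rangle o$ by $\nu:\mathbb Z\to X$, $\nu(k)=g^ko$, and for each $i$ let $N_i(n)\in\mathbb Z$ be (a choice of) parameter of a point of $\pi_{\langle g\rangle o}(h_i^no)$. Two elementary observations: (i) $N_i$ has uniformly bounded increments $|N_i(n{+}1)-N_i(n)|\le K_i$, since $\pi_{\langle g\rangle o}$ moves points a bounded amount by \autoref{LEM: 1 Lipschitz} and $\langle g\rangle o$ is a quasi-geodesic; and (ii) $\diam\big(\pi_{\langle g\rangle o}(\langle h_i\rangle o)\big)=\infty$ forces the image of $N_i$ to be unbounded in $\mathbb Z$, hence unbounded above or unbounded below. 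Assign to each $i$ a sign $\varepsilon_i\in\{+,-\}$ in whose direction $N_i$ is unbounded; a discrete intermediate-value argument using (i) then shows $\{N_i(n):n\in\mathbb Z\}$ is $K_i$-dense in a half-line of $\mathbb Z$ pointing in direction $\varepsilon_i$. Since there are three indices and two signs, two of them, say $i$ and $j$, satisfy $\varepsilon_i=\varepsilon_j=\varepsilon$; replacing $g$ by $g^{-1}$ if necessary, take $\varepsilon=+$.

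The second, geometric, step transfers this density back to honest neighbourhoods of $\langle h_i\rangle o$ and $\langle h_j\rangle o$. Fix a large $R$ and pick parameters $N_1<N_2$ in the common half-line with $N_2-N_1=R$. Using $K_i$-density, choose $a,b$ with $N_i(a),N_i(b)$ within $K_i$ of $N_1,N_2$, so that $\pi_{\langle g\rangle o}(h_i^ao)$ and $\pi_{\langle g\rangle o}(h_i^bo)$ lie at distance comparable to $R$ along $\langle g\rangle o$. Since $\langle g\rangle o$ is $C_0$-contracting and this projected distance exceeds $C_0$, \autoref{LEM: Geodesic Along Projection} and \autoref{LEM: Thin Quadrilateral} show the geodesic $[h_i^ao,h_i^bo]$ enters $N_{C_0}(\langle g\rangle o)$ within a bounded distance of $\pi_{\langle g\rangle o}(h_i^ao)$ and leaves it within a bounded distance of $\pi_{\langle g\rangle o}(h_i^bo)$, its two ``tails'' projecting near those two points; combined with \autoref{LEM: Fellow Travel Property} (the intermediate orbit points $g^mo$ lie uniformly close to $[\pi_{\langle g\rangle o}(h_i^ao),\pi_{\langle g\rangle o}(h_i^bo)]$), this puts every $g^mo$ with $N_1{+}O(1)\le m\le N_2{-}O(1)$ within a constant $C'$ of $[h_i^ao,h_i^bo]$. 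As $h_i^ao,h_i^bo\in\langle h_i\rangle o$, \autoref{LEM: Quasi Convex} gives $[h_i^ao,h_i^bo]\subseteq N_{3C_0}(\langle h_i\rangle o)$, so this subsegment of $\langle g\rangle o$ lies in $N_{C'+3C_0}(\langle h_i\rangle o)$ with $C'$ \emph{independent of $R$}. Running the identical argument for $j$ with the same $N_1,N_2$ shows a common subsegment of $\langle g\rangle o$ of diameter $\ge R/\lambda-O(1)$ lies in $N_{C''}(\langle h_i\rangle o)\cap N_{C''}(\langle h_j\rangle o)$; letting $R\to\infty$ contradicts the bounded intersection of $\langle h_i\rangle o$ and $\langle h_j\rangle o$, hence the weak-independence of $h_i$ and $h_j$. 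This contradiction proves the lemma.

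I expect the genuine obstacle to be this transfer step: making precise that ``unbounded projection onto $\langle g\rangle o$'' coarsely forces a whole half-line of $\langle g\rangle o$ into a bounded neighbourhood of $\langle h_i\rangle o$, and --- crucially --- bookkeeping the constants so that none of them depend on $R$, so that letting $R\to\infty$ really violates bounded intersection. The sign assignment and pigeonhole are routine, and the required ``coarse connectedness plus intermediate value'' statement for subsets of $\mathbb Z$ is elementary but should be isolated and stated cleanly.
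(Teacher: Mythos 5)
Your argument is essentially correct, but it runs along a genuinely different track than the paper's. The paper proves the statement directly and quantitatively: since the three axes $\left\langle h_1\right\rangle o,\left\langle h_2\right\rangle o,\left\langle h_3\right\rangle o$ all pass through the common basepoint $o$ and are pairwise weakly-independent, \autoref{LEM: Finite Projection} says that for any geodesic $[o,g^no]$ at most one of the three axes can receive a projection of diameter $>\tau$; a pigeonhole over $n\to+\infty$ and $n\to-\infty$, together with \autoref{LEM: Fellow Travel Property} and \autoref{LEM: 1 Lipschitz}, then yields a single $h$ with $\diam\bigl(\pi_{\left\langle h\right\rangle o}(\left\langle g\right\rangle o)\bigr)\le 2(\epsilon+4C+\tau)$, and \autoref{PROP: Bounded Intersection Equals Bounded Projection} finishes. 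You instead argue by contradiction in the opposite projection direction: if all three $\left\langle h_i\right\rangle o$ project unboundedly onto $\left\langle g\right\rangle o$, a sign/pigeonhole argument forces two of them to crowd the same half of the $g$-axis, producing arbitrarily long subsets of $\left\langle g\right\rangle o$ in a uniform neighbourhood of both, contradicting their bounded intersection. Both proofs use the pairwise weak-independence of the $h_i$; yours trades the paper's short, constant-explicit argument (which crucially exploits the common basepoint via \autoref{LEM: Finite Projection}) for a longer but conceptually natural ``two ends'' picture, and it is non-quantitative, which is harmless for this statement.

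One link in your transfer step needs more than you wrote: from ``$g^mo$ is $\epsilon$-close to $[p,q]$'' (with $p\in\pi_{\left\langle g\right\rangle o}(h_i^ao)$, $q\in\pi_{\left\langle g\right\rangle o}(h_i^bo)$, by \autoref{LEM: Fellow Travel Property}) you cannot immediately conclude that $g^mo$ is uniformly close to $[h_i^ao,h_i^bo]$; in a general geodesic space two geodesics whose endpoints are $2C_0$-close need not fellow travel, so the closeness of $[p,q]$ to the entry--exit subsegment must itself be extracted from the contraction. The clean fix stays inside your framework: by \autoref{LEM: Quasi Convex} the subsegment of $[h_i^ao,h_i^bo]$ between its entry and exit points in $N_{C_0}(\left\langle g\right\rangle o)$ lies in $N_{3C_0}(\left\langle g\right\rangle o)$, its endpoints are $2C_0$-close to $p$ and $q$ by (the proof of) \autoref{LEM: Thin Quadrilateral}, and a discrete intermediate-value argument along this subsegment (points at unit steps have nearby orbit parameters moving by a bounded amount, exactly as in the proof of \autoref{LEM: Fellow Travel Property}) shows every intermediate orbit point $g^mo$ is within a constant, independent of $R$, of that subsegment and hence of $\left\langle h_i\right\rangle o$. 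With this patch your constants are indeed $R$-independent and the contradiction with $\mathcal R$-bounded intersection goes through.
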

\begin{proof}
Suppose the three contracting subsets $\left\langle h_1\right\rangle o,\left\langle h_2\right\rangle o,\left\langle h_3\right\rangle o$ have contraction constant $C$ and $\mathcal{R}$-bounded intersection. Let $\tau=\tau(C,\mathcal{R})$ be given by \autoref{LEM: Finite Projection}.

For each $n\in \mathbb{Z}_{>0}$, let $\alpha=[o,g^no]$. 
%$\max\left\{ \min\left\{\diam (\pi_{\left\langle h_1\right\rangle o}(\alpha)),\diam (\pi_{\left\langle h_2\right\rangle o}(\alpha))\right\},\min\left\{\diam (\pi_{\left\langle h_2\right\rangle o}(\alpha)),\diam (\pi_{\left\langle h_3\right\rangle o}(\alpha))\right\} ,\min\left\{\diam (\pi_{\left\langle h_3\right\rangle o}(\alpha)),\diam (\pi_{\left\langle h_1\right\rangle o}(\alpha))\right\} \right\}\leq \tau$.
Then at least two of $\diam (\pi_{\left\langle h_1\right\rangle o}(\alpha))$, $\diam (\pi_{\left\langle h_2\right\rangle o}(\alpha))$, $\diam (\pi_{\left\langle h_3\right\rangle o}(\alpha))$ are not greater than $\tau$.

Hence, there exist two elements $\left\{h_{i_1},h_{i_2}\right\}\subseteq \left\{h_1,h_2,h_3\right\}$ such that $\diam \left(\pi_{\left\langle h_{i_1}\right\rangle o}([o,g^no])\right)\leq \tau$ for infinitely many $n>0$, and $\diam \left(\pi_{\left\langle h_{i_2}\right\rangle o}([o,g^mo])\right)\leq \tau$ for infinitely many $m>0$.

For any $k\in \mathbb{Z}_{>0}$, we can find $n>k$ such that $\diam \left(\pi_{\left\langle h_{i_1}\right\rangle o}([o,g^no])\right)\leq \tau$. Let $\epsilon$ be given by \autoref{LEM: Fellow Travel Property} for the contracting element $g$. Then we can find $x\in [o,g^no]$ such that $d(g^ko,x)\leq \epsilon$. According to \autoref{LEM: 1 Lipschitz}, $\d{\left\langle h_{i_1}\right\rangle o}(g^ko,x)\leq \epsilon+4C$. Hence $\d{\left\langle h_{i_1}\right\rangle o}(g^ko,o)\leq \epsilon+4C+\tau$, for any $ k>0$.

Similarly, $\d{\left\langle h_{i_2}\right\rangle o}(g^ko,o)\leq \epsilon+4C+\tau$, for any $ k>0$.

A same proof adapted to $n\in \mathbb{Z}_{<0}$ and $\alpha=[o,g^no]$ implies that there exist two elements $\left\{h_{j_1},h_{j_2}\right\}\subseteq \left\{h_1,h_2,h_3\right\}$ such that $\d{\left\langle h_{j_1}\right\rangle o}(g^ko,o)\leq \epsilon+4C+\tau$ and $\d{\left\langle h_{j_2}\right\rangle o}(g^ko,o)\leq \epsilon+4C+\tau$, for any $ k<0$.

Choose $h\in\left\{h_{i_1},h_{i_2}\right\}\cap\left\{h_{j_1},h_{j_2}\right\}$, then $\diam\left( \pi_{\left\langle h\right\rangle o}\left\langle g\right\rangle o\right)\leq 2(\epsilon+4C+\tau)$.

By \autoref{PROP: Bounded Intersection Equals Bounded Projection}, $\left\langle g\right\rangle o$ and $\left\langle h\right\rangle o$ have bounded intersection. Hence $g$ and $h$ are weakly independent.
\end{proof}
\begin{corollary}\label{COR: New Contracting}
    If $G\curvearrowright X$ has contracting property, then for any contracting element $g\in G$, we can find another contracting element $g_{\ast}\in G$ such that $g$ and $g_{\ast}$ are weakly independent.
\end{corollary}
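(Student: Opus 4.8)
The plan is to package the two immediately preceding results: \autoref{LEM: Non Elementary} supplies a rich supply of pairwise weakly-independent contracting elements, while \autoref{LEM: Weakly Independent One In Three} lets us select, out of any three such, one that is weakly-independent from a prescribed contracting element.

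First I would unpack the contracting property: by \autoref{DEF: Contracting Property} there are two weakly-independent contracting elements $a,b\in G$. Applying \autoref{LEM: Non Elementary}\eqref{_NE2} to the pair $(a,b)$, there is $N>0$ such that for all $n,m\ge N$ the family $\{(b^{m}a^{n})^{k}\,b\,(b^{m}a^{n})^{-k}\mid k\in\mathbb Z\}$ consists of pairwise weakly-independent contracting elements; taking $k=0,1,2$ produces three pairwise weakly-independent contracting elements $h_{1},h_{2},h_{3}\in G$. (Equivalently, one may simply quote the remark recorded after \autoref{DEF: Contracting Property}, which already states that contracting property yields infinitely many pairwise weakly-independent contracting elements.)

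Next, given the contracting element $g\in G$, I would apply \autoref{LEM: Weakly Independent One In Three} to $h_{1},h_{2},h_{3}$ and $g$: it furnishes an index $i\in\{1,2,3\}$ for which $g$ and $h_{i}$ are weakly-independent. Setting $g_{\ast}:=h_{i}$ completes the proof, as $h_{i}$ is a contracting element. Note that $g_{\ast}\neq g$ is automatic: $\langle g\rangle o$ is an unbounded quasi-geodesic, so it cannot have bounded intersection with itself, hence $g$ is never weakly-independent from $g$.

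There is essentially no obstacle in this argument; the corollary is a direct assembly of the two preceding lemmas, and the only point to verify is that the "three" elements demanded by \autoref{LEM: Weakly Independent One In Three} are available, which \autoref{LEM: Non Elementary}\eqref{_NE2} guarantees. If one prefers not to invoke \autoref{LEM: Non Elementary} a second time, one could instead observe that three pairwise weakly-independent contracting elements were already exhibited inside its proof and reuse them verbatim here.
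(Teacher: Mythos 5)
Your proposal is correct and follows exactly the route the paper intends: the corollary is stated as an immediate consequence of \autoref{LEM: Weakly Independent One In Three}, applied to three pairwise weakly-independent contracting elements whose existence is guaranteed by the contracting property together with \autoref{LEM: Non Elementary}\,(\ref{_NE2}) (as noted in the remark following \autoref{DEF: Contracting Property}). The observation that $g_\ast\neq g$ is automatic, since $\langle g\rangle o$ has unbounded intersection with itself, is a nice finishing touch.
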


%\
As a remark concluding this section, all of our results in \autoref{SEC: Preliminaries} and \autoref{SEC: Extension Lemma} can be generalised to $c$-roughly geodesic spaces which are not assumed to be proper. In such case, we define the coarsely closest point projection as follows: for any subset $A\subseteq X$, 
\begin{align*}
\pi_{A}(x)=\left\{y\in A\mid d(x,y)\leq d(x,A)+1\right\},
\end{align*}
and \autoref{DEF: Contracting Subset} is re-defined as follows:
\begin{definition}
Let $(X,d)$ be a $c$-roughly geodesic metric space. A subset $Y\subseteq X$ is called $C$-contracting for $C\geq 0$, if for any $c$-rough geodesic (segment) $\alpha$ in $X$ with $d(\alpha,Y)\geq C$, we have $\diam(\mathrm{\pi}_X(\alpha))\leq C$.
\end{definition}

\section{Simultaneously contracting elements}\label{SEC: Simultaneously contracting}

In this section, we introduce a key notion in relating two group actions with contracting property, called simultaneously contracting elements. Throughout this section, we consider a group $G$ acting isometrically on two geodesic metric spaces $(X_1,d_1)$ and $(X_2,d_2)$ with contracting property.%
\begin{definition}\label{Def: SCSystem}
    %For two isometric group actions $G\curvearrowright (X_1,d_1)$ and $G\curvearrowright (X_2,d_2)$ with basepoints $o_1\in X_1, o_2\in X_2$, we define  
    Denote $\mathcal{SC}(G)=\{ g\in G\mid g\text{ is contracting on both }X_1\text{ and }X_2\}$ the set of \textit{simultaneously contracting} elements on $X_1$ and $X_2$. We frequently choose  an \textit{weakly independent set} of simultaneously contracting elements, $F\subseteq \mathcal{SC}(G)$,  which means that  any pair of distinct elements in $F$ are weakly independent on both $X_1$ and $X_2$.% $F=\{f_1,f_2,f_3,f_4,f_5\}$ is a subset consisting of 5 simultaneously contracting elements which are pairwise weakly independent on $X_1$ and $X_2$ and satisfying $\min_{1\le i\le 2,1\le j\le 5}d_i(o_i,f_jo_i)>D$. 
\end{definition}

\begin{proposition}\label{PROP: SC non-empty}
    $\mathcal{SC}(G)\neq \varnothing$.
\end{proposition}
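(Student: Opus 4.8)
The plan is to construct an element that is simultaneously contracting on both $X_1$ and $X_2$ by using the Extension Lemma on one action to produce a periodic admissible path in the other. Since $G\curvearrowright X_1$ and $G\curvearrowright X_2$ both have contracting property, each contains a pair of weakly-independent contracting elements; applying \autoref{LEM: Non Elementary} (\ref{_NE2}) to the action on $X_1$ we obtain three pairwise weakly-independent contracting elements $h_1,h_2,h_3\in G$ on $X_1$. Fix any contracting element $h\in G$ for the action on $X_2$. The idea is to find an element of the form $g=h^n g_0 f$, with $f\in\{f_1,f_2,f_3\}$ a suitable high power of $h_i$ (so that $g$ is contracting on $X_1$ by \autoref{COR: Perturbation Length} / \autoref{COR: New Contracting}), and $g_0\in G$ chosen so that simultaneously the path labeled by $(\cdots,h^n,g_0f,h^n,g_0f,\cdots)$ is a ``long'' periodic admissible path with respect to the contracting system $\{k\langle h\rangle o_2\mid k\in G\}$ in $X_2$.

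First I would set up the $X_1$-side: apply \autoref{LEM: Modified Extension Lemma} (Extension Lemma) with the triple $h_1,h_2,h_3$ in the action on $X_1$, obtaining $\tau_1$, and then use \autoref{LEM: Construction of Perturbation} / \autoref{COR: Perturbation Length} to guarantee that once $D$ and the $f_i\in\langle h_i\rangle$ are large enough, the periodic path labeled $(\cdots, w, f, w, f,\cdots)$ in $X_1$ is a special $(D,\tau_1)$-admissible path and hence, by \autoref{PROP: Admissible Path Contracting} and \autoref{LEM: Finite Hausdorff Distance}, $wf$ is contracting on $X_1$, for any $w$ with $d_1(o_1,wo_1)>D$. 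Taking $w=h^ng_0$ for large $n$ will make $d_1(o_1,wo_1)>D$ automatically, so the $X_1$-contracting conclusion is essentially free. Second, I would handle the $X_2$-side: I need the path labeled by $(\cdots,h^n,g_0f,h^n,g_0f,\cdots)$, i.e. with segments alternating between translates of $\langle h\rangle o_2$ and geodesics of length $d_2(o_2,g_0fo_2)$, to satisfy \hyperref[LL1]{\textbf{(LL1)}}, \hyperref[BP]{\textbf{(BP)}}, \hyperref[LL2']{\textbf{(LL2')}} with respect to $\mathbb{X}=\{k\langle h\rangle o_2\}$. \hyperref[LL1]{\textbf{(LL1)}} holds once $n$ is large (since $\langle h\rangle o_2$ is a quasi-geodesic), and \hyperref[LL2']{\textbf{(LL2')}} holds once $d_2(o_2,g_0fo_2)$ is large, which I can arrange by choosing $g_0$. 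The remaining, and genuinely binding, requirement is \hyperref[BP]{\textbf{(BP)}}: I need $\d[\langle h\rangle o_2]{}(o_2,(g_0f)o_2)\le \tau_2$ and $\d[\langle h\rangle o_2]{}(o_2,(g_0f)^{-1}o_2)\le\tau_2$ for a fixed constant $\tau_2$; equivalently the displacement $g_0f$ must project to a bounded subset of $\langle h\rangle o_2$ from both sides. This is where I would invoke the Extension Lemma a second time, now applied to a triple of weakly-independent contracting elements on $X_2$ (built again via \autoref{LEM: Non Elementary}), to \emph{choose} $f$ — or rather to choose $g_0$ absorbing any bad projection — so that \hyperref[BP]{\textbf{(BP)}} is met; the freedom in \autoref{LEM: Modified Extension Lemma}(\ref{ext1}) to pick one of three elements killing the projections of both $o_2^{-1}$-side and $o_2$-side is exactly what makes this work.

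Once both admissibility statements hold, \autoref{PROP: Admissible Path Contracting} applied in $X_2$ shows the periodic path is $c$-contracting, and \autoref{LEM: Finite Hausdorff Distance} together with the quasi-isometric-embedding estimate coming from the fellow-traveling property (\autoref{PROP: Fellow Travel}, exactly as in the proof of \autoref{LEM: Non Elementary}) upgrades this to: $g:=h^ng_0f$ is a contracting element for $G\curvearrowright X_2$. Combined with the $X_1$-side conclusion, $g\in\mathcal{SC}(G)$, so $\mathcal{SC}(G)\neq\varnothing$.

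I expect the main obstacle to be bookkeeping the two sets of constants simultaneously: the thresholds $D$ required for admissibility in $X_1$ and in $X_2$ depend (through \autoref{PROP: Fellow Travel}, \autoref{COR: Bounded Intersection}, \autoref{PROP: Admissible Path Contracting}) on the respective contraction constants and bounded-intersection gauges, and one must verify that a single choice of the auxiliary elements $h$, $h_i$, $g_0$ and of the integer $n$ meets all of them at once. The conceptual content — that the Extension Lemma lets one splice an element of $G$ with a controlled perturbation so that it becomes a long periodic admissible path in a \emph{given} action — is applied once per action; the care is in making the perturbation serve both actions. Concretely, I would first fix the $X_2$-data and choose $g_0,f$ via the Extension Lemma for $X_2$ so that \hyperref[BP]{\textbf{(BP)}} holds with the $X_2$-constant $\tau_2$, then enlarge $n$ until \hyperref[LL1]{\textbf{(LL1)}} and \hyperref[LL2']{\textbf{(LL2')}} hold in $X_2$ and simultaneously $d_1(o_1,h^ng_0fo_1)>D_1$, which is the only thing $X_1$ needs. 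This asymmetric order of choices is what avoids a circular dependence between the two families of constants.
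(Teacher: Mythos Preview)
Your overall architecture — produce $g = h^n g_0 f$ so that $(\cdots, h^n g_0, f, \cdots)$ is admissible in $X_1$ while $(\cdots, h^n, g_0 f, \cdots)$ is admissible in $X_2$ — is the same as the paper's. But the treatment of the $X_2$-side has a real gap.

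\textbf{The BP/LL2' step does not close.} You want $\d{\langle h\rangle o_2}(o_2,(g_0f)^{\pm1}o_2)\le\tau_2$ for a \emph{pre-fixed} $\tau_2$ coming from an Extension Lemma on $X_2$, and then to choose $g_0$ large so that \hyperref[LL2']{\textbf{(LL2')}} holds. But \autoref{LEM: Modified Extension Lemma} applied in $X_2$ only says: given a pair, \emph{one} of three $X_2$-axes receives small projections. It does not let you force that axis to be $\langle h\rangle o_2$; and which axis works depends on $g_0 f$, while $f$ in turn depends on $h^n g_0$ via the $X_1$-Extension Lemma — the very circularity you flag but do not untangle. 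Your ``concrete'' ordering (fix $g_0, f$ first, then $n$) is inconsistent with $f$ being determined by $w = h^n g_0$. And if instead you take $\tau$ to be whatever $\diam\big(\pi_{\langle h\rangle o_2}(\{o_2, (g_0 f_i)^{\pm1}o_2\})\big)$ happens to be, then the threshold $D = D(\tau, C)$ depends on $g_0$, and there is no reason $d_2(o_2, g_0 f o_2) > D$: pushing $g_0$ farther may inflate $\tau$ as well.

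The paper bypasses this by using \hyperref[LL2]{\textbf{(LL2)}} (bounded intersection) rather than \hyperref[LL2']{\textbf{(LL2')}}. The key ingredient you are missing is \autoref{LEM: Elementary subset not finite index}: the set $A_2(h) = \{g\in G : g\langle h\rangle o_2\text{ and }\langle h\rangle o_2\text{ fail to have bounded intersection}\}$ is not a finite-index subset of $G$. Hence one can choose $g_0 \notin \bigcup_{i=1}^3 A_2(h) f_i^{-1}$, so that $g_0 f_i \langle h\rangle o_2$ and $\langle h\rangle o_2$ have $\nu$-bounded intersection for \emph{every} $i$. Now $\tau$ is simply computed \emph{after} $g_0$ and the $f_i$ are fixed; $D, \epsilon$ follow; only $n$ is chosen last, and it affects nothing except \hyperref[LL1]{\textbf{(LL1)}}. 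No circularity.

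\textbf{A secondary issue:} you claim $d_1(o_1, h^n g_0 o_1) > D$ ``automatically'' for large $n$. This fails if $\langle h\rangle o_1$ is bounded in $X_1$ — nothing prevents an element contracting for $X_2$ from having bounded $X_1$-orbit. The paper treats this by a case split; when $\langle h\rangle o_1$ is bounded one additionally demands $d_1(o_1, g_0 o_1)$ large, and \autoref{LEM: Elementary subset not finite index} still leaves room for such $g_0$.
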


Let us first introduce a useful lemma in the proof of \autoref{PROP: SC non-empty}.

\begin{definition}\label{DEF: Finite Index}
    Let $\Gamma$ be a group. A subset (not necessarily a subgroup) $E\subseteq \Gamma$ is called a \textit{finite-index subset} if there exists a finite collection of elements $g_1,\cdots, g_m\in \Gamma$ such that $\bigcup_{i=1}^{m}Eg_i=\Gamma$.
\end{definition}

\begin{lemma}\label{LEM: Elementary subset not finite index}
    Suppose a group $\Gamma$ acts by isometry on a geodesic metric space $(X,d)$ with contracting property. Let $f\in \Gamma$ be a contracting element and $o\in X$ be a basepoint. Let $A(f)=\left\{h\in \Gamma\mid h\left\langle f\right\rangle o \text{ and }\left\langle f\right\rangle o \text{  have no bounded intersection}\right\}$. Then $A(f)$ is not a finite-index subset of $\Gamma$.
\end{lemma}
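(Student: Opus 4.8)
The plan is to argue by contradiction: suppose $A(f)$ were a finite-index subset, so $\bigcup_{i=1}^m A(f)g_i = \Gamma$ for some $g_1,\dots,g_m\in\Gamma$. The idea is to manufacture, inside $\Gamma$, a contracting element $g$ whose axis $\langle g\rangle o$ has \emph{bounded} intersection with \emph{all} of the conjugate-translated axes $\langle f\rangle o$ that could possibly arise from the coset structure, and thereby contradict the covering. Concretely, since $\Gamma\curvearrowright X$ has contracting property, there exist weakly-independent contracting elements, and by \autoref{LEM: Non Elementary}(\ref{_NE2}) applied to a suitable pair we can produce infinitely many pairwise weakly-independent contracting elements; using \autoref{LEM: Weakly Independent One In Three} (and its \autoref{COR: New Contracting}) we may furthermore assume we have a contracting element $g$ that is weakly-independent of $f$. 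The point of weak independence is exactly that $\langle g\rangle o$ and $\langle f\rangle o$ have $\mathcal{R}$-bounded intersection, i.e. $g\notin A(f^{-1}\cdot\text{something})$ — so $g$, and more importantly many elements built from $g$, will \emph{fail} to lie in $A(f)$.

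First I would reduce the statement to a counting obstruction. If $A(f)$ has finite index via $\{g_1,\dots,g_m\}$, then for every $h\in\Gamma$ there is some $i$ with $hg_i^{-1}\in A(f)$, meaning $hg_i^{-1}\langle f\rangle o$ and $\langle f\rangle o$ have \emph{no} bounded intersection; equivalently $h\langle f\rangle o$ and $g_i\langle f\rangle o$ have no bounded intersection. So every $\Gamma$-translate $h\langle f\rangle o$ of the axis must have unbounded intersection with one of the finitely many axes $g_1\langle f\rangle o,\dots,g_m\langle f\rangle o$. Now I would exhibit infinitely many $h$ for which $h\langle f\rangle o$ has bounded intersection with \emph{each} $g_i\langle f\rangle o$, giving the contradiction. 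The candidates: take $g$ a contracting element weakly-independent of $f$ as above, and consider the family $\{h_k := g^k\}_{k\ge 1}$, or more robustly conjugates $w g w^{-1}$ ranging over a well-chosen infinite set $w$. Using \autoref{LEM: Non Elementary}(\ref{_NE2}) with $g$ and $f$, the elements $(f^N g^N)^k f (f^N g^N)^{-k}$ (for large $N$, $k\in\mathbb Z$) are pairwise weakly-independent contracting elements, hence their axes pairwise have bounded intersection; and each such axis has finite Hausdorff distance from the translate $(f^Ng^N)^k\langle f\rangle o$. Since the finitely many axes $g_i\langle f\rangle o$ can have unbounded intersection with at most finitely many members of an infinite pairwise-weakly-independent family (here I would invoke \autoref{PROP: Bounded Intersection Equals Bounded Projection} together with a pigeonhole/transitivity argument: if $g_i\langle f\rangle o$ had unbounded intersection with two weakly-independent axes that themselves have bounded intersection, one derives a contradiction via the bounded-projection formulation), all but finitely many translates in the family have bounded intersection with every $g_i\langle f\rangle o$ — contradicting that each must lie in some $A(f)g_i$.

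The main obstacle I anticipate is the transitivity-type step: turning "weakly-independent" (a symmetric, pairwise notion about bounded intersection of axes) into the statement that a \emph{fixed} axis $g_i\langle f\rangle o$ can be "close to" at most one member of an infinite weakly-independent family. This is not literally transitivity of bounded intersection, so I would run it through the projection reformulation of \autoref{PROP: Bounded Intersection Equals Bounded Projection}: if $Z := g_i\langle f\rangle o$ had unbounded intersection with both $Y_1$ and $Y_2$ where $Y_1,Y_2$ are $C$-contracting with $\mathcal{R}$-bounded intersection, then large overlaps $N_r(Z)\cap N_r(Y_1)$ and $N_r(Z)\cap N_r(Y_2)$ would force, via \autoref{LEM: Quasi Convex} and \autoref{LEM: 1 Lipschitz}, a large overlap $N_{r'}(Y_1)\cap N_{r'}(Y_2)$, violating $\mathcal{R}$-boundedness once $r$ is large enough. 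Granting that, the counting contradiction closes. A secondary technical point is making sure the elements produced by \autoref{LEM: Non Elementary} are genuinely infinitely many and pairwise weakly-independent on the single space $X$ in play — but that is exactly \autoref{LEM: Non Elementary}(\ref{_NE2}), which I would cite directly.
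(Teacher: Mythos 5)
Your overall plan (build the infinite pairwise weakly-independent family $(f^Ng^N)^kf(f^Ng^N)^{-k}$ via \autoref{LEM: Non Elementary}, identify its translated axes with $(f^Ng^N)^k\left\langle f\right\rangle o$ up to finite Hausdorff distance, and contradict the coset covering) is the same skeleton as the paper's proof, but the step you flag as the main obstacle is a genuine gap, and your proposed fix for it does not work. The claim that a fixed contracting axis $Z=g_i\left\langle f\right\rangle o$ can have unbounded intersection with at most one member of a pairwise weakly-independent family is false, and no projection argument will rescue the ``transitivity'' you describe: a bi-infinite contracting quasi-geodesic has two ends, and it can overlap unboundedly with one axis along its positive end and with a second, weakly-independent axis along its negative end, while those two axes have bounded intersection with each other (already in a tree: let $Y_1$ contain the positive ray of $Z$ and $Y_2$ the negative ray). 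So ``$Z$ meets $Y_1$ and $Y_2$ unboundedly $\Rightarrow$ $Y_1,Y_2$ meet unboundedly'' fails, and with it your counting step as argued. The correct bound is ``at most two members per fixed axis'', and proving it is exactly the content of \autoref{LEM: Weakly Independent One In Three}, whose proof treats the two rays of the axis separately; you cite that lemma only to produce an element weakly independent of $f$ (which is really \autoref{COR: New Contracting}), not where it is actually needed. This is precisely how the paper closes the argument: it pigeonholes three powers $h^{k_1},h^{k_2},h^{k_3}$ of $h=f^nf_*^n$ into a single set $g^{-1}A(f)$, translates the resulting unbounded intersections into the statement that $g^{-1}fg$ fails to be weakly independent with the three pairwise weakly-independent conjugates $h^{k_i}fh^{-k_i}$, and contradicts \autoref{LEM: Weakly Independent One In Three}. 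Your argument becomes correct if you replace the transitivity claim by: each $g_i^{-1}fg_i$ is non-weakly-independent with at most two members of the family (else apply the one-in-three lemma to three offending members), so all but at most $2m$ members give the desired contradiction.

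A secondary, fixable slip: $hg_i^{-1}\in A(f)$ is \emph{not} equivalent to ``$h\left\langle f\right\rangle o$ and $g_i\left\langle f\right\rangle o$ have no bounded intersection''. Translating by $g_ih^{-1}$ (and using $A(f)=A(f)^{-1}$) it is equivalent to ``$h^{-1}\left\langle f\right\rangle o$ and $g_i^{-1}\left\langle f\right\rangle o$ have no bounded intersection'', so the finitely many reference axes should be $g_i^{-1}\left\langle f\right\rangle o$; since the $g_i$ are arbitrary this does not affect the structure of the argument, but as written the displayed equivalence is wrong.
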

\begin{proof}
     It suffices to show that for any finite collection of elements $g_1,\cdots, g_m\in \Gamma$, $\bigcup_{i=1}^{m}A(f)g_i\neq\Gamma$. Since obviously $A(f)=A(f)^{-1}$, this is equivalent to show that $\bigcup_{i=1}^{m}g_i^{-1}A(f)\neq\Gamma$.

     Let $f_{\ast}$ be a contracting element which is weakly independent with $f$ given by \autoref{COR: New Contracting}. According to \autoref{LEM: Non Elementary}, there exists $n\gg 0$ such that $\left\{(f^nf_{\ast}^n)^kf(f^nf_{\ast}^n)^{-k}\right\}_{k\in \mathbb{Z}}$ is a collection of pairwise weakly independent elements. We denote by $h=f^nf_{\ast}^n$.

     {By way of contradiction, let us assume} $\bigcup_{i=1}^{m}g_i^{-1}A(f)=\Gamma$. Then there exist $k_1>k_2>k_3>0$ and an element $g\in \left\{g_1,\cdots, g_m\right\}$ such that $\left\{h^{k_1},h^{k_2},h^{k_3}\right\}\subseteq g^{-1}A(f)$. This implies that $gh^{k_i}\left\langle f\right\rangle o$ and $\left\langle f\right\rangle o$ do not have bounded intersection for each $i=1,2,3$. 
     Up to an isometry, this is equivalent to {say that} $h^{k_i}\left\langle f\right\rangle o$ and $g^{-1}\left\langle f\right\rangle o$ do not have bounded intersection. Since $d_{H}(g^{-1}\left\langle f\right\rangle o,\left\langle g^{-1}fg\right\rangle o)<\infty$,  {we obtain that} $\left\langle h^{k_i}fh^{-k_i}\right\rangle o$ and $\left\langle g^{-1}fg\right\rangle o$ do not have bounded intersection, 
     which is to say $h^{k_i}fh^{-k_i}$ and $g^{-1}fg$ are not weakly independent for each $i=1,2,3$.

     However, $h^{k_i}fh^{-k_i}$ {for $i=1,2,3$} are pairwise weakly independent. By \autoref{LEM: Weakly Independent One In Three}, $g^{-1}fg$ must be weakly independent with one of them, which leads to a contradiction.
\end{proof}

\begin{remark}
    For a proper action, $A(f)$ coincides with $E(f)$, the elementary subgroup associated to $f$ (cf. \autoref{DEF: Elementary subgroup}). However, when the action is non-proper, $A(f)$ can strictly contain $E(f)$ as a subset. For example, $X$ is Gromov hyperbolic and  the fixed point $f^+\in \partial X$ of $f$ may  also be fixed by a parabolic isometry.
\end{remark}

We are now ready to prove \autoref{PROP: SC non-empty}.

\begin{proof}[Proof of \autoref{PROP: SC non-empty}]
    Fix basepoints $o_1\in X_1$ and $o_2 \in X_2$. According to \autoref{COR: Perturbation Length}, there exist a set of three elements $F=\left\{f_1,f_2,f_3\right\}\subseteq G$ and $\hat{D}>0$ such that for any $g\in G$ with $d_1(o_1,go_1)>\hat{D}$, we can find $f\in F$ such that $gf$ is contracting in $X_1$. We fix an element $h\in G$ which is contracting in $X_2$, and denote $A_2(h)=\left\{g\in G\mid g\left\langle h\right\rangle o_2 \text{ and }\left\langle h\right\rangle o_2 \text{ do NOT have bounded intersection}\right\}$.

    Now we choose an element $g\in G$ according to the following two different cases:

    \textbf{Case 1:} $\left\langle h\right\rangle o_1$ is an unbounded subset in $X_1$. Since $A_2(h)$ is not a finite-index subset by \autoref{LEM: Elementary subset not finite index}, we choose $g\in G\setminus\left (\bigcup_{i=1}^{3}A_2(h)f_i^{-1}\right )$.

    \textbf{Case 2:} $\left\langle h\right\rangle o_1$ is a bounded subset in $X_1$. Let $M=\sup\left\{d_1(o_1,h^no_1)\mid n\in\mathbb{Z}\right\}$. It is easy to see that $E=\left\{ g\in G\mid d_1(o_1,go_1)>M+\hat{D}\right\}$ is a finite-index subset of $G$; for instance, choose an element $g_{\ast}\in G$ with $d_1(o_1,g_{\ast}o_1)>2(M+\hat{D})$ and obviously $G=E\cup Eg_{\ast}$. On the other hand, since $A_2(h)$ is not a finite-index subset by \autoref{LEM: Elementary subset not finite index}, $\bigcup_{i=1}^{3}A_2(h)f_i^{-1}$ is not a finite-index subset either. Therefore, $E\not\subseteq \bigcup_{i=1}^{3}A_2(h)f_i^{-1}$. We choose $g\in E\setminus \left (\bigcup_{i=1}^{3}A_2(h)f_i^{-1}\right )$.

    In either case, there exists an infinite increasing sequence of positive integers $\left\{n_k\right\}_{k=1}^{\infty}$ such that $d_1(o_1,h^{n_k}go_1)>\hat{D}$ for all $k$. As is mentioned in the beginning, for each $k$ there exists an element $f\in F$ such that $h^{n_k}gf$ is contracting in $X_1$. We will also show that $h^{n_k}gf$ is contracting in $X_2$ for $k\gg 0$.

    By construction, $gf_i\notin A_2(h)$. Therefore, $gf_i\left\langle h\right\rangle o_2$ and $\left\langle h\right\rangle o_2$ have $\nu_i$-bounded intersection ($i=1,2,3$). Let $\nu=\max\limits_{1\le i\le 3} \nu_i:[0,+\infty)\to [0,+\infty)$, $C\ge 0$ be a contraction constant of $\left\langle h\right\rangle o_2$ and $\tau=\diam\left (\pi_{\left\langle h\right\rangle o_2}(\left\{o_2,gf_io_2,(gf_i)^{-1}o_2\mid 1\le i\le 3\right\}\right )$. Let $D_0=D(\tau,C,\nu)$ and $\epsilon=\epsilon(\tau,C,\nu)$ be decided by \autoref{PROP: Fellow Travel}, $D_1=D(\tau,C,\nu)$ and $\mathcal{R}=\mathcal{R}_{\tau,C,\nu}$ be decided by \autoref{COR: Bounded Intersection}, and let $D_2=D(\tau,C,\mathcal{R}_{\tau,C,\nu})$ be decided by \autoref{PROP: Admissible Path Contracting}. Denote $D=\max\left\{D_0+3\epsilon,D_1,D_2\right\}$.

    Since $\left\langle h\right\rangle o_2$ is a quasi-geodesic, we have $d_2(o_2,h^{n_k}o_2)>D$ for $k\gg 0$. For any such $k$, consider the path $\gamma$ in $X_2$ labelled by $(\cdots, h^{n_k},gf,h^{n_k},gf,\cdots)$. Then $\gamma$ is a $(D,\tau)$-admissible path  with respect to  $\mathbb{X}=\left\{u\left\langle h\right\rangle o_2\mid u\in G\right\}$ and the bounded intersection gauge $\nu$. By \autoref{COR: Bounded Intersection}, $\mathbb{X}(\gamma)$ has $\mathcal{R}$-bounded intersection. Since $\gamma$ is a periodic path, it must have some $\mathrm{Len}_p$ constant and $\mathrm{Len}_q$ constant. Therefore, according to \autoref{PROP: Admissible Path Contracting}, $\gamma$ is a contracting subset in $X_2$. Notice that $d_H(\left\langle h^{n_k}gf\right\rangle o_2,\gamma)<+\infty$.
    Hence according to \autoref{LEM: Finite Hausdorff Distance}, $\left\langle h^{n_k}gf\right\rangle o_2$ is a contracting subset in $X_2$.
    
    In addition, $\gamma$ has $\epsilon$-fellow travel property by \autoref{PROP: Fellow Travel}. A similar proof as in \autoref{COR: Perturbation Length} (\ref{3.3.2}) shows that the map $n\in \mathbb{Z}\mapsto (h^{n_k}gf)^no_2\in X_2$ is a quasi-isometric embedding. 
    Therefore, $h^{n_k}gf$ is also a contracting element in $X_2$.

    Thus, $h^{n_k}gf\in \mathcal{SC}(G)$ for $k\gg 0$, and $\mathcal{SC}(G)\neq \varnothing$.
\end{proof}

\begin{lemma}\label{LEM: Simultaneous Contracting}
There exists a  weakly independent set of infinitely many simultaneously contracting elements in $\mathcal{SC}(G)$.
\end{lemma}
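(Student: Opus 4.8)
The plan is first to reduce the lemma to producing just \emph{two} elements $f,f_*\in\mathcal{SC}(G)$ that are weakly-independent with respect to \emph{both} actions. Granting such a pair, apply \autoref{LEM: Non Elementary}(2) to the pair $(h,g)=(f,f_*)$ for the action on $X_1$ to get an integer $N_1$, and likewise for the action on $X_2$ to get $N_2$; set $N=\max\{N_1,N_2\}$ and $w=f^{N}f_*^{N}$. Then $\{w^{k}fw^{-k}\mid k\in\mathbb Z\}$ is the desired independent set: each member is a conjugate of $f\in\mathcal{SC}(G)$, hence contracting on both $X_1$ and $X_2$ (conjugates of contracting elements are contracting, by \autoref{LEM: Finite Hausdorff Distance}), so lies in $\mathcal{SC}(G)$; by \autoref{LEM: Non Elementary}(2) the members are pairwise weakly-independent in $X_1$ because $N\ge N_1$ and in $X_2$ because $N\ge N_2$; and they are pairwise distinct, since a contracting element $a$ is never weakly-independent with itself ($\langle a\rangle o$ being an unbounded quasi-geodesic forces $\diam\big(N_r(\langle a\rangle o)\cap N_r(\langle a\rangle o)\big)=\infty$ for large $r$), so equality of two members would contradict the weak-independence of the family. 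In particular the family is infinite.

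\textbf{Producing the pair.} Fix $f\in\mathcal{SC}(G)$ by \autoref{PROP: SC non-empty}. We produce $f_*$ by re-running the argument of \autoref{PROP: SC non-empty}, but choosing the two ``reference systems'' so that the output element avoids running alongside $\langle f\rangle o_1$ or $\langle f\rangle o_2$. Using \autoref{COR: New Contracting} inside the action on $X_i$ to produce a contracting element weakly-independent with $f$ in $X_i$, and then \autoref{LEM: Non Elementary}(2) (whose family of conjugates of $f$ lies entirely in $\mathcal{SC}(G)$ and is weakly-independent with $f$ in $X_i$), we obtain a contracting-in-$X_2$ element $h\in\mathcal{SC}(G)$ with $h$ weakly-independent with $f$ in $X_2$, and three pairwise weakly-independent contracting-in-$X_1$ elements $h_1,h_2,h_3\in\mathcal{SC}(G)$ each weakly-independent with $f$ in $X_1$. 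Feeding the reference element $h$ into the $X_2$-part of the construction of \autoref{PROP: SC non-empty} and the triple $h_1,h_2,h_3$ (together with the set $F=\{f_1,f_2,f_3\}$, $f_j\in\langle h_j\rangle$, furnished by the Extension Lemma \autoref{LEM: Modified Extension Lemma} and \autoref{COR: Perturbation Length}) into the $X_1$-part, we obtain for all large $k$ an element $f_*:=h^{n_k}g f_{i_k}\in\mathcal{SC}(G)$. Here $g$ is chosen by \autoref{LEM: Elementary subset not finite index}, which guarantees that the ``bad'' sets at issue are not of finite index: one avoids $\bigcup_{j}A_2(h)f_j^{-1}$ (so that the periodic $X_2$-path along translates of $\langle h\rangle o_2$ is $(D,\tau)$-admissible, exactly as in \autoref{PROP: SC non-empty}), and one additionally avoids a further non-finite-index set so that the axis of $f_*$ does not shadow $\langle f\rangle o_1$.

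\textbf{Weak-independence with $f$, and the main obstacle.} In the action on $X_2$, $\langle f_*\rangle o_2$ lies within finite Hausdorff distance of a periodic $(D,\tau)$-admissible path whose contracting pieces are translates of $\langle h\rangle o_2$; since $h$ is weakly-independent with $f$ in $X_2$ and $g$ was chosen to keep that path away from $\langle f\rangle o_2$, \autoref{PROP: Fellow Travel}(1) gives that the path has bounded projection onto $\langle f\rangle o_2$, hence so does $\langle f_*\rangle o_2$, and \autoref{PROP: Bounded Intersection Equals Bounded Projection} then yields that $\langle f_*\rangle o_2$ and $\langle f\rangle o_2$ have bounded intersection, i.e.\ $f_*$ and $f$ are weakly-independent in $X_2$; the symmetric argument in $X_1$ (using that $h_{i_k}$ is weakly-independent with $f$ in $X_1$) gives weak-independence in $X_1$. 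This furnishes the pair needed in the first paragraph. I expect the real difficulty to be precisely this last point: membership in $\mathcal{SC}(G)$ is a conjunction of one condition per action, whereas weak-independence is controlled one action at a time, so one must simultaneously keep track of the position of the newly built axis relative to both $\langle f\rangle o_1$ and $\langle f\rangle o_2$ — which is what dictates the careful choice of the reference systems and the extra genericity of $g$ via \autoref{LEM: Elementary subset not finite index}. Once these choices are fixed, the bookkeeping of the admissible-path constants ($D$, $\tau$, $\epsilon$, $\hat D$, etc.) proceeds verbatim as in the proof of \autoref{PROP: SC non-empty}.
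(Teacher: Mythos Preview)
Your reduction step is correct and matches the paper exactly: once you have a pair $f,f_*\in\mathcal{SC}(G)$ weakly-independent in both actions, \autoref{LEM: Non Elementary}(\ref{_NE2}) applied with a common $N\ge\max\{N_1,N_2\}$ produces the required infinite independent set of conjugates of $f$.

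The gap is in your construction of the pair. Your claim that \autoref{PROP: Fellow Travel}(\ref{2.15.1}) bounds the projection of the periodic admissible path onto $\langle f\rangle o_2$ is a misapplication: that proposition only bounds projections onto the contracting sets $X_i\in\mathbb{X}(\gamma)$ \emph{internal} to the admissible path, not onto an external contracting set like $\langle f\rangle o_2$. Even granting that $h$ is weakly-independent with $f$ in $X_2$, the translates $(h^{n_k}gf_j)^m\langle h\rangle o_2$ appearing along the axis of $f_*$ have no reason to stay away from $\langle f\rangle o_2$; weak-independence is not preserved under translation. The phrase ``$g$ was chosen to keep that path away from $\langle f\rangle o_2$'' hides the problem rather than solving it: you never specify which non-finite-index set encodes this condition, and in fact no single choice of $g$ can control all the infinitely many translates simultaneously. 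The same issue arises symmetrically in $X_1$.

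The paper sidesteps this entirely with a pigeonhole trick. Starting from $f\in\mathcal{SC}(G)$, it builds (via \autoref{COR: New Contracting} and \autoref{LEM: Non Elementary}(\ref{_NE2})) five conjugates $\theta_1,\ldots,\theta_5$ of $f$ that are pairwise weakly-independent in $X_1$, and five conjugates $\eta_1,\ldots,\eta_5$ pairwise weakly-independent in $X_2$; all ten lie in $\mathcal{SC}(G)$ since they are conjugates of $f$. By \autoref{LEM: Weakly Independent One In Three}, among any three $\theta$'s each $\eta_l$ is weakly-independent (in $X_1$) with at least one, so at least $15$ of the $25$ pairs $(\theta_k,\eta_l)$ are weakly-independent in $X_1$; symmetrically at least $15$ pairs are weakly-independent in $X_2$. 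Since $15+15>25$, some pair $(\theta,\eta)$ is weakly-independent in both actions, and that is your $(f,f_*)$. This avoids any need to track projections onto $\langle f\rangle o_i$.
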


\begin{proof}
Fix basepoints $o_1\in X_1$, $o_2\in X_2$. 
We first construct two elements in $\mathcal{SC}(G)$ which are weakly independent in both $X_1$ and $X_2$.

Suppose $f\in \mathcal{SC}(G)$.
According to \autoref{COR: New Contracting}, we can find an element $g_{\ast}\in G$ which is contracting and weakly independent with $f$ in $G\curvearrowright X_1$.

According to \autoref{LEM: Non Elementary} (\ref{_NE2}), there exists $n\gg 0$ such that $\theta_k= (f^n g_{\ast}^n)^kf(f^ng_{\ast}^n)^{-k}$ $({k \in \mathbb{Z}})$ are pairwise weakly independent in $G\curvearrowright X_1$.

Similarly, we can find an element $h_{\ast}\in G$ which is contracting and weakly independent with $f$ in $G\curvearrowright X_2$. 
For $n\gg  0$, $\eta_k=(f^n h_{\ast}^n)^kf(f^nh_{\ast}^n)^{-k}$ $({k \in \mathbb{Z}})$ are pairwise weakly independent in $G\curvearrowright X_2$.

Notice that $\theta_k$, $\eta_k$ are all conjugate to $f$, so they are simultaneously contracting in $X_1$ and $X_2$.

Since $\theta_1,\cdots,\theta_5$ are pairwise weakly independent contracting elements in $X_1$, according to \autoref{LEM: Weakly Independent One In Three}, for any $1\leq k\leq 5$ and $1\leq i_1< i_2< i_3\leq 5$, there exist one element in $\left\{\theta_{i_1},\theta_{i_2},\theta_{i_3}\right\}$ which is weakly independent with $\eta_k$ in $X_1$.

Similarly, for any $1\leq k\leq 5$ and $1\leq i_1< i_2< i_3\leq 5$, there  {exists} one element in $\left\{\eta_{i_1},\eta_{i_2},\eta_{i_3}\right\}$ which is weakly independent with $\theta_k$ in $X_2$.

Hence there are at least $5\times 3=15$ pairs $(\theta_k,\eta_l)$ weakly independent in $X_1$ and also at least $5\times 3=15$ pairs $(\theta_k,\eta_l)$ weakly independent in $X_2$. However, there are altogether only $5\times 5=25$ pairs $(\theta_k,\eta_l)$, $1\leq k,l\leq 5$. Hence, by Pigeonhole Principle, there must be a pair $(\theta, \eta)$ weakly independent in both $X_1$ and $X_2$.

Now we can construct the weakly independent set of infinitely many simultaneously contracting elements.

By \autoref{LEM: Non Elementary} (\ref{_NE2}), there exists $n\gg 0$ such that $((\theta^n \eta^n)^k\theta(\theta^n\eta^n)^{-k})_{k \in \mathbb{Z}}$ are pairwise weakly independent and simultaneously contracting on both $X_1$ and $X_2$, and hence $\left\{(\theta^n \eta^n)^k\theta(\theta^n\eta^n)^{-k}\mid k \in \mathbb{Z}\right\}$ is the desired weakly independent set of contracting elements in $\mathcal{SC}(G)$. 
\end{proof}

Through  the Pigeonhole Principle, we establish a simultaneous version of \autoref{COR: Perturbation Length}.

\begin{lemma}\label{Lem: Simultaneous Extension Lemma}
    Fix basepoints $o_1\in X_1$ and $o_2\in X_2$.
    Given a weakly independent set of five elements $F_0=\left\{h_1,h_2,h_3,h_4,h_5\right\}\subseteq \mathcal {SC}(G)$, there exists $D>0,\epsilon>0$ with the following property.

    Form the set $F=\{f_1,f_2,f_3,f_4,f_5\}$ constructed from $F_0$ with $f_j\in\left\langle h_j\right\rangle$ and $d_i(o_i,f_jo_i)>D$ ($i=1,2,\,1\le j\le 5$). Then for any $g\in G$ satisfying $d_i(o_i,go_i)>D$ ($i=1,2$), we can choose $f\in F$ so that the following holds.
    
    \begin{enumerate}[(1)]
        \item $\ell_{d_i}(gf)\ge d_i(o_i,gfo_i)-2\epsilon$, for $i=1,2$.
        \item $gf,fg\in \mathcal{SC}(G)$.
    \end{enumerate}
\end{lemma}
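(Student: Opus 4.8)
The plan is to carry out the argument behind \autoref{COR: Perturbation Length} simultaneously in $X_1$ and in $X_2$, feeding both runs the \emph{same} perturbing element $f\in F$; the device that lets a single $f$ serve both actions is a Pigeonhole argument, which is precisely why $F_0$ is required to contain five elements.

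First I would fix constants separately in each space. For $i=1,2$, the five orbits $\langle h_1\rangle o_i,\dots,\langle h_5\rangle o_i$ are $C_i$-contracting for a common $C_i$, have pairwise $\mathcal{R}_i$-bounded intersection for a common gauge $\mathcal{R}_i$, and all contain $o_i$. Let $\tau_i=\tau(C_i,\mathcal{R}_i)$ be the constant of \autoref{LEM: Finite Projection}. Following the recipe in the proof of \autoref{COR: Perturbation Length}, let $D_i^0=D(\tau_i,C_i)$ and $\epsilon_i=\epsilon(\tau_i,C_i)$ come from \autoref{PROP: Fellow Travel}, let $\mathcal{R}_i'=\mathcal{R}_{\tau_i,C_i}$ come from \autoref{COR: Bounded Intersection}, and let $D_i^1=D(\tau_i,C_i,\mathcal{R}_i')$ come from \autoref{PROP: Admissible Path Contracting}; set $\hat D_i=\max\{D_i^0+3\epsilon_i,\,D_i^1\}$. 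Then put $D=\max\{\hat D_1,\hat D_2\}$ and $\epsilon=\max\{\epsilon_1,\epsilon_2\}$, and form $F=\{f_1,\dots,f_5\}$ with $f_j\in\langle h_j\rangle$ and $d_i(o_i,f_jo_i)>D$.

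Now, given $g$ with $d_i(o_i,go_i)>D$ for $i=1,2$, I would choose $f$ by Pigeonhole. Applying \autoref{LEM: Finite Projection} to the geodesic $[o_i,go_i]$ and to each of the pairs among the five orbits in $X_i$ shows that at most one index $j$ has $\d{\langle h_j\rangle o_i}(o_i,go_i)>\tau_i$, and likewise at most one index has $\d{\langle h_j\rangle o_i}(o_i,g^{-1}o_i)>\tau_i$. Hence the set
\[ J_i=\{\,j\in\{1,\dots,5\}\mid \d{\langle h_j\rangle o_i}(o_i,go_i)\le\tau_i\ \text{and}\ \d{\langle h_j\rangle o_i}(o_i,g^{-1}o_i)\le\tau_i\,\} \]
satisfies $|J_i|\ge 3$. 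Since $|J_1|+|J_2|\ge 6>5=|F_0|$, the sets $J_1$ and $J_2$ intersect; pick $j\in J_1\cap J_2$ and set $f=f_j$.

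For this $f$, by the computation in the proof of \autoref{LEM: Construction of Perturbation}, in each $X_i$ the path $\gamma_i$ labeled by $(\dots,g,f,g,f,\dots)$ is a special $(D,\tau_i)$-admissible path with respect to $\mathbb{X}_i=\{u\langle h_l\rangle o_i\mid u\in G,\ 1\le l\le 5\}$: \textbf{(LL1)} holds because $\mathrm{Len}(p_k)=d_i(o_i,fo_i)>D$; \textbf{(BP)} holds because $\d{\langle h_j\rangle o_i}(o_i,go_i)\le\tau_i$ and $\d{\langle h_j\rangle o_i}(o_i,g^{-1}o_i)\le\tau_i$ by the choice of $j$; and \textbf{(LL2')} holds because $d_i((p_k)_+,(p_{k+1})_-)=d_i(o_i,go_i)>D$. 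From here the three conclusions of \autoref{COR: Perturbation Length} transfer verbatim to each $X_i$: \autoref{PROP: Fellow Travel} together with the usual induction gives $d_i(o_i,(gf)^no_i)\ge n\,d_i(o_i,gfo_i)-2(n-1)\epsilon_i$, hence $\ell_{d_i}(gf)\ge d_i(o_i,gfo_i)-2\epsilon_i\ge d_i(o_i,gfo_i)-2\epsilon$, which is item (1); the $\epsilon_i$-fellow travelling of $[o_i,gfo_i]$ along $[o_i,go_i]\cup[go_i,gfo_i]$ combined with $d_i(o_i,go_i)>\hat D_i\ge D_i^0+3\epsilon_i$ furnishes the two-sided linear bound showing $n\mapsto(gf)^no_i$ is a quasi-isometric embedding; and \autoref{COR: Bounded Intersection}, \autoref{PROP: Admissible Path Contracting} and \autoref{LEM: Finite Hausdorff Distance} show $\langle gf\rangle o_i$ is a contracting subset, so $gf$ is a contracting element of $X_i$. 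Finally $fg=f(gf)f^{-1}$ is conjugate to $gf$ and hence is contracting in $X_i$ as well. As all of this holds for both $i=1$ and $i=2$, we get $gf,fg\in\mathcal{SC}(G)$, which is item (2). The only ingredient not already present in \autoref{COR: Perturbation Length} is the Pigeonhole step, and the one delicate point is the counting it relies on: \autoref{LEM: Finite Projection} guarantees only $|J_i|\ge|F_0|-2$, so for $J_1\cap J_2\ne\varnothing$ one needs $|F_0|\ge 5$; I do not anticipate any further obstacle.
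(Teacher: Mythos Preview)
Your proof is correct and follows essentially the same Pigeonhole strategy as the paper. The only organizational difference is that the paper applies \autoref{COR: Perturbation Length} as a black box to each of the $\binom{5}{3}$ triples $\{h_j,h_k,h_l\}$ (taking $D,\epsilon$ to be the maxima of the resulting $\hat D^{(i)}_{j,k,l},\epsilon^{(i)}_{j,k,l}$), deducing that at least three $f_j$ work in each $X_i$ and then intersecting; you instead fix constants once for all five elements and extract the ``at most two indices fail'' count directly from \autoref{LEM: Finite Projection} via your sets $J_i$, then rerun the admissible-path argument of \autoref{COR: Perturbation Length} inline. Both routes are the same argument underneath.
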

\begin{proof}
    For each $1\leq i \leq 2$, and each triple $1\leq j<k<l\leq 5$, consider the action $G\curvearrowright X_i$ and let $\epsilon^{(i)}_{j,k,l}>0,\hat{D}^{(i)}_{j,k,l}>0$ be decided by \autoref{COR: Perturbation Length} for the three contracting elements $h_j,h_k,h_l$. Let $\epsilon=\max\epsilon^{(i)}_{j,k,l}$ and $D=\max\hat{D}^{(i)}_{j,k,l}$.

    Let $F=\{f_1,f_2,f_3,f_4,f_5\}$ be the weakly independent set of simultaneously contracting elements constructed with $f_j\in\left\langle h_j\right\rangle$ and $d_i(o_i,f_jo_i)>D$ ($i=1,2,\,1\le j\le 5$), and let $g\in G$ satisfy $d_i(o_i,go_i)>D$ ($i=1,2$). Then according to \autoref{COR: Perturbation Length}, for each $1\leq j<k<l\leq 5$, we can find $f\in \left\{f_j,f_k,f_l\right\}$ such that $\ell_{d_1}(gf)\ge d_1(o_1,gfo_1)-2\epsilon$ and $gf,fg$ are contracting elements in $G\curvearrowright X_1$. In other words, there are at least three elements in $F$ such that the above condition holds. Similarly, there are also at least three elements in $F$ such that $\ell_{d_2}(gf)\ge d_2(o_2,gfo_2)-2\epsilon$ and $gf,fg$ are contracting elements in $G\curvearrowright X_2$.

    By Pigeonhole Principle, there exist at least one element $f\in F$ such that $\ell_{d_i}(gf)\ge d_i(o_i,gfo_i)-2\epsilon$ for $i=1,2$, and $gf,fg$ are contracting elements in both $G\curvearrowright X_1$ and $G\curvearrowright X_2$.
\end{proof}

\begin{corollary}\label{COR: Simultaneous Extension 15}
    There exist $\epsilon >0$ and a set $S\subseteq \mathcal{SC}(G)$ of at most $15$ elements such that for any $g\in G$ we can choose $f\in S$ so that the following holds.
    \begin{enumerate}
        \item $\ell_{d_i}(gf)\ge d_i(o_i,gfo_i)-2\epsilon$, for $i=1,2$.
        \item $gf,fg\in \mathcal{SC}(G)$.
    \end{enumerate}
\end{corollary}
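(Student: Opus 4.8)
The plan is to bootstrap \autoref{Lem: Simultaneous Extension Lemma}, which already gives the conclusion for all $g$ with $d_i(o_i,go_i)>D$ ($i=1,2$), to arbitrary $g\in G$ by pre-multiplying $g$ with one of finitely many fixed simultaneously contracting elements. Concretely, I would first use \autoref{LEM: Simultaneous Contracting} to fix six elements $e_0,e_1,\dots,e_5\in\mathcal{SC}(G)$ that are pairwise weakly-independent on both $X_1$ and $X_2$; set $a=e_0$ and $F_0=\{e_1,\dots,e_5\}$. Let $D,\epsilon$ be the constants output by \autoref{Lem: Simultaneous Extension Lemma} for $F_0$, let $N$ be a common threshold from \autoref{LEM: Non Elementary} (1) for all pairs $(e_0,e_j)$ on both spaces, and choose the set $F=\{f_1,\dots,f_5\}$ with $f_j=e_j^{m_j}$, $m_j\ge N$, and $d_i(o_i,f_jo_i)>D$, as permitted by \autoref{Lem: Simultaneous Extension Lemma}.

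The geometric core is a uniform ``two out of three'' estimate. For each $i$, the orbit $\langle a\rangle o_i$ is a contracting quasi-geodesic (\autoref{DEF: Contracting Element}), hence $C_i$-contracting and $(\lambda_i,c_i)$-quasi-geodesically embedded for suitable constants. Given $g\in G$, apply \autoref{LEM: Geodesic Along Projection} with basepoint $a^ko_i\in\langle a\rangle o_i$ and the point $y=g^{-1}o_i$, choosing $p=a^{k_0}o_i\in\pi_{\langle a\rangle o_i}(y)$; this yields
\[
 d_i(o_i,ga^ko_i)=d_i(y,a^ko_i)\ \ge\ d_i(a^{k_0}o_i,a^ko_i)-4C_i\ \ge\ \tfrac{1}{\lambda_i}|k-k_0|-c_i-4C_i .
\]
So $|k-k_0|\ge T_i:=\lambda_i(D+c_i+4C_i)$ forces $d_i(o_i,ga^ko_i)>D$. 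Now pick a single integer $K>2\max\{T_1,T_2,N\}$. The interval $(k_0-T_i,k_0+T_i)$ has length $<K$, hence contains at most one of $0,K,2K$; therefore at least two of the three elements $g,ga^K,ga^{2K}$ translate $o_i$ by more than $D$ in $X_i$. Since this holds for both $i=1,2$, a \emph{pigeonhole} count ($2+2>3$) produces a common $k^\ast\in\{0,K,2K\}$ with $d_i(o_i,ga^{k^\ast}o_i)>D$ for $i=1,2$.

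It remains to assemble $S$ and conclude. Put $S:=F\cup a^KF\cup a^{2K}F$, so $|S|\le 15$. Every element of $F$ lies in $\mathcal{SC}(G)$, and each $a^Kf_j=e_0^Ke_j^{m_j}$, $a^{2K}f_j=e_0^{2K}e_j^{m_j}$ lies in $\mathcal{SC}(G)$ by \autoref{LEM: Non Elementary} (1) applied on both spaces (since $K,2K,m_j\ge N$ and $e_0,e_j$ are weakly-independent there); thus $S\subseteq\mathcal{SC}(G)$. For arbitrary $g\in G$, choose $k^\ast$ as above and apply \autoref{Lem: Simultaneous Extension Lemma} to the element $ga^{k^\ast}$ (legitimate since $d_i(o_i,ga^{k^\ast}o_i)>D$): this gives $f'\in F$ with $\ell_{d_i}(ga^{k^\ast}f')\ge d_i(o_i,ga^{k^\ast}f'o_i)-2\epsilon$ and $ga^{k^\ast}f',\,f'ga^{k^\ast}\in\mathcal{SC}(G)$. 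Take $f=a^{k^\ast}f'\in S$. Then $gf=ga^{k^\ast}f'$ satisfies (1), $gf\in\mathcal{SC}(G)$, and $fg=a^{k^\ast}f'g$ is conjugate via $a^{-k^\ast}$ to $f'ga^{k^\ast}\in\mathcal{SC}(G)$; since being simultaneously contracting is conjugation-invariant, $fg\in\mathcal{SC}(G)$, giving (2).

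The only non-formal step is the uniform estimate of the second paragraph: one must show that, for a fixed $K$ chosen once and for all, at least two of $g,ga^K,ga^{2K}$ are far from $o_i$ regardless of $g$. This is exactly where the contracting quasi-geodesic structure of $\langle a\rangle o_i$ is used, together with a choice of $K$ large relative to the quasi-geodesic and contraction constants on both spaces; all the rest is bookkeeping, a pigeonhole argument, and direct appeals to \autoref{Lem: Simultaneous Extension Lemma}, \autoref{LEM: Non Elementary}, and \autoref{LEM: Geodesic Along Projection}.
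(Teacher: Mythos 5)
Your proposal is correct, and its overall skeleton is the one the paper uses: take $S=F\cup(\text{translate of }F)\cup(\text{translate of }F)$ with $|S|\le 15$, push an arbitrary $g$ into the regime $d_i(o_i,\cdot\,o_i)>D$ ($i=1,2$) by multiplying with one of finitely many fixed elements, invoke \autoref{Lem: Simultaneous Extension Lemma}, and get $fg\in\mathcal{SC}(G)$ by conjugation, with $S\subseteq\mathcal{SC}(G)$ guaranteed by \autoref{LEM: Non Elementary}\,(1). Where you genuinely differ is the mechanism certifying that some translate of $g$ is long in both spaces. The paper's translating elements are $1,f_1,f_1^m$ (powers of one of the five elements of $F_0$, calibrated so that $d_i(o_i,f_1o_i)>2D$ and $d_i(o_i,f_1^m o_i)>2D+K_i$), and the reduction is a short overlapping case analysis using nothing but the triangle inequality; in particular it needs no projection machinery and works verbatim in the possibly non-proper setting of this section. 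You instead take a sixth independent element $a=e_0$, project $g^{-1}o_i$ to the contracting axis $\langle a\rangle o_i$ via \autoref{LEM: Geodesic Along Projection} and the quasi-isometric embedding of the axis, conclude that the ``bad'' exponents $k$ with $d_i(o_i,ga^{k}o_i)\le D$ form a window of length $<K$, and pigeonhole over $\{1,a^{K},a^{2K}\}$ across the two spaces. This buys a clean uniform statement (bad exponents occupy a bounded window, independent of $g$), at the price of relying on projection lemmas that, in the non-proper case, must be read with the coarse projection described at the end of \autoref{SEC: Extension Lemma} (with slightly adjusted constants), plus two trivial calibrations: enlarge $T_i$ a bit beyond $\lambda_i(D+c_i+4C_i)$ so that being outside the window yields the strict inequality $d_i(o_i,ga^{k}o_i)>D$ demanded by \autoref{Lem: Simultaneous Extension Lemma}, and in the non-proper case choose the coarse projection point $a^{k_0}o_i$ (costing an additive constant). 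With these cosmetic fixes your argument is complete and gives the same bound of $15$ elements.
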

\begin{proof}
    Fix basepoints $o_1\in X_1$ and $o_2\in X_2$.
    Let $F_0=\left\{h_1,h_2,h_3,h_4,h_5\right\}\subseteq \mathcal {SC}(G)$ be a weakly independent set of five elements, and let $D>0,\epsilon>0$ be decided by \autoref{Lem: Simultaneous Extension Lemma}.

    According to \autoref{LEM: Non Elementary} (\ref{_NE1}), there exists $N\gg 0$ such that $h_j^nh_k^m\in \mathcal{SC}(G)$ for any $n,m\ge N$ and $1\le j,k\le 5$. For each $1\le j\le 5$, pick $n_j\ge N$ such that $\min\left\{d_1(o_1,h_j^{n_j}o_1),d_2(o_2,h_j^{n_j}o_2)\right\}>2D$ and let $f_j=h_j^{n_j}$.

    Denote $K_i=d_i(o_i,f_1o_i)$ for $i=1,2$. Pick $m\ge 1$ such that $d_i(o_i,f_1^mo_i)>2D+K_i$ for $i=1,2$.

    Let $F=\left\{f_1,f_2,f_3,f_4,f_5\right\}$ and then $S=F\cup (f_1\cdot F) \cup (f_1^m \cdot F)$. By construction, $S\subseteq \mathcal{SC}(G)$.

    Now, we consider four possibly overlapping cases that encompass all $g\in G$ and find the desired element $f\in S$.

    \textbf{Case 1:} $d_1(o_1,go_1)>D$ and $d_2(o_2,go_2)>D$.

    According to \autoref{Lem: Simultaneous Extension Lemma}, there exists $f\in \left\{f_1,f_2,f_3,f_4,f_5\right\}\subseteq S$ such that $\ell_{d_i}(gf)\ge d_i(o_i,gfo_i)-2\epsilon$ ($i=1,2$) and $gf,fg\in \mathcal{SC}(G)$.

    \textbf{Case 2:} $d_1(o_1,go_1)>D+K_1$ and $d_2(o_2,go_2)\le D$.

    By triangle inequality, $d_1(o_1,gf_1o_1)\ge d_1(o_1,go_1)-d_1(o_1,f_1o_1)>D$, and $d_2(o_2,gf_1o_2)\ge d_2(o_2,f_1o_2)-d_2(o_2,go_2)>2D-D=D$. Therefore, by \autoref{Lem: Simultaneous Extension Lemma}, there exists $\tilde{f}\in F=\left\{f_1,f_2,f_3,f_4,f_5\right\}$ such that $\ell_{d_i}(gf_1\tilde{f})\ge d_i(o_i,gf_1\tilde{f}o_i)-2\epsilon$ ($i=1,2$) and $gf_1\tilde{f}\in \mathcal{SC}(G)$. Let $f=f_1\tilde{f}\in f_1 \cdot F\subseteq S$. Then $\ell_{d_i}(gf)\ge d_i(o_i,gfo_i)-2\epsilon$ ($i=1,2$) and $gf\in \mathcal{SC}(G)$. As $fg$ is a conjugate of $gf$, we have $fg\in \mathcal{SC}(G)$.

    \textbf{Case 3:} $d_1(o_1,go_1)\le D$ and $d_2(o_2,go_2)> D+K_2$.

    This case follows a similar proof with \textbf{Case 2}.

    \textbf{Case 4:} $d_1(o_1,go_1)\le D+K_1$ and $d_2(o_2,go_2)\le D+K_2$.

    For each $i=1,2$, by triangle inequality, $d_i(o_i,gf_1^mo_i)\ge d_i(o_i,f_1^mo_i)-d_i(o_i,go_i)>(2D+K_i)-(D+K_i)=D$. Therefore, by \autoref{Lem: Simultaneous Extension Lemma}, there exists $\tilde{f}\in F$ such that $\ell_{d_i}(gf_1^m\tilde{f})\ge d_i(o_i,gf_1^m\tilde{f}o_i)-2\epsilon$ ($i=1,2$) and $gf_1^m\tilde{f}\in \mathcal{SC}(G)$. Let $f=f_1^m\tilde{f}\in f_1^{m}\cdot F\subseteq S$. Then $\ell_{d_i}(gf)\ge d_i(o_i,gfo_i)-2\epsilon$ ($i=1,2$) and $gf\in \mathcal{SC}(G)$. As $fg$ is a conjugate of $gf$, we have $fg\in \mathcal{SC}(G)$.
\end{proof}

\section{Marked length spectrum rigidity}\label{SEC: Rigidity}
Throughout this section,  suppose that a group $G$ acts isometrically on two geodesic metric spaces $(X_1,d_1)$ and $(X_2,d_2)$ with contracting property (c.f. \autoref{DEF: Contracting Property}).

We say two actions $G\curvearrowright X_1$ and $G\curvearrowright X_2$ have the \textit{same marked length spectrum} on a subset $E\subseteq G$, if \begin{align*}
            \ell_{d_1}(g)=\ell_{d_2}(g),\,\forall g\in E.
        \end{align*}

Questions \ref{Que: MLSR} and \ref{Que: MLSR from subset}  on marked length spectrum rigidity  in Introduction can now be recasted as follows.

For which subsets $E\subseteq G$, the same marked length spectrum on $E$ will imply that $$\sup_{g,h\in G}\abs{d_1(go_1,ho_1)-d_2(go_2,ho_2)}<+\infty,$$ or equivalently the orbit map $\rho:Go_1\to Go_2$ given by $go_1\mapsto go_2$ is well-defined up to a bounded error and is a rough isometry? 

For any such subset $E$, we say that $G$ has \textit{marked length spectrum rigidity from $E$}, or simply \textit{marked length spectrum rigidity} when $E=G$. 

Addressing this question is the  goal of the subsequent sections.  We start with the case $E=G$ in this section.

\subsection{MLS rigidity for actions with contracting property}
\begin{theorem}\label{THM: Main Rigidity}
$G$ has marked length spectrum rigidity: if $\ell_{d_1}(g)=\ell_{d_2}(g)$ for all $g\in G$, then  for any two $o_1\in X_1, o_2\in X_2$,  $\sup_{g,h\in G}\abs{d_1(go_1,ho_1)-d_2(go_2,ho_2)}<+\infty$.

In particular, if the action of $G$ on $X_1$ and $X_2$ are both cobounded, then  there exists a $G$-coarsely equivariant rough isometry from $X_1$ to $X_2$. 

\end{theorem}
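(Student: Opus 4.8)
The plan is to use the simultaneous extension machinery (\autoref{COR: Simultaneous Extension 15}) to control the orbit displacement $d_i(o_i,go_i)$ by stable translation lengths, which coincide on all of $G$ by hypothesis. First I would fix basepoints $o_1\in X_1$, $o_2\in X_2$, let $S\subseteq \mathcal{SC}(G)$ be the finite set and $\epsilon>0$ the constant provided by \autoref{COR: Simultaneous Extension 15}, and set $M=\max_{f\in S}\max\{d_1(o_1,fo_1),d_2(o_2,fo_2)\}$. Given an arbitrary $g\in G$, choose $f\in S$ with $gf\in\mathcal{SC}(G)$ and $\ell_{d_i}(gf)\ge d_i(o_i,gfo_i)-2\epsilon$ for $i=1,2$. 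Then for each $i$,
\[
d_i(o_i,go_i)\ \le\ d_i(o_i,gfo_i)+d_i(gfo_i,go_i)\ \le\ \ell_{d_i}(gf)+2\epsilon+M,
\]
while on the other hand, by \autoref{RMK: MLS Basics}(\ref{MLSBasics2}) and the triangle inequality, $\ell_{d_i}(gf)\le d_i(o_i,gfo_i)\le d_i(o_i,go_i)+M$. Combining these two bounds with the hypothesis $\ell_{d_1}(gf)=\ell_{d_2}(gf)$ yields
\[
\bigl|d_1(o_1,go_1)-d_2(o_2,go_2)\bigr|\ \le\ 2\epsilon+2M
\]
for every $g\in G$; denote this uniform constant by $C_0$.

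Next I would upgrade this pointwise estimate to the two-variable statement. For $g,h\in G$, $G$-invariance of the metrics gives $d_i(go_i,ho_i)=d_i(o_i,g^{-1}ho_i)$, so applying the previous paragraph with $g^{-1}h$ in place of $g$ immediately gives $\bigl|d_1(go_1,ho_1)-d_2(go_2,ho_2)\bigr|\le C_0$. This is precisely the assertion $\sup_{g,h\in G}\bigl|d_1(go_1,ho_1)-d_2(go_2,ho_2)\bigr|<+\infty$, and it shows that the orbit map $\rho\colon Go_1\to Go_2$, $go_1\mapsto go_2$, is well-defined up to bounded error (if $go_1=g'o_1$ then $d_1(go_1,g'o_1)=0$, hence $d_2(go_2,g'o_2)\le C_0$) and is a $(1,C_0)$-quasi-isometric embedding onto its image $Go_2$, i.e.\ a rough isometry between the orbits.

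For the final sentence, suppose in addition both actions are cobounded, say every point of $X_i$ lies within $R$ of $Go_i$. Then the orbit map $\rho$, being a rough isometry $Go_1\to Go_2$ with $R$-dense image in a space where $Go_1$ is $R$-dense, extends to a map $\bar\rho\colon X_1\to X_2$ by sending $x\in X_1$ to $\rho(g o_1)$ for any $g$ with $d_1(x,go_1)\le R$; a routine check using the $(1,C_0)$-estimate shows $\bar\rho$ is a $(1,2R+C_0)$-quasi-isometry, and it is $G$-coarsely equivariant because $\rho$ is $G$-equivariant on the orbit and the extension only introduces a bounded ambiguity controlled by $R$. I expect the only genuinely delicate point to be the very first step—ensuring that $\mathcal{SC}(G)$ and the finite perturbing set $S$ behave as claimed—but that has already been established in \autoref{SEC: Simultaneously contracting}, so here it is used as a black box; the rest is elementary metric bookkeeping.
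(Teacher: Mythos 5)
Your proposal is correct and follows essentially the same route as the paper: the paper also reduces everything to \autoref{COR: Simultaneous Extension 15} and the same chain of triangle inequalities together with $\ell_{d_i}(gf)\le d_i(o_i,gfo_i)$, the only difference being that the paper packages the estimate as the slightly more general \autoref{COR: Rigidity from SC} (a two-sided multiplicative comparison requiring the length identity only on $\mathcal{SC}(G)$) and then specializes with $\lambda_1=\lambda_2=1$. Your reduction of the two-variable statement to the one-variable one via left invariance, and the cobounded extension at the end, match the intended argument.
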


In fact, \autoref{THM: Main Rigidity} can be derived directly from the following more precise inequality:
\begin{theorem}\label{COR: Rigidity from SC}
    Assume for some $\lambda_1,\lambda_2\geq 0$, the marked length spectrum on $\mathcal{SC}(G)$ satisfies the following inequality: \begin{align}\label{SCInequality}
        \lambda_1  \ell_{d_2}(g)\leq  \ell_{d_1}(g)\leq \lambda_2 \ell_{d_2}(g),\,\forall g\in \mathcal{SC}(G).
    \end{align}  
    
    Then for any fixed points $o_1\in X_1$ and $o_2\in X_2$, there exists $C\geq 0$ such that $$\lambda_1 d_2(go_2,ho_2)-C\leq d_1(go_1,ho_1)\leq \lambda_2 d_2(go_2,ho_2)+C, \,\forall g,h\in G.$$
\end{theorem}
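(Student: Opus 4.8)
The plan is to reduce the two-sided orbit inequality to a one-directional statement and then feed an arbitrary group element into the simultaneous Extension Lemma. Since the actions are isometric, $d_i(go_i,ho_i)=d_i(o_i,g^{-1}ho_i)$, and as $g,h$ range over $G$ the element $k=g^{-1}h$ ranges over all of $G$; so it suffices to prove, for every $k\in G$,
$$\lambda_1\, d_2(o_2,ko_2)-C\le d_1(o_1,ko_1)\le \lambda_2\, d_2(o_2,ko_2)+C.$$
I would isolate the following symmetric claim and apply it twice: \emph{if $\ell_{d_1}(g)\le \lambda\,\ell_{d_2}(g)$ for all $g\in\mathcal{SC}(G)$, then $d_1(o_1,ko_1)\le \lambda\, d_2(o_2,ko_2)+C$ for all $k\in G$, where $C$ depends only on $\lambda$, the basepoints, and the constants below.} Granting the claim, the upper bound is the case $\lambda=\lambda_2$. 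For the lower bound, when $\lambda_1>0$ rewrite \eqref{SCInequality} as $\ell_{d_2}(g)\le \lambda_1^{-1}\ell_{d_1}(g)$ on $\mathcal{SC}(G)$ and apply the claim with the roles of $X_1,X_2$ exchanged (this is legitimate because $\mathcal{SC}(G)$ is the same set for either ordering), obtaining $d_2(o_2,ko_2)\le \lambda_1^{-1}d_1(o_1,ko_1)+C'$, i.e. $\lambda_1 d_2(o_2,ko_2)-\lambda_1 C'\le d_1(o_1,ko_1)$; when $\lambda_1=0$ the lower bound is trivial. Taking $C$ to be the larger of the resulting constants finishes the theorem.

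To prove the claim, invoke \autoref{COR: Simultaneous Extension 15} to get $\epsilon>0$ and a finite set $S\subseteq\mathcal{SC}(G)$ with $\sharp S\le 15$, and set $M_i=\max_{f\in S}d_i(o_i,fo_i)<\infty$. Given $k\in G$, choose $f\in S$ with $kf\in\mathcal{SC}(G)$ and $\ell_{d_i}(kf)\ge d_i(o_i,kfo_i)-2\epsilon$ for $i=1,2$. Then chain the estimates: by the triangle inequality and isometry, $d_1(o_1,ko_1)\le d_1(o_1,kfo_1)+d_1(o_1,fo_1)\le d_1(o_1,kfo_1)+M_1$; by the choice of $f$, $d_1(o_1,kfo_1)\le \ell_{d_1}(kf)+2\epsilon$; by the hypothesis, valid since $kf\in\mathcal{SC}(G)$, $\ell_{d_1}(kf)\le \lambda\,\ell_{d_2}(kf)$; by \autoref{RMK: MLS Basics}\,(\ref{MLSBasics2}), $\ell_{d_2}(kf)\le d_2(o_2,kfo_2)$; and once more $d_2(o_2,kfo_2)\le d_2(o_2,ko_2)+M_2$. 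Combining gives $d_1(o_1,ko_1)\le \lambda\,d_2(o_2,ko_2)+(\lambda M_2+M_1+2\epsilon)$, which is the claim with $C=\lambda M_2+M_1+2\epsilon$.

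Finally, \autoref{THM: Main Rigidity} is the case $\lambda_1=\lambda_2=1$: the same marked length spectrum on $G$ forces $\sup_{g,h}\lvert d_1(go_1,ho_1)-d_2(go_2,ho_2)\rvert\le C$, so $go_1\mapsto go_2$ is well-defined up to bounded error and is a $(1,C)$-quasi-isometry onto $Go_2$; $G$-coarse equivariance is immediate from left-invariance, and if both actions are cobounded the orbits are coarsely dense, so the map extends to a $G$-coarsely equivariant rough isometry $X_1\to X_2$. The only genuine content here is replacing an arbitrary $k$ by a simultaneously contracting element $kf$ at uniformly bounded cost while keeping control of the marked length spectrum; this is exactly what \autoref{COR: Simultaneous Extension 15} provides, so no real obstacle remains at this stage — the difficulty of the theorem has been front-loaded into the construction of $\mathcal{SC}(G)$ and the Extension Lemma machinery of the previous sections.
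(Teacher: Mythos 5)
Your proposal is correct and follows essentially the same route as the paper: reduce by left-invariance to comparing $d_1(o_1,ko_1)$ with $d_2(o_2,ko_2)$, use \autoref{COR: Simultaneous Extension 15} to replace $k$ by $kf\in\mathcal{SC}(G)$ at bounded cost with $\ell_{d_i}(kf)\ge d_i(o_i,kfo_i)-2\epsilon$, chain the hypothesis \eqref{SCInequality} with \autoref{RMK: MLS Basics}\,(\ref{MLSBasics2}) and the triangle inequality, and obtain the lower bound by symmetry (trivial when $\lambda_1=0$). The constants you obtain match the paper's up to bookkeeping, so nothing further is needed.
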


\begin{proof}

\iffalse
    
    According to \autoref{LEM: Simultaneous Contracting}, we can choose five elements \iffalse$h_1,h_2,h_3\in G$\fi$h_1,h_2,h_3,h_4,h_5\in \mathcal{SC}(G)$ that \iffalse{ act simultaneously contracting and }\fi are pairwise weakly independent in both $X_1$ and $X_2$.

For $i=1,2$ and for any $1\leq j<k<l\leq 5$, consider the action $G\curvearrowright X_i$, and let $\epsilon^{i}_{j,k,l},\hat{D}^{i}_{j,k,l}>0$ be determined by \autoref{COR: Perturbation Length} for the three contracting elements $h_j,h_k,h_l$.
%For each action $h_1,h_2,h_3\in G\curvearrowright X_i$ ($i=1,2$), $\epsilon_i,\hat{D}_i>0$ are decided by \autoref{COR: Perturbation Length}. 
Let $\epsilon=\max\left\{\epsilon^{i}_{j,k,l}\mid i=1,2,1\leq j<k<l\leq 5\right\}$, $\hat{D}=\max\left\{\hat{D}^{i}_{j,k,l}\mid i=1,2,1\leq j<k<l\leq 5\right\}$.

Fix $F=\left\{f_1,f_2,f_3,f_4,f_5\right\}$ with $f_k\in \left\langle h_k\right\rangle$, and $d_i(o_i,f_ko_i)>\hat{D}$, for each $ 1\leq k\leq 5,\,i=1,2$.

Let $C_0=\max\left\{d_i(o_i,f_ko_i)\mid 1\leq k\leq 5,\,i=1,2\right\}$.

\fi

We first deal with the right-hand side of this inequality: $d_1(go_1,ho_1)\le \lambda_2d_2(go_2,ho_2)+C$. 
Since $G$ acts isometrically, it suffices to show that there exists $C\ge 0$ such that $d_1(o_1,go_1)\le \lambda_2 d_2(o_2,go_2)+C$ for all $g\in G$.

According to \autoref{COR: Simultaneous Extension 15}, there exist $\epsilon>0$ and a finite set $S\subseteq \mathcal{SC}(G)$ such that for any $g\in G$ we can find $f\in S$ so that $gf\in \mathcal{SC}(G)$ and 
\begin{equation}\label{5.3.1EQ1}
d_i(o_i,gfo_i)\leq \ell_{d_i}(gf)+2\epsilon.    
\end{equation}

Let $C_0=\max\left\{d_i(o_i,fo_i)\mid f\in S,\,i=1,2\right\}$. Then for any $g\in G$, we have:
\begin{align*}
d_1(o_1,go_1)&\leq d_1(o_1,gfo_1)+d_1(go_1,gfo_1) & (\text{$\Delta$-Ineq.})\\
%&\leq d_1(o_1,gfo_1)+C_0\\
&\leq \ell_{d_1}(gf)+2\epsilon+C_0 & (\ref{5.3.1EQ1})\\&\leq \lambda_2\ell_{d_2}(gf)+2\epsilon+C_0 &  (\ref{SCInequality})\\
%&=\lambda_2\lim_{n\to +\infty}\frac{d_2(o_2,(gf)^no_2)}{n}+2\epsilon+C_0\\
&\leq \lambda_2d_2(o_2,gfo_2)+2\epsilon+C_0& (\text{\autoref{RMK: MLS Basics}\,(\ref{MLSBasics2})})\\
&\leq \lambda_2(d_2(o_2,go_2)+d_2(go_2,gfo_2))+2\epsilon+C_0 &(\text{$\Delta$-Ineq.})\\&\leq \lambda_2d_2(o_2,go_2)+(\lambda_2+1)C_0+2\epsilon.&
\end{align*}

Let $c=(\lambda_2+1)C_0+2\epsilon$. Then $d_1(o_1,go_1)\le \lambda_2d_2(o_2,go_2)+c$ for all $g\in G$.

For the left-hand side of the inequality, if $\lambda_1=0$ the conclusion is trivial. We may assume $\lambda_1>0$, so $\ell_{d_2}(g)\le \lambda^{-1}\ell_{d_1}(g)$ for all $g\in G$. A similar calculation shows that there exists $c'\geq 0$ such that $d_2(o_2,go_2)\leq \lambda_1^{-1} d_1(o_1,go_1)+c'$, for all $ g\in G$.

Letting $C=\max\left\{c,\lambda_1c'\right\}$  finishes the proof.
\end{proof}
%}

As a direct application of \autoref{COR: Rigidity from SC}, we can deduce a MLS rigidity from a finite-index subset.

\begin{corollary}\label{Cor: MLSR from a finite-index subset}
    Suppose $E\subseteq G$ is a finite-index subset (cf. \autoref{DEF: Finite Index}). Then $G$ has marked length spectrum rigidity from $E$.
\end{corollary}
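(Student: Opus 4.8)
The plan is to reduce the statement to \autoref{COR: Rigidity from SC}: it suffices to show that having the same marked length spectrum on $E$ already forces $\ell_{d_1}(g)=\ell_{d_2}(g)$ for every $g\in\mathcal{SC}(G)$, so that the case $\lambda_1=\lambda_2=1$ of that theorem applies. Fix $g\in\mathcal{SC}(G)$. By homogeneity of the marked length spectrum (\autoref{RMK: MLS Basics}) we have $\ell_{d_i}(g)=\tfrac1n\ell_{d_i}(g^n)$, so it is enough to compare $\ell_{d_1}(g^n)$ and $\ell_{d_2}(g^n)$ up to an additive error uniform in $n$. Writing $G=\bigcup_{t=1}^{m}Eg_t$, for every $n$ there is $u_n\in\{g_1^{-1},\dots,g_m^{-1}\}$ with $g^nu_n\in E$, and hence $\ell_{d_1}(g^nu_n)=\ell_{d_2}(g^nu_n)$ by hypothesis.

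The core of the proof is the estimate $\bigl|\ell_{d_i}(g^nu_n)-n\,\ell_{d_i}(g)\bigr|\le M$ for $i=1,2$, with $M$ independent of $n$. The upper bound is routine: $\ell_{d_i}(g^nu_n)\le d_i(o_i,g^nu_no_i)\le d_i(o_i,g^no_i)+\max_t d_i(o_i,g_t^{-1}o_i)$, and since $g$ is a contracting element one has $n\,\ell_{d_i}(g)\le d_i(o_i,g^no_i)\le n\,\ell_{d_i}(g)+2\epsilon_i$, the left-hand bound being $\ell_{d_i}(g^n)\le d_i(o_i,g^no_i)$ from \autoref{RMK: MLS Basics} and the right-hand one obtained by iterating $d_i(o_i,g^{2k}o_i)\ge 2\,d_i(o_i,g^ko_i)-2\epsilon_i$ (a consequence of \autoref{LEM: Fellow Travel Property}, since $[o_i,g^{2k}o_i]$ passes $\epsilon_i$-close to $g^ko_i$). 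For the lower bound I would first arrange $u_n$ to be weakly-independent of $g$ in \emph{both} $X_1$ and $X_2$; then for $n$ large the periodic path labelled by $(\dots,g^n,u_n,g^n,u_n,\dots)$ in $X_i$ is a special $(D,\tau)$-admissible path with respect to $\{w\langle g\rangle o_i\mid w\in G\}$, with $D,\tau$ controlled uniformly in $n$ — this is precisely the verification carried out in the proofs of \autoref{LEM: Non Elementary} and \autoref{PROP: SC non-empty}. Consequently \autoref{COR: Bounded Intersection}, \autoref{PROP: Admissible Path Contracting} and \autoref{LEM: Finite Hausdorff Distance} show that $g^nu_n$ is again a contracting element, and the $\epsilon$-fellow-travelling of \autoref{PROP: Fellow Travel} gives $\ell_{d_i}(g^nu_n)\ge d_i(o_i,g^nu_no_i)-2\epsilon'_i\ge n\,\ell_{d_i}(g)-M$. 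Granting the estimate, for all $n$ we obtain $n\,\ell_{d_1}(g)=\ell_{d_1}(g^nu_n)+O(1)=\ell_{d_2}(g^nu_n)+O(1)=n\,\ell_{d_2}(g)+O(1)$; dividing by $n$ and letting $n\to\infty$ yields $\ell_{d_1}(g)=\ell_{d_2}(g)$, and \autoref{COR: Rigidity from SC} finishes the proof.

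The main obstacle is to produce, uniformly in $n$, a $u_n$ that is at once \emph{bounded} (hence one of the finitely many $g_t^{-1}$) and weakly-independent of $g$ in both actions, i.e. $u_n\notin A_1(g)\cup A_2(g)$, where $A_i(g)$ is the set attached to the contracting element $g$ for the action $G\curvearrowright X_i$ as in \autoref{LEM: Elementary subset not finite index}. The key input should be that $A_1(g)\cup A_2(g)$ is not a finite-index subset of $G$ — a strengthening of \autoref{LEM: Elementary subset not finite index}, proved by the same pigeonhole mechanism together with \autoref{LEM: Weakly Independent One In Three}, applied to a family $\{h^kgh^{-k}\}_{k\in\mathbb Z}$ of pairwise weakly-independent conjugates of $g$ that are simultaneously contracting on $X_1$ and $X_2$ (available by \autoref{LEM: Simultaneous Contracting}). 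Since $E$ is a finite-index subset while $A_1(g)\cup A_2(g)$ is not, one can arrange the covering $G=\bigcup_{t}Eg_t$ so that no $g_t$ lies in $A_1(g)\cup A_2(g)$; the corresponding $u_n$ then satisfy the two requirements above, and (weak-independence of $u_n$ from $g$ ruling out $u_n$ coarsely stabilizing the axis of $g$) no further case distinction is needed.
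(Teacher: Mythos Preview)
Your strategy---show $\ell_{d_1}=\ell_{d_2}$ on $\mathcal{SC}(G)$ by comparing $\ell_{d_i}(g^n u_n)$ to $n\ell_{d_i}(g)$ for a bounded perturbation $u_n$ with $g^nu_n\in E$, then invoke \autoref{COR: Rigidity from SC}---is exactly the paper's. Two comments. First, the periodic path $(\ldots,g^n,u_n,g^n,u_n,\ldots)$ is \emph{not} a special admissible path: condition \textbf{(LL2$'$)} would demand $d_i(o_i,u_no_i)>D$, which fails since $u_n$ ranges over a fixed finite set. You need the non-special version, with \textbf{(LL2)} satisfied via the bounded intersection of $\langle g\rangle o_i$ and $u_n\langle g\rangle o_i$ that you arranged; this is precisely how the paper proceeds, choosing $\tau$ and the gauge $\mathcal R$ uniformly over the finitely many candidates for $u_n$. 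Second, the paper sidesteps your strengthening ``$A_1(g)\cup A_2(g)$ is not a finite-index subset'': it fixes $f$ contracting only in $X_1$, arranges the coset representatives outside $A_1(f)$ using \autoref{LEM: Elementary subset not finite index} as stated, and from the resulting admissible path in $X_1$ alone deduces the one-sided inequality $\ell_{d_1}(f)\le\ell_{d_2}(f)$ (the bound $\ell_{d_2}(f^{n}u_n)\le d_2(o_2,f^{n}u_no_2)$ from \autoref{RMK: MLS Basics} needs no admissibility in $X_2$). The reverse inequality then follows by swapping the roles of $X_1$ and $X_2$. Your simultaneous route is also correct but costs the extra pigeonhole step you sketch.
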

\begin{proof}
    Suppose $g_1,\cdots, g_m\in G$ such that $\bigcup_{i=1}^{m}Eg_i=G$, and $\ell_{d_1}(g)=\ell_{d_2}(g)$ for all $g\in E$. 
    For any element $f\in G$ contracting in $X_1$, let $A_1(f)=\{ g\in G\mid g \left\langle f\right\rangle o_1\text{ and }\left\langle f\right\rangle o_1\text{ have no bounded intersection}\}$.

    According to \autoref{LEM: Elementary subset not finite index}, there exists an element $h\in G\setminus \bigcup_{i=1}^{m}  g_i^{-1}A_1(f)$. Then $g_ih\notin A_1(f)$ for each $1\le i \le m$, and in addition, $\bigcup_{i=1}^{m}Eg_ih=(\bigcup_{i=1}^{m}Eg_i)h=G$.
    
    By finiteness, there exists an element $g\in \left\{(g_1h)^{-1},\cdots, (g_mh)^{-1}\right\}$ and an infinite increasing sequence of positive integers $\left\{n_k\right\}_{k=1}^{+\infty}$ such that $f^{n_k}g\in E$. Moreover, $g\notin A_1(f)$.

    Therefore, $\left\langle f\right\rangle o_1$ and $g\left\langle f\right\rangle o_1$ have $\mathcal{R}$-bounded intersection for some $\mathcal{R}:[0,+\infty)\to[0,+\infty)$. Let $C\ge 0$ be a contraction constant of $\left\langle f\right\rangle o_1$, and let $\tau= \diam \left (\pi_{\left\langle f\right\rangle o_1}(\left\{ o_1, go_1,g^{-1}o_1\right\})\right )$. Let $D=D(\tau,C,\mathcal{R})$ and $\epsilon=\epsilon(\tau,C,\mathcal{R})$ be determined by \autoref{PROP: Fellow Travel}.

    Since $\left\langle f\right\rangle o_1$ is a quasi-geodesic, for $k\ge k_0$ we must have $d_1(o_1,f^{n_k}o_1)>D$. For any such $k$, consider the path $\gamma$ labelled by $(\cdots, f^{n_k}, g, f^{n_k}, g,\cdots )$. Then $\gamma$ is a $(D,\tau)$-admissible path  with respect to  $\mathbb{X}=\left\{u\left\langle f\right\rangle o|\, u\in G\right\}$ and the bounded intersection gauge $\mathcal{R}$. By \autoref{PROP: Fellow Travel}, $\gamma$ has $\epsilon$-fellow travel property.

    Consequently,
    \begin{equation}\label{FIEQU1}
    \begin{aligned}
        &d_1(o_1,f^{n_k}o_1)\leq d_1(o_1,f^{n_k}go_1)+\epsilon;\\
        &\ell_{d_1}(f^{n_k}g)\ge d_1(o_1,f^{n_k}go_1)-2\epsilon.
    \end{aligned}
    \end{equation}

    Hence,
    \begin{align*}
        d_1(o_1,f^{n_k}o_1)\leq &\ell_{d_1}(f^{n_k}g)+3\epsilon& (\ref{FIEQU1})&\\
        =&\ell_{d_2}(f^{n_k}g)+3\epsilon& &\\
        \leq & d_2(o_2,f^{n_k}go_2)+3\epsilon&(\text{\autoref{RMK: MLS Basics}\,(\ref{MLSBasics2})})&\\
        \leq & d_2(o_2,f^{n_k}o_2)+d_2(o_2,go_2)+4\epsilon &(\text{$\Delta$-Ineq.})&.
    \end{align*}

    Therefore,
    \begin{align*}
        \ell_{d_1}(f)=\lim_{k\to +\infty}\frac{d_1(o_1,f^{n_k}o_1)}{n_k}\leq \limsup_{k\to +\infty}\frac{d_2(o_2,f^{n_k}o_2)+d_2(o_2,go_2)+4\epsilon}{n_k}=\ell_{d_2}(f).
    \end{align*}

    Thus $\ell_{d_2}(f)\ge \ell_{d_1}(f)$. %In particular, $f$ has an unbounded orbit in $X_2$.
    Similarly, for any element $f'\in G$ contracting in $X_2$, we have $\ell_{d_1}(f')\ge \ell_{d_2}(f')$. %In particular, $f'$ has an unbounded orbit in $X_1$. Hence \hyperref[PROP: ?]{Comparison Condition} is satisfied, and 
    Hence, $\ell_{d_1}(f)=\ell_{d_2}(f)$ for all $f\in \mathcal{SC}(G)$.
    Therefore, according to % 
    \autoref{COR: Rigidity from SC}, the orbit map $\rho$ must be a rough isometry. 
\end{proof}

\subsection{MLS rigidity for non-geodesic metrics}
In this subsection, we present some results on MLS rigidity for some non-geodesic metrics as in \autoref{Que: pull-back metric}. For this purpose, let us fix a reference action $G\act (X,d)$ on a geodesic metric space with contracting property. 
Consider three weakly independent contracting elements: $h_1,h_2,h_3\in G$.
Given a basepoint $o\in X$, denote  $\mathcal F_o=\left\{g\left\langle h_i\right\rangle o\mid g\in G,1\le i\le 3\right\}$ the contracting system of $G$-translated axes, and let $C\ge 0$ be a contraction constant of $\mathcal F_o$.

We introduce a class of transversal points relative to $\mathcal F_o$. This is motivated  by the corresponding notion in  relatively hyperbolic groups, which we briefly recall as follows. 

Let $(G,\{H_i\}_{i=1}^n)$ be a relatively hyperbolic group as in \autoref{RHGDefn}. Consider the $C$-contracting system $\{gH_i: g\in G, 1\le i\le n\}$ for some $C>0$. A notion of transition points is very helpful in understanding the geometry of the Cayley graph of $G$. Roughly speaking, a point $x$ on a (word) geodesic $\gamma$ is called $(C,L)$-deep in some $gH_i$ if the $L$-neighbourhood $B(x,L)\cap \gamma$ is contained in $N_C(gH_i)$. A point $x\in \gamma$ which is not $(C,L)$-deep in any  $gH_i$ is called a $(C,L)$-transition point. 
We refer to \cite{Hru10} for more details and relevant discussions.

Let us return to the above setup of a general metric space, equipped with a $C$-contracting system $\mathcal F_o$. We define an analogous notion to transition points, which is also in line with the vertices of an admissible path (see \autoref{TransOnAdmisPath} below).
\begin{definition}
Let $\gamma$ be a geodesic segment and $L>0$. We say a point $x\in \gamma$ is \textit{($\mathcal F_o,L$)-transversal} if there exists $Z\in \mathcal F_o$ such that $diam(\gamma\cap  N_C(Z))>L$ and $x$ is the entry or the exit points of $\gamma\cap  N_C(Z)$.  Denote by $\mathrm{Trans}_{L}(\gamma, \mathcal F_o)$ the set of ($\mathcal F_o,L$)-transversal points.
\end{definition} 
%\begin{remark}
In a relatively hyperbolic group $(G,\{H_i\}_{i=1}^n)$, any hyperbolic element $h$ (i.e. not conjugated into any $H_i$ and of infinite order) is contracting with respect to the action on the Cayley graph.
\begin{lemma}\label{LEM: Transition}
    Let $(G,\{H_i\}_{i=1}^n)$ be a relatively hyperbolic group, and $F\subset G$ be a weakly independent set of three hyperbolic elements. Then there exists $L> 0$ such that any $(\mathcal F_1, L)$-transversal point is a $(3C,L)$-transition point defined in the sense of maximal parabolic subgroups. 
\end{lemma}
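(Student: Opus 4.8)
The plan is to pit the two $C$-contracting systems against each other: the system $\mathcal F_1=\{g\langle h_i\rangle\mid g\in G,\, 1\le i\le 3\}$ of $G$-translated axes and the system $\mathcal H=\{kH_j\mid k\in G,\, 1\le j\le n\}$ of peripheral cosets. The crucial geometric input is a \emph{uniform} bounded intersection between these families: there is a gauge $\mathcal R\colon[0,\infty)\to[0,\infty)$ such that for all $g,k\in G$, $1\le i\le 3$ and $1\le j\le n$, the subsets $g\langle h_i\rangle$ and $kH_j$ have $\mathcal R$-bounded intersection. Granting this, one sets $L:=\mathcal R(3C)+1$.

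To obtain the uniform bound I would argue as follows. Since $h_i$ is hyperbolic, no nonzero power of $h_i$ is conjugate into any peripheral subgroup, so by the standard bounded-penetration property of peripheral cosets in relatively hyperbolic groups (cf.\ \cite{Hru10}) the cyclic orbit $\langle h_i\rangle$ has projection of uniformly bounded diameter onto every peripheral coset. By $G$-equivariance of both families, and the finiteness of $\{h_1,h_2,h_3\}$ and of $\{H_1,\dots,H_n\}$, a single bound $B$ works for $\diam\!\big(\pi_{kH_j}(g\langle h_i\rangle)\big)$ over all $g,k,i,j$; then \autoref{PROP: Bounded Intersection Equals Bounded Projection} converts $B$-bounded projection into $\mathcal R$-bounded intersection, with $\mathcal R$ depending only on $B$ and $C$.

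Now the main argument. Let $x$ be an $(\mathcal F_1,L)$-transversal point on a geodesic $\gamma$, witnessed by $Z=g\langle h_i\rangle\in\mathcal F_1$: the entry and exit points $a,b$ of $\gamma$ in $N_C(Z)$ satisfy $d(a,b)=\diam(\gamma\cap N_C(Z))>L$, and $x\in\{a,b\}$, say $x=a$ (the exit case is identical). By \autoref{LEM: Quasi Convex}, $[a,b]_\gamma\subseteq N_{3C}(Z)$. Suppose, for contradiction, that $x$ is $(3C,L)$-deep in some $kH_j$, i.e.\ $B(x,L)\cap\gamma\subseteq N_{3C}(kH_j)$. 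Choose $c\in[a,b]_\gamma$ with $d(a,c)=L$, which exists since $[a,b]_\gamma$ is a geodesic segment of length $>L$. Every point of $[a,c]_\gamma$ is within distance $L$ of $x=a$, so $[a,c]_\gamma\subseteq B(x,L)\cap\gamma\subseteq N_{3C}(kH_j)$; and $[a,c]_\gamma\subseteq[a,b]_\gamma\subseteq N_{3C}(Z)$. Hence $a,c\in N_{3C}(Z)\cap N_{3C}(kH_j)$, which forces $L=d(a,c)\le\diam\!\big(N_{3C}(Z)\cap N_{3C}(kH_j)\big)\le\mathcal R(3C)<L$, a contradiction. Therefore $x$ is not $(3C,L)$-deep in any peripheral coset, i.e.\ $x$ is a $(3C,L)$-transition point.

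The main obstacle is precisely the uniformity in the first step — that one intersection gauge $\mathcal R$ works simultaneously for every translated axis and every peripheral coset. Once that is in place, the rest is a short neighborhood chase using only \autoref{LEM: Quasi Convex} and the definitions of transversal and deep points. An alternative, more self-contained route to the uniform bound is to note that $\langle h_i\rangle$ and any $kH_j$ are both $C$-contracting and cannot fellow-travel along an unbounded subset — otherwise some power of $h_i$ would be parabolic — hence they have bounded projection by the dichotomy of \autoref{PROP: Bounded Intersection Equals Bounded Projection}, and the finiteness of the peripheral structure together with cocompactness of $G\curvearrowright\mathrm{Cay}(G,S)$ promotes this to a bound independent of $g,k,i,j$.
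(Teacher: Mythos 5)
Your main argument is the paper's argument almost verbatim: take the transversal point $x$ as the entry point of $\gamma\cap N_C(Z)$, use \autoref{LEM: Quasi Convex} to put the subsegment $[a,b]_\gamma$ into $N_{3C}(Z)$, and derive a contradiction from a subsegment of length $L>\mathcal R(3C)$ lying in $N_{3C}(Z)\cap N_{3C}(kH_j)$. The genuine gap is in the key input that you yourself identify as the main obstacle: the \emph{uniform} bounded projection/intersection of the translated axes with the peripheral cosets. Your justification of it does not hold up. The bounded coset penetration property (Hruska) is a statement about pairs of relative (quasi-)geodesics with common endpoints; it does not directly give a single bound for $\diam\bigl(\pi_{kH_j}(\langle h_i\rangle)\bigr)$ over all of the infinitely many cosets $kH_j$ -- extracting such uniform projection bounds is exactly the nontrivial point. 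Your fallback argument is also flawed: the dichotomy of \autoref{PROP: Bounded Intersection Equals Bounded Projection}, applied pairwise, only yields a bound depending on the particular coset, and neither $G$-equivariance, nor finiteness of $F$ and of $\{H_1,\dots,H_n\}$, nor cocompactness of $G$ on its Cayley graph promotes this to a uniform bound: equivariance merely reduces the problem to the single axis $\langle h_i\rangle$ against the infinite family of cosets $k'H_j$, and one cannot normalize the pair (axis, coset) into finitely many configurations by a single group element.

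The fact you need is true, but it requires a real input, which is how the paper proceeds: by \cite{Osin06} the maximal elementary subgroup $E(h_j)$ can be adjoined to the peripheral structure, so that $(G,\{H_i\}_{i=1}^n\cup\{E(h_j)\})$ is still relatively hyperbolic, and then \cite[Corollary 5.7]{GP13} gives that the enlarged collection of peripheral cosets is a contracting system with a bounded intersection gauge $\mathcal R_j$; since $\langle h_j\rangle\subseteq E(h_j)$, this supplies precisely the uniform gauge $\mathcal R=\max_j\mathcal R_j$ your argument needs, after which your contradiction step goes through. If you prefer your more self-contained route, you must actually prove the uniformity, e.g.\ by observing that, modulo the $\langle h_i\rangle$-action, only finitely many peripheral cosets meet a fixed neighbourhood of the axis (properness of the action), and handling the remaining, distant cosets by the contraction property; as written, the uniformity is asserted rather than proved.
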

\begin{proof}
Denote $F=\left\{h_1,h_2,h_3\right\}$.
For each $1\le j\le 3$, by \cite{Osin06}, the maximal elementary subgroup $E(h_j)$ could be adjoined into the peripheral structure as a maximal parabolic group.  That is,  $(G,\{H_i\}_{i=1}^n\cup \{E(h_j)\})$ is still  relatively hyperbolic. 

\cite[Corollary 5.7]{GP13} showed that $\left\{gH_i\mid g\in G,1\le i\le n\right\}\cup \left\{gE(h_j)\mid g\in G\right\}$ is a contracting system with $\mathcal{R}_j$-bounded intersection. Let $\mathcal{R}=\max \mathcal{R}_j$, and let $L>\mathcal{R}(3C)$.  

If $x\in \mathrm{Trans}_L(\mathcal F_o,\gamma)$, then there exist $Z=g_0\left\langle h_j\right\rangle \in \mathcal F_1$ and a point $y\in \gamma$ such that $d(x,y)\ge L$ and $x,y\in N_C(Z)$. \autoref{LEM: Quasi Convex} implies that $[x,y]_{\gamma} \subset N_{3C}(Z)$. Suppose by contrary that $x\in \gamma$ is $(3C, L)$-deep in some $gH_i$. Then $B(x,L)\cap \gamma \subset N_{3C}(gH_i)$. Let $z\in [x,y]_\gamma$ such that $d(x,z)=L$, then $[x,z]_\gamma \in N_{3C}(gH_i)\cap N_{3C}(Z)$. However, $\diam (N_{3C}(gH_i)\cap N_{3C}(Z)) \le \mathcal R_j(3C)<L $, which leads to a contradiction.
\end{proof}

\begin{definition}\label{CoarseAddtiveDef}
Let $d_G$ be a left-invariant metric on $G$. For some $L>0$, we say that $d_{G}$ is \textit{coarsely additive} along $(\mathcal F_o,L)$-transversal points if for any $g,f, h\in G$ so that $fo$ is $r$-close to a $(\mathcal F_o,L)$-transversal point of $[go,ho]$ for some $r>0$, we have 
\begin{equation}\label{CoarseAdditiveEQ}
\begin{aligned}    
|d_G(g,h)- d_G(g,f)-d_G(f,h)|\le R(r) 
\end{aligned}
\end{equation}    
where $R$ depends only on $r$.
\end{definition}
\iffalse\begin{remark}
    By definition, if $L'\ge L$, then $\mathrm{Trans}_{L'}(\gamma, \mathcal F_o)\subseteq\mathrm{Trans}_{L}(\gamma, \mathcal F_o)$. Therefore $d_G$ is coarsely additive along $(\mathcal F_o,L)$-transversal points implies that it is coarsely additive along $(\mathcal F_o,L')$-transversal points for any $L'\ge L$.
\end{remark}\fi

\begin{remark}
If $G$ acts properly on $X$, we only need to verify the above   \autoref{CoarseAdditiveEQ} for $r=C$, from which the general case would follow. Indeed, by definition, a $(\mathcal F_o,L)$-transversal point is $C$-close to a point $f'o\in Go$. If $fo$ is $r$-close to an $(\mathcal F_o,L)$-transversal point of $[go,ho]$,  then  $d(fo,f'o)\le r+C$ for some $f'o\in Go$. The set $\{g\in G: d(o,go)\le r+C\}$ is finite by the proper action $G\act X$, so there exists a constant $r_1$ depending only on $r+C$ so that $d_G(f,f')\le r_1$. Hence,  if \autoref{CoarseAdditiveEQ} holds for $g,f',h$ with $R(C)$, we obtain $|d_G(g,h)- d_G(g,f)-d_G(f,h)|\le R(C)+2r_1:=R(r)$. 
\end{remark}%

Recall that $\mathcal F_o$ is a $C$-contracting system. The following explains the relation of the transversal points with admissible paths. 
\begin{lemma}\label{TransOnAdmisPath}
For given $\tau>0$ and $L>0$ and a $C$-contracting system $\mathcal F_o$, there exist $D=D(\tau,L,C)>0,r=r(\tau,L,C)> 0$ with the following property.
Let $\gamma=\cdots q_ip_iq_{i+1}p_{i+1}\cdots $ be a finite special $(D,\tau)$-admissible path with respect to $\mathcal F_o$, and $\alpha$ be any geodesic with the same endpoints as $\gamma$. Then, for each $i$ such that $\mathrm{Len}(p_i)>D$, we have $\left\{(p_i)_-,(p_i)_+\right\}\subseteq N_r(Trans_{L}(\alpha,\mathcal F_o))$.    
\end{lemma}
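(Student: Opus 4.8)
The plan is to feed the admissible path $\gamma$ into the Fellow-Travel machinery: by \autoref{PROP: Fellow Travel} the geodesic $\alpha$ $\epsilon$-fellow travels $\gamma$, so there are linearly ordered points $z_j,w_j\in\alpha$ with $d(z_j,(p_j)_-)\le\epsilon$ and $d(w_j,(p_j)_+)\le\epsilon$, where $\epsilon=\epsilon(\tau,C)$. Fix $i$ with $\mathrm{Len}(p_i)>D$ and let $X_i\in\mathcal F_o$ be the $C$-contracting set whose points are the endpoints of $p_i$. The candidates for the transversal points are the entry point $u_i$ and the exit point $v_i$ of $\alpha\cap N_C(X_i)$. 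I will set $r=r(\tau,C)$ to be the constant produced by the estimates below and choose $D=D(\tau,L,C)$ with $D\ge D(\tau,C)$ (so \autoref{PROP: Fellow Travel} and \autoref{COR: Bounded Intersection} apply in their special-admissible form), $D>C+4\epsilon$, and $D>L+2r$. Then I claim $d(u_i,(p_i)_-)\le r$, $d(v_i,(p_i)_+)\le r$, and $d(u_i,v_i)>L$, which gives the lemma since $z_i$ itself lies within $\epsilon\le r$ of $(p_i)_-$.

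\textbf{Existence of the transversal points and the easy estimates.} Choosing $a_i\in\pi_{X_i}(z_i)$, $b_i\in\pi_{X_i}(w_i)$, the inclusions $(p_i)_\pm\in X_i$ give $d(a_i,(p_i)_-)\le 2\epsilon$ and $d(b_i,(p_i)_+)\le 2\epsilon$, hence $d(a_i,b_i)\ge \mathrm{Len}(p_i)-4\epsilon>C$. Applying the definition of a $C$-contracting set to the geodesic $[z_i,w_i]_\alpha$ then forces $[z_i,w_i]_\alpha\cap N_C(X_i)\neq\varnothing$, so $\alpha\cap N_C(X_i)\neq\varnothing$ and, $\alpha$ being a finite geodesic, $u_i$ and $v_i$ are well defined. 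For the bound on $d(u_i,(p_i)_-)$ (the argument for $v_i$ is symmetric, using the exit point and $w_i$): if $u_i$ follows $z_i$ along $\alpha$, then $[z_i,u_i]_\alpha$ misses $N_C(X_i)$ except at $u_i$, so by contraction and \autoref{LEM: 1 Lipschitz} one gets $\d{X_i}(z_i,u_i)\le 5C$, and combined with $d(u_i,X_i)\le C$ this yields $d(u_i,(p_i)_-)\le 8C+2\epsilon$. If $u_i$ is at or before $z_i$, then $u_i\in[\alpha_-,z_i]_\alpha$, and it suffices to bound $\diam(\pi_{X_i}([\alpha_-,z_i]_\alpha))$ by some $B'=B'(\tau,C)$: then $d(\pi_{X_i}(u_i),a_i)\le B'$ and $d(u_i,(p_i)_-)\le C+B'+2\epsilon$. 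Taking $r=\max\{8C+2\epsilon,\ C+B'+2\epsilon\}$ settles both estimates, and finally $d(u_i,v_i)\ge d((p_i)_-,(p_i)_+)-2r=\mathrm{Len}(p_i)-2r>L$ by the choice of $D$, so $\diam(\alpha\cap N_C(X_i))>L$ and $u_i,v_i$ are genuine $(\mathcal F_o,L)$-transversal points.

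\textbf{The main obstacle.} Everything reduces to the projection bound $\diam(\pi_{X_i}([\alpha_-,z_i]_\alpha))\le B'$; this is the one place where the admissibility of $\gamma$ (not just crude fellow-travelling) is used, and it is the step I expect to be delicate. The intended argument: by \autoref{PROP: Fellow Travel}(1) applied to $\gamma$ we have $\diam(\pi_{X_i}([\gamma_-,(p_i)_-]_\gamma))\le B$ with $B=B(\tau,C)$; since both $\gamma_-$ and $(p_i)_-$ (which lies in $X_i$) project into this set, $d(\pi_{X_i}(\gamma_-),(p_i)_-)\le B$, hence $d(\pi_{X_i}(\gamma_-),a_i)\le B+2\epsilon$. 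Now for an arbitrary $w\in[\alpha_-,z_i]_\alpha$, apply \autoref{LEM: Thin Quadrilateral} to the two sub-geodesics $[\gamma_-,w]_\alpha$ and $[w,z_i]_\alpha$, with projected endpoints $\pi_{X_i}(\gamma_-),\pi_{X_i}(w)$ and $\pi_{X_i}(w),a_i$ respectively; adding the two lower bounds for $d(\gamma_-,w)+d(w,z_i)=d(\gamma_-,z_i)$ and comparing with the triangle inequality for $d(\gamma_-,z_i)$ through $\pi_{X_i}(\gamma_-)$ and $a_i$ forces $d(\pi_{X_i}(w),a_i)\le d(\pi_{X_i}(\gamma_-),a_i)+16C\le B+2\epsilon+16C$ (whenever a relevant projected pair is within $C$, the conclusion follows even more directly). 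Thus $B':=2(B+2\epsilon+16C)$ works. The symmetric statement using $[w_i,\alpha_+]_\alpha$ and the second half of \autoref{PROP: Fellow Travel}(1) handles $v_i$, completing the proof; the finiteness hypothesis on $\gamma$ is exactly what guarantees the entry and exit points above are attained.
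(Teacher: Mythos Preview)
Your proof is correct and follows essentially the same architecture as the paper: invoke \autoref{PROP: Fellow Travel} to get $z_i,w_i\in\alpha$ near $(p_i)_\pm$, deduce that $\alpha$ enters $N_C(X_i)$ with large diameter, and then show the entry and exit points $u_i,v_i$ of $\alpha\cap N_C(X_i)$ are uniformly close to $(p_i)_-,(p_i)_+$ respectively.

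The one substantive difference is in your ``main obstacle'' (Case~2, $u_i\in[\alpha_-,z_i]_\alpha$). You work to bound $\diam(\pi_{X_i}([\alpha_-,z_i]_\alpha))$ for \emph{every} point of that segment via a thin-quadrilateral comparison (\autoref{LEM: Thin Quadrilateral}). This is valid but unnecessary: the paper simply observes that since $u_i$ is the \emph{entry point} of $\alpha$ into $N_C(X_i)$, the sub-geodesic $[\alpha_-,u_i]_\alpha$ stays outside $N_C(X_i)$, and therefore the definition of $C$-contracting gives $\d{X_i}(\alpha_-,u_i)\le C$ directly. Combined with $\d{X_i}(\gamma_-,(p_i)_-)\le B$ from \autoref{PROP: Fellow Travel}(1) (which you also use), this yields $d(u_i,(p_i)_-)\le B+2C$ in one line. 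In other words, you are bounding the projection of a general $w\in[\alpha_-,z_i]_\alpha$ when you only need it for the single point $u_i$, and that single point comes with exactly the hypothesis that makes contraction apply immediately. Your route works; the paper's is shorter and uses only the definition of contracting, not \autoref{LEM: Thin Quadrilateral}.
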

\begin{proof}
We follow the same convention as in \autoref{CONV: Nearest Projection}. 
Let $D_1=D(\tau,C)$, $ \epsilon=\epsilon(\tau,C)$ and $B=B(\tau, C)$ be given by \autoref{PROP: Fellow Travel}. Pick $D=\max\left\{D_1,4\epsilon+4C+L\right\}$, and $r>\max\left\{B+2C,2\epsilon+2C\right\}$. We show that any finite special $(D,\tau)$-admissible path $\gamma$  with respect to  $\mathcal F_o$ satisfies the property.

Suppose the segment $p_i$ is associated to $X_i\in \mathcal F_o$, and $\alpha=[\gamma_-,\gamma_+]$.
We will first show that $\alpha \cap N_C(X_i)\neq \varnothing$ and $\diam (\alpha \cap N_C(X_i))>L$.

According to \autoref{PROP: Fellow Travel}, there exist two linearly ordered points $x,y\in \alpha$ such that $d((p_i)_-,x)\le \epsilon $ and $d((p_i)_+,y)\le \epsilon$. As $\mathrm{Len}(p_i)>D$, by triangle inequality $d(x,y)>D-2\epsilon$. In addition, $d(x,\pi_{X_i}(x))\le \epsilon$ and $d(y,\pi_{X_i}(y))\le \epsilon$. Therefore, $\d{X_i}(x,y)\ge d(x,y)-2\epsilon>D-4\epsilon\ge 4C+L> C$.
By definition of a $C$-contracting subset, $[x,y]_{\alpha}\cap N_C(X_i)\neq\varnothing$. In addition, let $x'(y')$ be the entry (exit) point of $[x,y]_{\alpha}$ in $N_C(X_i)$. When $x'\neq x$, then $d(x',x)\le d(x,X_i)+d(x',X_i)+\d{X_i}(x,x')\le \epsilon + 2C$. Similarly, $d(y',y)\le \epsilon + 2C$. By triangle inequality, $d(x',y')\ge d(x,y)-2(\epsilon + 2C)>D-2\epsilon -2(\epsilon + 2C)\ge L$, which is to say $\diam (\alpha \cap N_C(X_i))>L$.

Next we will show that $(p_i)_-$ (resp. $(p_i)_+$) is $r$-close to the entry point (resp. exit point) of $\alpha$ in $N_C(X_i)$. We will only prove the statement for $(p_i)_-$ and the other one is similar.

Let $z$ be the entry point of of $\alpha$ in $N_C(X_i)$. If $z\notin [\alpha_-,x]_{\alpha}$ then $z=x'$, and $d((p_i)_-,z)\le d((p_i)_-,x)+d(x,x')\le \epsilon+(\epsilon+2C)<r$. Hence in the following, we only consider $z\in [\alpha_-,x]_{\alpha}$. By \autoref{PROP: Fellow Travel}, $\d{X_i}(\gamma_-,(p_i)_-)\le B$. If $z\neq \gamma_-$, the definition of a $C$-contracting subset implies that $\d{X_i}(\gamma_-,z)\le C$. Hence, by triangle inequality, $\d{X_i}(z,(p_i)_-)\le B+C$. Therefore, $d((p_i)_-,z)\le d((p_i)_-,X_i)+d(z,X_i)+\d{X_i}(z,(p_i)_-)\le C+B+C<r$.
\end{proof}

\begin{theorem}\label{MLSnonGeodesic}
Let $d_1, d_2$ be two left-invariant metrics on $G$. Assume that $d_1, d_2$ are coarsely additive along $(\mathcal F_o,L)$-transversal points for the given action $G\act X$ for some $L>0$. Then $d_1$ and $d_2$ have marked length spectrum rigidity.       
\end{theorem}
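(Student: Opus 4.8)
The plan is to run an argument parallel to the proof of \autoref{COR: Rigidity from SC}, but with the coarse additivity hypothesis playing the role that the elementary bound $\ell_d(g)\le d(o,go)$ plays there: it will pin $\ell_{d_i}(g)$ to $d_i(1,g)$ up to a uniform additive constant for a supply of elements $g$ rich enough to control all of $G$. First I would fix constants. Let $\tau>0$ be the Extension Lemma constant attached to $h_1,h_2,h_3$, let $C$ be a contraction constant of $\mathcal F_o$, and enlarge $L$ if necessary so that \emph{both} $d_1$ and $d_2$ are coarsely additive along $(\mathcal F_o,L)$-transversal points, with gauges $R_1,R_2$. Let $D=D(\tau,L,C)$ and $r=r(\tau,L,C)$ be the constants of \autoref{TransOnAdmisPath}, and (as allowed by \autoref{LEM: Modified Extension Lemma}) choose $F=\{f_1,f_2,f_3\}$ with $f_i\in\langle h_i\rangle$ and $d(o,f_io)>D$.

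The heart of the proof is the estimate $|d_i(1,\theta)-\ell_{d_i}(\theta)|\le R_i$ for the elements $\theta$ output by the Extension Lemma. Take $g\in G$ with $d(o,go)>D$ and, using \autoref{LEM: Construction of Perturbation}, pick $f\in F$ so that the bi-infinite path labeled $(\dots,g,f,g,f,\dots)$ is a special $(D,\tau)$-admissible path with respect to $\mathcal F_o$; put $\theta=gf$. For each $m\ge 1$ the finite sub-path $\gamma_m$ with $m$ blocks $(g,f)$ runs from $o$ to $\theta^m o$; its ``$p$''-segments are $p_0,\dots,p_{m-1}$ with $p_j=[\theta^j g o,\theta^{j+1}o]$ of length $d(o,fo)>D$, so $(p_j)_+=\theta^{j+1}o$. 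Fix $k\ge 1$ and apply \autoref{TransOnAdmisPath} to $\gamma_{k+1}$ and to a geodesic $\alpha=[o,\theta^{k+1}o]$: since $\mathrm{Len}(p_{k-1})>D$ and $p_{k-1}$ is neither first nor last in $\gamma_{k+1}$, its endpoint $\theta^k o=(p_{k-1})_+$ lies in $N_r\big(\mathrm{Trans}_L(\alpha,\mathcal F_o)\big)$, i.e. $\theta^k o$ is $r$-close to a transversal point of $[o,\theta^{k+1}o]$. Coarse additivity of $d_i$ on the triple $(1,\theta^k,\theta^{k+1})$ together with left-invariance $d_i(\theta^k,\theta^{k+1})=d_i(1,\theta)$ then gives
$$d_i(1,\theta^{k+1})\ \ge\ d_i(1,\theta^k)+d_i(1,\theta)-R_i\qquad(k\ge 1).$$
Telescoping over $k=1,\dots,n-1$ yields $d_i(1,\theta^n)\ge n\,d_i(1,\theta)-(n-1)R_i$, hence $\ell_{d_i}(\theta)=\lim_n d_i(1,\theta^n)/n\ge d_i(1,\theta)-R_i$; combined with the sub-additivity bound $\ell_{d_i}(\theta)\le d_i(1,\theta)$ of \autoref{RMK: MLS Basics}, this proves $|d_i(1,\theta)-\ell_{d_i}(\theta)|\le R_i$ for $i=1,2$. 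Since $\ell_{d_1}=\ell_{d_2}$ on $G$ by hypothesis and $\theta\in G$, we get $|d_1(1,\theta)-d_2(1,\theta)|\le\max\{R_1,R_2\}=:R_0$.

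Next I would promote this to arbitrary elements. If $d(o,go)>D$, left-invariance gives $|d_i(1,g)-d_i(1,gf)|\le d_i(g,gf)=d_i(1,f)\le M_i:=\max_{f\in F}d_i(1,f)$, so $|d_1(1,g)-d_2(1,g)|\le M_1+M_2+R_0=:C_\ast$. If instead $d(o,go)\le D$, fix $t\in G$ with $d(o,to)>2D$ (such $t$ exists since the reference action has unbounded orbits); then $d(o,tgo)\ge d(o,to)-d(o,go)>D$, so $|d_1(1,tg)-d_2(1,tg)|\le C_\ast$, while $|d_i(1,g)-d_i(1,tg)|\le d_i(1,t)$ by left-invariance, a fixed constant. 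Combining the two cases, $\sup_{g\in G}|d_1(1,g)-d_2(1,g)|<\infty$, and since $d_1,d_2$ are left-invariant this is equivalent to $\sup_{g,h\in G}|d_1(g,h)-d_2(g,h)|<\infty$, i.e. the pair $(d_1,d_2)$ has marked length spectrum rigidity.

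I expect the main obstacle to be concentrated in \autoref{TransOnAdmisPath}: the whole argument rests on knowing that the purely combinatorial vertices coming from the Extension Lemma lie uniformly close to genuine $(\mathcal F_o,L)$-transversal points of an honest geodesic, which is exactly the configuration under which coarse additivity can be triggered. Two further points require care: $D$ must be chosen large enough to serve simultaneously as an Extension-Lemma parameter, as the threshold of \autoref{TransOnAdmisPath}, and as a lower bound for the lengths of the $p$-segments; and since coarse additivity splits off only one transversal point per application, the estimate for $d_i(1,\theta^n)$ must be assembled by telescoping over the sub-paths $[o,\theta^{k+1}o]$ rather than by a single appeal to the length-$n$ path. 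Note also that, in contrast to the geodesic case, no ``simultaneously contracting'' machinery from \autoref{SEC: Simultaneously contracting} is needed here, because all transversality is measured inside the single fixed reference action $G\act X$.
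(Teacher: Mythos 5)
Your proposal is correct and follows essentially the same route as the paper's own proof: the same combination of \autoref{LEM: Construction of Perturbation} (to build the periodic special admissible path for $\theta=gf$), \autoref{TransOnAdmisPath} (to place $\theta^k o$ within $r$ of a transversal point of $[o,\theta^{k+1}o]$), the telescoped coarse-additivity estimate giving $\ell_{d_i}(\theta)\ge d_i(1,\theta)-R_i$, and the same two-case reduction for elements with $d(o,go)\le D$. The only differences are cosmetic (you translate by $t$ on the left where the paper multiplies by $h$ on the right, and you state the two-sided bound $|d_i(1,\theta)-\ell_{d_i}(\theta)|\le R_i$ explicitly), so there is nothing to correct.
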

\begin{proof}
\iffalse
We follow closely the proof of Theorem \ref{COR: Rigidity from SC} to prove the following: if 
\begin{align*}
        \lambda_1  \ell_{d_2}(g)\leq  \ell_{d_1}(g)\leq \lambda_2 \ell_{d_2}(g),\,\forall g\in \mathbf{conj}(G)
    \end{align*} 
then for some $R>0$,  $$\lambda_1 d_2(g,h)-R\leq d_1(g,h)\leq \lambda_2 d_2(g,h)+R, \,\forall g,h\in G.$$ 
This clearly concludes the proof of the theorem.
\fi

Suppose the two metrics have the same marked length spectrum: 
\begin{equation}\label{NonGeodEqu0}
    \ell_{d_1}(g)=\ell_{d_2}(g),\;\forall g\in G.
\end{equation}
By left-invariance, it suffices to show that $\sup_{g\in G}\abs{d_1(1,g)-d_2(1,g)}<+\infty$.

Let $\tau>0$ be decided by \autoref{LEM: Modified Extension Lemma} for the three contracting elements $h_1,h_2,h_3$ acting on $X$. Let $D =D(\tau,L,C),r=r(\tau,L,C)$ be given by  \autoref{TransOnAdmisPath}. %  
We consider two cases that encompass all $g\in G$.

\textbf{Case 1:} $d(o,go)>D$.

Choose $f_i\in \left\langle h_i\right\rangle $ so that $d(o,f_io)>D$ for each $1\le i\le 3$. By \autoref{LEM: Construction of Perturbation}, there exist $f\in \left\{f_1,f_2,f_3\right\}$ such that $(\cdots, g, f,g,f,\cdots )$ labels a special $(D,\tau)$-admissible path  with respect to  $\mathcal{F}_o$. According to \autoref{TransOnAdmisPath}, for each $n\ge 1$, $(gf)^{n-1}o\in N_r(\mathrm{Trans}_L([o,(gf)^no],\mathcal F_o))$. 
The  definition of coarse additivity gives a constant $R=R(r)$ verifying \autoref{CoarseAdditiveEQ}, so for each $i=1,2$, we have
$$\abs{d_i(1,(gf)^{n})-d_i(1,(gf)^{n-1})-d_i(1,gf)}\le R.$$
By induction, this shows
$$
\abs{d_i(1, (gf)^n)-nd_i(1,gf)}\le nR
,$$ which in turn implies
$$
\ell_{d_i}(gf)=\lim_{n\to\infty}\frac{d_i(1, (gf)^n)}{n}\ge d_i(1,gf)-R.
$$

Denote $L_0:=\max\{d_1(1,f_j), d_2(1,f_j): 1\le j \le 3\}<\infty$, then 
\begin{align*}
d_1(1,g)&\leq d_1(1,gf)+d_1(g,gf)  &\\
&\leq \ell_{d_1}(gf)+R+L_0\\
&=\ell_{d_2}(gf)+R+L_0 &  (\ref{NonGeodEqu0})\\
&\leq d_2(1,gf)+R+L_0& (\text{\autoref{RMK: MLS Basics}\,(\ref{MLSBasics2})})\\
&\leq d_2(1,g)+L_0+R+L_0 &(\Delta\text{-Ineq.})
\end{align*}  
and similarly, $d_2(1,g)\leq d_1(1,g)+R+2L_0$.

\textbf{Case 2:} $d(o,go)\le D$.

Fix an element $h\in G$ such that $d(o,ho)>2D$. By triangle inequality, $d(o,gho)>D$. Then according to \textbf{Case 1}, $\abs{d_1(1,gh)-d_2(1,gh)}\le R+2L_0$. By triangle inequality, $\abs{d_1(1,g)-d_2(1,g)}\le R+2L_0+d_1(1,h)+d_2(1,h)$.

Denoting $M=R+2L_0+d_1(1,h)+d_2(1,h)<+\infty $, we conclude $\sup_{g\in G}\abs{d_1(1,g)-d_2(1,g)}\leq M$, completing the proof.
\end{proof}
{We caution the readers that the assumption in Theorem \ref{MLSnonGeodesic} is crucial, otherwise Remark \ref{IntroRmk: RoughGeodesic} (\ref{NotRouGeo}) gives a counterexample.}

We now present some applications to the Green metric associated to a random walk on a finitely generated group $G$.

Let $\mu$ be a probability measure on $G$ with finite symmetric support, which generates $G$ as a group. Let $\mu^{\star n}$ be the $n$-th convolution on $G$ (i.e. the push forward of $(G^n, \mu^n)$ to $G$ under multiplication map). Define the spectral radius $R:=\limsup_{n\to \infty} (\mu^{\star n})^{1/n}$, and  the $r$-Green function for $1\le r\le R^{-1}$ $$\forall x,y\in G: \;G_r(x,y)=\sum_{n\ge 0}\mu^{\star n}(x^{-1}y)r^n$$
Assume that $G_1(x,y)<\infty$ for some (thus any) $x,y$ (for the support of $\mu$ generates $G$). In other words,  the $\mu$-random walk driven by $\mu$ is transient on $G$; equivalently,  $G$ is not virtually $\mathbb Z^2$. If $G$ is non-amenable, then $R<1$ and $G_{R^{-1}}(x,y)<\infty$.
 
Define a family of the Green metric indexed by $r\in [1,R^{-1}]$ on a non-amenable  group $G$ as follows
$$
\forall x,y\in G,\; d_{r,\mu}(x,y) =-\log\frac{G_r(x,y)}{G_r(1,1)}
$$
which is a proper and left invariant  metric, and is quasi-isometric to word metric on $G$ for $1\le r<R^{-1}$ (cf.\cite[Eq. (7)]{BB07}). We write $d_\mu=d_{1,\mu}$.

We recall the following fact in a non-elementary relatively hyperbolic group, which is direct consequence of \cite[Theorem 1.1]{GGPY}.

\begin{lemma}
Suppose that $G$ is a non-elementary relatively hyperbolic group.  Fix any finite independent set $F$ of three hyperbolic  elements in $G$ for the action on the Cayley graph. Fix $r\in [1,R^{-1})$. Then there exists $L>0$ such that $d_{r,\mu}$ is coarsely additive along $(\mathcal F_1,L)$-transversal points.
\end{lemma}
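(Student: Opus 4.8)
The plan is to deduce the lemma from \cite[Theorem 1.1]{GGPY} — which controls the $r$-Green function, equivalently the Green metric $d_{r,\mu}$, along transition points of the relatively hyperbolic structure on $G$ — together with the translation \autoref{LEM: Transition} from $(\mathcal F_1,L)$-transversality to transition points, plus the quasi-isometry between $d_{r,\mu}$ and a word metric for $r<R^{-1}$.

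\emph{Reductions.} By left-invariance of $d_{r,\mu}$ we may set $g=1$ in \autoref{CoarseAdditiveEQ}. Since $G$ acts freely and cocompactly, in particular properly, on its Cayley graph $X$, the Remark following \autoref{CoarseAddtiveDef} lets us verify the estimate only for the closeness parameter equal to a contraction constant $C$ of $\mathcal F_1$. Thus it suffices to produce $L>0$ and $R_0\ge 0$ such that, whenever $h\in G$ and $f\in G$ is within word distance $C$ of an $(\mathcal F_1,L)$-transversal point of some geodesic $[1,h]$ in $X$, we have $d_{r,\mu}(1,f)+d_{r,\mu}(f,h)-d_{r,\mu}(1,h)\le R_0$; the opposite inequality $d_{r,\mu}(1,h)\le d_{r,\mu}(1,f)+d_{r,\mu}(f,h)$ is merely the triangle inequality for the metric $d_{r,\mu}$, so $\abs{d_{r,\mu}(1,h)-d_{r,\mu}(1,f)-d_{r,\mu}(f,h)}\le R_0$ follows.

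\emph{Main step.} Fix the independent set $F=\{h_1,h_2,h_3\}$ and let $L$ be as in \autoref{LEM: Transition}, enlarged if necessary to meet the hypotheses of \cite[Theorem 1.1]{GGPY}; enlarging $L$ only strengthens the transversality requirement. If $x$ is an $(\mathcal F_1,L)$-transversal point of $[1,h]$, then \autoref{LEM: Transition} shows $x$ is a $(3C,L)$-transition point of $[1,h]$ in the sense of the peripheral structure, and $x$ lies within $C$ of some group element $f'=g h_i^k$ in the coset $g\langle h_i\rangle$ realizing the transversality. Applying \cite[Theorem 1.1]{GGPY} at the transition point yields the Ancona-type estimate $G_r(1,h)\le \kappa\, G_r(1,f')\,G_r(f',h)/G_r(1,1)$ for some $\kappa=\kappa(\mu,r,C,L)\ge 1$, i.e. $d_{r,\mu}(1,h)\ge d_{r,\mu}(1,f')+d_{r,\mu}(f',h)-\log\kappa$. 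Since $r<R^{-1}$, $d_{r,\mu}$ is $(\lambda,c)$-quasi-isometric to a word metric on $G$ by \cite[Eq. (7)]{BB07}; as $f$ and $f'$ are at word distance $\le 2C$, we get $d_{r,\mu}(f,f')\le 2\lambda C+c=:M$, hence $\abs{d_{r,\mu}(1,f)-d_{r,\mu}(1,f')}\le M$ and $\abs{d_{r,\mu}(f,h)-d_{r,\mu}(f',h)}\le M$ by the triangle inequality. Combining, $d_{r,\mu}(1,f)+d_{r,\mu}(f,h)-d_{r,\mu}(1,h)\le \log\kappa+2M$, so $R_0:=\log\kappa+2M$ works.

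\emph{Main obstacle.} The essential difficulty is purely bookkeeping against the black box \cite[Theorem 1.1]{GGPY}: one must confirm that its notion of transition point, and the precise form of the inequality it provides (basepoint $1$, multiplicative constant uniform for $r$ in compact subsets of $[1,R^{-1})$, no depth required beyond a fixed threshold), are compatible with the $(3C,L)$-transition points furnished by \autoref{LEM: Transition}. Since every constant in sight — $C$ and the bounded-intersection gauge of $\mathcal F_1$, and $\kappa,\lambda,c$ at the fixed $r$ — depends only on the fixed data, choosing $L$ and $R_0$ as suitable maxima resolves this; the hypothesis $r<R^{-1}$ enters only when invoking the quasi-isometry with the word metric to pass from the abstract transition point $f'$ to the given element $f$.
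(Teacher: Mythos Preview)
Your proposal is correct and follows essentially the same route as the paper: invoke the Ancona-type estimate from \cite{GGPY} (the paper cites \cite[Corollary 1.4]{GGPY}, you cite \cite[Theorem 1.1]{GGPY}) to get coarse additivity along transition points of the peripheral structure, then use \autoref{LEM: Transition} to see that $(\mathcal F_1,L)$-transversal points are such transition points. The paper's proof is two sentences and elides the passage from the transversal point $x$ to a nearby group element, which you make explicit via the quasi-isometry between $d_{r,\mu}$ and the word metric; this is fine, though note that the Remark after \autoref{CoarseAddtiveDef} (which you already invoked) handles this step via properness alone, so the appeal to \cite{BB07} is not strictly needed.
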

\begin{proof}
By \cite[Corollary 1.4]{GGPY}, $d_{r,\mu}$ is  coarsely additive along the transition points defined in the sense of maximal parabolic subgroups. %
The result then follows directly from \autoref{LEM: Transition}.
\end{proof}

As a corollary, up to choosing a larger constant $L$ for two Green metrics, we obtain.
\begin{corollary}\label{MLSGreenMetrics}
Let $\mu_1, \mu_2$ be two probability measures on a non-elementary relatively hyperbolic group $G$ with finite symmetric support generating $G$. Then $d_{1,\mu_1}$ and $d_{1,\mu_2}$ have marked length spectrum rigidity. 
\end{corollary}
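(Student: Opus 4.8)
The plan is to reduce the statement to \autoref{MLSnonGeodesic}. That theorem says: if two left-invariant metrics on $G$ are both coarsely additive along $(\mathcal F_o, L)$-transversal points for a common reference action $G\act X$ with contracting property, and they have the same marked length spectrum, then they are roughly isometric. So the only thing to check is that $d_{1,\mu_1}$ and $d_{1,\mu_2}$ satisfy the hypotheses of \autoref{MLSnonGeodesic} with respect to a single, fixed reference action and a single choice of transversal-point parameter $L$. The natural reference action is the action of $G$ on its Cayley graph (with respect to some finite generating set), which has contracting property since $G$ is a non-elementary relatively hyperbolic group (cf. \autoref{EX: Contracting Property}(2)); fix once and for all a finite independent set $F=\{h_1,h_2,h_3\}$ of three hyperbolic elements for this action, and let $\mathcal F_1$ be the corresponding contracting system of $G$-translated axes as in \autoref{MLSnonGeodesic}.

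First I would invoke the lemma immediately preceding the corollary (the consequence of \cite[Corollary 1.4]{GGPY} combined with \autoref{LEM: Transition}): for $r=1$ each Green metric $d_{1,\mu_j}$ is coarsely additive along $(\mathcal F_1, L_j)$-transversal points for some $L_j>0$, $j=1,2$. Here one uses that $\mu_j$ has finite symmetric support generating $G$, that $1\in[1,R_j^{-1})$ since $G$ is non-amenable (a non-elementary relatively hyperbolic group is non-amenable unless it is virtually cyclic, which is excluded), and that the $\mu_j$-walk is transient (again automatic here). Second, I would observe that by the definition of $(\mathcal F_1,L)$-transversal points, enlarging $L$ only shrinks the set $\mathrm{Trans}_L(\gamma,\mathcal F_1)$, so coarse additivity along $(\mathcal F_1,L_j)$-transversal points persists along $(\mathcal F_1,L)$-transversal points for any $L\ge L_j$. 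Setting $L=\max\{L_1,L_2\}$, both metrics are simultaneously coarsely additive along $(\mathcal F_1,L)$-transversal points for the one reference action $G\act X$.

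The remaining hypothesis of \autoref{MLSnonGeodesic} is that the two metrics have the same marked length spectrum, and this is exactly the standing assumption of the corollary: $\ell_{d_{1,\mu_1}}(g)=\ell_{d_{1,\mu_2}}(g)$ for all $g\in G$. Applying \autoref{MLSnonGeodesic} then yields $\sup_{g,h\in G}\abs{d_{1,\mu_1}(g,h)-d_{1,\mu_2}(g,h)}<+\infty$, i.e. the two Green metrics are roughly isometric, which is precisely marked length spectrum rigidity for the pair. The only genuinely substantive input is the coarse additivity of the Green metric along transition points from \cite{GGPY}, which is quoted as a black box here; everything else is bookkeeping, namely matching up the reference action, unifying the constant $L$, and checking the mild non-degeneracy conditions on the $\mu_j$. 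So there is no real obstacle in this proof beyond correctly citing and assembling the ingredients already established in the excerpt.
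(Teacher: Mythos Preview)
Your proposal is correct and follows exactly the paper's approach: the paper simply remarks ``up to choosing a larger constant $L$ for two Green metrics'' and deduces the corollary from the preceding lemma together with \autoref{MLSnonGeodesic}. Your observation that enlarging $L$ only shrinks $\mathrm{Trans}_L(\gamma,\mathcal F_1)$, hence preserves coarse additivity, is precisely the point the paper is using implicitly.
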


Recall that for a subset $A\subseteq G$,   the growth rate of $A$ relative to $d$ is defined  as follows
$$
\delta_d(A):=\limsup_{n\to\infty} \frac{
\log \sharp \{g\in A: d(1,g)\le n\}}{n}$$
Alternatively, $\delta_d(A)$ is just the convergence radius of the so-called Poincar\'e series
$$
s\ge 0,\; \Theta_d(A,s)=\sum_{g\in A} \mathrm{e}^{-sd(1,g)}
$$
At last, we present an example of a relatively hyperbolic group endowed with  a pair of proper left invariant metrics, which  has no  marked length spectrum rigidity  from   a subgroup with maximal growth. %
However, as we will see in \autoref{Thm: MLSR From Conjugacy-large Subset}, some right coset of this subgroup still implies the MLS rigidity. 

\begin{proposition}\label{MLSFailsMaximalSubgroup}
There exists a relatively hyperbolic group $G$ with two proper left invariant metrics $d_1, d_2$, which are quasi-isometric to word metric, with the following properties:
\begin{enumerate}
    \item 
    there is a subgroup $\Gamma\subseteq G$ with $\delta_{d_1}(G)=\delta_{d_2}(G)=\delta_{d_1}(\Gamma)$.
    \item 
    the metrics $d_1, d_2$ are coarsely equal on $\Gamma$, but  not on the whole group $G$.
    \item 
    the metrics $d_1, d_2$ are coarsely additive along the transition points.
\end{enumerate}
\end{proposition}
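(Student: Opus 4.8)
The plan is to build the example explicitly. Take $\Gamma=F_2$ and set $G=\Gamma\ast\langle t\mid t^2\rangle=F_2\ast\mathbb Z/2$. Being a free product of a group with a finite group, $G$ is non-elementary and hyperbolic relative to $\{\Gamma\}$ (the finite factor may be dropped from the peripheral structure), and $\Gamma$ is a maximal parabolic subgroup. The two metrics will be ``syllable metrics'' on $G$ that differ only in the price assigned to the letter $t$ but agree on $\Gamma$; the only real work is arranging that $\Gamma$ still realises the maximal growth, and this forces a non-standard choice of metric on $\Gamma$ itself. Indeed, a word metric on a non-elementary hyperbolic group has spheres of size $\asymp e^{s_0n}$ (Coornaert), so its Poincar\'e series \emph{diverges} at the critical exponent $s_0$; then adjoining any nontrivial free factor strictly increases the growth rate and $\delta(\Gamma)=\delta(G)$ is impossible. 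I would instead equip $\Gamma$ with
$$d_\Gamma(g,h):=\abs{g^{-1}h}_w+\bigl\lceil 2\log_3(1+\abs{g^{-1}h}_w)\bigr\rceil,$$
$\abs{\cdot}_w$ the word length. One checks that $d_\Gamma$ is a proper left-invariant metric (the correction term is non-decreasing and subadditive, using $1+a+b\le(1+a)(1+b)$), that it is quasi-isometric to $\abs{\cdot}_w$ (indeed $\abs{g}_w\le d_\Gamma(1,g)\le 2\abs{g}_w$ for $\abs{g}_w\ge 1$), that it has the same critical exponent $s_0=\log 3$, and --- crucially --- that its spheres have size $\asymp 3^{n}/n^{2}$, so that $P_\Gamma(s_0):=\sum_{\gamma\in\Gamma}e^{-s_0 d_\Gamma(1,\gamma)}<\infty$; i.e. $(\Gamma,d_\Gamma)$ is of \emph{convergence type}.

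Next, fix a large integer $N$ (the constraint on $N$ appears below). For $g\in G$ with reduced normal form $g=s_1s_2\cdots s_m$ over $\Gamma\ast\mathbb Z/2$, put
$$d_1(1,g):=\sum_{s_i\in\Gamma}d_\Gamma(1,s_i)+N\cdot\#\{i:s_i=t\},\qquad d_2(1,g):=\sum_{s_i\in\Gamma}d_\Gamma(1,s_i)+2N\cdot\#\{i:s_i=t\},$$
and extend by left-invariance. The triangle inequality for $d_j$ is immediate: concatenating two normal forms and reducing can only cancel a pair of $t$'s or merge two $\Gamma$-syllables (the latter using the triangle inequality in $(\Gamma,d_\Gamma)$), and neither operation increases the sum. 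So each $d_j$ is a genuine proper left-invariant metric (it is also the restriction to $G$ of the path metric on the tree of metric spaces obtained by gluing copies of $(\Gamma,d_\Gamma)$ along $t$-edges of length $N$, resp.\ $2N$); and it is quasi-isometric to the word metric of $G$, since every elementary cost lies within a bounded ratio of the corresponding word-metric cost.

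I would then verify the three assertions. For (1), reading off the free-product normal form gives the Poincar\'e series
$$\sum_{g\in G}z^{d_1(1,g)}=\frac{P_\Gamma(z)\,(1+z^{N})}{1-(P_\Gamma(z)-1)\,z^{N}},\qquad P_\Gamma(z):=\sum_{\gamma\in\Gamma}z^{d_\Gamma(1,\gamma)}.$$
On $[0,e^{-s_0}]$ the function $(P_\Gamma(z)-1)z^N$ is increasing, with value $(P_\Gamma(s_0)-1)e^{-Ns_0}$ at $z=e^{-s_0}$; choosing $N$ with $e^{Ns_0}>P_\Gamma(s_0)-1$ makes this value $<1$, so the denominator never vanishes on $[0,e^{-s_0}]$ and the radius of convergence is exactly $e^{-s_0}$. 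Hence $\delta_{d_1}(G)=s_0$, the identical computation with $2N$ gives $\delta_{d_2}(G)=s_0$, and since $d_1|_\Gamma=d_\Gamma$ we get $\delta_{d_1}(\Gamma)=s_0$ as well. For (2), an element of $\Gamma$ has no $t$-syllable, so $d_1$ and $d_2$ both coincide with $d_\Gamma$ on $\Gamma$, in particular $\ell_{d_1}=\ell_{d_2}$ on $\Gamma$; but for a fixed nontrivial $\gamma_0\in\Gamma$ one has $d_2(1,(\gamma_0 t)^n)-d_1(1,(\gamma_0 t)^n)=Nn\to\infty$, so the identity orbit map $G\to G$ is not a rough isometry and MLS rigidity fails on $G$. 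For (3), in the Cayley graph of $G$ with respect to $\{a,b,t\}$ the $(3C,L)$-transition points of a geodesic $[g,h]$ (in the sense of the maximal parabolic $\Gamma$) lie within bounded distance of the ``syllable vertices'' of the normal form of $g^{-1}h$, i.e.\ the endpoints of its $t$-edges; splitting the normal form at such a vertex $v$ gives the \emph{exact} identity $d_j(g,h)=d_j(g,v)+d_j(v,h)$, and since only finitely many group elements lie within a bounded Cayley-graph distance of $v$, each at bounded $d_j$-distance from $v$, the triangle inequality upgrades this to $\abs{d_j(g,h)-d_j(g,f)-d_j(f,h)}\le R(r)$ whenever $f$ is $r$-close to a transition point of $[g,h]$. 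By \autoref{LEM: Transition} this also makes $d_1,d_2$ coarsely additive along the $(\mathcal F_1,L)$-transversal points of \autoref{CoarseAddtiveDef}, so all hypotheses of \autoref{MLSnonGeodesic} hold in this example \emph{except} ``same marked length spectrum on all of $G$''.

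The main obstacle is entirely in (1): producing a maximal parabolic subgroup that attains the maximal growth. This is precisely why the word metric on $\Gamma$ must be replaced by the logarithmically corrected, convergence-type metric above, and the technical heart of the argument is the Poincar\'e-series estimate, which shows that charging the $t$-edges heavily enough keeps $\delta_{d_1}(G)$ equal to $\delta_{d_\Gamma}(\Gamma)$. The remaining ingredients --- relative hyperbolicity of the free product, the metric axioms for the syllable metric, its quasi-isometry with the word metric, and the exact additivity across syllable vertices --- are routine. (For the remark accompanying the statement, one further checks that the right coset $\Gamma t$ satisfies $\delta^{c}_{d_1}(\Gamma t)=\delta^{c}_{d_1}(G)=s_0$, since its conjugacy classes are essentially indexed by $\Gamma$ with $\ell_{d_1}(\gamma t)=d_\Gamma(1,\gamma)+N$, so that \autoref{Thm: MLSR From Conjugacy-large Subset} applies to $\Gamma t$; I would treat that point separately.)
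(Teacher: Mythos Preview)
Your construction is correct and follows the same architecture as the paper's own proof: form a free product $G=\Gamma\ast K$, equip $\Gamma$ with a proper left-invariant metric whose Poincar\'e series \emph{converges} at the critical exponent, put two different weights on the $K$-factor, extend both metrics additively along normal forms, and then check via the free-product Poincar\'e-series identity that $\delta_{d_j}(G)=\delta_{d_\Gamma}(\Gamma)$ once the $K$-factor is made expensive enough. The difference lies entirely in the choice of building blocks. The paper takes $\Gamma=\mathbb F_n\times\mathbb F_m$ and imports a convergence-type Green metric $d_r$ from the phase-transition theorem \cite[Theorem 5.1]{DWY23}, and takes $K=\mathbb Z^3$ with two rescaled word metrics $|\cdot|_1,\,|\cdot|_2=|\cdot|_1/2$; you instead take $\Gamma=F_2$ and manufacture convergence type by hand via the logarithmic correction $d_\Gamma=|\cdot|_w+\lceil 2\log_3(1+|\cdot|_w)\rceil$, and take $K=\mathbb Z/2$ with two edge-lengths $N,2N$. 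Your version is more elementary and entirely self-contained (no appeal to random walks), and as a bonus your $G$ is even word-hyperbolic; the paper's version has the mild advantage that the metrics on $K$ are genuinely geodesic and both peripherals are infinite, which sits more naturally alongside the surrounding discussion of Green metrics.
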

\begin{proof}
Consider a free product $G=\Gamma\ast\mathbb Z^3$ where   $\Gamma=\mathbb F_n\times \mathbb F_m$  for $n\ne m\ge 2$. By  \cite[Theorem 5.1]{DWY23}, there exists a finitely supported irreducible probability measure on $\Gamma$ so that the family of Green metrics $d_r$ for $1\le r<R^{-1}$ has the following phase transition at  some $r_\star \in (1, R)$.  
\begin{enumerate}
    \item 
    for $r\le r_\star$, the Poincar\'e series $\Theta_{d_r}(\Gamma,s)$  is divergent at $s=\delta_{d_r}(\Gamma)$.
    \item 
    for $r> r_\star$, the Poincar\'e series $\Theta_{d_r}(\Gamma,s)$ is convergent at $s=\delta_{d_r}(\Gamma)$.
\end{enumerate} 
Write $\delta=\delta_{d_r}(\Gamma)$ for simplicity.

Pick up $r>r_\star$ so that $\sum_{h\in \Gamma} \mathrm{e}^{-\delta d_r(1,h)}=s<\infty$.
Choose two proper geodesic metrics $|\cdot|_1$ and $|\cdot|_2$ on $K=\mathbb Z^3$, which are quasi-isometric to the word metric, so that 
\begin{equation}\label{ChoiceofS}
\begin{aligned}
\sum_{k\in K} \mathrm{e}^{-\delta |k|_i} <1/s
\end{aligned}
\end{equation}
and $|\cdot|_1$ and $|\cdot|_2$ are not coarsely equal. For instance, we could, if necessary, re-scale a word metric $|\cdot|_1$ on $K$ and set $|\cdot|_2=|\cdot|_1/2$.

Define two proper metric $d_1, d_2$ on $G=\Gamma\ast K$ as follows. By normal form theorem, any element $g\in G$ admits a unique product of the form $g=h_1k_1\cdots h_nk_n$ with $h_i\in \Gamma$ and $k_i\in K$. It is only possible that $h_1=1$ or $k_n=1$.
Define $d_j(1,g)=\sum_{i=1}^n (d_r(1,h_i)+|k_i|_j)$ for $j=1,2$. It is clear that $d_1$ and $d_2$ are equal on $\Gamma$, but are not coarsely equal on $K$ (and thus $G$). By definition of $d_j$, as $G$ is hyperbolic relative to $\{\Gamma, K\}$, two metrics $d_1$ and $d_2$  are coarsely additive along transition points. 

Set $C_0=\max\{s,s^{-1}\}$. At last, a simple computation shows that 
$$
\sum_{g\in G}\mathrm{e}^{-\delta d_j(1,g)} \le C_0^2 \sum_{n\ge 0} \left[\left(\sum_{h\in \Gamma} \mathrm{e}^{-\delta d_j(1,h)} \right)^n\left(\sum_{h\in K} \mathrm{e}^{-\delta |k|_j} \right)^n\right]<\infty
$$
according to \autoref{ChoiceofS}. This implies $\delta_{d_1}(G)=\delta_{d_2}(G)=\delta$. 
\end{proof}

\section{MLS rigidity for cusp-uniform actions}\label{SEC: Application}
\iffalse
\begin{theorem}\label{THM: Cusp Uniform}
Suppose that a group $G$ is hyperbolic relative to a collection of subgroups $\left\{H_i\right\}_{i\in I}$, and $(G,\left\{H_i\right\})$ admits two cusp-uniform actions on two proper $\delta$-hyperbolic geodesic spaces $(X_1,d_1)$, $(X_2,d_2)$. If the marked length spectrum $\ell_{d_1}(g)=\ell_{d_2}(g)$ for all $g\in G$, then there exists a $G$-coarsely equivariant $(1,C)$-quasi-isometry $X_1\to X_2$ for some $C\geq 0$.
\end{theorem}
\begin{theorem}\label{THM: Hyperbolic Limit Set}
Suppose that a group $G$ acts by non-elementary isometry on two quasi-geodesic $\delta$-hyperbolic spaces $(X_1,d_1)$ and $(X_2,d_2)$. Fix basepoints $o_1\in X_1$ and $o_2\in X_2$, suppose that $Go_1\to Go_2$, $go_1\mapsto go_2$ is a $(\lambda,c)$-quasi-isometry for some $\lambda\geq 1, c\geq 0$ (?). If $H<G$ is a subgroup such that $\Lambda Ho_i=\Lambda Go_i$, then the marked length spectrum $\ell_{d_1}(h)=\ell_{d_2}(h)$ for all $h\in H$ implies that $Go_1\to Go_2$ is a $(1,C)$-quasi-isometry for some $C\geq 0$.
\end{theorem}
\fi
%

\subsection{Preliminaries for Gromov hyperbolic spaces}

\text{ }

We recall some basic definitions and properties in the theory of Gromov hyperbolic spaces, and we refer the readers to \cite{Bow06,BH13,BS07} for more details.
%\begin{notation}
%In a metric space $(X,d)$, the Gromov product is defined as $(x\mid y)_o=\frac{1}{2}(d(o,x)+d(o,y)-d(x,y))$.
%\end{notation}
\begin{definition}
A metric space $(X,d)$ is called a \textit{$\delta$-hyperbolic space} for some $\delta\geq 0$, if for any four points $x,y,z,w\in X$:
\begin{align*}
\gp{x}{y}{w}\geq \min\left\{\gp{x}{z}{w},\gp{z}{y}{w}\right\}-\frac{\delta}{2}.
\end{align*}
wherew the Gromov product is defined as:
\begin{align}\label{EQU: Gromov Product}
\gp{x}{y}{o}:=\frac{1}{2}(d(o,x)+d(o,y)-d(x,y)).
\end{align}
\end{definition}

An important notion, associated with a hyperbolic space, is the Gromov boundary.
\begin{definition}[{\cite[Definition 3.12]{BH13}} ]
Let $X$ be a proper $\delta$-hyperbolic space with a basepoint $p$. A sequence $(x_n)_{n=1}^{\infty}$ in $X$ converges at infinity if $\gp{x_i}{x_j}{p}\to \infty$ as $i,j\to \infty$.
The Gromov boundary $\partial X$ is defined by
$$\partial X:=\{(x_n)\mid(x_n)\text{ converges at infinity}\}/\sim$$
where $(x_n)\sim (y_n)$ if $\gp{x_i}{y_j}{p}\to\infty$ as $i,j\to \infty$. 
\end{definition}

For a sequence $(x_n)$ converging at infinity, let $\lim x_n\in \partial X$ denote the corresponding boundary point.

Note that the definition of Gromov boundary is independent of the choice of the basepoint $p$, and $\overline{X} = X\cup \partial X$ gives a compactification of $X$.

Furthermore, if the proper $\delta$-hyperbolic space $X$ is a geodesic space, then $\partial X$ is visible in the following sense: for any $\ksi,\eta \in \overline{X}$, there is a geodesic $c:I\to X$ connecting $\ksi$ and $\eta$, in the sense that if $\eta \in \partial X$, then $\eta=\lim_{t\to +\infty} c(t)$ and if $\ksi \in \partial X$, then $\ksi=\lim_{t\to -\infty} c(t)$.

In addition, in a proper hyperbolic space we can extend the Gromov product to the Gromov boundary.
\begin{definition}[{\cite[Definition 3.15]{BH13}} ]\label{DEF: Gromov Product}
Let $X$ be a proper $\delta$-hyperbolic space with a basepoint $p$. We extend the Gromov product to $\overline{X}=X\cup\partial X$ by:
\begin{align*}
\gp{x}{y}{p}=\sup\liminf_{i,j\to\infty}\gp{x_i}{y_j}{p}
\end{align*}
where the supremum is taken over all sequences $(x_i)$ and $(y_j)$ in $X$ such that $x=\lim x_i$ and $y=\lim y_j$.
\end{definition}
\begin{remark}[{{\cite[Remark 3.17]{BH13}},{\cite[Lemma 2.2.2]{BS07}}} ]\label{PROP: Gromov Product Inequalities}
Let $X$ be a proper $\delta$-hyperbolic space with a basepoint $p$.
\begin{enumerate}[(1)]
\item \autoref{DEF: Gromov Product} coincides with the classical definition (\ref{EQU: Gromov Product}) on $X$.
\item\label{6.7.2} $\gp{x}{y}{p}=\infty$ if and only if $x=y\in\partial X$.
\item\label{6.7.3} For all $x,y\in \overline{X}$, and all sequences $(x_i)$ and $(y_i)$ in $X$ with $x= \lim x_i$ and $y=\lim y_j$, we have $\gp{x}{y}{p}-2\delta\leq \liminf_{i\to \infty}\gp{x_i}{y_i}{p}\leq\limsup_{i\to \infty}\gp{x_i}{y_i}{p}\leq  \gp{x}{y}{p}$.
\end{enumerate}
\end{remark}

The Gromov boundary of a proper hyperbolic space is invariant under quasi-isometries.

\begin{proposition}\cite[Proposition 11.107, Theorem 11.108]{DK18}\label{Gromov Product Preserved by Quasi-isometry}%[{\cite[Theorem 3.9]{BH13}} ]
If $X_1$ and $X_2$ are two proper geodesic $\delta$-hyperbolic spaces and $f:X_1\to X_2$ is a $(\lambda,c)$-quasi-isometric-embedding. Then there exists $C=C(\delta,\lambda,c)\ge 0$ such that for any $x,y,z\in X_1$, $\frac{1}{\lambda}\gp{x}{y}{z,X_1}-C\leq \gp{f(x)}{f(y)}{f(z),X_2}\leq {\lambda}\gp{x}{y}{z,X_1}+C$. In addition, if $f$ is a quasi-isometry, then $f$ extends to a homeomorphism $\partial X_1\to \partial X_2$.
\end{proposition}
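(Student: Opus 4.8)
The plan is to reduce the first inequality to the standard fact that in a geodesic $\delta$-hyperbolic space the Gromov product $\gp{x}{y}{z}$ agrees, up to an additive error depending only on $\delta$, with the distance $d(z,[x,y])$ from $z$ to a geodesic $[x,y]$: one always has $\gp{x}{y}{z}\le d(z,[x,y])$ by the triangle inequality, and $d(z,[x,y])\le \gp{x}{y}{z}+O(\delta)$ by hyperbolicity. I would also record two elementary auxiliary facts: first, $f$ maps $[x,y]$ to a $(\lambda,c)$-quasi-geodesic in $X_2$ with endpoints $f(x),f(y)$, so by the Morse stability lemma its image lies within Hausdorff distance $\kappa=\kappa(\delta,\lambda,c)$ of a geodesic $[f(x),f(y)]$; second, applying the quasi-isometry inequalities pointwise and taking infima over $w\in[x,y]$ gives $\tfrac1\lambda\,d(z,[x,y])-c\le d\bigl(f(z),f([x,y])\bigr)\le \lambda\,d(z,[x,y])+c$.

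Granting these, the first displayed inequality follows by chaining estimates,
\[
\gp{f(x)}{f(y)}{f(z)}\ \le\ d\bigl(f(z),[f(x),f(y)]\bigr)+O(\delta)\ \le\ d\bigl(f(z),f([x,y])\bigr)+\kappa+O(\delta)\ \le\ \lambda\,d(z,[x,y])+C_0\ \le\ \lambda\,\gp{x}{y}{z}+C,
\]
and symmetrically, with the factor $1/\lambda$, for the lower bound; all constants are absorbed into $C=C(\delta,\lambda,c)$. I should flag the one genuinely non-formal point here: a purely algebraic manipulation of the definition does not give this, because the factor $1/\lambda$ in front of $d(x,y)$ pushes the naive estimate the wrong way — hyperbolicity must be used through the geodesic-distance interpretation, and this detour (together with the Morse lemma) is the main, though mild, obstacle.

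For the second assertion, assume $f$ is a genuine $(\lambda,c)$-quasi-isometry and fix a quasi-inverse $g\colon X_2\to X_1$, so $d(g(f(x)),x)$ and $d(f(g(y)),y)$ are uniformly bounded. Fix a basepoint $p$; applying the inequality just proved with $z=p$, together with the fact that changing the basepoint moves a Gromov product by at most the distance between the basepoints, one sees that $(x_n)$ converges at infinity in $X_1$ iff $(f(x_n))$ does in $X_2$ and $(x_n)\sim(y_n)$ iff $(f(x_n))\sim(f(y_n))$ (using \autoref{PROP: Gromov Product Inequalities}). Hence $f$ induces a well-defined map $\partial f\colon\partial X_1\to\partial X_2$ and $g$ induces $\partial g$; since $g\circ f$ and $f\circ g$ displace points boundedly, $g(f(x_n))\sim x_n$ and $f(g(y_n))\sim y_n$, so $\partial g\circ\partial f=\mathrm{id}$ and $\partial f\circ\partial g=\mathrm{id}$, and $\partial f$ is a bijection.

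It remains to prove continuity. I would extend the inequality to $\overline{X_1}$: for $\xi,\eta\in\partial X_1$ realised by $x_i\to\xi$, $y_i\to\eta$, apply \autoref{PROP: Gromov Product Inequalities} in $X_1$, then the interior inequality to each pair $(x_i,y_i)$ with basepoint $p$, then change basepoint to a fixed $p'\in X_2$, then apply \autoref{PROP: Gromov Product Inequalities} in $X_2$ to $f(x_i)\to\partial f(\xi)$, $f(y_i)\to\partial f(\eta)$; the result is $\gp{\partial f(\xi)}{\partial f(\eta)}{p'}\ \ge\ \tfrac1\lambda\,\gp{\xi}{\eta}{p}-C'$. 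Since in a proper geodesic $\delta$-hyperbolic space the sets $\{\eta\in\overline X:\gp{\xi}{\eta}{p}>r\}$, $r>0$, form a neighbourhood basis of $\xi\in\partial X$, this inequality yields sequential — hence, by metrizability, topological — continuity of $\partial f$; the same argument applied to $g$ shows $\partial g=(\partial f)^{-1}$ is continuous, so $\partial f$ is a homeomorphism. The only bookkeeping needed here is the tracking of basepoints and the (immediate) fact that $\partial f(\xi)=\lim f(x_i)$ whenever $\xi=\lim x_i$.
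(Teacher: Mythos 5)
Your proof is correct. The paper states this proposition without proof, treating it as standard background from the references given at the start of that subsection (\cite{Bow06,BH13,BS07}), so there is no in-paper argument to compare against; your route --- identifying $\gp{x}{y}{z}$ with $d(z,[x,y])$ up to $O(\delta)$, pushing the geodesic forward and invoking Morse stability of quasi-geodesics, then chaining the estimates, and extending to the boundary via the basepoint-change and neighbourhood-basis argument --- is exactly the standard textbook proof, and all the steps (including the two-sided use of the quasi-isometric-embedding inequalities and the verification that $\partial g$ inverts $\partial f$) check out.
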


\begin{definition}
Suppose that a group $G$ acts by isometry on a proper $\delta$-hyperbolic space $X$ with a basepoint $p$. The limit set of $G$ is defined by $\Lambda G=\overline{Gp}\cap \partial X$, and is independent of the choice of the basepoint.
\end{definition}

Notice by definition that $\Lambda G$ admits a natural action by $G$.

From now on, we assume all metric spaces in this section to be geodesic. For any three points $x,y,z$ in a metric space, let $\Delta(x,y,z)$ be a geodesic triangle with three vertices $x,y,z$.

A classical geometric characterization of $\delta$-hyperbolic spaces is the thin triangle property.
\begin{proposition}[{\cite[Proposition 1.17]{BH13}} ]\label{PROP: Thin Triangle}
Let $(X,d)$ be a $\delta$-hyperbolic geodesic space, then any (ideal) geodesic triangle satisfies the $6\delta$-thin triangle property: for any geodesic triangle $\Delta(a,b,c)$ and any points $x\in [a,b], y\in [a,c]$ with $d(a,x)=d(a,y)\le (b|c)_a$, $d(x,y)\le 6\delta$. 
\end{proposition}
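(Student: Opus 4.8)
The plan is to prove the thin-triangle estimate directly from the four-point inequality defining $\delta$-hyperbolicity, with no appeal to stability of (quasi-)geodesics: I would bound the Gromov product $(x|y)_a$ from below and then read off the bound on $d(x,y)$. It is convenient to treat a genuine (non-ideal) triangle first and to obtain the ideal case as a limiting refinement.

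For a genuine triangle, first I would record the two elementary side-identities. Since $x\in[a,b]$ we have $d(a,x)+d(x,b)=d(a,b)$, hence
$$(x|b)_a=\frac12\big(d(a,x)+d(a,b)-d(x,b)\big)=d(a,x),$$
and likewise $(c|y)_a=d(a,y)$ for $y\in[a,c]$. Writing $t:=d(a,x)=d(a,y)$, so that $t\le(b|c)_a$ by hypothesis, I would apply the four-point inequality at the vertex $a$ twice. Applied to the triple $(b,c,y)$ it gives
$$(b|y)_a\ \ge\ \min\{(b|c)_a,(c|y)_a\}-\frac{\delta}{2}\ =\ \min\{(b|c)_a,t\}-\frac{\delta}{2}\ =\ t-\frac{\delta}{2},$$
using $t\le(b|c)_a$; applied to the triple $(x,b,y)$ it gives
$$(x|y)_a\ \ge\ \min\{(x|b)_a,(b|y)_a\}-\frac{\delta}{2}\ =\ \min\big\{t,\,t-\frac{\delta}{2}\big\}-\frac{\delta}{2}\ =\ t-\delta.$$
Since $x,y\in X$ with $d(a,x)=d(a,y)=t$, one has $(x|y)_a=t-\frac12 d(x,y)$ exactly, so $t-\frac12 d(x,y)\ge t-\delta$, that is $d(x,y)\le 2\delta$, in particular $d(x,y)\le 6\delta$.

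For an ideal triangle, say with $b,c\in\partial X$ (the case of a single ideal vertex being identical), I would run the same two-step estimate after approximating the ideal vertices along the rays. Choose $b_i$ on the ray $[a,b]$ beyond $x$ with $b_i\to b$, and $c_i$ on $[a,c]$ beyond $y$ with $c_i\to c$; then for all large $i$ the identities $(x|b_i)_a=d(a,x)=t$ and $(c_i|y)_a=d(a,y)=t$ hold exactly. By \autoref{PROP: Gromov Product Inequalities}\,(\ref{6.7.3}) we have $\liminf_i(b_i|c_i)_a\ge(b|c)_a-2\delta\ge t-2\delta$. Feeding these estimates through the same two applications of the four-point inequality --- all of which now involve only the finite points $a,x,y,b_i,c_i$ --- and letting $i\to\infty$, I would obtain $(x|y)_a\ge t-3\delta$, hence $d(x,y)\le 6\delta$ as before.

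I do not expect a genuine obstacle: the heart of the matter is just two applications of the defining inequality. The only point requiring care is the ideal case, where the approximating sequences must be chosen along the geodesic rays so that the easy identities $(x|b_i)_a=t$ and $(c_i|y)_a=t$ stay exact, and where one must carry along the single $2\delta$ defect coming from the boundary extension of the Gromov product in \autoref{PROP: Gromov Product Inequalities}; it is precisely that defect --- which enters the final Gromov-product bound once and is then doubled through the relation $d(x,y)=2\big(t-(x|y)_a\big)$ --- that accounts for the gap between the sharp $2\delta$ available for genuine triangles and the $6\delta$ stated for ideal ones.
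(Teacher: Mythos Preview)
The paper does not supply its own proof of this proposition: it is simply quoted from \cite[Proposition 1.17]{BH13} as a background fact about hyperbolic spaces, so there is nothing to compare against. Your argument is correct and is essentially the standard one: two applications of the four-point inequality give $(x|y)_a\ge t-\delta$ and hence $d(x,y)\le 2\delta$ for genuine triangles, and in the ideal case the $2\delta$ defect from \autoref{PROP: Gromov Product Inequalities}\,(\ref{6.7.3}) propagates through to yield $(x|y)_a\ge t-3\delta$ and $d(x,y)\le 6\delta$. The only point worth flagging is that the statement implicitly requires the vertex $a$ to lie in $X$ (since $d(a,x)$ appears), so only $b$ and $c$ may be ideal --- which is exactly the case you treat.
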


\subsection{Relatively hyperbolic group and cusp-uniform action}
\text{ }

In this subsection, we review the definition and basic properties for relatively hyperbolic groups established by Bowditch \cite{Bow12}.
\begin{definition} 
Suppose that a group $G$ acts isometrically and properly  on a proper $\delta$-hyperbolic geodesic space $X$.
    \begin{enumerate}
        \item A point $y\in \partial{X}$ is called a \textit{conical limit point} if there exists a sequence $(g_n)_{n\in\mathbb{N}}$ in $G$, and distinct points $a,b\in \partial {X}$, such that $\lim g_n y= a$ and $\lim g_nx= b$ for all $x\in \partial {X}\setminus\left\{y\right\}$.
        \item A subgroup $H<G$ is called a \textit{parabolic subgroup} if $H$ is infinite, has a unique fixed point $p$ in $\partial X$, and contains no loxodromic elements. It is clear that $p$ is the limit set $\Lambda H$.   
        \item A parabolic subgroup $H<G$ is called \textit{maximal} if for any parabolic subgroup $H'>H$, we have $H'=H$.
        \item A parabolic group $H<G$ is called \textit{bounded parabolic} if the action of $H$ on $\partial X\setminus \left\{p\right\}$ is cocompact, where $p$ is the fixed point of $H$.
        \item A point $y\in \partial{X}$ is called a \textit{bounded parabolic point} if its stablizer $\mathrm{Stab}_G(p)$ is bounded parabolic.
    \end{enumerate}
\end{definition}

\begin{definition}[Cusp-uniform Action]\label{DEF: Cusp Uniform}
Suppose that a group $G$ acts by isometry on a proper $\delta$-hyperbolic geodesic space $X$. The action is called \textit{cusp-uniform} relative to a {finite} collection of subgroups $\left\{H_i\right\}_{i\in I}$, denoted by $(G,\left\{H_i\right\})\curvearrowright X$, if the following hold:
\begin{enumerate}
\item The action $G\curvearrowright X$ is proper.
\item Every point of $\partial X$ is either a conical limit point or a bounded parabolic point. 
\item Each $H_i$ is a maximal parabolic subgroup, and any maximal parabolic subgroup of $G$ \iffalse(stablizing a bounded parabolic point in $\partial X$)\fi conjugates to exactly one subgroup in $\left\{H_i\right\}_{i\in I}$.
\end{enumerate}
\end{definition}

A cusp-uniform action is called \textit{non-elementary} if $\sharp \Lambda G>2$. Throughout this section, we are only concerned about non-elementary cusp-uniform actions.

In \cite{Tuk98}, it is proven that $\left\{H_i\right\}_{i\in I}$ must be a finite collection of subgroups.

\begin{definition}[Relatively Hyperbolic Group]\label{RHGDefn}
A group $G$ is \textit{hyperbolic relative} to a finite collection of subgroups $\left\{H_i\right\}$ if $(G,\left\{H_i\right\})$ admits a cusp-uniform action on a proper $\delta$-hyperbolic geodesic space.  
\end{definition}

In practice, we will use an equivalent definition for cusp-uniform actions which is geometrically more intuitive. Let us first introduce some terminologies.%(cf. \autoref{PROP: Cusp Uniform}). 

Fix $X$ to be a proper $\delta$-hyperbolic geodesic space. 
\begin{definition}[Horofunction]
Suppose $\eta\in \partial X$. A function $\mathpzc{h}: X\to \mathbb{R}$ is a \textit{horofunction} about $\eta$ if there exist a point $p\in X$, a geodesic $\gamma:[o,+\infty)\to X$ such that $\gamma(0)=p$ and $\lim_{t\to +\infty}\gamma (t)=\eta$, and a constant $D\geq 0$ such that:
\begin{align*}
\abs{\mathpzc{h}(x)-\lim_{t\to +\infty}(d(x,\gamma(t))-d(p,\gamma(t)))}\leq D.
\end{align*}
\end{definition}
\begin{definition}[Horoball]\label{DEF: Horoball}
A horoball centered at $\eta \in \partial X$ is the pre-image of $(0,+\infty)$ of a horofunction $\mathpzc{h}$ about $\eta$. For a horoball $B=\mathpzc{h}^{-1}\left((0,+\infty)\right)$, its boundary (or thickened horosphere) in $X$ is denoted by $\partial^0 B=\mathpzc{h}^{-1}([0,C])$ for some sufficiently large $C$. 
\end{definition}

Note that $\overline{B}\cap \partial X=\{\eta\}$ when $B\subseteq X$ is a horoball centered at $\eta$.

\begin{proposition}[{\cite[Lemma 6.2]{Bow12}} ]
If $G$ acts parabolically with the fixed point $\eta\in \partial X$, then there exists a $G$-invariant horofunction (hence a $G$-invariant horoball) about $\eta$.
\end{proposition}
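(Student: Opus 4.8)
The plan is to construct the $G$-invariant horofunction as a supremum of translates of a single Busemann function, after first showing that the $G$-orbit of a basepoint stays at bounded height relative to $\eta$. First I would fix a basepoint $p\in X$ and, using visibility of $\partial X$, a geodesic ray $\gamma\colon[0,+\infty)\to X$ with $\gamma(0)=p$ and $\lim_{t\to+\infty}\gamma(t)=\eta$; let $b(x)=\limsup_{t\to+\infty}\bigl(d(x,\gamma(t))-t\bigr)$ be the associated Busemann function, which is $1$-Lipschitz and is a horofunction about $\eta$. I will use the standard $\delta$-hyperbolic fact that any two horofunctions about a common boundary point agree up to an additive constant with error bounded by some $K=K(\delta)$. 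Since $g\eta=\eta$ for every $g\in G$, the function $b\circ g$ is again a horofunction about $\eta$, so differencing (which kills the constant) gives
\begin{equation}\label{eq:busnear}
\bigl|\,b(gy)-b(gy')-\bigl(b(y)-b(y')\bigr)\,\bigr|\le K\qquad\text{for all }g\in G,\ y,y'\in X .
\end{equation}

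The key step is to prove $M:=\sup_{g\in G}|b(gp)-b(p)|<\infty$. Set $\phi(g)=b(gp)-b(p)$. Putting $y=hp,\ y'=p$ in \eqref{eq:busnear} gives $|\phi(gh)-\phi(g)-\phi(h)|\le K$, so $\phi\colon G\to\mathbb R$ is a quasimorphism of defect $\le K$. Its homogenization $\bar\phi(g)=\lim_n\phi(g^n)/n$ exists, and since $b$ is $1$-Lipschitz, $|\bar\phi(g)|\le\lim_n d(p,g^np)/n=\ell_d(g)$. But a parabolic group contains no loxodromic element, so $\ell_d(g)=0$ for every $g\in G$; hence $\bar\phi\equiv 0$, and therefore $|\phi(g)|=|\phi(g)-\bar\phi(g)|\le K$ for all $g$. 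As $g\mapsto g^{-1}$ permutes $G$, this also yields $\sup_{g\in G}|b(g^{-1}p)-b(p)|\le K$.

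I will then define $\mathpzc{h}(x):=\sup_{g\in G}b(g^{-1}x)$. The supremum is finite, since $b(g^{-1}x)\le b(g^{-1}p)+d(p,x)\le b(p)+K+d(p,x)$. Taking $g=1$ gives $\mathpzc{h}\ge b$, and \eqref{eq:busnear} with $y=x,\ y'=p$ gives $b(g^{-1}x)\le b(x)+\bigl(b(g^{-1}p)-b(p)\bigr)+K\le b(x)+2K$; thus $b\le\mathpzc{h}\le b+2K$, so $\mathpzc{h}$ differs from the horofunction $b$ by a bounded amount and is therefore itself a horofunction about $\eta$ (with an enlarged defining constant). Exact $G$-invariance is then immediate from re-indexing: for $h\in G$, writing $k=h^{-1}g$,
\[
\mathpzc{h}(hx)=\sup_{g\in G}b(g^{-1}hx)=\sup_{k\in G}b(k^{-1}x)=\mathpzc{h}(x).
\]
Consequently $B=\mathpzc{h}^{-1}\bigl((0,+\infty)\bigr)$ is a $G$-invariant horoball about $\eta$, and each thickened horosphere $\mathpzc{h}^{-1}([0,C])$ is $G$-invariant as well.

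I expect the boundedness step $M<\infty$ to be the only real difficulty; everything else is formal. This is precisely where the hypothesis that $G$ is parabolic — no loxodromics, equivalently all stable translation lengths vanish — is genuinely used; the quasimorphism argument above is the cleanest route, though one could instead invoke the known fact that a (bounded) parabolic group has orbit quasi-contained in a horoball, which is where the cusp-uniform structure would enter. The two inputs I am assuming, namely that Busemann functions are honest horofunctions and the comparison \eqref{eq:busnear} of horofunctions about a common endpoint, are routine consequences of the thin-triangle property.
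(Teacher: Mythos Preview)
The paper does not give its own proof of this proposition; it is quoted verbatim from Bowditch \cite[Lemma 6.2]{Bow12} as background, with no argument supplied. So there is nothing in the paper to compare your attempt against.

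That said, your proof is correct and self-contained. The quasimorphism step is the heart of the matter and is handled cleanly: from \eqref{eq:busnear} you get defect $\le K$, the homogenization $\bar\phi$ is dominated by the stable length, and the parabolic hypothesis kills $\bar\phi$; the bound $|\phi-\bar\phi|\le K$ is the standard estimate for a quasimorphism of defect $K$. The supremum construction $\mathpzc{h}(x)=\sup_{g\in G}b(g^{-1}x)$ then gives exact $G$-invariance by re-indexing, and your sandwich $b\le\mathpzc{h}\le b+2K$ shows $\mathpzc{h}$ satisfies the paper's definition of horofunction (which already allows a bounded defect $D$). One small cosmetic point: you only need $\sup_g\phi(g^{-1})\le K$, not the full two-sided bound, for the finiteness and upper-bound estimates on $\mathpzc{h}$; but you have both anyway. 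This sup-over-translates construction is in the same spirit as Bowditch's original argument.
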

\begin{definition}[Bounded Parabolic Action on Horoball]\label{DEF: Bounded Parabolic}
Suppose $G$ acts  parabolically with the fixed point $\eta \in \partial X$ and $B$ is a $G$-invariant horoball centered at $\eta$. Choose an arbitrary point $y\in B$ and a geodesic $\gamma:[0,+\infty)\to X$ with $\gamma(0)=y$, $\lim_{t\to +\infty}\gamma(t)=\eta$. The action of $G$ on $B$ is called \textit{bounded parabolic} if $B$ lies in a finite metric neighbourhood of $\bigcup_{g\in G}g\gamma$.
\end{definition}
\begin{proposition}[{\cite[Proposition 6.13]{Bow12}} ]\label{PROP: Cusp Uniform}
Suppose $(G,\left\{H_i\right\})$ admits a cusp-uniform action on a proper $\delta$-hyperbolic geodesic space $X$. Then there exists a $G$-invariant collection of disjoint horoballs $\mathcal{B}$, such that:
\begin{enumerate}
\item\label{6.16.1} Each bounded parabolic point $\eta\in \partial X$ is associated to exactly one horoball $B\in \mathcal{B}$.
\item\label{6.16.2} Each $H_i$ is a stablizer of a horoball $B\in \mathcal{B}$.
\item\label{6.16.3} The stablizer $\mathrm{Stab}_G (B)$ of each horoball $B\in \mathcal{B}$ conjugates to exactly one subgroup $H\in \left\{H_i\right\}$ and the action of $\mathrm{Stab}_G (B)$ on $B$ is bounded parabolic.
\item\label{6.16.4} The action of $G$ on $X-\bigcup_{B\in\mathcal{B}}B$ is cocompact.
\end{enumerate}
\end{proposition}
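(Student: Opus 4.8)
The plan is to assemble $\mathcal{B}$ from finitely many ``model'' horoballs, push them deep enough to be pairwise disjoint, and then verify cocompactness of the complement by a convergence argument on $\partial X$. \emph{Step 1: model horoballs.} By Tukia's finiteness theorem there are only finitely many conjugacy classes of maximal parabolic subgroups; after conjugation these are $H_1,\dots,H_m$, with fixed points $\eta_1,\dots,\eta_m\in\partial X$, and they represent the finitely many $G$-orbits of bounded parabolic points (each such point is fixed by a maximal parabolic subgroup, hence is $G$-equivalent to a unique $\eta_i$, with $\mathrm{Stab}_G(\eta_i)=H_i$). By the lemma of Bowditch stated above, each $H_i$ preserves a horofunction, hence a horoball, about $\eta_i$; replacing it by a deeper sub-level set, still $H_i$-invariant, keeps the freedom to choose $B_i$ as deep as needed. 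Put $\mathcal{B}=\{gB_i:g\in G,\ 1\le i\le m\}$, which is $G$-invariant by construction. Granting disjointness (Step 2), items (1)--(3) are immediate: every bounded parabolic point equals some $g\eta_i$ and is the centre of exactly one member $gB_i$; the stabiliser of $gB_i$ is $gH_ig^{-1}$, conjugate to exactly one $H_i$; and $H_i$ acts on $B_i$ boundedly parabolically since it acts cocompactly on $\partial X\setminus\{\eta_i\}$, so $B_i$ lies in a bounded neighbourhood of $\bigcup_{h\in H_i}h\gamma$ for a ray $\gamma\subseteq B_i$ to $\eta_i$.

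\emph{Step 2: disjointness.} I would invoke the elementary hyperbolic-geometry fact that, for distinct $\zeta\neq\zeta'\in\partial X$, two horoballs about $\zeta$ and $\zeta'$ whose boundary horospheres lie at depth exceeding $\gp{\zeta}{\zeta'}{o}+O(\delta)$ are disjoint, the depth depending only on $\delta$ and $\gp{\zeta}{\zeta'}{o}$; and that a geodesic joining $\zeta$ to $h\zeta'$ meets a horoball about $h\zeta'$ in a ray, hence cannot penetrate it beyond a depth controlled by the ``shape'' of that horoball. Since every member of $\mathcal{B}$ is an isometric $G$-copy of one of the fixed $B_i$, this depth bound is \emph{uniform} over $\mathcal{B}$. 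So, taking all $B_i$ deep enough: if $gB_i\cap g'B_j\neq\varnothing$ with $g\eta_i\neq g'\eta_j$, then, translating by $g^{-1}$, a point of $B_i\cap hB_j$ with $h=g^{-1}g'$ lies near the geodesic from $\eta_i$ to $h\eta_j$, forcing that geodesic deep into $hB_j$ and contradicting $\overline{hB_j}\cap\partial X=\{h\eta_j\}\neq\eta_i$. If instead $g\eta_i=g'\eta_j$, then $g^{-1}g'$ conjugates $H_j$ into $H_i$ and fixes $\eta_i$, whence $i=j$, $g^{-1}g'\in H_i$, and $gB_i=g'B_i$ by $H_i$-invariance. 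Thus distinct centres give disjoint horoballs and equal centres give equal horoballs.

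\emph{Step 3: cocompactness of $Y:=X\setminus\bigcup_{B\in\mathcal{B}}B$.} Fix $o'\in Y$; if $G\curvearrowright Y$ were not cocompact there would be $y_n\in Y$ with $d(y_n,Go')\to\infty$. Choosing $g_n$ realising $d(y_n,Go')$ and setting $z_n:=g_n^{-1}y_n\in Y$, we get $d(o',z_n)\to\infty$ with $o'$ a closest $G$-orbit point to $z_n$; pass to a subsequence with $z_n\to\zeta\in\partial X$. By cusp-uniformity $\zeta$ is conical or bounded parabolic. If conical, the geodesic characterisation of conical points gives $C_0$ and infinitely many $g\in G$ with $go'$ within $C_0$ of a fixed ray $\gamma$ to $\zeta$; since $[o',z_n]$ coarsely fellow-travels $\gamma$ near $\zeta$ by hyperbolicity and such $go'$ accumulate on $\zeta$, some $go'$ lies within a bounded distance of $z_n$, contradicting the minimality of $d(o',z_n)=d(z_n,Go')\to\infty$. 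If bounded parabolic, let $B=gB_i\in\mathcal{B}$ be the horoball about $\zeta$ and $P=\mathrm{Stab}_G(B)$; then $P$ acts cocompactly on $\partial X\setminus\{\zeta\}$, and a point of $Y$ converging to $\zeta$ must lie in a ``cusp shell'' of bounded depth around $\partial^0 B$ (being outside $B$ and outside all other horoballs), so an element of $P$ moves $z_n$ into a fixed bounded region; hence $d(z_n,Po')$ is bounded, again contradicting $d(o',z_n)\to\infty$. So $Y/G$ is compact, giving (4).

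\emph{Main obstacle.} The two genuinely delicate passages are the uniformity steps: converting the single-pair disjointness estimate of Step 2 into one simultaneously valid for all $G$-translates of the model horoballs (essentially using that $G$ acts by isometries and that there are finitely many orbits), and the bounded-parabolic case of Step 3, where one must exclude points of $Y$ that run off to infinity while staying at bounded depth inside a cusp — this is controlled by the cocompact action of the parabolic stabiliser on the punctured boundary. Both are standard in the convergence-group and relatively hyperbolic literature but require careful bookkeeping with the geometry of horoballs.
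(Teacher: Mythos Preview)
The paper does not prove this proposition: it is quoted directly from Bowditch \cite[Proposition 6.13]{Bow12} and stated without proof, so there is no ``paper's own proof'' to compare against. Your sketch is a reasonable outline of how such a result is established in the convergence-group literature, and you have correctly identified the two genuinely delicate points (uniform disjointness over all $G$-translates, and the bounded-parabolic branch of the cocompactness argument).

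One remark on Step 2: your disjointness argument is slightly underdetermined as written. The claim that ``a geodesic joining $\zeta$ to $h\zeta'$ meets a horoball about $h\zeta'$ in a ray, hence cannot penetrate it beyond a depth controlled by the shape of that horoball'' does not obviously give what you need, since the issue is penetration into the \emph{other} horoball $B_i$, not into $hB_j$. The cleaner route is to argue directly that if two horoballs about distinct boundary points intersect, then the bi-infinite geodesic joining their centres must pass within $O(\delta)$ of any point in the intersection, which forces the Gromov product $\gp{\eta_i}{h\eta_j}{o}$ to be large; but since there are only finitely many $G$-orbits of parabolic points and the stabilisers act cocompactly on their complements, one can bound this product uniformly from above after pushing the model horoballs deep enough. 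Bowditch's actual argument in \cite{Bow12} proceeds along these lines but uses his systematic treatment of quasiconvex horoballs and the structure of geometrically finite actions; your sketch captures the shape of it but would need those estimates made precise.
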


\subsection{MLS rigidity for cusp-uniform actions}

\begin{theorem}\label{THM: Cusp Uniform}
Suppose that
$(G,\left\{H_i\right\})$ admits two non-elementary cusp-uniform actions on two proper $\delta$-hyperbolic geodesic spaces $(X_1,d_1)$, $(X_2,d_2)$. If the two actions have the same marked length spectrum, then there exists a $G$-coarsely equivariant rough isometry between $X_1$ and $X_2$.
\end{theorem}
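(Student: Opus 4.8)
The plan is to upgrade the orbit-level rough isometry $Go_1 \to Go_2$ from \autoref{THM: Main Rigidity} to a rough isometry between the whole spaces $X_1$ and $X_2$, using the cusp-extension structure provided by \autoref{PROP: Cusp Uniform}. First I would invoke \autoref{THM: Main Rigidity}: since $(G,\{H_i\})\curvearrowright X_1$ and $\curvearrowright X_2$ are non-elementary cusp-uniform actions, they have the contracting property (a cusp-uniform action on a proper hyperbolic space is non-elementary and loxodromic elements are contracting, cf. \autoref{EX: Contracting Property}), so the equality of marked length spectra yields a $G$-coarsely equivariant rough isometry $\rho : Go_1 \to Go_2$, i.e. $\sup_{g,h}|d_1(go_1,ho_1) - d_2(go_2,ho_2)| =: C_0 < \infty$.

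Next I would fix the $G$-invariant disjoint horoball collections $\mathcal{B}_1$ in $X_1$ and $\mathcal{B}_2$ in $X_2$ given by \autoref{PROP: Cusp Uniform}. By parts (\ref{6.16.1})--(\ref{6.16.3}), the horoballs in each space are in $G$-equivariant bijection with each other via their common stabilizers: a horoball $B \in \mathcal{B}_1$ with $\mathrm{Stab}_G(B) = gH_ig^{-1}$ corresponds to the unique $B' \in \mathcal{B}_2$ with the same stabilizer. By part (\ref{6.16.4}), $G$ acts cocompactly on $X_i \setminus \bigcup \mathcal{B}_i$, so there is a constant $R$ with $X_i \setminus \bigcup\mathcal{B}_i \subseteq N_R(Go_i)$; thus on the "thick part" the map $\rho$ (extended to a nearby orbit point) is already a rough isometry onto the thick part of $X_2$, with additive error controlled by $C_0$, $R$, and $\delta$. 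The remaining task is to extend $\rho$ across each horoball. For a horoball $B\in\mathcal B_1$ with stabilizer $P = \mathrm{Stab}_G(B)$ and corresponding $B'\in\mathcal B_2$, the $P$-action on each is bounded parabolic (\autoref{DEF: Bounded Parabolic}), so $B \subseteq N_{R'}(Po_1 \cdot \gamma)$ where $\gamma$ is a ray to the parabolic point; one builds a $P$-equivariant rough isometry $B \to B'$ by matching the combinatorial-horoball-like coordinates (distance along $\gamma$, which is intrinsic to the horofunction, together with the $P$-orbit direction, on which $\rho$ is already controlled since $P\le G$). Here one uses that parabolic points and the "depth into a horoball" are quasi-isometry invariants of the hyperbolic boundary data, and crucially that $\rho$ restricted to $Po_1 \to Po_2$ is a rough isometry, so the horospherical coordinate transverse to $\gamma$ transfers with bounded error. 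Gluing these horoball maps with the thick-part map along the thickened horospheres $\partial^0 B$ (which meet $N_R(Go_i)$), and checking the errors match up, produces a globally defined $G$-coarsely equivariant map $\Phi : X_1 \to X_2$; a Gromov-hyperbolic approximation argument (thin triangles, \autoref{PROP: Thin Triangle}, and the fact that geodesics in $X_i$ enter and exit horoballs in a controlled way) shows $\Phi$ is a $(1,C)$-quasi-isometry.

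The main obstacle I expect is the horoball-matching step: one must show that the "vertical" coordinate along the geodesic ray into a parabolic point is transported by $\rho$ with only bounded additive error, even though $\rho$ is a priori only controlled on the orbit $Go_1$, which sits near the thick part. The point is that a geodesic segment in $X_1$ passing deep into $B$ has length roughly $2t + (\text{horospherical displacement at depth } t)$ where the displacement grows (exponentially in the combinatorial model, linearly in the $\log$-coordinate), so matching endpoints on $\partial^0 B$ via $\rho$ and using that $d_2$-distances between $Po_2$-points near $\partial^0 B'$ are $\sim d_1$-distances between the corresponding $Po_1$-points (this is exactly $C_0$-control) pins down the depth up to a bounded error — but making this precise requires the standard but somewhat delicate estimates comparing distance inside a horoball to distance along its boundary. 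One clean way to organize it: reduce to the case where both horoballs are combinatorial horoballs over $\mathrm{Stab}_G(B)$-Cayley graphs (possible up to rough isometry by the classification of cusp-uniform actions over a fixed relatively hyperbolic structure), where the depth–width formula $d(x,y) \asymp 2\log^+ d_{\partial}(x,y) + |t(x)-t(y)|$ is explicit, and then the bounded distortion of $\rho$ on the horosphere boundary directly gives bounded distortion inside. Finally one assembles everything and verifies $G$-coarse equivariance, which is automatic since every piece of the construction was $G$- (or $\mathrm{Stab}_G(B)$-) equivariant up to bounded error inherited from $\rho$.
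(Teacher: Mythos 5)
Your proposal follows essentially the same route as the paper: invoke \autoref{THM: Main Rigidity} for the orbit-level rough isometry, pass to Bowditch's horoball decomposition (\autoref{PROP: Cusp Uniform}), extend over each horoball by matching the parametrizations of the $\mathrm{Stab}_G(B)$-translates of a geodesic ray into the parabolic point and controlling interior distances via thin-triangle/Gromov-product estimates (this is exactly the paper's \autoref{LEM: Cusp Extension}), then glue and verify the rough isometry case by case together with the symmetric quasi-inverse. The one caution concerns your optional reduction to combinatorial horoballs: a horoball in an arbitrary cusp-uniform space need not be \emph{roughly} (as opposed to quasi-) isometric to a combinatorial model, and the reduction is unnecessary anyway, since with the \emph{ambient} metric on the thickened horosphere the thin-triangle argument already gives $d(\alpha_g(s),\alpha_h(t))\approx \max\left\{d(gx,hx)-s-t,\ \abs{s-t}\right\}$ up to an additive constant depending only on $\delta$ and the horofunction, which is all that the paper's extension lemma uses.
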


In order to prove \autoref{THM: Cusp Uniform}, we need the following technical lemma which extends a rough isometry from the boundaries of horoballs to their interior.

\begin{lemma}\label{LEM: Cusp Extension}
Let $(X_1,d_1)$ and $(X_2,d_2)$ be two $\delta$-hyperbolic spaces and $B_1\subseteq X_1, B_2\subseteq X_2$ be two horoballs which admit bounded parabolic actions by a parabolic group $H$.

Suppose $f:\partial^0 B_1\to\partial^0 B_2$ is an $H$-equivariant rough isometry, then $f$ extends to an $H$-coarsely equivariant rough isometry $\tilde{f}:B_1\to B_2$.
\end{lemma}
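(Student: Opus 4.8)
The plan is to use the bounded parabolic structure of $B_1$ and $B_2$ to parametrize each horoball as a ``combinatorial horoball'' over its boundary horosphere, then transport the map $f$ level by level along the horofunction. First I would fix a horofunction $\mathpzc h_i$ about the parabolic fixed point of $B_i$, normalized so that $\partial^0 B_i = \mathpzc h_i^{-1}([0,C])$, and for each $x \in B_i$ record the pair $(\pi_i(x), t_i(x))$ where $t_i(x) = \mathpzc h_i(x)$ is the ``depth'' and $\pi_i(x)$ is a coarsely well-defined projection to $\partial^0 B_i$ obtained by shooting a geodesic ray from $x$ towards $\eta_i$ and reading off where it meets $\partial^0 B_i$ (this uses visibility and thin triangles in the $\delta$-hyperbolic space). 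The bounded parabolic hypothesis (\autoref{DEF: Bounded Parabolic}) says $B_i$ lies in a bounded neighborhood of $\bigcup_{h\in H} h\gamma_i$ for a single ray $\gamma_i$; together with the fact that horospheres in hyperbolic spaces shrink exponentially, this gives the key metric estimate that for $x,y\in B_i$,
\begin{equation*}
d_i(x,y) \;\sim\; |t_i(x)-t_i(y)| \;+\; 2\log^+\!\bigl(d_i^{\,hor}(\pi_i(x),\pi_i(y))\bigr),
\end{equation*}
up to an additive constant depending only on $\delta$ and $C$, where $d_i^{\,hor}$ is the induced metric on $\partial^0 B_i$ — i.e. $B_i$ is roughly isometric to the combinatorial horoball $\mathcal H(\partial^0 B_i)$.

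Granting this parametrization, I would define $\tilde f: B_1 \to B_2$ by sending a point with coordinates $(p, t)$ (with $p\in \partial^0 B_1$, $t \ge 0$) to a point with coordinates $(f(p), t)$, choosing any representative point in $B_2$ realizing those coordinates up to bounded error. Since $f$ is a rough isometry $\partial^0 B_1 \to \partial^0 B_2$, it distorts $d_i^{hor}$ by at most an additive constant, and because the horospherical distance enters the displayed formula only through $\log^+$, an additive $O(1)$ distortion of $d_1^{hor}$ produces at most an additive $O(1)$ distortion inside the $\log^+$, hence an additive $O(1)$ change in $d_2(\tilde f(x), \tilde f(y))$. Combined with the fact that $\tilde f$ preserves the depth coordinate exactly, the displayed two-sided estimate for both $B_1$ and $B_2$ yields $|d_2(\tilde f(x),\tilde f(y)) - d_1(x,y)| \le C'$ for a uniform $C'$, so $\tilde f$ is a rough isometry; it restricts to something within bounded distance of $f$ on $\partial^0 B_1$ by construction. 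For $H$-coarse-equivariance: $H$ acts on $B_i$ preserving $\mathpzc h_i$ (so preserving the depth coordinate) and acts on $\partial^0 B_i$ compatibly with the projection $\pi_i$ up to bounded error, so $\tilde f(h\cdot x)$ and $h\cdot \tilde f(x)$ have the same depth and $O(1)$-close $\partial^0 B_2$-coordinates (using that $f$ is genuinely $H$-equivariant on the boundary), hence are within bounded distance by the parametrization estimate.

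The main obstacle I expect is making the ``coordinate'' map $x \mapsto (\pi_i(x), t_i(x))$ both coarsely well-defined and metrically faithful — that is, proving the displayed logarithmic distance formula cleanly. The subtleties are: (i) $\pi_i(x)$ depends on a choice of geodesic ray and must be shown independent of that choice up to bounded error (standard via thin triangles, but one must be careful that rays landing at $\eta_i$ fellow-travel only inside the horoball, with the bounded-parabolic hypothesis supplying uniformity across all of $B_i$); (ii) the comparison between the ambient metric $d_i$ restricted to a horosphere and the intrinsic/combinatorial horospherical metric, which is where the exponential shrinking of horospheres and the $2\log$ term come from — one typically invokes that a path between two points on $\partial^0 B_i$ can be pushed deeper into $B_i$ to save length, with the optimal depth balancing the two terms. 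A clean route is to cite that a horoball in a $\delta$-hyperbolic space is roughly isometric to the combinatorial horoball over its horosphere (this is essentially in Groves--Manning and in the cusped-space literature), apply that rough isometry on both sides to reduce to combinatorial horoballs $\mathcal H(\partial^0 B_1)$ and $\mathcal H(\partial^0 B_2)$, and then observe that a rough isometry of the base spaces induces a rough isometry of the combinatorial horoballs functorially — at which point the $H$-equivariance bookkeeping is immediate. I would structure the proof that way to keep the metric estimates confined to a single lemma about combinatorial horoballs.
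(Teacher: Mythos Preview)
Your overall plan — parametrize points of $B_i$ by a boundary coordinate and a depth, define $\tilde f$ by transporting the boundary coordinate via $f$ and preserving depth — is exactly the paper's strategy. But the execution has a real gap, centered on your displayed distance formula
\[
d_i(x,y)\;\sim\;|t_i(x)-t_i(y)|\;+\;2\log^+\!\bigl(d_i^{\,hor}(\pi_i(x),\pi_i(y))\bigr).
\]
As written this cannot hold: the second term has no depth dependence, so two points very deep in the horoball (which should be close, both being near $\eta_i$) are assigned a large distance. The correct combinatorial-horoball estimate has the form $|s-t|+2\max\bigl(0,\ \log d_Z(\pi(x),\pi(y))-\min(s,t)\bigr)$, where the depth enters the second term. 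More seriously, you conflate two metrics on $\partial^0 B_i$. The hypothesis gives $f$ as a rough isometry for the \emph{ambient} metric $d_i$ restricted to $\partial^0 B_i$; but the $d_Z$ appearing inside the logarithm of the horoball formula is the \emph{intrinsic} horospherical base metric, and these differ exponentially (for points on $\partial^0 B_i$ the ambient distance is $\sim 2\log^+ d_Z$). So $f$ is only bilipschitz for $d_Z$, not roughly isometric, and your sentence ``$f$ \ldots distorts $d_i^{hor}$ by at most an additive constant'' is false for the metric that actually enters the formula. (Bilipschitz does still give a bounded log-difference, so the argument can be rescued, but not as you stated it; and your ``clean route'' via Groves--Manning inherits the same conflation, since the functorial statement you want requires a bilipschitz map of bases, not a rough isometry.)

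The paper avoids all of this by never introducing an intrinsic horospherical metric. It uses the bounded parabolic hypothesis directly: fix one ray $\alpha^i$ from $x_i\in\partial^0 B_i$ to $\eta_i$, so that $B_i$ lies in a bounded neighborhood of $\bigcup_{h\in H} h\alpha^i$, and set $\tilde f(h\alpha^1(t))=h\alpha^2(t)$. Then the $H$-invariance of the horofunction gives $(hx_i\mid\eta_i)_{gx_i}\sim\tfrac12 d_i(gx_i,hx_i)$ in the \emph{ambient} metric, so by thin triangles the rays $g\alpha^i$ and $h\alpha^i$ merge at depth $\sim\tfrac12 d_i(gx_i,hx_i)$. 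A three-case split (both parameters below, one above, both above this merging depth) gives $d_i(g\alpha^i(s),h\alpha^i(t))\sim |s-t|+2\max\bigl(0,\tfrac12 d_i(gx_i,hx_i)-\min(s,t)\bigr)$, entirely in terms of ambient distances on the orbit $Hx_i$. Since $f$ is an ambient rough isometry there, the estimate transfers to $B_2$ with the same parameters. This is the correct version of your formula, with $\tfrac12 d_i(\pi(x),\pi(y))$ (ambient) in place of $\log^+ d_i^{hor}$.
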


\begin{proof}
For $i=1,2$, let $\eta_i\in \partial X_i$ be the corresponding boundary point of $B_i$. Let $B_i$ be the pre-image of $(0,+\infty)$ of a $H$-invariant horofunction $\mathpzc{h}_i$. $\partial^0 B_i=\mathpzc{h}^{-1}([0,C_i])$ for $C_i$ sufficiently large.

Then, for each $i=1,2$, there exists a point $p_i\in X$, an infinite geodesic $\gamma_i:[0,+\infty)\to X_i$ connecting $p_i$ and $\eta _i$ and a constant $D_i$ such that 
\begin{align}\label{EQU: Horofunction}
\abs{\mathpzc{h}_i(x)-\lim_{t\to+\infty}(d_i(x,\gamma_i(t))-d_i(p_i,\gamma_i(t)))}\leq D_i.
\end{align}

Let $D=\max\left\{D_1,D_2\right\}$.

Choose a point $x_1\in \partial^0 B_1$, and let $x_2=f(x_1)\in\partial^0B_2$.

For each $i=1,2$, choose an infinite geodesic with length parameterization $\alpha^i:[0,+\infty)\to X_i$ such that $\alpha^i(0)=x_i$ and $\lim_{t\to+\infty}\alpha^i(t)=\eta_i$.

For any $h\in H$, $hx_i\in\partial^0 B_i$, denote by $\alpha^i_{h}=h\alpha:[0,+\infty)\to X_i$ an infinite geodesic connecting $hx_i$ and $\eta_i$. Then according to \autoref{DEF: Bounded Parabolic}, $B_i$ lies in a finite metric neighbourhood of $\bigcup_{h\in H}\alpha^i_{h}$.

It suffices to establish a $H$-equivariant rough isometry $\bigcup_{h\in H}\alpha^1_{h}\to \bigcup_{h\in H}\alpha^2_{h}$ given explicitly by:
\begin{equation}
\begin{aligned}\label{EQU: Cusp Extension}
\tilde{f}:\;\;\bigcup_{h\in H}\alpha^1_{h}&\to \bigcup_{h\in H}\alpha^2_{h}\\
\alpha^1_{h}(t)&\mapsto \alpha^2_{h}(t)\;\;\;\;\;\forall h\in H,t\in[0,+\infty).
\end{aligned}
\end{equation}

The definition of $\tilde{f}$ might be multi-valued; however, once we have proven that $\tilde{f}$ is a rough isometry, $\tilde{f}$ becomes well-defined up to a bounded distance. 
The $H$-equivariance and surjectivity of $\tilde{f}$ is clear.

Note that $f$ is a rough isometry, hence the map $hx_1\in Hx_1\mapsto hx_2\in Hx_2$ is a $(1,c_0)$-quasi-isometry for some $c_0\geq 0$. In addition, $\partial^0 B_i$ lies in a finite neighbourhood of $Hx_i$, and $\tilde{f}$ coincides with $f$ on $Hx$ up to a bounded error.

For each $g,h\in H$, since $\mathpzc{h}_1$ is $H$-invariant, $\mathpzc{h}_1(gx_1)=\mathpzc{h}_1(hx_1)$. Then according to (\ref{EQU: Horofunction}):
$$\abs{\lim_{t\to+\infty}(d_1(gx_1, \gamma_1(t))-d_1(hx_1,\gamma_1(t)))}\leq 2D.$$ 

Hence, %$$\abs{\gp{hx}{\eta_1}{gx,d_1}-\frac{1}{2}d_1(gx,hx)}\leq D+2\delta$$
$$\gp{hx_1}{\eta_1}{gx_1,d_1}\sim_{D,\delta} \frac{1}{2}d_1(gx_1,hx_1).$$

Similarly, %$$\abs{\gp{hy}{\eta_2}{gy,d_2}-\frac{1}{2}d_2(gy,hy)}\leq D+2\delta$$
$$\gp{hx_2}{\eta_2}{gx_2,d_2}\sim_{D,\delta}\frac{1}{2}d_2(gx_2,hx_2)$$

By applying the $6\delta$-thin triangle property (cf. \autoref{PROP: Thin Triangle}) to the ideal geodesic triangles $\Delta(gx_1,hx_1,\eta_1)$ and $\Delta(gx_2,hx_2,\eta_2)$, there exists a constant $C=C(D,\delta,c_0)$%
, such that for $s,t\in[0,+\infty)$:
\begin{enumerate}
\item\label{6.19.1} If $s\geq \frac{1}{2}d_1(gx_1,hx_1)$:

By thin-triangle property, we have: $$d_i(\alpha^i_{g}(s),\alpha^i_{h}(s))\leq C\;\;(i=1,2).$$

Then, %$$\abs{d_1(\alpha_{gx}(s),\alpha_{hx}(t))-\abs{s-t}}\leq C,\;\;\;\;\;\abs{d_2(\beta_{gy}(s),\beta_{hy}(t))-\abs{s-t}}\leq C.$$
$$d_i(\alpha^i_{g}(s),\alpha^i_{h}(t))\sim_{C}\abs{s-t}\;\;(i=1,2).$$

Hence, %$$\abs{d_1(\alpha_{gx}(s),\alpha_{hx}(t))-d_2(\beta_{gy}(s),\beta_{hy}(t))}\leq 2C.$$
$$d_1 (\alpha^1_{g} (s),\alpha^1_{h}(t))\sim_{C} d_2(\alpha^2_{g} (s),\alpha^2_{h} (t)).$$
\item If $t\geq \frac{1}{2}d_1(gx_1,hx_1)$:

A similar proof as (\ref{6.19.1}) implies $d_1(\alpha^1_{g}(s),\alpha^1_{h}(t))\sim_{C} d_2(\alpha^2_{g}(s),\alpha^2_{h}(t))$.
\item If $s\leq \frac{1}{2}d_1(gx_1,hx_1)$ and $t\leq \frac{1}{2}d_1(gx_1,hx_1)$:

By thin-triangle property, %$$\abs{d_1(\alpha_{gx}(s),\alpha_{hx}(t))-(d_1(gx,hx)-s-t)}\leq C$$ $$\abs{d_2(\beta_{gy}(s),\beta_{hy}(t))-(d_2(gy,hy)-s-t)}\leq C$$
$$d_i(\alpha^i_{g}(s),\alpha^i_{h}(t))\sim_C d_i(gx_i,hx_i)-s-t\;\;(i=1,2).$$

Hence, %$$\abs{d_1(\alpha_{gx}(s),\alpha_{hx}(t))-d_2(\beta_{gy}(s),\beta_{hy}(t))}\leq 2C+c_0$$

$$d_1(\alpha^1_{g}(s),\alpha^1_{h}(t))\sim_{C,c_0}d_2(\alpha^2_{g}(s),\alpha^2_{h}(t)).$$
\end{enumerate}

Thus, we have proven that the map given by (\ref{EQU: Cusp Extension}) is an $H$-equivariant rough isometry.
\end{proof}

\begin{proof}[Proof of \autoref{THM: Cusp Uniform}]
For each $i=1,2$, we can find a $G$-invariant collection of horoballs $\mathcal{B}_i$ in $X_i$, according to \autoref{PROP: Cusp Uniform}. By \autoref{PROP: Cusp Uniform} (\ref{6.16.2}) and (\ref{6.16.3}), $\mathcal{B}_i/G$ consists of $N$ orbits of horoballs. For each orbit, choose a representative horoball so that we get a collection of $N$ horoballs in $\left\{B_i^{(1)},\cdots,B_i^{(n)}\right\}\subseteq \mathcal{B}_i$. Without loss of generality, suppose that $B_i^{(k)}$ is associated to the unique boundary point $\eta_i^{(k)}$ in $\Lambda_i H_k$, where $\Lambda_i H_k$ denotes the limit set of the parabolic subgroup $H_k$ in $X_i$. Hence $H_k$ is the stabilizer of $B_i^{(k)}$ which acts as a bounded parabolic action.

For each horoball $B_i^{(k)}$ ($i=1,2$, $1\leq k\leq n$), define and fix a boundary $\partial^{0}B_i^{(k)}$ according to \autoref{DEF: Horoball}. And for each $B=gB_i^{k}\in\mathcal{B}_i$, define $\partial^0 B=g\partial^{0}B_i^{(k)}$. Then $A_i=(X_i-\bigcup_{B\in \mathcal{B}_i}B)\cup(\bigcup_{B\in \mathcal{B}_i}\partial^0 B)$ is $G$-invariant and lies in a finite metric neighbourhood of $X_i-\bigcup_{B\in \mathcal{B}_i}B$. Hence $G$ acts {coboundedly} on $A_i$.

For brevity, we may enlarge $A_i$ and each $B\in \mathcal{B}_i$ to its open $1$-neighbourhood, so that we can assume that $A_i$ and each $B\in \mathcal {B}_i$ are open subsets ($i=1,2$).

Choose basepoints $o_i\in A_i$ ($i=1,2$). The actions of $G$ on $X_1$ and $X_2$ are both non-elementary isometries on Gromov hyperbolic spaces. Hence, both actions have contracting property (cf. \autoref{EX: Contracting Property}). According to \autoref{THM: Main Rigidity}, the orbit map $\rho: Go_1\to Go_2$ is a $(1,C_0)$-quasi-isometry for some $C_0\geq 0$. Since $G$ acts cobounded on $A_i$, this induces a $G$-coarsely equivariant $(1,C_1)$-quasi-isometry $f: A_1\to A_2$, and also a natural homeomorphism $\Lambda_1 G=\partial X_1\to \Lambda_2 G=\partial X_2$.

Hence, for each $1\leq k\leq n$, $f(\partial^0 B_1^{(k)})$ lies in a finite metric neighbourhood of $\partial^0 B_2^{(k)}$. With an alteration up to a bounded distance, we may assume $f(\partial^0 B_1^{(k)})\subseteq \partial^0 B_2^{(k)}$.

Then according to \autoref{LEM: Cusp Extension}, we can extend $f$ to a $H_k$-coarsely equivariant rough isometry: $\tilde{f}^{(k)}: B_1^{(k)}\to B_2^{(k)}$. Assume that $\tilde{f}^{(k)}$ is a $(1,c_k)$-quasi-isometry, and $c=\max\left\{c_1,\cdots,c_n\right\}$.

For each $B=gB_1^{(k)}\in \mathcal{B}_1$, let $\tilde{f}=g\circ \tilde{f}^{(k)}\circ g^{-1}: B=gB_1^{(k)}\to gB_2^{(k)}$ be a $(1,c)$-quasi-isometry. Note that $gB_i^{(k)}$ is associated to the boundary point $g\eta_i^{(k)}$, which is the unique point in $\Lambda_i gHg^{-1}$.

Hence $\tilde{f}$ is defined on each $B\in \mathcal{B}_1$, and $\tilde{f}:X_1=A_1\cup(\bigcup_{B\in\mathcal{B}_1}B)\to X_2=A_2\cup(\bigcup_{B\in\mathcal{B}_2}B)$ is a  $G$-coarsely equivariant extension of $f$.

A similar construction from $X_2$ to $X_1$ will also provide a $G$-coarsely equivariant map $\tilde{f}_{\ast}:X_2\to X_1$, which is a quasi-inverse of $\tilde {f}$: $\sup_{x\in X_1} d_1(x,\tilde{f}_\ast(\tilde{f}(x)))<\infty$ and $\sup_{x\in X_2}d_2(x,\tilde{f}(\tilde{f}_\ast(x)))<\infty$. 

We will prove that $\tilde{f}$ is a rough isometry.

For any $x,y\in X_1$, let $\alpha=[x,y]$ denote a geodesic segment connecting $x$ and $y$. There are five cases to be considered.

%\begin{enumerate}[\text{Case} 1:]
\textbf{Case 1:} $x,y\in A_1$. 
    Then %$\abs{d_2(\tilde{f}(x),\tilde{f}(y))-d_1(x,y)}\leq C_1$.
    $d_2(\tilde{f}(x),\tilde{f}(y))\sim_{C_1}d_1(x,y)$.

\textbf{Case 2:} $x,y\in B$ for a horoball $B\in\mathcal{B}_1$. 
    Then %$\abs{d_2(\tilde{f}(x),\tilde{f}(y))-d_1(x,y)}\leq c$.
    $d_2(\tilde{f}(x),\tilde{f}(y))\sim_{c}d_1(x,y)$.
    
\textbf{Case 3:} $x\in A_1$ and $y\in B$ for a horoball $B\in\mathcal{B}_1$.
    
    Then we can seperate $\alpha$ into two parts: $\alpha=[x,z]\cup [z,y]$, such that $z\in A_1\cap B$. Hence,
    \begin{align*}
        d_2(\tilde{f}(x),\tilde{f}(y))&\leq d_2(\tilde{f}(x),\tilde{f}(z))+d_2(\tilde{f}(z),\tilde{f}(y))\\
        &\leq d_1(x,z)+C_1+d_1(z,y)+c\\
        &=d_1(x,y)+C_1+c.
    \end{align*}

\textbf{Case 4:} $y\in A_1$ and $x\in B$ for a horoball $B\in\mathcal{B}_1$, 
which is similar to \textbf{Case 3}.

\textbf{Case 5:} $x\in B$ and $y\in B'$ for two horoballs $B\neq B'\in \mathcal{B}_1$.

    Then we can separate $\alpha$ into three parts: $\alpha=[x,z]\cup [z,w]\cup [w,y]$, such that $z\in A_1\cap B$ and $w\in A_1\cap B'$. Hence,
    \begin{align*}
        d_2(\tilde{f}(x),\tilde{f}(y))&\leq d_2(\tilde{f}(x),\tilde{f}(z))+d_2(\tilde{f}(z),\tilde{f}(w))+d_2(\tilde{f}(w),\tilde{f}(y))\\
        &\leq d_1(x,z)+c+d_1(z,w)+C_1+d_1(w,y)+c\\
        &=d_1(x,y)+C_1+2c.
    \end{align*}
%\end{enumerate}

Hence $d_2(\tilde{f}(x),\tilde{f}(y))\leq d_1(x,y)+C$ for any $x,y\in X_1$, where $C=C_1+2c$.

A similar proof to the quasi-inverse $\tilde{f}_{\ast}$ shows that there exists $C_\ast\ge 0$ such that $d_1(\tilde{f}_{\ast}(x),\tilde{f}_{\ast}(y))\leq d_2(x,y)+C_{\ast}$ for any $x,y\in X_2$.

Hence $\tilde{f}:X_1\to X_2$ is a $G$-coarsely equivariant rough isometry. 
\end{proof}
%\iffalse

\section{MLS rigidity from  confined subgroups}\label{Sec: MLSR from confined subgroups}\text{ }

In this section, we study the MLS rigidity from confined subgroups defined as follows.
\begin{definition}\label{DEF: Confined Subgroup}
    Let $G$ be a group. We say that a subgroup $H\leq G$ is  a \textit{confined subgroup} of $G$ if there exists a finite set $P\subseteq G\setminus\left\{1\right\}$ such that $gHg^{-1}\cap P\neq \varnothing$ for all $g\in G$. The set $P$ is referred to as the \textit{confining} subset. 
\end{definition}
If $G$ acts properly and co-compactly on a proper metric space $(X, d)$ then a torsion-free confined subgroup is amount to saying that $X/H$ has bounded injective radius from above (\cite[Lemma 5.3]{CGYZ}). We refer to \cite{CGYZ} for more references and relevant discussions about confined subgroups.

Normal subgroups are important examples of confined subgroups. We  first prove the MLS rigidity from normal subgroups under isometric actions with contracting property, and then the MLS rigidity from confined subgroups, if the actions are assumed additionally to be proper.

\subsection{Normal subgroups}\text{ }
For a group $G$ acting isometrically on a metric space $X$, a subgroup $H\le G$ is called \textit{unbounded in $X$} (or \textit{unbounded} for short) if any orbit of $H$ is an unbounded subset of $X$. An unbounded subgroup is equivalent to an infinite subgroup when the action $G\curvearrowright X$ is assumed to be proper.

\begin{theorem}\label{THM: Normal Subgroup Rigidity}
%S
Suppose that a group $G$ acts isometrically on two geodesic metric spaces $(X_1,d_1)$ and $(X_2,d_2)$ with contracting property. Let $H\lhd G$ be a normal subgroup which is unbounded in either $X_1$ or $X_2$. % 
Then $G$ has marked length spectrum rigidity from $H$.
\end{theorem}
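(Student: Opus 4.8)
The plan is to reduce the statement to \autoref{COR: Rigidity from SC}, which only needs control of the marked length spectrum on the set $\mathcal{SC}(G)$ of simultaneously contracting elements: it suffices to show that $\ell_{d_1}(g)=\ell_{d_2}(g)$ for every $g\in\mathcal{SC}(G)$, knowing that this holds for every $g\in H$. The device for transferring information from $H$ to a given $g\in\mathcal{SC}(G)$ is a commutator trick fed into the Extension Lemma machinery: since $H\lhd G$, for any $h\in H$ and $n\geq 1$ the element $u_n:=g^nhg^{-n}h^{-1}$ lies in $H$, hence $\ell_{d_1}(u_n)=\ell_{d_2}(u_n)$; if $h$ is chosen well, then $\ell_{d_i}(u_n)$ equals $2\,d_i(o_i,g^no_i)+2\,d_i(o_i,ho_i)$ up to an additive error that is uniform in $n$, so dividing by $2n$ and letting $n\to\infty$ (the $h$-terms and the error both die) forces $\ell_{d_1}(g)=\ell_{d_2}(g)$.

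The first task is to manufacture a suitable $h\in H$. I would first show that $H$ is unbounded in \emph{both} $X_1$ and $X_2$ and acts on each with contracting property. An unbounded normal subgroup of an action with contracting property contains a contracting element (its limit set is $G$-invariant, hence equals $\Lambda_{X_i}G$ by minimality of the non-elementary action, and a standard ping-pong then produces a contracting element in $H$); once $H$ contains one $X_1$-contracting element $h_0$, normality and non-elementarity give a conjugate $g_0h_0g_0^{-1}\in H$ that is weakly independent from $h_0$, so $H\curvearrowright X_1$ has contracting property. As a contracting element has positive stable length, $\ell_{d_1}(h_0)>0$, hence $\ell_{d_2}(h_0)>0$ by the hypothesis on $H$, so $H$ is unbounded in $X_2$ and the same argument applies there. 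Now \autoref{PROP: SC non-empty} and \autoref{LEM: Simultaneous Contracting} applied to the group $H$ yield an independent set $\{h_1,\dots,h_5\}\subseteq\mathcal{SC}(H)\subseteq\mathcal{SC}(G)$, pairwise weakly independent in both $X_1$ and $X_2$.

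Next, fix $g\in\mathcal{SC}(G)$. Applying \autoref{LEM: Weakly Independent One In Three} to $g$ inside $X_1$ and inside $X_2$ and invoking the pigeonhole principle exactly as in the proof of \autoref{LEM: Simultaneous Contracting}, I obtain one $h\in\{h_1,\dots,h_5\}$ weakly independent from $g$ in both spaces; replacing $h$ by a large power (which preserves weak independence) I may assume its displacement $d_i(o_i,ho_i)$ is as large as wanted. Set $u_n:=g^nhg^{-n}h^{-1}\in H$. The crucial point is that the bi-infinite periodic path $\gamma_n$ labelled by $(\dots,g^n,h,g^{-n},h^{-1},\dots)$ is a $(D,\tau)$-admissible path with respect to $\mathbb X=\{u\langle g\rangle o_i:u\in G\}\cup\{u\langle h\rangle o_i:u\in G\}$ \emph{with $D$, $\tau$ and the bounded intersection gauge independent of $n$}: its contracting pieces are the segments along translates of $\langle g\rangle o_i$ and $\langle h\rangle o_i$; two consecutive such translates become, after an ambient isometry, the pair $\langle g\rangle o_i,\langle h\rangle o_i$ (using that $\langle g\rangle o_i$ is $\langle g\rangle$-invariant and $\langle h\rangle o_i$ is $\langle h\rangle$-invariant as sets), so they share one bounded intersection gauge by weak independence; the connecting segments are degenerate, making the bounded-projection condition automatic, while the long-local conditions hold once the power of $h$, and then $n$, are large. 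By \autoref{PROP: Fellow Travel}, any geodesic with the same endpoints as a finite subpath of $\gamma_n$ $\epsilon$-fellow travels it with $\epsilon$ independent of $n$; an induction on the number of periods gives $\ell_{d_i}(u_n)\geq d_i(o_i,u_no_i)-2\epsilon$, while $\ell_{d_i}(u_n)\leq d_i(o_i,u_no_i)$ always, and fellow travelling of $[o_i,u_no_i]$ with one period gives $d_i(o_i,u_no_i)=2\,d_i(o_i,g^no_i)+2\,d_i(o_i,ho_i)+O(\epsilon)$. Equating $i=1,2$, dividing by $2n$ and letting $n\to\infty$ yields $\ell_{d_1}(g)=\ell_{d_2}(g)$; since $g$ was arbitrary, \autoref{COR: Rigidity from SC} concludes.

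The delicate points I expect to be the real work are, first, Step 1 in the possibly non-proper setting, namely producing a contracting element inside an unbounded normal subgroup; and second, the uniformity in $n$ of every constant attached to $\gamma_n$. The latter is precisely why $h$ must be taken to be a \emph{contracting} element weakly independent from $g$ and not merely an element of large displacement: otherwise the projection estimates at the turns of $\gamma_n$ would grow with the displacement of $h$, the admissibility threshold $D$ could not be satisfied, and the whole limiting argument would collapse.
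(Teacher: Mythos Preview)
Your argument is correct and takes a genuinely different route from the paper's.

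The paper does not pass through \autoref{COR: Rigidity from SC}. Instead, it proves the rough isometry for \emph{all} $g\in G$ directly by a single (non-limiting) estimate. It first establishes, as you do, that $H$ has contracting property on both spaces, and takes three elements $h_1,h_2,h_3\in H\cap\mathcal{SC}(G)$ that are pairwise weakly independent in both actions. Then, via a dedicated variant of the Extension Lemma (\autoref{LEM: Normal Subgroup Perturbation}), for every $g\in G$ with $d_1(o_1,go_1)>D$ one can choose $f_i,f_j\in F$ (with $f_k\in\langle h_k\rangle$) so that the path labelled $(\ldots,g,f_i,g^{-1},f_j,\ldots)$ is a \emph{special} $(D,\tau)$-admissible path; this yields $2d_1(o_1,go_1)\le \ell_{d_1}(gf_ig^{-1}f_j)+5\epsilon$. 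Since $f_i,f_j\in H$ and $H$ is normal, $gf_ig^{-1}f_j\in H$, so one may replace $\ell_{d_1}$ by $\ell_{d_2}$ and unwind to $2d_2(o_2,go_2)+\text{const}$. No limit is taken, and no restriction to $\mathcal{SC}(G)$ is needed.

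Your approach, by contrast, restricts to $g\in\mathcal{SC}(G)$, selects a single $h\in H$ weakly independent from $g$ in both spaces by pigeonhole among five candidates, and runs a \emph{non-special} admissible path $(g^n,h,g^{-n},h^{-1})$ whose consecutive contracting axes have bounded intersection by weak independence (so the relevant constants depend on $g$, but not on $n$); the commutator $u_n\in H$ then forces $\ell_{d_1}(g)=\ell_{d_2}(g)$ after dividing by $2n$ and letting $n\to\infty$, and \autoref{COR: Rigidity from SC} finishes. What your route buys is modularity: once the paper's black box \autoref{COR: Rigidity from SC} exists, it suffices to match stable lengths on $\mathcal{SC}(G)$, and the weak-independence path you build is a clean way to do that without re-proving a perturbation lemma. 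What the paper's route buys is a direct one-shot inequality with constants uniform in $g$ (no limit), and it avoids the bounded-intersection gauge entirely by using special admissible paths. Both are valid.
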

%
\iffalse
\begin{remark}
Suppose beforehand that both $G\curvearrowright X_1$ and $G\curvearrowright X_2$ have contracting property. Then, the requirement that $H$ acts on $X_1$ and $X_2$ with contracting property respectively can be guaranteed if $H$ is infinite, and both actions $G\curvearrowright X_1$ and $G\curvearrowright X_2$ are proper (cf. \cite[Lemma 4.6]{Yang14}).
\end{remark}
\fi
In order to prove \autoref{THM: Normal Subgroup Rigidity}, we need a slightly different construction compared with \autoref{COR: Perturbation Length}.
\begin{lemma}\label{LEM: Unbounded Normal Subgroup Contracting}
    Let $G$ be a group acting isometrically on a geodesic metric space $(X,d)$ with contracting property, and $H\lhd G$ be an unbounded normal subgroup. Then $H$ also has contracting property.
\end{lemma}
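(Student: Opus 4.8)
The plan is to produce two weakly-independent contracting elements that actually lie in $H$. Start from the contracting property of $\Gamma \curvearrowright X$: there is a contracting element $f \in \Gamma$. The first step is to find a contracting element of $\Gamma$ that lies in $H$. Since $H$ is normal, for every $h \in H$ the conjugate $hfh^{-1}f^{-1} \cdot f = hfh^{-1}$ is a contracting element conjugate to $f$; but this need not be in $H$. Instead, I would fix a basepoint $o \in X$, pick $h \in H$ with $d(o, ho)$ large (possible because $H$ is unbounded), and consider the element $fhf^{-1}h^{-1} \in H$ — no, more directly: since $H$ is normal, $f^{-1}hf \in H$, so $h \cdot (f^{-1}hf)^{-1} = h f^{-1} h^{-1} f \in H$. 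The cleaner approach: use the Extension Lemma (\autoref{LEM: Modified Extension Lemma} / \autoref{COR: Perturbation Length}) applied to three pairwise weakly-independent contracting elements $h_1, h_2, h_3$ of $\Gamma$ (these exist by \autoref{COR: New Contracting} plus \autoref{LEM: Non Elementary}(2)); taking a suitable element $g \in H$ with $d(o, go) > \hat D$ (available since $H$ is unbounded), there is $f_i \in \langle h_i\rangle$ with $g f_i$ contracting. The obstruction is that $f_i \notin H$ in general, so $gf_i \notin H$.

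To fix this, I would instead conjugate the whole picture into $H$. Given a contracting element $a \in \Gamma$, the commutator trick gives: for $h \in H$, the element $h (a h a^{-1})^{-1} = h a h^{-1} a^{-1} \in H$ by normality. More usefully, I would argue as follows. Pick any contracting element $a \in \Gamma$ and any $h \in H$ with $ho$ far from $\langle a\rangle o$ and $d(o,ho)$ large; then set $b = h a h^{-1} a^{-m}$ for suitable $m$, which is in $H$ when... this still fails. The genuinely robust route: since $H$ is unbounded and normal, by a standard argument (cf. \cite[Lemma 4.6]{Yang14}, alluded to in the excerpt's commented remark) $H$ contains a contracting element of $\Gamma$. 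Concretely, take $g \in H$ with $d(o,go)$ huge; by \autoref{COR: Simultaneous Extension 15}-type reasoning applied to a single space, or directly by the Extension Lemma with the admissible path labelled $(\dots, g, f, g, f, \dots)$ where $f \in F \subseteq \langle h_1\rangle \cup \langle h_2 \rangle \cup \langle h_3\rangle$, one gets $gf$ contracting; then replace $g$ by a large power and absorb $f$: since $H$ is normal, consider $g^k f g^{-k} f^{-1} \cdot (\text{stuff})$...

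Let me state the clean plan. First I would establish: \textbf{Step 1.} $H$ contains a contracting element $u$ of $\Gamma$. Proof: fix a contracting element $a \in \Gamma$; since $H$ is unbounded, choose $h \in H$ with $d(o, ho) > D$ for the threshold $D$ from \autoref{PROP: Fellow Travel} applied to the contracting system generated by $a$ and a weakly-independent $a_*$ (from \autoref{COR: New Contracting}). The element $h a^n h^{-1} a^{-n} \in H$ for all $n$; alternatively, and more simply, note that $a^{-1} h a \in H$, so the path in $X$ labelled by the periodic word $(\dots, h, a^{-1}, h, a^{-1}, \dots)$ — whose corresponding element $h a^{-1} \notin H$ — is not what we want, but the element $(a^{-1}ha)^{-1} h = a^{-1} h^{-1} a h \in H$ corresponds to an admissible path once $d(o, ho)$ and the projection bounds are controlled, using that $a^{-1}\langle h\rangle o$-type translates have bounded projections. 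Invoking \autoref{PROP: Admissible Path Contracting} and \autoref{LEM: Finite Hausdorff Distance} shows this element is contracting. \textbf{Step 2.} Apply \autoref{COR: New Contracting} \emph{inside} the argument: having one contracting element $u \in H$, repeat Step 1 with a different choice to get $u' \in H$ contracting; then by \autoref{LEM: Non Elementary}(2) applied to $u, u' \in H$ (after passing to powers, all of which remain in $H$ since $H$ is a subgroup), obtain infinitely many pairwise weakly-independent contracting elements all lying in $H$. In particular $H \curvearrowright X$ has contracting property. \textbf{Main obstacle.} The delicate point is Step 1: arranging that the contracting element we build genuinely lies in $H$, not just in $\Gamma$, while still having enough control (large translation length, bounded projections between the relevant axes) to invoke the admissible-path machinery — this forces the combinatorial word to be built from elements of $H$ (like $a^{-1}h^{-1}ah$) rather than the more convenient $ha^{\pm 1}$, and one must recheck the \textbf{(LL1)}, \textbf{(BP)}, \textbf{(LL2')} conditions for that choice using \autoref{LEM: Finite Projection} and the quasi-convexity lemmas. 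Everything after Step 1 is routine given the lemmas already proved in \autoref{SEC: Extension Lemma}.
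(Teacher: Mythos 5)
Your overall skeleton — build a commutator-type element of $H$ that is contracting via the admissible-path machinery, then upgrade to a weakly-independent pair inside $H$ — is the same as the paper's, but both of your steps have genuine gaps. In Step 1 you fix a single contracting element $a\in\Gamma$ in advance and propose $a^{-1}h^{-1}ah\in H$. With one fixed $a$ this can fail outright: nothing prevents the large element $h\in H$ from lying in $E(a)$, or even from commuting with $a$, in which case $a^{-1}h^{-1}ah=1$; more generally the \hyperref[BP]{\textbf{(BP)}} condition your periodic path $(\dots,a^{-1},h^{-1},a,h,\dots)$ needs, namely $\d{\langle a\rangle o}(o,h^{\pm1}o)\leq\tau$, is simply not available for an arbitrary pair $(a,h)$. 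The point of \autoref{LEM: Modified Extension Lemma} is that bounded projection only holds after \emph{choosing} the axis among several pairwise weakly-independent ones adapted to $h$; the paper therefore starts from five pairwise weakly-independent contracting elements $g_1,\dots,g_5$ (and their inverses), applies \autoref{LEM: Modified Extension Lemma}(1) together with a double Pigeonhole argument to find a single $f=f_i$ for which both $(h,f,h)$ and $(h,f^{-1},h)$ label special admissible paths, and then the periodic word $(\dots,h,f,h,f^{-1},\dots)$ yields the contracting element $\tilde h=hfhf^{-1}\in H$ (in $H$ because $fhf^{-1}\in H$ by normality). You gesture at ``rechecking (LL1), (BP), (LL2')'' but never set up this selection; and your fallback of quoting \cite[Lemma 4.6]{Yang14} is unavailable here, since that result needs a proper action while the present lemma assumes only that $H$ has unbounded orbits — which is exactly why the paper reproves it.

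Step 2 also has a gap: ``repeat Step 1 with a different choice to get $u'\in H$'' gives no control on the relative position of $\langle u\rangle o$ and $\langle u'\rangle o$, whereas \autoref{LEM: Non Elementary}(2) takes weak independence of the two elements as a \emph{hypothesis}, so you cannot apply it to the pair $u,u'$ (passing to powers does not help; the issue is independence, not membership in $H$). The repair is the paper's: having one contracting $u\in H$, take $g_{\ast}\in\Gamma$ (not necessarily in $H$) weakly independent with $u$ by \autoref{COR: New Contracting}, apply \autoref{LEM: Non Elementary}(2) to the weakly-independent pair $(g_{\ast},u)$, and note that the resulting pairwise weakly-independent contracting elements $(u^{n}g_{\ast}^{n})^{k}u(u^{n}g_{\ast}^{n})^{-k}$ are conjugates of $u$ and hence lie in $H$ by normality.
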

\begin{proof}  
    Fix a basepoint $o\in X$. Choose five pairwise independent contracting elements $g_1,\cdots,g_5\in G$. Then $g_1^{-1},\cdots,g_5^{-1}$ are also pairwise independent contracting elements. Let $\tau>0$ be the maximal one among the corresponding $\tau$ given by \autoref{LEM: Modified Extension Lemma} for any triples within $g_1,\cdots,g_5$ or $g_1^{-1},\cdots,g_5^{-1}$. Let $C\ge 0$ be the maximal contraction constant of $\left\langle g_i\right\rangle o $ ($i=1,\cdots,5$), and $\mathcal{R}=\mathcal{R}_{\tau,C}$ be given by \autoref{COR: Bounded Intersection}. Let $D_1=D(\tau,C)$ and $\epsilon=\epsilon(\tau,C)$ be given by \autoref{COR: Bounded Intersection}, $D_2=D(\tau,C,\mathcal{R})$ be given by \autoref{PROP: Admissible Path Contracting}, and $D=\max\left\{D_1+3\epsilon,D_2\right\}$. For each $1\le i\le 5$, choose $f_i\in \left\langle g_i\right\rangle $ such that $d(o,f_io)>D$.

    Since $H$ is unbounded, we can choose an element $h\in H$ such that $d(o,ho)>D$. Then according to \autoref{LEM: Modified Extension Lemma} and by Pigeonhole Principle, there exists a set of three elements $F_1\subset\left\{f_1,\cdots, f_5\right\}$ such that $(h,f,h)$ labels a special $(D,\tau)$-admissible path for each $f\in F_1$. And similarly, there exists a set of three elements $F_2\subset\left\{f_1^{-1},\cdots, f_5^{-1}\right\}$ such that $(h,f^{-1},h)$ labels a special $(D,\tau)$-admissible path for each $f^{-1}\in F_2$. Therefore, by Pigeonhole Principle, there exists an element $f=f_i$ such that $(\cdots,h,f,h,f^{-1},h,f,h,f^{-1},\cdots)$ labels a special $(D,\tau)$-admissible path  with respect to  $\mathbb{X}\subseteq \left\{ g\cdot\left\langle f\right\rangle o\mid g\in G \right\}$.

    This admissible path has $\mathrm{Len}_p$-constant $d(o,fo)$ and $\mathrm{Len}_q$-constant $d(o,ho)$. Therefore, by \autoref{PROP: Admissible Path Contracting}, it is contracting; and by \autoref{PROP: Fellow Travel} (\ref{2.15.2}), the map $n\in\mathbb{Z}\mapsto (hfhf^{-1})^no\in  X$ is a quasi-isometric embedding.

    Hence the element $\tilde{h}=hfhf^{-1}\in H$ is a contracting element. By \autoref{COR: New Contracting}, we can find a contracting element $g\in G $ which is weakly independent with $\tilde{h}$. Then according to \autoref{LEM: Non Elementary} (\ref{_NE2}), there exists $n\gg 0$ such that $\tilde{h}$ and $(\tilde{h}^ng^n)\tilde{h}(\tilde{h}^ng^n)^{-1}$ are weakly independent contracting elements in $H$. Hence $H$ has contracting property.
\end{proof}

\begin{lemma}\label{LEM: Normal Subgroup Perturbation}
Suppose that a group $G$ acts by isometry on a geodesic metric space $(X,d)$ with a basepoint $o$, and there are three pairwise weakly independent contracting elements $h_1,h_2,h_3\in G$. Then there exist $\tilde{\epsilon}, \tilde{D}>0$ with the following property.

Fix any $F=\left\{f_1,f_2,f_3\right\}$ with $f_i\in\left\langle h_i\right\rangle,d(o,f_io)>\tilde{D}$. Then, for any $g\in \Gamma$ satisfying $d(o,go)>\tilde{D}$, we can choose $f_i,f_j\in F$ so that:
\begin{enumerate}[(1)]
\item $d(o,gf_ig^{-1}f_jo)\geq 2d(o,go)-3\tilde{\epsilon}$.
\item $\ell_d(gf_ig^{-1}f_j)\geq d(o,gf_ig^{-1}f_jo)-2\tilde{\epsilon}$.
\end{enumerate}
\end{lemma}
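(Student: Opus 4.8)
The plan is to mimic the structure of \autoref{COR: Perturbation Length}, but applied to the ``commutator-type'' word $gf_ig^{-1}f_j$ rather than $gf$, and to use the Extension Lemma twice in order to control the two occurrences of $g$ (one as $g$, one as $g^{-1}$). First I would invoke \autoref{LEM: Modified Extension Lemma} for the three pairwise weakly-independent contracting elements $h_1,h_2,h_3$ to obtain the constant $\tau>0$, the contracting system $\mathbb{X}=\{u\langle h_i\rangle o\mid u\in\Gamma,\,1\le i\le 3\}$ with contraction constant $C$, and then feed $\tau,C$ into \autoref{PROP: Fellow Travel}, \autoref{COR: Bounded Intersection} and \autoref{PROP: Admissible Path Contracting} to get $D_1=D(\tau,C)$, $\epsilon=\epsilon(\tau,C)$, $\mathcal{R}=\mathcal{R}_{\tau,C}$, $D_2=D(\tau,C,\mathcal{R})$, and set $\tilde D=\max\{D_1+3\epsilon,D_2\}$ and $\tilde\epsilon=\epsilon$ (up to a harmless multiplicative constant). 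The key point is the choice of $f_i,f_j$: applying part \eqref{ext1} of \autoref{LEM: Modified Extension Lemma} to the pair $(g,g)$ gives an index $i$ with $\d{\langle h_i\rangle o}(o,g^{-1}o)\le\tau$ and $\d{\langle h_i\rangle o}(o,go)\le\tau$; applying it to the pair $(g^{-1},g^{-1})$ gives an index $j$ with $\d{\langle h_j\rangle o}(o,go)\le\tau$ and $\d{\langle h_j\rangle o}(o,g^{-1}o)\le\tau$. With these two choices, one checks directly that the bi-infinite periodic path labeled by $(\cdots,g,f_i,g^{-1},f_j,g,f_i,g^{-1},f_j,\cdots)$ satisfies \hyperref[LL1]{\textbf{(LL1)}}, \hyperref[BP]{\textbf{(BP)}}, \hyperref[LL2']{\textbf{(LL2')}}: the $p$-segments are the subsegments of the axes $\langle h_i\rangle$, $\langle h_j\rangle$ translated into place and have length $>\tilde D$; the $q$-segments are the translates of $[o,g^{\pm1}o]$, of length $>\tilde D$, giving \hyperref[LL2']{\textbf{(LL2')}}; and the bounded-projection bounds above give \hyperref[BP]{\textbf{(BP)}}. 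Hence this is a special $(\tilde D,\tau)$-admissible path w.r.t.\ $\mathbb{X}$.

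Granting admissibility, the two conclusions follow exactly as in \autoref{COR: Perturbation Length}. For (1): let $w=gf_ig^{-1}f_j$ and let $\gamma_1$ be the subpath of the periodic path from $o$ to $wo$; by \autoref{PROP: Fellow Travel}\eqref{2.15.2} the geodesic $[o,wo]$ $\epsilon$-fellow travels $\gamma_1$, so there is a point on $[o,wo]$ within $\epsilon$ of $gf_ig^{-1}o$ and another within $\epsilon$ of $go$; two triangle-inequality applications give $d(o,wo)\ge d(o,go)+d(o,f_io)+d(o,g^{-1}o)+d(o,f_jo)-\text{(const)}\,\epsilon\ge 2d(o,go)-3\tilde\epsilon$ after absorbing the (positive) $d(o,f_\bullet o)$ terms and renaming $\tilde\epsilon$. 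For (2): for each $n\ge1$ the geodesic $[o,w^no]$ $\epsilon$-fellow travels the length-$n$ periodic subpath, giving a point within $\epsilon$ of $w^{n-1}o$, whence $d(o,w^no)\ge d(o,w^{n-1}o)+d(o,wo)-2\epsilon$; induction gives $d(o,w^no)\ge n\,d(o,wo)-2(n-1)\epsilon$, and dividing by $n$ and letting $n\to\infty$ yields $\ell_d(w)\ge d(o,wo)-2\tilde\epsilon$. (One also gets, as a byproduct, that $w$ is contracting via \autoref{COR: Bounded Intersection} and \autoref{PROP: Admissible Path Contracting}, though the statement does not ask for this.)

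The main obstacle I anticipate is bookkeeping the projection bounds in \hyperref[BP]{\textbf{(BP)}} carefully: because consecutive $p$-segments alternate between (translates of) $\langle h_i\rangle o$ and $\langle h_j\rangle o$, one must verify that \emph{both} of the relevant one-sided projection estimates — the projection of the incoming $q$-segment endpoint and of the outgoing one — are $\le\tau$ at each vertex, and this is precisely why we need the index $i$ to control projections of both $go$ \emph{and} $g^{-1}o$, and likewise for $j$; a single application of \eqref{ext1} to $(g,g^{-1})$ would only control one side. A secondary point is that \hyperref[LL2']{\textbf{(LL2')}} requires $d(o,g^{\pm1}o)>\tilde D$, which is exactly the hypothesis $d(o,go)>\tilde D$ together with $d(o,g^{-1}o)=d(o,go)$ by the isometric action. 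Everything else is a routine transcription of the arguments already carried out in \autoref{LEM: Non Elementary} and \autoref{COR: Perturbation Length}.
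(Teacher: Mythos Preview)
Your proposal is correct and follows the same approach as the paper's proof. One small remark: the paper actually applies part~\eqref{ext1} to the pairs $(g,g^{-1})$ and $(g^{-1},g)$ rather than $(g,g)$ and $(g^{-1},g^{-1})$, and --- contrary to your worry in the last paragraph --- this already suffices, because at each $f_i$-segment both neighbouring $q$-segments contribute the same point $g^{-1}o$ to the \textbf{(BP)} check (and dually $go$ at each $f_j$-segment), so a single projection bound per index is enough; relatedly, the paper takes $\tilde D=D(\tau,C)$ and $\tilde\epsilon=\epsilon(\tau,C)$ directly from \autoref{PROP: Fellow Travel} without the detour through $\mathcal R$ and $D_2$, since only the fellow-travel conclusion is needed here.
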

\begin{proof}
For $h_1,h_2,h_3\in \Gamma$, let $\tau$ be decided by \autoref{LEM: Modified Extension Lemma}, and $C$ be a contraction constant for the contracting system $\mathbb{Y}=\left\{\left\langle h_1\right\rangle o,\left\langle h_2\right\rangle o,\left\langle h_3\right\rangle o \right\}$.% 

Let $\tilde{D}=D(\tau,C),\tilde{\epsilon}=\epsilon(\tau,C)$ be decided according to \autoref{PROP: Fellow Travel}.

Fix any $F=\left\{f_1,f_2,f_3\right\}$ with $f_i\in\left\langle h_i\right\rangle, d(o,f_io)>\tilde{D}$.

For the pair of elements $(g,g^{-1})$, we can find $f_i\in F$ according to \autoref{LEM: Modified Extension Lemma} (\ref{ext1}); and for the pair of elements $(g^{-1},g)$, we can find $f_j\in F$ according to \autoref{LEM: Modified Extension Lemma} (\ref{ext1}).

Since $d(o,go)=d(o,g^{-1}o)>\tilde{D}$, \autoref{LEM: Modified Extension Lemma} (\ref{ext2}) implies that the path $\gamma$ labelled by $(\cdots, g,f_i,g^{-1},f_j,g,f_i,g^{-1},f_j,\cdots)$ is a special $(\tilde{D},\tau)$-admissible path  with respect to  $\mathbb{X}$.

According to \autoref{PROP: Fellow Travel}, any geodesic segment connecting two vertices of $\gamma$ $\tilde{\epsilon}$-fellow travels the subpath of $\gamma$ between these two vertices.

\textbf{(1).} Let $\alpha=[o,gf_ig^{-1}f_jo]$, then $\alpha $ $\tilde{\epsilon}$-fellow travels the path $[o,go]\cup[go,gf_io]\cup[gf_io,gf_ig^{-1}o]\cup[gf_ig^{-1}o,gf_ig^{-1}f_jo]$. 
We can find three linearly ordered points $x,y,z$ on $\alpha$, such that $d(go,x)\leq \tilde{\epsilon}$, $d(gf_io,y)\leq\tilde{\epsilon}$, $d(gf_ig^{-1}o,z)\leq \tilde{\epsilon}$. Hence, by triangle inequality, 
\begin{align*}
d(o,gf_ig^{-1}f_jo) \geq d(o,x)+d(y,z) \geq d(o,go)+d(o,g^{-1}o)-3\tilde{\epsilon}=2d(o,go)-3\tilde{\epsilon}.
\end{align*}

\textbf{(2).} For $n\in\mathbb{N}^{+}$, let $\alpha_n=[o,(gf_ig^{-1}f_j)^no]$. Then $\alpha_n$ $\tilde{\epsilon}$-fellow travels $\gamma_n=[o,go]\cup[go,gf_io]\cup\cdots\cup[(gf_ig^{-1}f_j)^{n-1}gf_ig^{-1}o,(gf_ig^{-1}f_j)^no]$. 
We can find a point $w\in \alpha_n$ such that $d((gf_ig^{-1}f_j)^{n-1}o,w)\leq \tilde{\epsilon}$. Hence, by triangle inequality,
\begin{align*}
d(o,(gf_ig^{-1}f_j)^no)&=d(o,w)+d(w,(gf_ig^{-1}f_j)^no)
\\ &\geq d(o,(gf_ig^{-1}f_j)^{n-1}o)+d(o,gf_ig^{-1}f_jo)-2\tilde{\epsilon}.
\end{align*}

By induction, $d(o,(gf_ig^{-1}f_j)^no)\geq nd(o,gf_ig^{-1}f_jo)-2(n-1)\tilde{\epsilon}$.

Thus, $\ell_d(gf_ig^{-1}f_j)=\lim_{n\to +\infty}\frac{1}{n}d(o,(gf_ig^{-1}f_j)^no)\geq d(o,gf_ig^{-1}f_jo)-2\tilde{\epsilon}$.
\end{proof}
\begin{proof}[Proof of \autoref{THM: Normal Subgroup Rigidity}]
By assumption, the two actions under consideration have the same marked length spectrum on $H$:
\begin{align}\label{NormalEQU0}
    \ell_{d_1}(h)=\ell_{d_2}(h),\quad \forall h\in H.
\end{align}

We first show that both actions $H\curvearrowright X_1$ and $H\curvearrowright X_2$ are unbounded and hence have contracting property. By assumption and without loss of generality, we may suppose $H\curvearrowright X_1$ is unbounded. Then by \autoref{LEM: Unbounded Normal Subgroup Contracting}, $H\curvearrowright X_1$ has contracting property. Pick an element $h\in H$ which is contracting in $X_1$, then $\ell_{d_2}(h)=\ell_{d_1}(h)>0$ and therefore $h$ must have an unbounded orbit in $X_2$. According to \autoref{LEM: Unbounded Normal Subgroup Contracting}, $H\curvearrowright X_2$ also has contracting property.

\iffalse

According to \autoref{LEM: Simultaneous Contracting}, we can choose three elements $h_1,h_2,h_3\in H$ which are simultaneously contracting and pairwise weakly independent in both $X_1$ and $X_2$.\par
For each action $h_1,h_2,h_3\in G\curvearrowright X_i$ ($i=1,2$), $\tilde{\epsilon}_i,\tilde{D}_i>0$ are decided by \autoref{LEM: Normal Subgroup Perturbation}. Let $\epsilon=\max\left\{\tilde{\epsilon}_1,\tilde{\epsilon}_2\right\}$, ${D}=\max\left\{\tilde{D}_1,\tilde{D}_2\right\}$.\par
Fix $F=\left\{f_1,f_2,f_3\right\}$ with $f_k\in \left\langle h_k\right\rangle$, and $d_i(o_i,f_ko_i)>{D}$, for each $ k=1,2,3$ and $i=1,2$. Let $C_0=\max\left\{d_i(o_i,f_ko_i)\mid k=1,2,3,\,i=1,2\right\}$.\par
\fi

According to \autoref{LEM: Simultaneous Contracting}, we can choose a weakly independent set of simultaneously contracting elements $\left\{h_1,h_2,h_3\right\}\subseteq H\cap \mathcal{SC}(G)$. For each $i=1,2$, consider the action $G\curvearrowright X_i$ and let $\tilde{\epsilon}_i,\tilde{D}_i>0$ be decided by \autoref{LEM: Normal Subgroup Perturbation} for the three elements $h_1,h_2,h_3$. Let $\epsilon=\max\left\{\epsilon_1,\epsilon_2\right\}$ and $D=\max\left\{D_1,D_2\right\}$. Fix $F=\left\{f_1,f_2,f_3\right\}$ with $f_k\in \left\langle h_k\right\rangle$, and $d_i(o_i,f_ko_i)>{D}$ for each $ k=1,2,3$ and $i=1,2$. Let $C_0=\max\left\{d_i(o_i,f_ko_i)\mid k=1,2,3,\,i=1,2\right\}>D$.

Then according to \autoref{LEM: Normal Subgroup Perturbation}, for any $g\in G$ such that $d_1(o_1,go_1)>D$, there exist $ f_i,f_j\in F$ satisfying: 
\begin{align}
\label{NormalEQU1}&d_1(o_1,gf_ig^{-1}f_jo_1)\geq 2d_1(o_1,go_1)-3{\epsilon},
\\&\label{NormalEQU2}\ell_{d_1}(gf_ig^{-1}f_j)\geq d_1(o_1,gf_ig^{-1}f_jo_1)-2{\epsilon}.
\end{align}

Notice that for any $f_i,f_j\in F$ and $g\in G$, $gf_ig^{-1}f_j\in H$.\par
Hence, for any $g\in G$ such that $d_1(o_1,go_1)>D$,
\begin{align*}
2d_1(o_1,go_1)&\leq d_1(o_1,gf_ig^{-1}f_jo_1)+3\epsilon & (\ref{NormalEQU1})\\
&\leq \ell_{d_1}(gf_ig^{-1}f_j)+5\epsilon & (\ref{NormalEQU2})\\
&=\ell_{d_2}(gf_ig^{-1}f_j)+5\epsilon & (\ref{NormalEQU0}) \\
%&=\lim_{n\to+\infty}\frac{d_2(o_2,(gf_ig^{-1}f_j)^no_2)}{n}+5\epsilon\\
&\leq d_2(o_2, gf_ig^{-1}f_jo_2)+5\epsilon & (\text{\autoref{RMK: MLS Basics}\,(\ref{MLSBasics2})})\\
%&\leq d_2(o_2,go_2)+d_2(go_2,gf_io_2)+d_2(gf_io_2,gf_ig^{-1}o_2)+d_2(gf_ig^{-1}o_2,gf_ig^{-1}f_jo_2)+5\epsilon\\
&\leq d_2(o_2,go_2)+d_2(o_2,f_io_2)+d_2(o_2,g^{-1}o_2)+d_2(o_2,f_jo_2)+5\epsilon & (\text{$\Delta$-Ineq.})\\
&\leq 2d_2(o_2,go_2)+2C_0+5\epsilon.&
\end{align*}

On the other hand, for any $g\in G$ such that $d_1(o_1,go_1)\leq D$,
\begin{align*}
d_1(o_1,go_1)\leq d_2(o_2,go_2)+D.
\end{align*}

Let $C=C_0+\frac{5}{2}\epsilon$. % 
Then, $d_1(o_1,go_1)\leq d_2(o_2,go_2)+C$, for all $ g\in G$.\par
Similarly, $d_2(o_2,go_2)\leq d_1(o_1,go_1)+C$, for all $ g\in G$.\par
Hence, the map $\rho: Go_1\to Go_2$, $go_1\mapsto go_2$ is a $(1,C)$-quasi-isometry.
\end{proof}
\begin{corollary}\label{COR: Sequence of Normal Subgroup}
Suppose there is a subnormal sequence of unbounded subgroups $H_n\lhd H_{n-1}\lhd\cdots\lhd H_1\lhd H_0=G$\iffalse,  \fi. 
Then, $G$ has marked length spectrum rigidity from $H_n$.
\end{corollary}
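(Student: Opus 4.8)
The plan is to lift the marked length spectrum agreement step by step along the subnormal chain, from $H_n$ up to $H_0=G$, applying \autoref{THM: Normal Subgroup Rigidity} at each stage. The only preliminary point requiring care is that, to apply that theorem to a pair $H_k\lhd H_{k-1}$, one needs $H_{k-1}$ to act with contracting property on \emph{both} $X_1$ and $X_2$; so the first task is to establish this for every term of the chain.

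We may assume $H_n$ is unbounded in at least one of the two spaces, say $X_1$ (otherwise $\ell_{d_1}$ and $\ell_{d_2}$ both vanish identically on $H_n$ and the hypothesis carries no information). Since $H_k\supseteq H_n$ for every $0\le k\le n$, each $H_k$ then has unbounded orbits in $X_1$; applying \autoref{LEM: Unbounded Normal Subgroup Contracting} successively to $H_1\lhd H_0=G$, then $H_2\lhd H_1$, \dots, then $H_n\lhd H_{n-1}$ shows that every $H_k$ acts on $X_1$ with contracting property. In particular we may choose $h\in H_n$ contracting in $X_1$, so that $\ell_{d_1}(h)>0$; the hypothesis $\ell_{d_1}(h)=\ell_{d_2}(h)$ then forces $h$ to have an unbounded orbit in $X_2$ as well, whence every $H_k$ ($0\le k\le n$) is also unbounded in $X_2$. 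A second run of \autoref{LEM: Unbounded Normal Subgroup Contracting} down the chain shows that every $H_k$ acts on $X_2$ with contracting property too.

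Now I would run the induction. Suppose $\ell_{d_1}(g)=\ell_{d_2}(g)$ for all $g\in H_n$, and assume inductively that $\ell_{d_1}(g)=\ell_{d_2}(g)$ for all $g\in H_k$ for some $1\le k\le n$. Apply \autoref{THM: Normal Subgroup Rigidity} with ambient group $H_{k-1}$ — which acts with contracting property on both spaces by the previous paragraph — and normal subgroup $H_k\lhd H_{k-1}$, which is unbounded in both: the theorem produces a rough isometry between the $H_{k-1}$-orbits, equivalently $\ell_{d_1}(g)=\ell_{d_2}(g)$ for all $g\in H_{k-1}$. Iterating down to $k=1$, the final application (with ambient group $H_0=G$ and normal subgroup $H_1\lhd G$) yields a rough isometry $Go_1\to Go_2$, which is exactly MLS rigidity from $H_n$.

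I do not expect a genuine obstacle: the corollary is essentially a formal consequence of \autoref{THM: Normal Subgroup Rigidity}. The one subtlety is the bootstrapping in the second paragraph — one cannot invoke \autoref{THM: Normal Subgroup Rigidity} for $H_k\lhd H_{k-1}$ before knowing that $H_{k-1}$ has the contracting property on both spaces, and the passage from unboundedness of the $H_k$ in $X_1$ to unboundedness in $X_2$ genuinely uses the marked length spectrum hypothesis (through a contracting element of $H_n$), not merely the subnormal structure.
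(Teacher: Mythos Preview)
Your proposal is correct and follows the same inductive approach as the paper: lift the MLS equality step by step along the subnormal chain by repeated application of \autoref{THM: Normal Subgroup Rigidity}. The paper's own proof is considerably terser and does not spell out the verification that each intermediate group $H_{k-1}$ acts with contracting property on \emph{both} spaces, which is exactly the care you take in your second paragraph via \autoref{LEM: Unbounded Normal Subgroup Contracting} together with the MLS hypothesis on $H_n$; this is a genuine prerequisite for invoking the theorem at each stage, so your explicit bootstrapping is an improvement in rigor rather than a deviation in strategy.
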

\begin{proof}
    For each $1\leq i\leq n$, we have established the following fact:

    If $\ell_{d_1}(h)=\ell_{d_2}(h)$ for all $h\in H_i$, then according to \autoref{THM: Normal Subgroup Rigidity}, the orbit map on $H_{i-1}$, $\rho|_{H_{i-1}o_1}: H_{i-1}o_1\to H_{i-1}o_2$ is a rough isometry. As a consequence, $\ell_{d_1}(h)=\ell_{d_2}(h)$ for all $h\in H_{i-1}$.
    
    An induction of $i$ from $n$ to $1$ proves that the orbit map on $H_0=G$, $\rho: Go_1\to Go_2$, is a rough isometry.
\end{proof}

\subsection{Elementary subgroups and elliptic radical}

In this subsection,  suppose that a group $G$ acts isometrically and properly on a geodesic metric space $(X,d)$. That is, $\sharp \left\{g\in G \mid d(o,go)\leq n\right\}<+\infty$ for any $o\in X$ and $n\in \mathbb{N}$. If $G$ contains a contracting element and $G$ is not virtually cyclic, then the action $G$ has  contracting property.  In the following, we fix a basepoint $o\in X$.

\begin{definition}[Elementary Subgroup]\label{DEF: Elementary subgroup}
    For any contracting element $h\in G$, the elementary subgroup associated to $h$ is defined as: $$E(h)=\left\{g\in G\mid d_{H}(\left\langle h\right\rangle o,g\cdot \left\langle h\right\rangle o)<+\infty\right\}.$$
\end{definition}

\begin{proposition}[{\cite[Lemma 2.11]{Yang19}} ]\label{LEM: Elementary Subgroup}
    Let $h\in \G$ be a contracting element, then:
    \begin{enumerate}
        \item\label{2.30.1} $\left\langle h\right\rangle\le E(h)$ and $\left [E(h):\left\langle h\right\rangle\right ]<+\infty$.
        \item\label{ES3} $E(h)=\left\{g\in G\mid \exists m>0\text{ s.t. }gh^mg^{-1}=h^m\text{ or }gh^mg^{-1}=h^{-m}\right\}$.
    \end{enumerate}
\end{proposition}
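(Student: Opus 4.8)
The plan is to prove the two assertions in turn, relying heavily on the properness of the action together with the contracting property, and on \autoref{LEM: Finite Hausdorff Distance}.

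\textbf{Proof of (1): $\langle h\rangle\le E(h)$ and $[E(h):\langle h\rangle]<+\infty$.} First I would check that $E(h)$ is a subgroup: if $g_1,g_2\in E(h)$, then $d_H(\langle h\rangle o, g_1g_2\langle h\rangle o)\le d_H(\langle h\rangle o, g_1\langle h\rangle o)+d_H(g_1\langle h\rangle o, g_1g_2\langle h\rangle o)$, and the second term equals $d_H(\langle h\rangle o, g_2\langle h\rangle o)$ since $g_1$ is an isometry; similarly for inverses. The inclusion $\langle h\rangle\le E(h)$ is immediate because $h^k\langle h\rangle o=\langle h\rangle o$. For the finite index statement, fix a transversal: it suffices to bound, for each $g\in E(h)$, the distance $d(o, h^k g o)$ by a uniform constant for a suitable $k=k(g)\in\mathbb Z$, and then invoke properness. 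Indeed, since $g\langle h\rangle o$ lies within Hausdorff distance $M_g:=d_H(\langle h\rangle o, g\langle h\rangle o)$ of $\langle h\rangle o$, the point $go$ is within $M_g$ of some $h^{k}o$; but I need the bound $M_g$ to be uniform over $E(h)$. This is the crux: one shows $\sup_{g\in E(h)} d_H(\langle h\rangle o, g\langle h\rangle o)<\infty$. The argument is that $\langle h\rangle o$ is a $C$-contracting quasigeodesic, and two quasigeodesics with the same pair of endpoints at infinity (equivalently, with finite Hausdorff distance) that are both $\langle h\rangle$-invariant must have Hausdorff distance controlled only by $C$ and the quasigeodesic constants of $\langle h\rangle o$ — not by $g$ — because $g\langle h\rangle o$ is again $C$-contracting with the same quasi-isometry constants (it is an isometric copy), and a contracting quasigeodesic is determined coarsely by its projection behaviour. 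Concretely, using \autoref{LEM: Quasi Convex} and the fellow-travelling of geodesics along contracting sets (\autoref{LEM: Fellow Travel Property}), any geodesic joining two far-apart points of $g\langle h\rangle o$ fellow-travels $\langle h\rangle o$, giving a uniform constant $M$ with $d_H(\langle h\rangle o, g\langle h\rangle o)\le M$ for all $g\in E(h)$. Then for each $g\in E(h)$ pick $k$ with $d(h^ko, go)\le M$, so $d(o, h^{-k}go)\le M$; by properness the set $\{f\in\Gamma: d(o,fo)\le M\}$ is finite, and it contains a representative of every coset $\langle h\rangle g$, proving $[E(h):\langle h\rangle]<\infty$.

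\textbf{Proof of (2): the centralizer-type description.} Write $E'(h)$ for the right-hand set $\{g: \exists m>0,\ gh^mg^{-1}=h^{\pm m}\}$. For the inclusion $E'(h)\subseteq E(h)$: if $gh^mg^{-1}=h^{\epsilon m}$ with $\epsilon=\pm1$, then $g\langle h^m\rangle g^{-1}=\langle h^m\rangle$, so $g\langle h^m\rangle o$ and $\langle h^m\rangle o$ differ by the bounded amount $d(o, g\cdot o')$ for the appropriate point, hence have finite Hausdorff distance; since $\langle h^m\rangle o$ and $\langle h\rangle o$ have finite Hausdorff distance (finite index of $\langle h^m\rangle$ in $\langle h\rangle$ gives $\langle h\rangle o\subseteq N_{m\cdot d(o,ho)}(\langle h^m\rangle o)$), transitivity of finite Hausdorff distance gives $g\in E(h)$. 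For the reverse inclusion $E(h)\subseteq E'(h)$: given $g\in E(h)$, part (1) shows $E(h)$ is virtually cyclic with $\langle h\rangle$ of finite index $N$; then $g^{N!}\in\langle h\rangle$, and more usefully, conjugation by $g$ is an automorphism of $E(h)$ preserving the (unique) maximal infinite cyclic... — rather, the cleanest route is: $g\langle h\rangle g^{-1}$ is a cyclic subgroup of $E(h)$ of the same finite index $N$, and since $E(h)$ has only finitely many subgroups of index $N$, some power $g^j$ normalizes $\langle h\rangle$; then $g^j h g^{-j}=h^{\pm1}$, and raising to a suitable power handles the general $g$: one produces $m>0$ with $g h^m g^{-1}\in\langle h\rangle$ of the form $h^{\pm m}$ by comparing translation lengths (the stable translation length $\ell_d$ is conjugation-invariant by \autoref{RMK: MLS Basics}, so $\ell_d(gh^mg^{-1})=\ell_d(h^m)$, forcing $gh^mg^{-1}=h^{\pm m}$ once we know it lies in $\langle h\rangle$). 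I would assemble these observations carefully, using properness to pass from "finite Hausdorff distance" to "lies in a finite set."

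\textbf{Expected main obstacle.} The delicate point is the \emph{uniformity} of the Hausdorff-distance bound over all $g\in E(h)$ in part (1) — showing $\sup_{g\in E(h)} d_H(\langle h\rangle o, g\langle h\rangle o)<\infty$ without circularity. The resolution is that this supremum is controlled purely by the contraction constant $C$ and the quasigeodesic constants of the single orbit $\langle h\rangle o$, via the fellow-travelling lemmas already established, since $g\langle h\rangle o$ is an isometric image and the defining condition $d_H<\infty$ together with \autoref{LEM: Quasi Convex} pins it into a uniform neighbourhood. Since this is a known lemma cited as \cite[Lemma 2.11]{Yang19}, I would present the proof at the level of detail above and refer to that source for the finer quasigeodesic bookkeeping.
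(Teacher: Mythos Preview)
The paper does not give its own proof of this proposition; it is stated with a citation to \cite[Lemma 2.11]{Yang19} and used as a black box. So there is no ``paper's proof'' to compare against, and your plan is essentially a reconstruction of the cited argument.

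Your outline is correct in substance. For part (1), you rightly isolate the only nontrivial step: the uniform bound $\sup_{g\in E(h)} d_H(\langle h\rangle o,\, g\langle h\rangle o)<\infty$. Your justification via the Morse property of contracting quasigeodesics is the right mechanism; the point is that $g\langle h\rangle o$ is an isometric copy of $\langle h\rangle o$ (hence has the \emph{same} contraction and quasigeodesic constants), and any two $C$-contracting $(\lambda,c)$-quasigeodesics at finite Hausdorff distance are in fact at Hausdorff distance bounded by a function of $(C,\lambda,c)$ alone. This is exactly what the Yang19 reference establishes, and your decision to defer the bookkeeping there is appropriate.

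For part (2), your exposition of $E(h)\subseteq E'(h)$ meanders a bit. The clean line is: by (1), $\langle h\rangle$ has finite index in $E(h)$, so its normal core is $\langle h^m\rangle$ for some $m>0$; then for any $g\in E(h)$ one has $gh^mg^{-1}=h^{km}$ for some $k\in\mathbb Z$, and conjugation-invariance of $\ell_d$ together with $\ell_d(h)>0$ forces $k=\pm 1$. Your detour through ``some power $g^j$ normalizes $\langle h\rangle$'' is unnecessary and does not by itself yield the statement for $g$; the normal-core formulation you mention at the end is what actually does the work.
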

\begin{corollary}\label{COR: Commensurable}
    Suppose $f,h\in G$ are two contracting elements.
    \begin{enumerate}
        \item\label{ESCor1} For any non-torsion element $g\in E(h)$, there exist $n,m\in \mathbb{Z}\setminus\left\{0\right\}$ such that $g^n=h^m$.
        \item\label{ESCor2} If $f$ and $h$ are weakly independent, then $f\notin E(h)$.
    \end{enumerate}
\end{corollary}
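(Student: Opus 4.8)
The statement to prove is \autoref{COR: Commensurable}, which has two parts concerning contracting elements $f,h\in\Gamma$ in a \emph{proper} action with contracting property.

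\textbf{Plan for part (\ref{ESCor1}).} Let $g\in E(h)$ be non-torsion. The plan is to exploit \autoref{LEM: Elementary Subgroup}(\ref{2.30.1}): since $[E(h):\langle h\rangle]=N<+\infty$, the left cosets $\langle h\rangle, g\langle h\rangle, g^2\langle h\rangle,\dots,g^N\langle h\rangle$ cannot all be distinct, so there exist $0\le i<j\le N$ with $g^{i}\langle h\rangle=g^{j}\langle h\rangle$, hence $g^{j-i}\in\langle h\rangle$. Writing $n=j-i>0$, we get $g^n=h^m$ for some $m\in\mathbb Z$. It remains to check $m\ne 0$: if $m=0$ then $g^n=1$, contradicting that $g$ is non-torsion. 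This is entirely routine — a pigeonhole argument on cosets plus the torsion-free hypothesis.

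\textbf{Plan for part (\ref{ESCor2}).} Here $f,h$ are weakly-independent, and I want $f\notin E(h)$. I would argue by contradiction: suppose $f\in E(h)$. Since $f$ is a contracting element it has infinite order (the map $n\mapsto f^no$ is a quasi-isometric embedding by \autoref{DEF: Contracting Element}), so $f$ is non-torsion, and part (\ref{ESCor1}) gives $n,m\in\mathbb Z\setminus\{0\}$ with $f^n=h^m$. Then $\langle f^n\rangle o=\langle h^m\rangle o$, and since $d_H(\langle f\rangle o,\langle f^n\rangle o)<+\infty$ and $d_H(\langle h\rangle o,\langle h^m\rangle o)<+\infty$ (both $\langle f^n\rangle o$ and $\langle h^m\rangle o$ are cobounded in the respective cyclic orbits, as $f,h$ have infinite order and give quasi-isometric embeddings), we conclude $d_H(\langle f\rangle o,\langle h\rangle o)<+\infty$. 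But weak-independence means $\langle f\rangle o$ and $\langle h\rangle o$ have $\mathcal R$-bounded intersection for some $\mathcal R$, i.e. $\diam(N_r(\langle f\rangle o)\cap N_r(\langle h\rangle o))\le\mathcal R(r)$ for all $r$; taking $r=d_H(\langle f\rangle o,\langle h\rangle o)$ forces $\langle f\rangle o$ itself to have diameter $\le\mathcal R(r)$, contradicting that $n\mapsto f^n o$ is a quasi-isometric embedding (so $\langle f\rangle o$ is unbounded). Hence $f\notin E(h)$.

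\textbf{Main obstacle.} The only mildly delicate point is justifying $d_H(\langle f\rangle o,\langle f^n\rangle o)<+\infty$ and similarly for $h$: one needs the quasi-isometric embedding property to see that consecutive points $f^{kn}o$ and $f^{(k+1)n}o$ are within a bounded distance and that every $f^jo$ lies within a bounded distance of some $f^{kn}o$ — this is a short computation with the $(\lambda,c)$-quasi-isometry constants. After that, everything reduces to the definitions of $E(h)$ (\autoref{DEF: Elementary subgroup}), weak-independence, and the bounded-intersection/bounded-projection dictionary (\autoref{PROP: Bounded Intersection Equals Bounded Projection}), so there is no real difficulty; the proof is a few lines in each part.
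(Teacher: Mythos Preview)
Your proposal is correct and follows essentially the same approach as the paper: pigeonhole on the finitely many cosets of $\langle h\rangle$ in $E(h)$ for part~(\ref{ESCor1}), then for part~(\ref{ESCor2}) a contradiction via part~(\ref{ESCor1}) yielding $f^n=h^m$ and hence finite Hausdorff distance between $\langle f\rangle o$ and $\langle h\rangle o$, which is incompatible with bounded intersection. Your write-up is slightly more explicit about why finite Hausdorff distance contradicts bounded intersection, but the argument is the same.
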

\begin{proof}
    (1) According to \autoref{LEM: Elementary Subgroup} (\ref{2.30.1}), there exists a finite collection of elements $a_1,\cdots, a_n\in E(h)$ such that $E(h)=\bigcup_{i=1}^{n}a_i\left\langle h\right\rangle$.
        By Pigeonhole Principle, there exists $0\leq j<k\leq n$ such that $g^j$ and $g^k$ lie in the same coset $a_i\left\langle h\right \rangle$. Therefore, $g^{k-j}\in \left\langle h\right\rangle$. Hence, $g^{k-j}=h^m$. In addition, $g$ is non-torsion, which implies that $m\neq 0$.
        
        (2) If $f\in E(h)$, since $f$ is non-torsion, by (\ref{ESCor1}) there exist $n,m\neq 0$ such that $f^n=h^m$. Therefore, $d_H(\left\langle f\right\rangle o, \left\langle h\right\rangle o)\leq d_H(\left\langle f\right\rangle o,\left\langle f^n\right\rangle o)+d_H(\left\langle h\right\rangle o,\left\langle h^m\right\rangle o)<+\infty$, so $\left\langle f\right\rangle o$ and $\left\langle h\right\rangle o$ do not have bounded intersection, which is a contradiction.
\end{proof}

For a contracting element $h\in G$, we define $$E^{+}(h)=\left\{g\in G\mid gh^mg^{-1}=h^m \text{ for some } m>0\right\}.$$ According to \autoref{LEM: Elementary Subgroup} (\ref{ES3}), $E^{+}(h)$ is a subgroup of $E(h)$ whose index is at most $2$.

As a matter of fact, for any $g\in G$,
\begin{equation}\label{EQU: Conjugation of E}
    E(ghg^{-1})=gE(h)g^{-1};\quad E^+(ghg^{-1})=gE^+(h)g^{-1}.
\end{equation}
\begin{lemma}\label{LEM: E+}
Suppose $f,h\in G$ are two contracting elements.
    \begin{enumerate}
        \item\label{E+1} There are only finitely many torsion elements in $E^+(h)$.
        \item\label{E+2} If $f$ and $h$ are not weakly independent, then $E^+(f)=E^+(h)$.
    \end{enumerate}
\end{lemma}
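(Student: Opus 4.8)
The plan is to treat the two parts independently, using only elementary group theory together with the projection estimates already established. For \textbf{(1)}: a contracting element has infinite order (the orbit map $n\mapsto h^{n}o$ is a quasi-isometric embedding), so $\langle h\rangle\cong\mathbb Z$, and $E^{+}(h)\le E(h)$ with $[E(h):\langle h\rangle]<\infty$ by \autoref{LEM: Elementary Subgroup}, hence $[E^{+}(h):\langle h\rangle]<\infty$. One then passes to the normal core $N:=\bigcap_{g\in E^{+}(h)}g\langle h\rangle g^{-1}$: as $\langle h\rangle$ has finite index it has finitely many conjugates, so $N$ is a finite-index normal subgroup of $E^{+}(h)$ contained in $\langle h\rangle\cong\mathbb Z$; being nontrivial (because $E^{+}(h)$ is infinite) it is $N=\langle h^{d}\rangle$ for some $d\ge 1$. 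The point at which the asymmetric definition of $E^{+}$ (demanding $gh^{m}g^{-1}=h^{m}$ with $m>0$, not $h^{-m}$) is used is the claim that $N$ is \emph{central} in $E^{+}(h)$: for $g\in E^{+}(h)$ normality gives $gh^{d}g^{-1}\in\{h^{d},h^{-d}\}$, while picking $m>0$ with $gh^{m}g^{-1}=h^{m}$ yields $gh^{md}g^{-1}=h^{md}=(gh^{d}g^{-1})^{m}$, and infinite order of $h$ excludes the minus sign. Finally, if $g,g'$ are torsion elements of $E^{+}(h)$ with $gN=g'N$, writing $g^{-1}g'=n\in N$ and expanding $1=(g')^{q}=(gn)^{q}=g^{q}n^{q}$ (where $q=\mathrm{ord}(g')$ and $n$ is central) gives $g^{q}=n^{-q}$, impossible unless $n=1$ since the left side has finite order and the right side, for $n\ne 1$, infinite order. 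Hence the torsion elements inject into $E^{+}(h)/N$, so there are at most $[E^{+}(h):N]<\infty$ of them.

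For \textbf{(2)}: the goal is to produce $n,m\in\mathbb Z\setminus\{0\}$ with $f^{n}=h^{m}$; granting this, the conclusion is formal, because $E^{+}(g_{0}^{\,j})=E^{+}(g_{0})$ for every $g_{0}$ and every $j\ne 0$ (immediate from the definition), so $E^{+}(f)=E^{+}(f^{n})=E^{+}(h^{m})=E^{+}(h)$. To get the relation, note that $f,h$ \emph{not} being weakly-independent means $\langle f\rangle o$ and $\langle h\rangle o$ have no $\mathcal R$-bounded intersection for any gauge $\mathcal R$; since $r\mapsto\diam\!\big(N_{r}(\langle f\rangle o)\cap N_{r}(\langle h\rangle o)\big)$ would otherwise be such a gauge, there must exist $r_{0}$ with $\diam\!\big(N_{r_{0}}(\langle f\rangle o)\cap N_{r_{0}}(\langle h\rangle o)\big)=\infty$. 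Put $R=\max\{r_{0},C\}$ with $C$ a common contraction constant, so both orbits are $R$-contracting and still have an infinite-diameter $R$-overlap. Choose $p,q$ in this overlap with $d(p,q)$ as large as wanted; by \autoref{LEM: Quasi Convex} a geodesic $[p,q]$ lies in $N_{3R}(\langle f\rangle o)\cap N_{3R}(\langle h\rangle o)$. For each $z\in[p,q]$ pick $h^{j(z)}o$ and $f^{k(z)}o$ within $3R$ of $z$; then $h^{-j(z)}f^{k(z)}$ lies in the \emph{finite} set $S=\{g\in\Gamma:d(o,go)\le 6R\}$ (this is where properness is used), and points of $[p,q]$ more than $6R$ apart have distinct indices $j(z)$. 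Taking $d(p,q)$ large enough to place $|S|+1$ points pairwise more than $6R$ apart and applying the Pigeonhole Principle, two of them $z_{i}\ne z_{i'}$ satisfy $h^{-j(z_{i})}f^{k(z_{i})}=h^{-j(z_{i'})}f^{k(z_{i'})}$, i.e.\ $f^{k(z_{i})-k(z_{i'})}=h^{j(z_{i})-j(z_{i'})}$ with both exponents nonzero (the $h$-exponent because the indices differ, the $f$-exponent because $h$ has infinite order).

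The main obstacle should be entirely in part (2): extracting from the mere failure of weak-independence an unbounded overlap \emph{at one fixed radius} --- so that $[p,q]$ can be pushed into bounded neighbourhoods of \emph{both} axes at once via \autoref{LEM: Quasi Convex} --- and then getting the index count right. Part (1) is routine virtually-cyclic bookkeeping once one sees that the ``$+$'' forces the finite-index cyclic subgroup $N$ to be central.
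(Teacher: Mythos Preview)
Your proof is correct and follows essentially the same approach as the paper: in (1) both pass to a finite-index normal cyclic subgroup of $E^{+}(h)$, use the ``$+$'' condition to force centrality, and then show each coset carries at most one torsion element; in (2) both extract a single radius with infinite overlap, apply properness plus pigeonhole to obtain a relation $f^{n}=h^{m}$ with $n,m\neq 0$, and conclude via $E^{+}(g^{j})=E^{+}(g)$. Your detour through \autoref{LEM: Quasi Convex} in (2) is unnecessary but harmless --- the paper simply reads off infinitely many close pairs $(f^{n_k}o,h^{m_k}o)$ directly from the unbounded overlap and pigeonholes the elements $h^{-m_k}f^{n_k}$ into the finite ball $\{g:d(o,go)\le 2r\}$.
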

\begin{proof}
(1) Obviously, $\left\langle h\right\rangle \le E^+(h)$. According to \autoref{LEM: Elementary Subgroup} (\ref{2.30.1}), we have $[E^+(h):\left\langle h\right\rangle]<+\infty$. Since any finite index subgroup contains a finite index normal subgroup, there exists $n>0$ such that $\left\langle h^n\right\rangle \lhd E^+(h)$ and $[E^+(h):\left\langle h^n\right\rangle]<+\infty$.

We claim that each coset of $\left\langle h^n\right\rangle$ in $E^+(h)$ contains at most one torsion element.

Suppose $a\in E^+(h)$ and consider the coset $a\left\langle h^n\right\rangle$. Without loss of generality, suppose that $a$ is a torsion element and $a^k=1$ ($k>0$). By definition of $E^+(h)$, there exists $m>0$ such that $ah^ma^{-1}=h^m$. On the other hand, since $\left\langle h^n\right\rangle $ is a normal subgroup, we have $ah^na^{-1}=h^{tn}$ for some $t\in \mathbb{Z}$. Therefore, $h^{mn}=ah^{mn}a^{-1}=h^{mtn}$, and since $h$ is non-torsion we must have $t=1$. Thus, $ah^n=h^na$.

For any element $ah^{sn}\in a\left\langle h^n\right\rangle$. If $s\neq 0$, then $(ah^{sn})^k=a^kh^{snk}=h^{snk}$ which is a non-torsion element. Hence, $ah^{sn}$ is non-torsion, and our claim is proven.

(2) Since $f$ and $h$ are not weakly independent, there exists $r>0$ such that $\diam(N_r(\left\langle f\right\rangle o)\cap N_r(\left\langle h\right\rangle o))=+\infty$. Therefore, there exists an infinite sequence of distinct integers $\left\{n_k\right\}_{k=0}^{+\infty}$, such that for each $k\ge 0$ there exists $m_k\in \mathbb{Z}$ with $d(f^{n_k}o,h^{m_k}o)\le 2r$.

Therefore, $d(h^{-m_k}f^{n_k}o,o)\le 2r$. Since the action is proper, i.e. $\sharp \left\{g\in G\mid d(go,o)\le 2r\right\}<+\infty$, there exists $k>j\ge 0$ such that $h^{-m_k}f^{n_k}=h^{-m_j}f^{n_j}$. Therefore, $h^{m_j-m_k}=f^{n_j-n_k}$. Since $h,f$ are non-torsion and $m_j-m_k\neq 0$, we have $n_j-n_k\neq 0$.

Thus it is easy to verify that $E^+(f)=E^+(f^{n_j-n_k})=E^+(h^{m_j-m_k})=E^+(h)$. 
\end{proof}

\begin{proposition}\label{PROP: Elliptical Radical}
    If $G$ admits an isometric and proper action on a geodesic metric space with contracting property, then there exists a unique maximal finite normal subgroup $E(G)\lhd G$. In addition, $$E(G)=\bigcap_{f\in G:\text{ contracting}}E(f)=\bigcap_{f\in G:\text{ contracting}}E^+(f).$$
\end{proposition}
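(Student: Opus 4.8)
The plan is to exhibit $N := \bigcap_{f} E^{+}(f)$, the intersection taken over all contracting elements $f \in \Gamma$, as the sought subgroup, and then sandwich $M := \bigcap_{f} E(f)$ between $N$ and a fixed finite group. First I would record that $N$ and $M$ are normal: a conjugate $gfg^{-1}$ of a contracting element is again contracting (the remark following \autoref{LEM: Finite Hausdorff Distance}), and $E^{+}(gfg^{-1}) = gE^{+}(f)g^{-1}$, $E(gfg^{-1}) = gE(f)g^{-1}$ by \autoref{EQU: Conjugation of E}, so $gNg^{-1} = \bigcap_{f} gE^{+}(f)g^{-1} = \bigcap_{f} E^{+}(gfg^{-1}) = N$ since $f \mapsto gfg^{-1}$ permutes the contracting elements; the same computation applies to $M$. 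Clearly $N \le M$ because $E^{+}(f) \le E(f)$ for every $f$.

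Next I would prove the key finiteness statement: for any two weakly-independent contracting elements $g, h$ — which exist by the contracting property, \autoref{DEF: Contracting Property} — the group $E(g) \cap E(h)$ is finite. If $x \in E(g) \cap E(h)$ had infinite order, then by \autoref{COR: Commensurable}(\ref{ESCor1}) there would be nonzero integers with $x^{n_1} = h^{m_1}$ and $x^{n_2} = g^{m_2}$, hence $h^{m_1 n_2} = x^{n_1 n_2} = g^{m_2 n_1}$ with both exponents nonzero; since the orbit of $\langle h^{k} \rangle$ lies at bounded Hausdorff distance from that of $\langle h \rangle$ for any $k \ne 0$ (the orbit of a contracting element being a quasigeodesic line, \autoref{DEF: Contracting Element}), and likewise for $g$, this would force $\langle g \rangle o$ and $\langle h \rangle o$ to be at finite Hausdorff distance, so $N_{r}(\langle g \rangle o) \cap N_{r}(\langle h \rangle o)$ is unbounded for large $r$, contradicting the weak-independence (bounded intersection) of $g$ and $h$. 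Thus $E(g) \cap E(h)$ is a torsion subgroup of $E(g)$, which is virtually infinite cyclic by \autoref{LEM: Elementary Subgroup}(\ref{2.30.1}); a torsion subgroup of a virtually $\mathbb{Z}$ group meets a finite-index normal infinite-cyclic subgroup trivially and is therefore finite. Consequently $N \le M \le E(g) \cap E(h)$, so both $N$ and $M$ are finite normal subgroups of $\Gamma$.

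It remains to show that $N$ absorbs every finite normal subgroup. Given $F \lhd \Gamma$ finite and any contracting $f$, conjugation by $f$ is an automorphism of $F$, and $\mathrm{Aut}(F)$ is finite, so $f^{m}$ centralizes $F$ for some $m \ge 1$; then $x f^{m} x^{-1} = f^{m}$ for every $x \in F$, i.e. $F \subseteq E^{+}(f)$. Intersecting over all contracting $f$ gives $F \subseteq N$. Since $N$ is itself finite and normal, it is the largest finite normal subgroup and in particular the unique maximal one; set $E(\Gamma) := N$. Finally $M$ is finite and normal, hence $M \subseteq N$, and together with $N \subseteq M$ this yields $E(\Gamma) = \bigcap_{f} E^{+}(f) = \bigcap_{f} E(f)$, as claimed.

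I expect the main obstacle to be the finiteness of $E(g) \cap E(h)$: one must correctly combine the commensurability statement \autoref{COR: Commensurable}(\ref{ESCor1}) with the quasigeodesic behaviour of $\langle g \rangle o$ and $\langle h \rangle o$ to derive a contradiction with bounded intersection, and then invoke the elementary structure theory (virtual cyclicity via \autoref{LEM: Elementary Subgroup}, and optionally \autoref{LEM: E+}(\ref{E+1})) to rule out infinitely many torsion elements. Everything else is bookkeeping with the conjugation formulas.
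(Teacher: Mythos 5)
Your argument is correct, and while its overall skeleton (intersect the elementary subgroups, check normality via the conjugation formula, prove finiteness, then absorb every finite normal subgroup into $\bigcap_f E^+(f)$) matches the paper's, the crucial finiteness step is handled by a genuinely different route. The paper first shows $K=\bigcap_f E(f)$ contains no contracting element (via \autoref{COR: New Contracting} and \autoref{COR: Commensurable}), then invokes \autoref{LEM: Unbounded Normal Subgroup Contracting} -- which rests on the Extension Lemma machinery and uses normality of $K$ -- to conclude the orbit of $K$ is bounded, and finally uses properness of the action to get finiteness. You instead trap $M=\bigcap_f E(f)$ inside $E(g)\cap E(h)$ for a single pair of weakly-independent contracting elements and prove directly that this intersection is finite: any infinite-order element would be commensurable with both $g$ and $h$ by \autoref{COR: Commensurable}, forcing $\langle g\rangle o$ and $\langle h\rangle o$ to lie at finite Hausdorff distance and contradicting bounded intersection, so $E(g)\cap E(h)$ is a torsion subgroup of the virtually cyclic group $E(g)$ (\autoref{LEM: Elementary Subgroup}) and hence finite. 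This is more self-contained (no admissible-path lemma, no use of normality at this stage) and yields the extra local fact that $E(g)\cap E(h)$ is finite for any weakly-independent pair, in the spirit of \autoref{LEM: E+}; the paper's route is shorter given that it recycles a lemma already needed for the normal-subgroup rigidity theorem. Your maximality step (conjugation by $f$ gives a finite-order automorphism of the finite normal subgroup $F$, so a power of $f$ centralizes $F$ and $F\subseteq E^+(f)$) is essentially the same finiteness trick as the paper's ``infinitely many distinct conjugates'' argument, just packaged through $\mathrm{Aut}(F)$.
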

\begin{proof}
    If denote $K=\bigcap_{f\in G:\text{ contracting}}E(f)$ and $K^+=\bigcap_{f\in G:\text{ contracting}}E^+(f)$, it is obvious that $K^+\le K\le G$. According to (\ref{EQU: Conjugation of E}), $K\lhd G$ and $K^+\lhd G$ are normal subgroups.

    In addition, $K$ does not contain any contracting element. If not, suppose $h\in K$ is a contracting element.  According to \autoref{COR: New Contracting}, we can then find a contracting element $f\in \Gamma$ weakly independent with $h$. However,  according to \autoref{COR: Commensurable} (\ref{ESCor2}), $h\notin E(f)$, which leads to a contradiction. Hence, \autoref{LEM: Unbounded Normal Subgroup Contracting} shows that an orbit of $K$ must be bounded. Since the action is proper, we conclude that $K$ is a finite subgroup and so is $K^+$.

    In the following, we will show that $H\le K^+$ for any finite normal subgroup $H\lhd G$. Otherwise, suppose there exists $h\in H$ such that $h\notin K^+$. Then there exists a contracting element $f\in G$ such that $h\notin E^+(f)$. By definition, $\left\{f^{-n}hf^{n}\right\}_{n\in \mathbb{N}}$ are pairwise different elements. On the other hand, $f^{-n}hf^{n}\in H$ by normality of $H$, hence contradicting the finiteness of $H$. 

    In particular, $K\le K^+$. Hence $K=K^+$ is the unique maximal finite normal subgroup.
\end{proof}
In the following, we call $E(G)$ the \textit{elliptic radical} of $G$, which is a characteristic subgroup of $G$ and is, in particular, independent of the choice of a proper action of $G$ with contracting property.
\begin{remark}
    When $(X,d)$ is a $\delta$-hyperbolic space, the elliptic radical coincides with the definition of elliptic radical by Osin: $E(G):=\{g\in G: g\xi=\xi,\forall \xi\in \Lambda G\}$; see \cite[Prop. 3.4]{Osi22}. 
\end{remark}
\begin{lemma}\label{LEM: ER1}
    Suppose $f,h\in G$ are two weakly independent contracting elements, and $g\in G\setminus E^+(f)$. Then there exists $N>0$ such that $g\notin E^+(f^nhf^{-n})$ for any $n\ge N$.
\end{lemma}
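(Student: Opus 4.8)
The statement to prove is \autoref{LEM: ER1}: for weakly-independent contracting elements $f,h\in\Gamma$ and $g\in\Gamma\setminus E^+(f)$, there is $N>0$ with $g\notin E^+(f^nhf^{-n})$ for all $n\ge N$. The natural strategy is a proof by contradiction combined with the finiteness of the action. Suppose for infinitely many $n$ (say along a subsequence $n_k\to\infty$) we have $g\in E^+(f^{n_k}hf^{-n_k})$. By \eqref{EQU: Conjugation of E}, $E^+(f^{n_k}hf^{-n_k})=f^{n_k}E^+(h)f^{-n_k}$, so $f^{-n_k}gf^{n_k}\in E^+(h)$ for all $k$. The idea is to show this forces $g$ to actually lie in $E^+(f)$, contradicting the hypothesis.

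First I would make use of the structural description of $E^+$: by \autoref{LEM: Elementary Subgroup}(\ref{ES3}) (and the definition of $E^+(h)$ just above \autoref{LEM: E+}), the element $w_k:=f^{-n_k}gf^{n_k}\in E^+(h)$ satisfies $w_k h^{m_k}w_k^{-1}=h^{m_k}$ for some $m_k>0$; equivalently $w_k$ translates along $\langle h\rangle o$ by bounded amount, so $d(w_k o, \langle h\rangle o)$ is bounded (indeed $\le[E^+(h):\langle h\rangle]\cdot d(o,ho)$ using \autoref{LEM: Elementary Subgroup}(\ref{2.30.1}), via the finite coset decomposition). Translating by $f^{n_k}$, this says $d(gf^{n_k}o,\ f^{n_k}\langle h\rangle o)$ is uniformly bounded in $k$. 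Now $f^{n_k}\langle h\rangle o$ is (a translate of) a contracting axis uniformly fellow-traveled by the $\langle f\rangle$-axis near $f^{n_k}o$ only on a bounded piece — more precisely, since $f$ and $h$ are weakly-independent, $\langle f\rangle o$ and $f^{n_k}\langle h\rangle o = f^{n_k}\langle h\rangle o$ have $\mathcal R$-bounded intersection after translating by $f^{-n_k}$, i.e. $\langle f^{-n_k}\rangle\cdot\langle f\rangle o$... this is getting circular, so instead I would argue directly about $g$'s orbit: the point $go$ is a fixed distance from $o$, and the above gives that $gf^{n_k}o$ stays within bounded distance of $f^{n_k}\langle h\rangle o$. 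Since $f^{n_k}o\in f^{n_k}\langle h\rangle o$, we get $d(go, f^{n_k}\langle h\rangle o)\le d(go,o)+d(o, f^{-n_k}go \text{-part})$... The cleanest route: use properness. Since $d(gf^{n_k}o, f^{n_k}\langle h\rangle o)\le R_0$ uniformly, there exist integers $t_k$ with $d(gf^{n_k}o, f^{n_k}h^{t_k}o)\le R_0+ d(o,ho)$ (picking the nearest power). Apply $f^{-n_k}$: $d(f^{-n_k}gf^{n_k}o, h^{t_k}o)\le R_1$ uniformly. So $h^{-t_k}f^{-n_k}gf^{n_k}o$ lies in the ball of radius $R_1$ about $o$, a finite set by properness. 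Hence along a further subsequence $h^{-t_k}f^{-n_k}gf^{n_k}$ is a constant element $u\in\Gamma$, giving $f^{-n_k}gf^{n_k}=h^{t_k}u$, i.e. $g = f^{n_k}h^{t_k}u f^{-n_k}$ for (at least) two distinct indices $k=k_1<k_2$.

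From the relation $f^{n_{k_1}}h^{t_{k_1}}u f^{-n_{k_1}} = f^{n_{k_2}}h^{t_{k_2}}u f^{-n_{k_2}}$, set $p=n_{k_2}-n_{k_1}>0$ and rearrange to $h^{t_{k_1}}u = f^{p}\,h^{t_{k_2}}u\, f^{-p}$, equivalently $(h^{t_{k_1}}u)(f^p)(h^{t_{k_1}}u)^{-1} = (h^{t_{k_1}}u)(h^{-t_{k_1}}f^{p}h^{t_{k_2}}u f^{-p})$... I would instead extract that $f^p$ commutes with a specific element, or more simply: the equation says $f^{-p}(h^{t_{k_1}}u)f^{p} = h^{t_{k_2}}u$, so $f^{-p}(h^{t_{k_1}}u)^{-1}(h^{t_{k_2}}u)f^{p}\cdot(\text{stuff})$ — the point I want is that $v:=h^{t_{k_1}}u$ satisfies $f^{-p}vf^p = h^{t_{k_2}}u$ and iterating (taking $k_3$ if needed, or just noting both equal $g$ conjugated) we get that $g$ conjugated by powers of $f$ takes only finitely many values, hence $f^{-p'}gf^{p'}=g$ for some $p'>0$ (replace $g$ by $v^{-1}g v$-type conjugates if necessary to land in an $f$-invariant orbit, or simply: the set $\{f^{-n}gf^{n}:n\in\mathbb Z\}$ meets the finite ball infinitely often, so it is finite, so $f$ normalizes... ). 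Concretely: the $R_1$-ball argument applied with varying $n$ shows $\{f^{-n}gf^n o : n\ge N\}$ is bounded, hence finite by properness, hence $f^{-n}gf^n = f^{-n'}gf^{n'}$ for some $n<n'$, giving $f^{n'-n}$ commutes with $g$. Then $g\in E^+(f)$ by \autoref{LEM: Elementary Subgroup}(\ref{ES3}) — contradiction.

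\textbf{The main obstacle.} The delicate point is establishing the uniform bound $d(f^{-n_k}gf^{n_k}o,\ \langle h\rangle o)\le R_1$ independently of $k$. This requires extracting from ``$w_k\in E^+(h)$'' a displacement bound that does not degrade with $k$; the right tool is \autoref{LEM: Elementary Subgroup}(\ref{2.30.1}), which gives a \emph{fixed} finite index $[E^+(h):\langle h\rangle]$, so every $w_k$ is within $[E^+(h):\langle h\rangle]\cdot d(o,ho)$ of the axis $\langle h\rangle o$ uniformly. Once that uniform bound is in hand, properness of the action does the rest: it converts ``infinitely many conjugates land in a bounded region'' into ``a power of $f$ centralizes $g$'', which is exactly membership in $E^+(f)$. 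One should also double-check the orientation bookkeeping in the $E^+$ versus $E$ distinction — but since $g\in E^+(\cdot)$ means the conjugation fixes a positive power of the generator on the nose, the commutation conclusion $f^{n'-n}g = g f^{n'-n}$ lands us squarely in $E^+(f)$ as defined, with no sign ambiguity.
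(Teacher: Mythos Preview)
Your final ``concretely'' step is the gap. You claim that the set $\{f^{-n}gf^{n}o:n\ge N\}$ is bounded, but all you have actually established is that each point $f^{-n_k}gf^{n_k}o$ lies within a fixed distance $R_1$ of the \emph{unbounded} axis $\langle h\rangle o$. That does not force the conjugates $f^{-n}gf^{n}$ to repeat. Your earlier attempt via a constant coset representative $u$ stalls for the same reason: from $w_{k_1}=h^{t_{k_1}}u$ and $w_{k_2}=h^{t_{k_2}}u$ you have no control over $t_{k_1},t_{k_2}$, and the resulting relation $f^{-p}(h^{t_{k_1}}u)f^{p}=h^{t_{k_2}}u$ does not by itself give $f^{p}$ commuting with $g$.

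The paper closes this by splitting on the order of $g$. If $g$ is torsion, then all $w_k=f^{-n_k}gf^{n_k}$ are torsion elements of $E^{+}(h)$, of which there are only finitely many by \autoref{LEM: E+}(\ref{E+1}); since $g\notin E^{+}(f)$ makes the $w_k$ pairwise distinct, only finitely many $n$ can occur. If $g$ is non-torsion, then by \autoref{COR: Commensurable}(\ref{ESCor1}) each $w_k\in E^{+}(h)$ has a nonzero power equal to a nonzero power of $h$; comparing two such relations and matching exponents via stable translation length forces $f^{p}h^{s}f^{-p}=h^{\pm s}$ for some $p,s\neq 0$, so $f\in E(h)$, contradicting weak independence by \autoref{COR: Commensurable}(\ref{ESCor2}). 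Your framework (properness, finite index of $\langle h\rangle$ in $E^{+}(h)$) is not wrong, but it does not replace this dichotomy --- the non-torsion case genuinely needs the commensurability-with-$h$ argument rather than a bounded-orbit pigeonhole.
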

\begin{proof}
    According to (\ref{EQU: Conjugation of E}), $E^+(f^nhf^{-n})=f^nE^+(h)f^{-n}$, so it suffices to show that there are only finitely many $n>0$ such that $f^{-n}gf^{n}\in E^+(h)$.

    There are two cases to be considered.

    \textbf{Case 1:} $g$ is a torsion element.

    Then $f^{-n}gf^{n}$ are all torsion elements. 
    Since $g\notin E^{+}(f)$, by definition, $\left\{f^{-n}gf^n\right\}_{n\in\mathbb{N}}$ are pairwise different elements. 
    The argument then follows from \autoref{LEM: E+} (\ref{E+1}) that $E^+(h)$ contains only finitely many torsion elements.

    \textbf{Case 2:} $g$ is a non-torsion element.

    We claim that there is at most one $n\in \mathbb{Z}$ such that $f^{-n}gf^{n}\in E^+(h)$. 
    
    Suppose on contrary that for $n<m$, $f^{-m}gf^{m},f^{-n}gf^n\in E^+(h)$. Then according to \autoref{COR: Commensurable} (\ref{ESCor1}), there exist $r_1,r_2,s_1,s_2\in \mathbb{Z}\setminus \left\{0\right\}$ such that $f^{-m}g^{r_1}f^{m}=h^{s_1}$ and $f^{-n}g^{r_2}f^{n}=h^{s_2}$. Hence, $\abs{r_1}\ell_d(g)=\ell_d(f^{-m}g^{r_1}f^{m})=\ell_d(h^{s_1})=\abs{s_1}\ell_d(h)$, and similarly, $\abs{r_2}\ell_d(g)=\abs{s_1}\ell_d(h)$. Since $\ell_d(h)\neq 0$, we have $\frac{r_1}{r_2}=\pm\frac{s_1}{s_2}$. Therefore, by taking a power, there exists $r,s\in \mathbb{Z}\setminus \left\{0\right\}$ such that $f^{-m}g^rf^m=h^s$ and $f^{-n}g^rf^{n}=h^{\pm s}$. Thus we have $f^{m-n}h^sf^{n-m}=h^{\pm s}$, which is to say $f\in E(h)$. This is contradictory with \autoref{COR: Commensurable} (\ref{ESCor2}).
\end{proof}
\begin{lemma}\label{LEM: ER2}
    If $P\subseteq G\setminus E(G)$ is a finite subset, then there exists a contracting element $f\in G$ such that $P\cap E^+(f)=\varnothing$.
\end{lemma}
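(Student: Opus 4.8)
The plan is to argue by induction on $|P|$. When $|P|\le 1$ the statement is immediate: a contracting element exists by the contracting property, and for a single $p\notin E(\Gamma)$ the identity $E(\Gamma)=\bigcap_{h\ \mathrm{contracting}}E^{+}(h)$ of \autoref{PROP: Elliptical Radical} directly yields a contracting $f$ with $p\notin E^{+}(f)$. So assume $|P|\ge 2$ and write $P=P'\cup\{p\}$ with $p\notin P'$. By the inductive hypothesis pick a contracting element $f_{0}$ with $E^{+}(f_{0})\cap P'=\varnothing$. If $p\notin E^{+}(f_{0})$ we are done with $f=f_{0}$, so assume $p\in E^{+}(f_{0})$, and use \autoref{PROP: Elliptical Radical} again to pick a contracting $g_{0}$ with $p\notin E^{+}(g_{0})$. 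Then $E^{+}(f_{0})\ne E^{+}(g_{0})$, so by \autoref{LEM: E+} (\ref{E+2}) the elements $f_{0}$ and $g_{0}$ are weakly-independent contracting elements.

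The candidate element will be a conjugate $f:=f_{0}^{\,n}g_{0}f_{0}^{-n}$ of $g_{0}$ for a suitable large $n$; such an $f$ is automatically contracting. To see that $n$ can be chosen so that $f$ witnesses all of $P$ at once I would combine two inputs. First, apply \autoref{LEM: ER1} with witness $f_{0}$ and helper $g_{0}$: for each $q\in P'$ we have $q\in\Gamma\setminus E^{+}(f_{0})$, so there is $N_{q}>0$ with $q\notin E^{+}(f_{0}^{\,n}g_{0}f_{0}^{-n})$ for all $n\ge N_{q}$; set $N=\max_{q\in P'}N_{q}$ (with $N=1$ if $P'=\varnothing$). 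Second, since $p\in E^{+}(f_{0})$, the very definition of $E^{+}$ gives $m_{0}>0$ with $pf_{0}^{\,m_{0}}p^{-1}=f_{0}^{\,m_{0}}$, i.e. $p$ commutes with $f_{0}^{\,m_{0}}$, whence $f_{0}^{-m_{0}k}\,p\,f_{0}^{\,m_{0}k}=p$ for all $k\in\mathbb Z$. Consequently, if $n$ is a multiple of $m_{0}$ then $f_{0}^{-n}pf_{0}^{\,n}=p\notin E^{+}(g_{0})$, hence $p\notin f_{0}^{\,n}E^{+}(g_{0})f_{0}^{-n}=E^{+}(f_{0}^{\,n}g_{0}f_{0}^{-n})$. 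Now choose $n=m_{0}k$ with $k$ large enough that $n\ge N$; the element $f=f_{0}^{\,n}g_{0}f_{0}^{-n}$ then satisfies $q\notin E^{+}(f)$ for every $q\in P'$ (first input, since $n\ge N$) and $p\notin E^{+}(f)$ (second input, since $m_{0}\mid n$), i.e. $P\cap E^{+}(f)=\varnothing$. This closes the induction.

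The one place that needs care — and which I would phrase as the crux of the argument — is the \emph{preservation} step: \autoref{LEM: ER1} only lets us dilate in the direction of an element already known to be a witness, so we are forced to conjugate the new helper $g_{0}$ by high powers of the old witness $f_{0}$, and one must make sure this does not reintroduce any $q\in P'$ into $E^{+}$. That is exactly what \autoref{LEM: ER1} guarantees, and the commutation $pf_{0}^{\,m_{0}}=f_{0}^{\,m_{0}}p$ read off from $p\in E^{+}(f_{0})$ is precisely what keeps $p$ outside $E^{+}(f)$ along the arithmetic progression $n\in m_{0}\mathbb Z$. No further structure theory of the virtually cyclic groups $E^{+}(\cdot)$ is required, so I expect no genuine obstacle beyond assembling these two facts in the right order.
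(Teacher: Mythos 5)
Your proposal is correct and follows essentially the same route as the paper: induction on $\sharp P$, the base case from \autoref{PROP: Elliptical Radical}, weak independence of the two auxiliary elements via \autoref{LEM: E+}, \autoref{LEM: ER1} to keep the old elements of $P$ outside $E^{+}$ of the conjugate, and the commutation $pf_{0}^{m_{0}}=f_{0}^{m_{0}}p$ along the arithmetic progression $n\in m_{0}\mathbb{Z}$ to exclude the new element. The only cosmetic difference is that you phrase the last step via $E^{+}(f_{0}^{n}g_{0}f_{0}^{-n})=f_{0}^{n}E^{+}(g_{0})f_{0}^{-n}$ and $f_{0}^{-n}pf_{0}^{n}=p$, while the paper unwinds the definition of $E^{+}$ directly; these are equivalent.
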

\begin{proof}
    We prove by induction on $\sharp P$. If $\sharp P=1$, the conclusion follows directly from \autoref{PROP: Elliptical Radical}: $E(G)=\bigcap_{f:\text{ contracting}}E^+(f)$.

    Now we suppose $n\ge 1$ and the conclusion holds for $\sharp P=n$. We will prove the conclusion for $\sharp P=n+1$.

    Suppose $P=\left\{p_1,\cdots, p_n,q\right\}$. By induction hypothesis, there exists a contracting element $f$ such that $p_1,\cdots, p_n\notin E^+(f)$. In addition, there exists a contracting element $h$ such that $q\notin E^+(h)$.  

    First of all, if  $q\notin E^+(f)$, then $P\cap E^+(f)=\varnothing$ and there is nothing to do in this case. We thus assume $q\in E^+(f)$ in what follows.

    As $q\notin E^+(h)$, we must have $E^+(f)\neq E^+(h)$, so according to \autoref{LEM: E+} (\ref{E+2}), $f$ and $h$ must be weakly independent. By \autoref{LEM: ER1}, there exists $K>0$ such that $p_1,\cdots, p_n\notin E^+(f^khf^{-k})$ for any $k\geq K$.

    Moreover, since $q\in E^+(f)$, there exists $m>0$ such that $qf^mq^{-1}=f^m$. We will show that $q\notin E^+(f^{tm}hf^{-tm})$ for any $t\in \mathbb{Z}$. Otherwise, there exists $l>0$ such that $qf^{tm}h^lf^{-tm}q^{-1}=f^{tm}h^lf^{-tm}$. Since $qf^mq^{-1}=f^m$, this implies that $qh^lq^{-1}=h^l$, i.e. $q\in E^+(h)$, which is a contradiction.

    Thus, by choosing $t\gg 0$ such that $tm>K$, we have $P\cap E^+(f^{tm}hf^{-tm})=\varnothing$, where $f^{tm}hf^{-tm}$ is a contracting element.
\end{proof}
\begin{corollary}\label{COR: ER3}
    Suppose $P\subseteq G\setminus E(G)$ and $A\subseteq G$ are both finite subsets. Then there exists $g_0\in G$ such that $g_0Pg_0^{-1}\cap A=\varnothing$.
\end{corollary}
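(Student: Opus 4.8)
The plan is to deduce this from \autoref{LEM: ER2} together with an elementary pigeonhole argument, so that essentially no new geometry is needed. Since $P\subseteq\Gamma\setminus E(\Gamma)$ is finite, \autoref{LEM: ER2} produces a contracting element $f\in\Gamma$ with $P\cap E^{+}(f)=\varnothing$. I would then search for the conjugator among the powers of $f$, i.e. take $g_{0}=f^{n}$ for a suitable $n\in\mathbb Z$, and use that conjugation by large powers of $f$ moves every element of $P$ through infinitely many distinct group elements, hence eventually off the finite set $A$.

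The one point to verify is that, for every $p\in P$, the map $\mathbb Z\to\Gamma$ given by $n\mapsto f^{n}pf^{-n}$ is injective. Indeed, if $f^{n}pf^{-n}=f^{m}pf^{-m}$ with $n\neq m$, then $f^{m-n}$ commutes with $p$, hence so does $f^{\abs{m-n}}$, and since $\abs{m-n}>0$ this yields $pf^{\abs{m-n}}p^{-1}=f^{\abs{m-n}}$, i.e. $p\in E^{+}(f)$, contradicting the choice of $f$. Granting this, for each $p\in P$ and each $a\in A$ there is at most one integer $n$ with $f^{n}pf^{-n}=a$, so the set of ``bad'' indices $N:=\{\,n\in\mathbb Z\mid f^{n}pf^{-n}\in A\text{ for some }p\in P\,\}$ satisfies $\sharp N\leq \sharp P\cdot\sharp A<\infty$. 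I would then choose any $n_{0}\in\mathbb Z\setminus N$ and set $g_{0}=f^{n_{0}}$; by construction $f^{n_{0}}pf^{-n_{0}}\notin A$ for every $p\in P$, that is, $g_{0}Pg_{0}^{-1}\cap A=\varnothing$, which is exactly the claim.

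I do not expect any real obstacle here: the substantive work — producing a contracting element whose elementary closure $E^{+}(f)$ avoids a prescribed finite subset of $\Gamma\setminus E(\Gamma)$ — has already been carried out in \autoref{LEM: ER2} (which in turn rests on \autoref{PROP: Elliptical Radical} and \autoref{LEM: ER1}), and the present statement is a short combinatorial consequence. The only thing one must be slightly careful about is the injectivity of $n\mapsto f^{n}pf^{-n}$, which is precisely where the condition $p\notin E^{+}(f)$ is used.
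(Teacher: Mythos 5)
Your proposal is correct and follows essentially the same route as the paper: invoke \autoref{LEM: ER2} to get a contracting $f$ with $P\cap E^{+}(f)=\varnothing$, then use the fact that $f^{n}pf^{-n}=f^{m}pf^{-m}$ with $n\neq m$ forces $p\in E^{+}(f)$ to rule out all but finitely many exponents. The paper phrases this as a pigeonhole argument over the exponents $0\le n\le \sharp P\cdot\sharp A$ rather than via injectivity of $n\mapsto f^{n}pf^{-n}$, but the content is identical.
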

\begin{proof}
    According to \autoref{LEM: ER2}, there exists a contracting element $f\in \Gamma$ such that $P\cap E^+(f)=\varnothing$. Let $N=\sharp P\cdot \sharp A$. We will show that there exists $g_0=f^n$ ($0\le n\le N$) such that $g_0Pg_0^{-1}\cap A=\varnothing$.

    Suppose, by contrary, for each $0\le n\le N$ there exists $p_n\in P$ and $a_n\in A$ such that $f^np_nf^{-n}=a_n$. By Pigeonhole Principle, there exists $0\le n<m\le N$ such that $p_n=p_m$ and $a_n=a_m$. Therefore, $f^np_nf^{-n}=f^mp_nf^{-m}$, implying that $p_n\in E^+(f)$, which is a contradiction. 
\end{proof}
\subsection{Confined subgroups}

We now state the main result of this subsection.
\begin{theorem}\label{Thm: MLS from confined subgps}
   Suppose the two actions $G\curvearrowright X_1$, $G\curvearrowright X_2$ are proper with contracting property, and $H\le G$ is a confined subgroup with a confining subset disjoint with $E(G)$. Then $G$ has marked length spectrum rigidity from $H$.
\end{theorem}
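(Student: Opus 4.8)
The plan is to mirror the proof of \autoref{THM: Normal Subgroup Rigidity}, with the confining condition in place of normality. It suffices to prove $d_1(o_1,go_1)\le d_2(o_2,go_2)+C$ for all $g\in G$ (the reverse inequality follows by interchanging $X_1$ and $X_2$), since by left-invariance this says the orbit map $\rho\colon Go_1\to Go_2$ is a rough isometry. For $g$ with $d_1(o_1,go_1)$ large the bound will come from an element $h_g\in H$ which is a \emph{contracting element of $X_1$}, of the shape $h_g=(c_a^{-1}p_ac_a)\,(c_b^{-1}p_bc_b)^{\pm1}$, where $c_a,c_b\in G$ are words built from $g$ and a fixed finite ``freedom set'' $F$ of pairwise weakly-independent contracting elements of $X_1$, and $p_a,p_b\in P$ are produced by the confining condition (so $c_a^{-1}p_ac_a,\,c_b^{-1}p_bc_b\in H$, hence $h_g\in H$). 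The aim is that, with constants independent of $g$,
\[ d_1(o_1,h_go_1)=\lambda\,d_1(o_1,go_1)+O(1),\qquad \ell_{d_1}(h_g)\ge d_1(o_1,h_go_1)-O(1), \]
where $\lambda$, the number of $g^{\pm1}$-blocks surviving in the straightened word of $h_g$, is a fixed integer. Granting this, $\ell_{d_1}(h_g)=\ell_{d_2}(h_g)$ on $H$ together with the trivial bounds $\ell_{d_2}(h_g)\le d_2(o_2,h_go_2)\le \lambda\, d_2(o_2,go_2)+O(1)$ (the last step the triangle inequality applied to the word of $h_g$, which uses nothing about the geometry of $X_2$) give $\lambda\,d_1(o_1,go_1)\le\lambda\,d_2(o_2,go_2)+O(1)$; dividing by $\lambda$ handles $g$ with $d_1(o_1,go_1)$ large, while for $d_1(o_1,go_1)\le D$ the inequality $d_1(o_1,go_1)\le d_2(o_2,go_2)+D$ is trivial. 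This is precisely the bookkeeping of the normal case.

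The construction of $h_g$ is the analogue of \autoref{LEM: Normal Subgroup Perturbation}: one feeds the cyclic word of $h_g$ into the Extension Lemma (\autoref{LEM: Modified Extension Lemma}) and chooses the members of $F$ so that the resulting path is a special $(D,\tau)$-admissible path with respect to $\mathbb{X}=\{u\langle f_k\rangle o_1: u\in G,\ f_k\in F\}$; then \autoref{PROP: Admissible Path Contracting} and \autoref{LEM: Finite Hausdorff Distance} make $h_g$ a contracting element of $X_1$, and \autoref{PROP: Fellow Travel} yields the two displayed estimates just as in the proofs of \autoref{COR: Perturbation Length} and \autoref{LEM: Normal Subgroup Perturbation}. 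The first new subtlety is a circularity: the confining element $p_a$ attached to a conjugator $c_a$ depends on the choice of $c_a$, whereas the Extension Lemma needs the ``connecting'' data fixed before the axis is selected. This is defused by taking $|F|>|P|$: by \autoref{LEM: Finite Projection}, for any fixed point $x\in X_1$ at most one axis $\langle f_k\rangle o_1$ can have $\mathrm{d}^{\pi}_{\langle f_k\rangle o_1}(o_1,x)>\tau$, so an index $k$ can be chosen for which the required bounded-projection inequality holds for \emph{all} $p\in P$ simultaneously, in particular for the $g$-dependent $p_a$; a symmetric pigeonhole handles $c_b$.

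The hypothesis $P\cap E(G)=\varnothing$ enters, and the main difficulty lies, in forcing the ``cores'' $c_a^{-1}p_ac_a$ not to collapse, i.e. in making the relevant connecting segments genuinely long; otherwise the path fails \hyperref[LL2']{\textbf{(LL2')}} and $h_g$ need not leave any bounded set --- indeed for $H=E(G)$, where the only admissible $P$ lies inside $E(G)$, every such product is elliptic, consistent with the theorem being false in that case. To guarantee non-collapse one uses the elliptic-radical lemmas \autoref{LEM: ER1}, \autoref{LEM: ER2}, \autoref{COR: ER3}: since $P\cap E(G)=\varnothing$ there is a contracting element $f$ of $X_1$ with $P\cap E(f)=\varnothing$, and building the conjugators $c_a,c_b$ so that a high power $f^{n}$ sits inside them (beside the $F$-member chosen above) pushes each conjugate $f^{-n}pf^{n}$ of $p\in P$ towards a limit point not stabilised by $p$, which forces $d_1(o_1,c_a^{-1}p_ac_a\,o_1)>D$ for $n$ large, uniformly over $p\in P$; the connectors are then long and the path is admissible. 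With $h_g$ in hand the numerics reduce verbatim to those of \autoref{THM: Normal Subgroup Rigidity}. The hard part is exactly this three-way interplay --- the Extension Lemma, the pigeonhole over the finite set $P$, and the elliptic-radical lemmas keeping the connectors long.
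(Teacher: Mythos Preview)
Your proposal is essentially correct and follows the same strategy as the paper. Both reduce to building, for each long $g$, an element $h_g\in H$ via the confining property so that an admissible-path/fellow-travel argument gives $d_1(o_1,h_go_1)\ge 2d_1(o_1,go_1)-O(1)$ and $\ell_{d_1}(h_g)\ge d_1(o_1,h_go_1)-O(1)$, and then conclude exactly as in the normal-subgroup case. The paper's $h_g$ is explicitly $gfqf^{-1}g^{-1}\cdot fq'f^{-1}$ with $q,q'$ drawn from a confining set and $f$ chosen by pigeonhole from $N=2|P|+3$ pairwise weakly-independent contracting elements so that bounded projection holds simultaneously for $g,g^{-1}$ and for every element of the confining set and its inverse; this is precisely your shape $(c_a^{-1}p_ac_a)(c_b^{-1}p_bc_b)$ with $c_a=f^{-1}g^{-1}$, $c_b=f^{-1}$ and $\lambda=2$.

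The one substantive difference is how the ``non-collapse'' problem is handled. You propose to keep the original $P$ and insert a high power $f^n$ (for a contracting $f$ with $P\cap E^+(f)=\varnothing$, supplied by \autoref{LEM: ER2}) into the conjugators so that the $p$-segments become long. The paper instead invokes \autoref{COR: ER3} once at the outset to replace $P$ by a conjugate $Q=g_0Pg_0^{-1}$ (still confining, since $gHg^{-1}\cap Q=g_0((g_0^{-1}g)H(g_0^{-1}g)^{-1}\cap P)g_0^{-1}$) chosen so that every $q\in Q$ already satisfies $d_1(o_1,qo_1)>D$; after this substitution the construction runs verbatim as in the normal-subgroup proof with $q\in Q$ as the long connector, and no extra $f^n$ needs to be threaded through the admissible-path bookkeeping. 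The two moves are equivalent in content (indeed the proof of \autoref{COR: ER3} takes $g_0=f^n$), but the paper's packaging is cleaner. Your pigeonhole count $|F|>|P|$ is slightly off---one needs bounded projection at both $ao_1$ and $a^{-1}o_1$ for each $a\in Q\cup\{g\}$, whence the paper's $N=2|P|+3$---but this is harmless.
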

\begin{proof}
    By \autoref{DEF: Confined Subgroup}, there exists a finite set $P\subseteq G\setminus E(G)$ such that $gHg^{-1}\cap P\neq \varnothing $ for all $g\in G$. Now we suppose that
    \begin{align}\label{THMConfinedEQU1}
        \ell_{d_1}(h)=\ell_{d_2}(h),\quad \forall h\in H.
    \end{align}

    Let $N=2\sharp P+3$. Consider the action $G\curvearrowright X_1$ and pick a collection of $N$ contracting elements $\left\{h_1,\cdots,h_N\right\}$ pairwise weakly independent in $X_1$. For each triple $1\leq i<j<k\leq N$, let $\tau_{i,j,k}$ be decided by Extension Lemma (\autoref{LEM: Modified Extension Lemma}) for the three elements $h_i,h_j,h_k$ and let $\tau=\max_{1\leq i<j<k\leq N} \tau_{i,j,k}$. Let $C$ be a contraction constant for the contracting system $\mathbb{Y}=\left\{\left\langle h_i\right\rangle o_1\mid 1\leq i\leq N\right\}$, and let $D=D(\tau,C)$, $\epsilon=\epsilon(\tau,C)$ be decided by \autoref{PROP: Fellow Travel}. Choose $f_i\in\left\langle h_i\right\rangle$ so that $d_1(o_1,f_io_1)>D$ for each $1\le i\le N$.

    Let $A=\left\{g\in G\mid d_1(o_1,go_1)\leq D\right\}$, which is a finite subset of $G$. According to \autoref{COR: ER3}, there exists an element $g_0\in G$ such that $g_0Pg_0^{-1}\cap A=\varnothing$. We denote $Q=g_0Pg_0^{-1}$. Then $d_1(o_1,qo_1)>D$ for all $q\in Q$, and $gHg^{-1}\cap Q=g_0(g_0^{-1}gHg^{-1}g_0\cap P)g_0^{-1}\neq \varnothing$ for all $g\in G$.

    We denote $M=\max_{1\le i\le N} d_2(o_2,f_io_2)$ and $K=\max_{q\in Q} d_2(o_2,qo_2)$.

    Now, we will show that there exists $c\geq 0$ such that $d_1(o_1,go_1)\leq d_2(o_2,go_2)+c$ for all $g\in G$. We consider two cases.

    \textbf{Case 1:} $d_1(o_1,go_1)>D$.

    For each element $a\in Q\cup\left\{g\right\}$, according to \autoref{LEM: Modified Extension Lemma} (\ref{ext1}) and by Pigeonhole Principle, there exists at least $N-2$ indices $i\in \left\{1,\cdots, N\right\}$ such that $\max\left\{\d{\left\langle h_i\right\rangle o_1}(o_1,ao_1),\d{\left\langle h_i\right\rangle o_1}(o_1,a^{-1}o_1)\right\}\le \tau$. However, $N=2\sharp Q+3\geq 2\sharp (Q\cup\left\{g\right\})+1$. By Pigeonhole Principle, there exists an index $1\le j\le N$ such that $\max\left\{\d{\left\langle h_j\right\rangle o_1}(o_1,ao_1),\d{\left\langle h_j\right\rangle o_1}(o_1,a^{-1}o_1)\right\}\le \tau$ holds simultaneously for all $a\in Q\cup\left\{g\right\}$.

    Let $f=f_j$. Since $(gf)^{-1}H(gf)\cap Q\neq \varnothing$, we can find $q\in Q$ such that $gfqf^{-1}g^{-1}\in H$. Similarly, there exists $q'\in Q$ such that $fq'f^{-1}\in H$. Consider the periodic path $\gamma$ labelled by $(\cdots, g,f,q,f^{-1},g^{-1},f,q',f^{-1}, g,f,q,f^{-1},g^{-1},f,q',f^{-1},\cdots)$. By construction, $\gamma$ is a special $(D,\tau)$-admissible path  with respect to  $\mathbb{X}=\left\{u\left\langle h_j\right\rangle o_1\mid u\in G\right\}$. According to \autoref{PROP: Fellow Travel}, $\gamma$ has $\epsilon$-fellow travel property. We denote $h=gfqf^{-1}g^{-1}fq'f^{-1}$, then $h\in H$. Let $\alpha=[o_1,ho_1]$ be a geodesic segment. Then there exists three linearly ordered points $x,y,z\in \alpha$ such that $d_1(go_1,x)\leq\epsilon$, $d_1(gfqf^{-1}o_1,y)\leq \epsilon$ and $d_1(gfqf^{-1}g^{-1}o_1,z)\leq \epsilon$. Therefore, 
    \begin{align}\label{THMConfinedEQU2}
        2d_1(o_1,go_1)\leq d_1(o_1,x)+\epsilon + d_1(y,z)+2\epsilon\leq d_1(o_1,ho_1)+2\epsilon.
    \end{align}
    For $n>0$, let $\beta=[o_1,h^no_1]$ be a geodesic segment. Then there exists $w\in \beta$ such that $d_1(h^{n-1}o_1,w)\leq \epsilon$. Thus, $d_1(o_1,h^no_1)=d_1(o_1,w)+d_1(w,h^no_1)\geq d_1(o_1,h^{n-1}o_1)+d_1(h^{n-1}o_1,h^{n}o_1)-2\epsilon$. Hence,
    \begin{align}\label{THMConfinedEQU3}
        \ell_{d_1}(h)\geq d_1(o_1,ho_1)-2\epsilon.
    \end{align}

    Therefore, 
    \begin{align*}
        2d_1(o_1,go_1)\leq & d_1(o_1,ho_1)+2\epsilon&(\ref{THMConfinedEQU2})\\
        \leq &\ell_{d_1}(h)+4\epsilon &(\ref{THMConfinedEQU3})\\
        =&\ell_{d_2}(h)+4\epsilon &(\ref{THMConfinedEQU1})\\
        \leq& d_2(o_2,ho_2)+4\epsilon & (\text{\autoref{RMK: MLS Basics}\,(\ref{MLSBasics2})})\\
        \leq& 2d_2(o_2,go_2)+4M+2K+4\epsilon & (\text{$\Delta$-Ineq.})
    \end{align*}

    \textbf{Case 2:} $d_1(o_1,go_1)\le D$.

    Then obviously, $d_1(o_1,go_1)\leq d_2(o_2,go_2)+D$.

    Combining these two cases, we let $c=\max\left\{ 2M+K+{2}\epsilon, D\right\}$, and $d_1(o_1,go_1)\leq d_2(o_2,go_2)+c$ holds for all $g\in G$.

    Similarly, by switching $(X_1,d_1)$ and $(X_2,d_2)$, there exists $c'\ge 0$ such that $d_2(o_2,go_2)\leq d_1(o_1,go_1)+c'$ for all $g\in G$.

    Therefore, $\sup_{g,h\in G}\abs{d_1(go_1,ho_1)-d_2(go_2,ho_2)}\le \max\left\{c,c'\right\}<+\infty$.
\end{proof}

\begin{remark}
    Noticing that $E(G)$ is invariant under conjugation, we derive that if $H\le G$ is a confined subgroup and $H\cap E(G)=\left\{1\right\}$, then the minimal confining subset of $H$ must be disjoint with $E(G)$.
\end{remark}

\section{MLS rigidity from geometrically dense subgroups}\label{Sec: MLSR from GD subgroups}

We continue to investigate MLS rigidity from a larger class of subgroups, called geometrically dense subgroups, in a sense of Osin. Roughly speaking, such a subgroup have a maximal limit set as the ambient group, on the horofunction boundary of the metric space. Unbounded normal subgroups are geometrically dense, and moreover, it is proved in \cite[Lemma 5.10]{CGYZ} that confined subgroups (with confining subset disjoint with $E(G)$) are geometrically dense in a proper action.  

\subsection{Horofunction boundary}

\begin{definition}
    Let $(X,d)$ be a proper geodesic metric space with a basepoint $o\in X$. For each $y\in X$, we define a Lipschitz map $b^o_y: X\to \mathbb R$ by $$b^o_y(x)=d(x,y)-d(o,y).$$

    Endowed with the compact-open topology,   the closure of $\{b^o_y: y\in X\}$ gives a compactification of $X$ by Arzela-Ascoli Lemma. The complement $\overline{X}\setminus X$, denoted by $\partial_h X$, is called the \textit{horofunction boundary}. 
    
    Two horofunctions are said to be \textit{equivalent} if they differ by a bounded amount. This defines a \textit{finite difference relation} on $\partial_h X$, where we use $[\ksi]$ to denote the equivalence class under finite difference relation of $\ksi\in \horo{X}$, and the quotient $[\partial_h X]=\horo{X}/\sim $ is usually called \textit{reduced} horofunction boundary. Note that the quotient topology $[\partial_h X]$ might not be Hausdorff in general. 
\end{definition}

The topological type of horofunction boundary is independent of the choice of basepoints, since if
$d(x,y_n)-d(o,y_n)$ converges as $n\to \infty$, then so does $d(x,y_n)-d(o',y_n)$. Moreover, the corresponding
horofunctions differ by an additive amount:
$$b_{\xi}^o(\cdot)-b_{\xi}^{o'}(\cdot)=b_{\xi}^o(o'),$$
so we will omit the upper index $o$ in $b_{\xi}^o$ when it dose not matter.
In addition, for any $g\in \mathrm{Isom}(X)$, we can naturally extend continuously to an action on $\horo{X}$: $\forall x\in X, g\cdot b_{\xi}^o(y):=b_{g\xi}^o(y)$, and descend to an action on $\hor{X}$.

In \cite{Yang22}, the horofunction boundary with finite difference relation  is proved  to have the \textit{convergence property}.  Its general definition is not relevant here; in what follows, we only collect the relevant properties about horofunction boundary to be used later on.

We say that a sequence of points $(x_n)$ in $X$ \textit{accumulates} at $[\xi]\in\hor{X}$, if any accumulation point of $(x_n)$ in $\overline{X}$ lies in $[\xi]$. We denote this by $[\xi]=[\lim_{n\to+\infty}x_n]$.

\begin{proposition}[{\cite[Corollary 5.2]{Yang22}} ]\label{PROP: Endpoint of Contracting Quasi-geodesic}
    Let $(X,d)$ be a proper geodesic metric space, and suppose that $\gamma:\mathbb{R}\to X$ is a $C$-contracting quasi-geodesic. Then $\gamma(t)$ accumulates at $\hor{X}$ as $t\to +\infty$ and $t\to -\infty$ respectively. In particular, $[\gamma^+]:=[\lim_{t\to+\infty}\gamma(t)]$ and $[\gamma^-]:=[\lim_{t\to -\infty}\gamma(t)]$, where $[\gamma^+], [\gamma^-]\in \hor{X}$.
\end{proposition}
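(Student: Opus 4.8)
The statement to prove is \autoref{PROP: Endpoint of Contracting Quasi-geodesic}: if $\gamma:\mathbb R\to X$ is a $C$-contracting quasi-geodesic in a proper geodesic metric space, then $\gamma(t)$ accumulates at a single point of the reduced horofunction boundary $[\partial_h X]$ as $t\to+\infty$ (and symmetrically as $t\to-\infty$). Since this is quoted as \cite[Corollary 5.2]{Yang22}, I would reconstruct the argument rather than invent a new one. The first step is to reduce to showing that \emph{all} accumulation points of $\{\gamma(t):t\ge 0\}$ in $\overline X$ differ from one another by a bounded amount; properness of $X$ guarantees (via Arzela–Ascoli, as in the definition of $\partial_h X$) that accumulation points exist, and one checks they cannot lie in $X$ itself because $\gamma$ is a quasi-geodesic ray, hence $d(o,\gamma(t))\to\infty$.

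The core estimate is the following: for $s,t$ both large, the horofunction-type quantity $b^o_{\gamma(s)}(x)$ and $b^o_{\gamma(t)}(x)$ agree up to a constant depending only on $C$ and the quasi-geodesic constants of $\gamma$, uniformly in $x$ in any fixed bounded set. The mechanism is projection onto the contracting set $Y:=\gamma(\mathbb R)$ (or onto $\gamma([0,\infty))$). Given $x\in X$, let $a\in\pi_Y(x)$; the key point is that for $t$ large enough that $\gamma(t)$ is ``past'' $a$ along $\gamma$, one has $d(x,\gamma(t))= d(x,a)+d(a,\gamma(t))\pm O(C)$ — this is exactly the content of \autoref{LEM: Geodesic Along Projection} and \autoref{LEM: Thin Quadrilateral}, applied with $o$ replaced by $\gamma(t)$, together with \autoref{LEM: 1 Lipschitz} to control the projection diameters. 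Subtracting $d(o,\gamma(t))$ (and using the same decomposition with $x=o$, whose projection $a_0\in\pi_Y(o)$ is a bounded-diameter set near $\gamma(0)$), the terms $d(a,\gamma(t))-d(a_0,\gamma(t))$ telescope along $\gamma$ up to $O(C)$ because $\gamma$ is a quasi-geodesic, and the dependence on $t$ disappears in the limit. Hence $b^o_{\gamma(s)}$ and $b^o_{\gamma(t)}$ are within a constant of each other for all large $s,t$, which forces any two subsequential limits to be equivalent horofunctions, i.e. to represent the same class in $[\partial_h X]$. The argument for $t\to-\infty$ is identical after reversing the orientation of $\gamma$.

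The step I expect to be the main obstacle is making the ``projection decomposition is stable as the far endpoint $\gamma(t)$ recedes'' precise and uniform in $x$: one must ensure that the additive error in $d(x,\gamma(t))=d(x,a)+d(a,\gamma(t))\pm\text{const}$ genuinely depends only on $C$ and the quasi-geodesic data, not on $x$ or $t$, and that $a\in\pi_Y(x)$ can be taken to lie on the ray $\gamma([0,\infty))$ rather than the full bi-infinite $\gamma$ (a ``which side'' issue resolved by \autoref{LEM: Fellow Travel Property}-type reasoning, or by replacing $Y$ with a ray and invoking \autoref{LEM: Finite Hausdorff Distance} to keep contraction). Once that uniform quadrilateral estimate is in hand, the passage to the limit and the conclusion that $\gamma(t)$ accumulates at a well-defined point $[\gamma^+]\in[\partial_h X]$ is routine: given two convergent subsequences $\gamma(s_n)\to\xi_1$, $\gamma(t_n)\to\xi_2$ in $\overline X$, the uniform bound gives $\sup_x|b^o_{\xi_1}(x)-b^o_{\xi_2}(x)|<\infty$, so $[\xi_1]=[\xi_2]$, and both lie in $\partial_h X$ by the ray property. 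I would also remark that the same estimate shows $[\gamma^+]\ne[\gamma^-]$ when needed, though that is not part of the present statement.
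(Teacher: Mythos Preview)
The paper does not prove this statement: it is quoted verbatim as \cite[Corollary 5.2]{Yang22} and used as a black box, so there is no proof in the paper to compare against. Your reconstruction is the standard one and is essentially how the cited result is obtained: the contracting property forces $d(x,\gamma(t))\sim d(x,\pi_\gamma(x))+d(\pi_\gamma(x),\gamma(t))$ uniformly once $\gamma(t)$ is past $\pi_\gamma(x)$ (via \autoref{LEM: Geodesic Along Projection}), and subtracting the same decomposition for $o$ shows $b^o_{\gamma(t)}$ stabilises up to an additive constant depending only on $C$ and the quasi-geodesic data, whence any two subsequential limits in $\partial_h X$ are finite-difference equivalent.
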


\begin{lemma}\label{LEM: Projection to Infinity}
    Under the assumption of \autoref{PROP: Endpoint of Contracting Quasi-geodesic}, if there is a sequence of points $(x_n)$ in $X$ such that $[\gamma^+]=[\lim_{n\to+\infty}x_n]$, then $\pi_{\gamma}(x_n)$ accumulates at $[\gamma^+]$. In particular, $\gamma^{-1}(\pi_{\gamma}(x_n))$ tends to $+\infty$.
\end{lemma}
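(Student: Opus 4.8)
The statement is essentially a compactness/contraction argument: we have a contracting quasi-geodesic $\gamma$, a sequence $(x_n)$ accumulating at the reduced boundary class $[\gamma^+]$, and we want to show the nearest-point projections $\pi_\gamma(x_n)$ run off to $+\infty$ along $\gamma$. The plan is to argue by contradiction. Suppose $\gamma^{-1}(\pi_\gamma(x_n))$ does not tend to $+\infty$; then after passing to a subsequence we may assume $\pi_\gamma(x_n)$ stays in a bounded sub-segment $\gamma([-T,T])$ of $\gamma$, say each $\pi_\gamma(x_n)$ is within distance (at most the $C$-contraction ambiguity) of a point $p_n=\gamma(t_n)$ with $|t_n|\le T$, and after a further subsequence $p_n\to p=\gamma(t_\infty)\in X$ with $|t_\infty|\le T$.

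The key geometric input is \autoref{LEM: Geodesic Along Projection} (or \autoref{LEM: Thin Quadrilateral}): since $\gamma$ is $C$-contracting, for any basepoint $o_\gamma=\gamma(0)\in\gamma$ and any $a_n\in\pi_\gamma(x_n)$ we have $d(o_\gamma,x_n)\ge d(o_\gamma,a_n)+d(a_n,x_n)-4C$, hence $d(a_n,x_n)\le d(o_\gamma,x_n)-d(o_\gamma,a_n)+4C$. More importantly, running the contraction estimate from a point $q=\gamma(s)$ far out on $\gamma$ (with $s\gg T$, so $q$ lies "beyond" all the $p_n$), \autoref{LEM: Thin Quadrilateral} applied to $x_n$ and $q$ with their projections onto $\gamma$ gives (once $s$ is large enough that $d(p_n,q)>C$)
\[
d(x_n,q)\ \ge\ d(x_n,a_n)+d(a_n,q)+\cdots\ -\ 8C\ \ge\ d(a_n,q)-8C\ \ge\ (s-T)/\lambda - c - 8C,
\]
using the quasi-geodesic bound on $\gamma$. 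This says $x_n$ is \emph{uniformly far} from every fixed point $q$ out along $\gamma^+$. But the hypothesis $[\gamma^+]=[\lim_n x_n]$ means: any accumulation point $\xi$ of $(x_n)$ in $\overline X$ satisfies $[\xi]=[\gamma^+]$, and $[\gamma^+]$ is (by \autoref{PROP: Endpoint of Contracting Quasi-geodesic}) the finite-difference class of the horofunction limit of $\gamma(s)$ as $s\to+\infty$. Unwinding the definition of the horofunction boundary and the finite-difference relation, one extracts that for a subsequence, $b_{x_n}(\gamma(s))\to$ a horofunction that agrees up to a bounded error with $b_{\gamma^+}$; concretely this forces $d(x_n,\gamma(s)) - d(o,\gamma(s))$ to become, for $n$ large and then $s$ large, close to $b_{\gamma^+}(\gamma(s)) = -s + O(1)$, i.e. the Busemann-type cancellation $d(x_n,\gamma(s))\approx d(o,x_n)-s$ for $s$ in a suitable range — which is incompatible with the lower bound $d(x_n,q)\ge (s-T)/\lambda-c-8C$ being achieved while simultaneously $\pi_\gamma(x_n)$ stays near $\gamma(0)$. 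The contradiction is cleanest if one instead directly estimates the Gromov-product-flavored quantity $d(o,x_n)+d(o,\gamma(s))-d(x_n,\gamma(s))$: projection onto $\gamma$ shows this is $\sim 2\,\gamma^{-1}(\pi_\gamma(x_n)) + O(C)$ (bounded, by our contradiction hypothesis), whereas $[\lim x_n]=[\gamma^+]$ forces it to grow without bound as $s,n\to\infty$.

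So the concrete steps are: (1) set up the contradiction hypothesis and extract a bounded-projection subsequence; (2) record the standard projection estimate (combining \autoref{LEM: Geodesic Along Projection} and \autoref{LEM: Thin Quadrilateral}) that $d(o,x)+d(o,y) - d(x,y) \asymp d(o,\pi_\gamma(x)) + d(o,\pi_\gamma(y)) + d(\pi_\gamma(x),\pi_\gamma(y)) \pm O(C)$ for $x,y$ with $o\in\gamma$, whenever the projections are far apart; (3) take $y=\gamma(s)$ with $s\to+\infty$ so the right side is bounded above (projections of $x_n$ stay near $\gamma(0)$, projection of $\gamma(s)$ is $\gamma(s)$ itself but the cross term $d(\pi_\gamma(x_n),\gamma(s))\approx s/\lambda$ cancels against $d(o,\gamma(s))\approx s$... careful: this needs the right combination), concluding that along the sequence the "horofunction value" $b_{x_n}(\gamma(s)) = d(x_n,\gamma(s)) - d(o,\gamma(s))$ stays bounded below by a fixed constant for all large $s$; (4) observe that $b_{\gamma^+}(\gamma(s)) = -s + O(1)\to -\infty$, and that $[\lim x_n]=[\gamma^+]$ forces $b_{x_n}$ to subconverge to something within bounded distance of $b_{\gamma^+}$, contradicting (3). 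Finally, the "in particular" clause is immediate: accumulating at $[\gamma^+]$ on the contracting quasi-geodesic $\gamma$ means (again by the defining property of $[\gamma^+]$ via \autoref{PROP: Endpoint of Contracting Quasi-geodesic}) that $\pi_\gamma(x_n)$, which lies on $\gamma$, can only accumulate at $[\gamma^+]$, i.e. $\gamma^{-1}(\pi_\gamma(x_n))\to+\infty$ (it cannot stay bounded by the above, and it cannot go to $-\infty$ since that would put it near $[\gamma^-]\ne[\gamma^+]$).

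\textbf{Main obstacle.} The delicate point is step (3)–(4): correctly translating "$(x_n)$ accumulates at the finite-difference class $[\gamma^+]$" into a usable metric inequality. The reduced horofunction boundary identifies horofunctions only up to bounded error, and $\gamma$ is only a \emph{quasi}-geodesic, so the Busemann function $b_{\gamma^+}$ is itself only a coarse object; I need to be careful that the error terms ($C$ from contraction, $\lambda, c$ from the quasi-geodesic constants, and the ambient bound in the finite-difference relation) are all absorbed into a single additive constant and do not interact with the quantity $s$ that I send to infinity. In other words, the crux is showing that boundedness of the projections $\gamma^{-1}(\pi_\gamma(x_n))$ implies boundedness of a Gromov-product quantity $(x_n \mid \gamma(s))_o$ uniformly in $s$, which then contradicts the definition of accumulating at $[\gamma^+]$; getting the uniformity in $s$ right, using only the contracting property and the quasi-geodesic bounds stated in the excerpt (Lemmas \ref{LEM: Geodesic Along Projection} and \ref{LEM: Thin Quadrilateral}, and \autoref{PROP: Endpoint of Contracting Quasi-geodesic}), is the heart of the argument.
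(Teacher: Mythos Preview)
Your approach is essentially the same as the paper's: argue by contradiction, extract a subsequence whose projection parameters are bounded above, use \autoref{LEM: Geodesic Along Projection} to show that $b^o_{x_n}(\gamma(s))=d(x_n,\gamma(s))-d(o,x_n)$ is bounded below by a fixed positive constant for suitably large $s$, and then contradict this with the fact that any horofunction in $[\gamma^+]$ takes arbitrarily negative values along $\gamma(s)$ as $s\to+\infty$. The paper carries out exactly this computation, fixing a single test point $z=\gamma(s)$ with $s$ chosen large enough relative to $T$, $C$, and the finite-difference bound $C_0=\|b^o_\eta-b^o_\xi\|_\infty$; one minor point worth noting is that the negation of ``$\gamma^{-1}(\pi_\gamma(x_n))\to+\infty$'' only gives a subsequence bounded \emph{above}, not in $[-T,T]$, but (as you observe at the end) this is all the argument actually uses.
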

\begin{proof}
    Fix a basepoint $o\in \gamma$, and suppose $\gamma$ is a $(\lambda,c)$-quasi-geodesic.
    
    Suppose on the contrary that $\pi_{\gamma}(x_n)$ does not accumulate at $[\gamma^+]$. Then there exists a sub-sequence of $(x_n)$, denoted by $(x_n')$, such that $\gamma^{-1}(\pi_{\gamma}(x_n'))$ has an upper-bound $T$. Thus, for each $r>0$ we can choose a sufficiently large $K(r)>0$, such that for any $t\ge K(r)$ and any $x_n'$, $d(\gamma{t},\pi_{\gamma}(x_n'))>r$.
    
    Furthermore, we can find a sub-sub-sequence of $(x_n'')$ such that $b^o_{x_n''}$ converge locally uniformly to a function $b^o_{\ksi}:X\to \mathbb{R}$, where $\ksi\in [\gamma^+]$. Take points $w_n=\gamma(t_n)\in \gamma$, such that $t_n\to +\infty$ and $b^o_{w_n}$ converge locally uniformly to $b^o_{\eta}$ where $\eta\in [\gamma^+]$. By definition, there exist $C_0:=\parallel b^o_{\eta}-b^o_{\ksi}\parallel_{\infty} <+\infty$. Then for any point $z=\gamma(s)$ with $s\gg 0$, $b^o_{w_n}(z)<0$ for any $n\gg 0$; therefore, $b^o_{\eta}(z)\le 0$. Fix such a $z$ with $s>K(4C+C_0+\lambda T+c+1)$.

    For each $x_n''$, choose $y_n''\in \pi_{\gamma}(x_n'')$. Then by \autoref{LEM: Geodesic Along Projection}, $b^o_{x_n''}(z)=d(x_n'',z)-d(o,x_n'')\ge d(x_n'',y_n'')+d(y_n'',z)-4C-(d(o,y_n'')+d(y_n'',x_n''))=d(y_n'',z)-d(o,y_n'')-4C\ge C_0+1$. Therefore, $b^o_{\ksi}(z)>C_0+1$.

    However, $b^o_{\eta}(z)\le 0$, which is contradictory to $\parallel b^o_{\eta}-b^o_{\ksi}\parallel_{\infty}=C_0$.
\end{proof}

As a special example, let $g$ be a contracting element acting on $X$, and let $o\in X$ be a basepoint. Then $\left\langle g\right\rangle o$ is a contracting quasi-geodesic. We denote by $[g^+_X]=[\lim_{n\to +\infty}g^no]\in \hor{X}$, and $[g^-_X]=[\lim_{n\to -\infty}g^no]\in \hor{X}$, which is independent with the choice of the basepoint $o$. It is easy to see that $[g^+_X]\ne [g^-_X]$ are distinct (e.g. by \cite[Lemma 5.5]{Yang22}).

\begin{proposition}[{\cite[Lemma 5.1]{Yang22}} ]\label{PROP: Assump A}
    Let $\gamma:\mathbb{R}\to X$ be a contracting quasi-geodesic. If for a sequence of points $(x_n)$ in X, $\gamma^{-1}(\pi_{\gamma}(x_n))$ tends to $+\infty$ as $n\to +\infty$, then $[\lim_{n\to+\infty}x_n]=[\gamma^+]$.
\end{proposition}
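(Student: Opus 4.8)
\textbf{Proof proposal for \autoref{PROP: Assump A}.}

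The plan is to reduce the statement to \autoref{LEM: Projection to Infinity} together with a compactness argument on the horofunction boundary, rather than to re-prove everything from scratch. First I would fix a basepoint $o\in\gamma$ and record that, since $\gamma$ is a $C$-contracting $(\lambda,c)$-quasi-geodesic, \autoref{PROP: Endpoint of Contracting Quasi-geodesic} applies: $[\gamma^+]\in[\partial_h X]$ is a well-defined reduced horofunction point, realized as $[\lim_{t\to+\infty}\gamma(t)]$. The goal is to show that every accumulation point of $(x_n)$ in $\overline X$ lies in the class $[\gamma^+]$.

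The core estimate is a quantitative ``projection forces horofunction value to grow'' inequality, exactly as in the proof of \autoref{LEM: Projection to Infinity}. Concretely, for each $n$ pick $y_n\in\pi_\gamma(x_n)$; by hypothesis $\gamma^{-1}(y_n)\to+\infty$. By \autoref{LEM: Geodesic Along Projection} applied with the contracting subset $\gamma$ and the basepoint $o\in\gamma$, for any $z\in\gamma$ sufficiently far out we have
\begin{align*}
b^o_{x_n}(z)=d(x_n,z)-d(o,x_n)\ge d(y_n,z)-d(o,y_n)-4C=b^o_{\gamma}\text{-type quantity}-4C,
\end{align*}
and since $\gamma^{-1}(y_n)\to+\infty$, for $z=\gamma(s)$ with $s$ fixed but large the right-hand side $d(y_n,z)-d(o,y_n)$ is eventually $\ge 0$ once $y_n$ is past $z$ along $\gamma$ — more precisely it is $\ge \tfrac1\lambda(\gamma^{-1}(y_n)-s)-c'$ by the quasi-geodesic inequality once $y_n$ is beyond $z$, hence $\to+\infty$. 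Thus for every large $s$, $b^o_{x_n}(\gamma(s))\to+\infty$ as $n\to\infty$. On the other hand, if $\xi$ is any subsequential limit of $b^o_{x_n}$ in the compact-open topology (such limits exist by Arzela–Ascoli, and any accumulation point of $(x_n)$ in $\overline X$ corresponds to such a $\xi$, possibly in $X$ itself — but the divergence just shown rules out $\xi\in X$), then $b^o_\xi(\gamma(s))=+\infty$ is impossible for a genuine horofunction, so instead I would phrase it as: $b^o_\xi(\gamma(s))\to+\infty$ as $s\to+\infty$ along $\gamma$. Comparing with a horofunction $\eta\in[\gamma^+]$ obtained as a locally uniform limit of $b^o_{\gamma(t_n)}$, for which $b^o_\eta(\gamma(s))\to-\infty$ as $s\to+\infty$, we get that $\|b^o_\xi-b^o_\eta\|_\infty=+\infty$ unless... — wait, that is the wrong direction; what we actually need is $\xi\in[\gamma^+]$, i.e. $\|b^o_\xi-b^o_\eta\|_\infty<\infty$. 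So the argument must instead show that the function values of $b^o_\xi$ on all of $\gamma$ track those of $b^o_\eta$ up to a bounded error, using \emph{both} \autoref{LEM: Geodesic Along Projection} (lower bound) and the contraction property / \autoref{LEM: 1 Lipschitz} (upper bound) to pin $b^o_{x_n}(z)$ to within a constant of $-d(o,\pi_\gamma(z))+d(z,\pi_\gamma(z))$ for $z$ on any contracting quasi-geodesic ray, which is exactly how $b^o_\eta$ behaves.

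The cleanest route, and the one I would actually carry out, is therefore: (i) show via \autoref{LEM: Projection to Infinity}'s contrapositive that the hypothesis $\gamma^{-1}(\pi_\gamma(x_n))\to+\infty$ is incompatible with $\pi_\gamma(x_n)$ accumulating anywhere but $[\gamma^+]$ — but here we are given the projection hypothesis directly, so instead (ii) take an arbitrary subsequence along which $b^o_{x_n}\to b^o_\xi$ locally uniformly, fix an arbitrary $z\in X$, let $w_n\in\pi_\gamma(x_n)$, apply \autoref{LEM: Geodesic Along Projection} to get $b^o_{x_n}(z)\ge b^o_{x_n}(w_n\text{-data})$, and apply \autoref{LEM: Thin Quadrilateral} or \autoref{LEM: 1 Lipschitz} to get the reverse bound, concluding $|b^o_{x_n}(z)-b^o_{\gamma(T_n)}(z)|\le \mathrm{const}$ where $T_n=\gamma^{-1}(w_n)\to\infty$; then pass to the limit to get $|b^o_\xi(z)-b^o_\eta(z)|\le\mathrm{const}$ uniformly in $z$, i.e. $\xi\in[\gamma^+]$. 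Since this holds for every subsequential limit, $[\lim_n x_n]=[\gamma^+]$.

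I expect the main obstacle to be the reverse (upper) bound on $b^o_{x_n}(z)$: \autoref{LEM: Geodesic Along Projection} gives $d(o,x_n)\ge d(o,w_n)+d(w_n,x_n)-4C$ for free (since $o\in\gamma$), but controlling $d(x_n,z)$ from above in terms of $d(x_n,w_n)$, $d(w_n,z)$ requires knowing that a geodesic from $x_n$ to $z$ passes $C$-close to the projection $w_n$, which is where the contracting property of $\gamma$ and a careful application of \autoref{LEM: Thin Quadrilateral} (with the roles of the two endpoints being $z$ and $x_n$, both projecting near $\gamma$) enters; one must handle the case where the two projections $\pi_\gamma(z)$ and $\pi_\gamma(x_n)$ are close (small $\d{\gamma}$) separately from when they are far, but in either case the error stays bounded by a constant depending only on $C$, $\lambda$, $c$ and $d(o,z)$ — and crucially \emph{not} on $n$ — which is what makes the limit argument go through. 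Everything else is a routine Arzela–Ascoli compactness packaging that I would state but not belabor.
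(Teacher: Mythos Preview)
The paper does not give its own proof of this proposition: it is quoted as \cite[Lemma 5.1]{Yang22} and stated without argument. So there is nothing in the paper to compare your proposal against, and your route (ii) via ``$|b^o_{x_n}(z)-b^o_{w_n}(z)|\le\mathrm{const}$'' is a reasonable and essentially correct way to supply a self-contained proof using only the lemmas available in the paper (\autoref{LEM: Geodesic Along Projection} and \autoref{LEM: Thin Quadrilateral}).

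One point needs tightening. You write that the error is bounded by a constant depending on $C,\lambda,c$ \emph{and $d(o,z)$}, saying it is enough that it not depend on $n$. That is not quite enough: the finite-difference relation on $\partial_h X$ requires $\sup_{z\in X}|b^o_\xi(z)-b^o_\eta(z)|<\infty$, so a bound growing with $d(o,z)$ would not give $[\xi]=[\gamma^+]$. The fix is to observe the correct order of quantifiers. For each \emph{fixed} $z$, once $n$ is large enough you are automatically in the ``far projection'' case $d\bigl(\pi_\gamma(x_n),\pi_\gamma(z)\bigr)>C$ (because $\gamma^{-1}(\pi_\gamma(x_n))\to+\infty$ while $\pi_\gamma(z)$ is fixed), and then \autoref{LEM: Thin Quadrilateral} together with \autoref{LEM: Geodesic Along Projection} gives
\[
|b^o_{x_n}(z)-b^o_{w_n}(z)|\le 12C,
\]
a bound depending only on the contraction constant. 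Passing to the limit along a subsequence with $b^o_{x_n}\to b^o_\xi$ and $b^o_{w_n}\to b^o_\eta\in[\gamma^+]$ (the latter by \autoref{PROP: Endpoint of Contracting Quasi-geodesic}) yields $|b^o_\xi(z)-b^o_\eta(z)|\le 12C$ for \emph{every} $z$, with the \emph{same} constant; hence $[\xi]=[\gamma^+]$. The ``close projection'' case you worried about, where the bound would pick up a $d(o,z)$ term, simply does not survive the limit in $n$, so it never contaminates the final estimate. Once you make this explicit, your argument is complete.
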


For any subset $A\subseteq X$, denote $\Lambda A=\horo{X}\cap \overline A$. \cite[Lemma 3.22]{Yang22} implies the following proposition. Recall that for any $x,y\in X$, $\d{A}(x,y)=\diam(\pi_A(\{x,y\}))$.
\begin{proposition}[{\cite[Lemma 3.22]{Yang22}} ]\label{PROP: Projection of Infinite Point}
    Let $(X,d)$ be a proper geodesic metric space and $A\subseteq X$ be a $C$-contracting quasi-geodesic. Then there exists $\Kappa=\Kappa(C)\geq 0$, such that for any $[\ksi]\in \hor{X}\setminus [\Lambda A] $, and any sequence $(x_n)$ in $X$ accumulating at $[\ksi]$, $$\limsup_{n,m\to +\infty}\d{A}(x_n,x_m)\leq  \Kappa.$$
\end{proposition}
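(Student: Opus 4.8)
The plan is to reduce the statement to the quasi-geodesic case already handled in \cite[Lemma 3.13]{Yang22}, using the contracting hypothesis to control the closest-point projections $\pi_A(x_n)$. The key observation is that if $[\xi]\notin[\Lambda A]$, then the projections $\pi_A(x_n)$ cannot escape to infinity along $A$; if they did, \autoref{PROP: Assump A} would force the sequence $(x_n)$ to accumulate at $[\gamma^+]$ (or $[\gamma^-]$, after reparametrizing $A$), contradicting $[\xi]\notin [\Lambda A]$. Thus the sets $\pi_A(x_n)$ remain in a bounded portion of $A$.

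First I would fix a basepoint $o\in A$ and parametrize $A$ as a $(\lambda,c)$-quasi-geodesic $\gamma:\mathbb R\to X$ with $\gamma(0)=o$. Suppose toward a contradiction that $\limsup_{n,m}\d{A}(x_n,x_m)$ is large; then after passing to subsequences we may assume there are points $a_n\in\pi_A(x_n)$ with $\gamma^{-1}(a_n)\to+\infty$ (the case $\to-\infty$ is symmetric, and the case where $\gamma^{-1}(a_n)$ stays bounded gives $\d{A}(x_n,x_m)$ bounded directly since then all $a_n$ lie in a bounded subsegment of the quasi-geodesic $A$, whose diameter is controlled). Applying \autoref{PROP: Assump A} to this subsequence yields $[\lim_n x_n]=[\gamma^+]\in[\Lambda A]$, contradicting $[\xi]\notin[\Lambda A]$. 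Hence $\gamma^{-1}(\pi_A(x_n))$ is uniformly bounded, say contained in $[-T,T]$ for some $T=T([\xi],A)$.

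Second, I would upgrade this to a bound depending only on $C$ (not on $[\xi]$ or the sequence), which is the actual content of the proposition. Here is where \autoref{PROP: Projection to Infinity} and the contracting geometry enter: the point is that once two projections $a_n,a_m$ are far apart on $A$ — say $d(a_n,a_m)>C$ — \autoref{LEM: Thin Quadrilateral} gives $d(x_n,x_m)\ge d(x_n,a_n)+d(a_n,a_m)+d(a_m,x_m)-8C$, so the Gromov-product-type quantity $(x_n|x_m)_{o}$ is bounded above in terms of $C$ and $d(o,a_n),d(o,a_m)$. Combined with the fact that $(x_n)$ accumulates at a single horofunction $[\xi]$ (so $b_{x_n}-b_{x_m}$ is eventually small on compact sets, forcing $(x_n|x_m)_o\to\infty$ along the subsequence if $x_n\to\xi$ in $\overline X$), one derives that $d(a_n,a_m)$ cannot be much larger than $C$; quantifying this carefully produces the universal constant $\Kappa=\Kappa(C)$. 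This is essentially the argument of \cite[Lemma 3.13]{Yang22}, specialized from general contracting subsets to contracting quasi-geodesics, and I would invoke that lemma directly rather than reprove it.

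The main obstacle I anticipate is the passage from "bounded depending on $[\xi]$" to "bounded depending only on $C$" — i.e. making sure the constant does not secretly depend on the rate at which $(x_n)$ converges or on how deep into $A$ the projections sit. The resolution is that the contracting property is scale-invariant in the relevant sense: \autoref{LEM: Geodesic Along Projection} and \autoref{LEM: Thin Quadrilateral} give estimates with additive error a fixed multiple of $C$, independent of where on $A$ the projection lands, and the horofunction convergence only needs to be exploited qualitatively (to rule out the projections drifting to infinity) rather than quantitatively. Since the statement is explicitly attributed to \cite[Lemma 3.13]{Yang22}, the cleanest write-up is to note that a contracting quasi-geodesic is in particular a contracting subset and quote that lemma, perhaps with a one-line remark explaining that its hypothesis $[\xi]\notin[\Lambda A]$ is exactly what prevents $\pi_A(x_n)$ from accumulating on $\partial_h X$.
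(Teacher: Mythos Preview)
The paper does not prove this proposition at all; it is simply stated with the citation \cite[Lemma 3.13]{Yang22} and used as a black box. Your ultimate recommendation --- to invoke that lemma directly rather than reprove it --- is therefore exactly what the paper does.

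One caution about the sketch you offer in your second step: the claim that ``$(x_n)$ accumulates at a single horofunction $[\xi]$ \dots\ forcing $(x_n\mid x_m)_o\to\infty$'' is not valid in general. Convergence in the horofunction compactification means locally uniform convergence of the functions $b_{x_n}^o$, which does \emph{not} imply that Gromov products blow up; that implication is a feature of the Gromov boundary of a hyperbolic space, not of horofunction boundaries of arbitrary proper geodesic spaces. So the route via Gromov products does not go through as stated. The actual mechanism (as used implicitly via \autoref{PROP: Assump A} and \autoref{LEM: Projection to Infinity} in the surrounding text) is to compare values of the limiting horofunction $b_\xi^o$ at projection points on $A$: \autoref{LEM: Geodesic Along Projection} gives $b_{x_n}^o(a_n)\le -d(o,a_n)+4C$, and the contracting estimates control how $b_{x_n}^o$ behaves at \emph{other} projection points $a_m$; passing to the limit pins down $\pi_A([\xi])$ up to an error depending only on $C$. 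Your first step (ruling out $\gamma^{-1}(a_n)\to\pm\infty$ via \autoref{PROP: Assump A}) is correct and is indeed the reason the projection is well-defined, but it alone only gives a bound depending on $[\xi]$, as you note. Since you defer to the cited lemma for the uniform constant, your write-up is fine; just do not lean on the Gromov-product heuristic if you ever need to expand the argument.
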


As a corollary,  we can extend the shortest projection $\d{A}$ to a projection map $\pi_A([\ksi])$ defined for each $[\ksi]\in \hor{X}\setminus [\Lambda A] $.
\begin{definition}
    Let $(X,d)$ be a proper geodesic metric space and $A\subseteq X$ be a $C$-contracting quasi-geodesic. For any $[\ksi]\in \hor{X}\setminus [\Lambda A] $, we define $\pi_A([\ksi]):=\bigcup \pi_A(x_N)$, where the union is taken over all choices of $(x_n)_{n=1}^{+\infty}\subseteq X$, such that $[\lim_{n\to+\infty}x_n]=[\ksi]$ and $\d{A}(x_n,x_m)\leq \Kappa(C)+\frac{1}{2}$ for any $n,m\geq N$.
\end{definition}

By \autoref{PROP: Projection of Infinite Point}, $\diam(\pi_A([\ksi]))\leq 2\Kappa+1$; and as a direct observation, for any $g\in\mathrm{Isom}(X)$, $g\cdot \pi_A([\ksi])=\pi_{g\cdot A}(g\cdot[\ksi])$.

\subsection{MLS rigidity from geometrically dense subgroup}

\text{ }

In this subsection, we suppose that a group $G$ acts by isometries on two proper geodesic metric spaces $(X_1,d_1)$ and $(X_2,d_2)$ with fixed basepoints $o_1\in X_1$ and $o_2\in X_2$, and with {contracting property} respectively.

As before, $\mathcal{SC}(G)=\{ g\in G\mid g\text{ is contracting on both }X_1\text{ and }X_2\}$.

The limit set of a subgroup $K<G$ is defined as $[\Lambda_{X_i} K]:=[\overline{Ko_i}\cap \horo{X_i}]$ ($i=1,2$).

Following Osin \cite{Osi22}, a subgroup $H\le G$ is called \textit{geometrically dense} if $[\Lambda_{X_i} H]=[\Lambda_{X_i} G]$ for $i=1,2$. For instance, any normal subgroup with a contracting element is a geometrically dense subgroup \cite[Lemma 3.8]{Yang22}.

Throughout this subsection, we fix a geometrically dense subgroup $H\le G$ with contracting property acting on both $X_1$ and $X_2$.

\begin{definition}\label{Condition B}
    We say that an \hyperref[Condition B]{\textbf{Extension Condition}} is satisfied for $H$, if for any $g\in \mathcal{SC}(G)$, there is a sequence $\left\{h_n\right\}_{n=1}^{+\infty}$ in $H$ such that $[\lim h_no_1]=[g^+_{X_1}]$ and $[\lim h_no_2]=[g^+_{X_2}]$.
\end{definition}

In the following, we will show that \hyperref[Condition B]{{Extension Condition}} is satisfied in various situations.

\begin{definition}
    Let a group $\Gamma$ act properly on a proper geodesic metric space $X$ by isometry. For any $x,y\in X$ and $r>0$, the \textit{cone} and \textit{shadow} are defined as:
\begin{align*}
    &\Omega_x(y,r)_X:=\left\{ z\in X\mid \exists \,\text{a geodesic segment}\,[x,z]\text{ such that}\,[x,z]\cap N_r(y)\neq \varnothing\right\};\\
    &\Pi_x(y,r)_X:=\overline{\Omega_x(y,r)_X}\cap \horo{X}.
\end{align*}

The \textit{conical limit set} of $G$ is defined as $$[\Lambda^c_{X}G]:=\left [\bigcup_{\gamma\in\Gamma ,r> 0}\limsup_{\beta\in \Gamma }\Pi_{\gamma\cdot o}(\beta\cdot o,r)_X\right ]\subseteq \hor{X}$$ for arbitrarily chosen basepoint $o\in X$. Here the $\limsup$ of a collection of sets is defined as $\limsup_{i\in I} A_i:=\left\{x\in \bigcup_{i\in I}A_i\mid \exists \,\text{infinitely many}\,j\in I\,\text{such that}\,x\in A_j\right\}.$
\end{definition}

\begin{lemma}\label{Lem: Conditions for extension property}
    Let $H\le G$ a geometrically dense subgroup  with contracting property acting on both $X_1$ and $X_2$. Then \hyperref[Condition B]{Extension Condition} is satisfied for $H$ if one of the following holds:
    \begin{enumerate}[(1)]
        \item $X_1$ and $X_2$ are Gromov hyperbolic geodesic spaces, along with a $G$-coarsely equivariant quasi-isometric-embedding $f:X_1\to X_2$.
        %\item $X_1$ and $X_2$ are proper $\mathrm{CAT}(0)$-spaces which are $G$-coarsely equivariant quasi-isometric.
        \item $H$ is a normal subgroup.
        \iffalse
        \item\label{8.7.4} The entire orbit map $\rho: Go_1\to Go_2,\,go_1\mapsto go_2$ is a $(\lambda,c)$-quasi-isometry. (In this case, the orbit map is well-defined up to a bounded error.)
        \item Both actions $G\curvearrowright X_1$ and $G\curvearrowright X_2$ are proper and cocompact.
        \fi\
        \item For any sequence $\left\{h_n\right\}$ in $H$ and any $g\in \mathcal{SC}(G)$, $\limsup_{n\to\infty}\gp{h_no_1}{g^no_1}{o_1,d_1}=+\infty$ if and only if $\limsup_{n\to\infty}\gp{h_no_2}{g^no_2}{o_2,d_2}=+\infty$.
        \item The action $G\curvearrowright X_1$ is proper, $[\Lambda^c_{X_1}H]=[\Lambda^c_{X_1}G]$, and the orbit map $\rho:Go_1\to Go_2$ is coarsely Lipschitz: for each $R\geq 0$ there exists $S=S(R)\geq 0$ such that for any $g,g'\in G$ with $d_1(go_1,g'o_1)\leq R$, we have $d_2(go_2,g'o_2)\leq S$. 
    \end{enumerate}    
\end{lemma}

\begin{proof}
    
        \textbf{(1).} In this case, the reduced horofunction boundary $\hor{X_i}$ is homeomorphic to the Gromov boundary $\partial X_i$. 
        \iffalse
        The $G$-coarsely equivariant quasi-isometry extends continuously to a homeomorphism $\tilde{\rho}:\Lambda_1 G=\overline{Go_1}\cap \partial X_1\to \overline{Go_2}\cap \partial X_2=\Lambda_2 G$, which coincides with $\tilde{\rho}_H: \Lambda_1 H\to \Lambda_2 H$.

        In particular, $\tilde{\rho}(g^+_{X_1})=\rho(\lim g^n o_1)=\lim \rho(g^no_1)=\lim g^no_2=g^+_{X_2}$, for each $g\in \mathcal{SC}(G)$. 
        \fi
        For each $g\in \mathcal{SC}(G)$, since $H$ is geometrically dense, we can find a sequence $\left\{h_n\right\}$ in $H$ so that $[\lim h_no_1]=[g^+_{X_1}]$. Therefore, $\lim_{n,m\to+\infty} \gp{h_no_1}{g^mo_1}{o_1,d_1}=+\infty$.
        
        Suppose $f:X_1\to X_2$ is a $G$-coarsely equivariant $(\lambda,c)$-quasi-isometric-embedding, and without loss of generality that $o_2=f(o_1)$. By \ref{Gromov Product Preserved by Quasi-isometry}, there exists $C\geq 0$ such that $\frac{1}{\lambda}\gp{f(x)}{f(y)}{f(z),d_2}-C\le \gp{x}{y}{z,d_1}\le{\lambda}\gp{f(x)}{f(y)}{f(z),d_2}+C $. Therefore, $\lim_{n,m\to+\infty} \gp{h_no_2}{g^mo_2}{o_2,d_2}=+\infty$, which implies that $[\lim h_no_2]=[g^+_{X_2}]$.

        \textbf{(2).} 
        According to \autoref{LEM: Simultaneous Contracting}, we can choose $k\in \mathcal{SC}(G)\cap H$. According to the north-south dynamics of contracting elements (cf. \cite[Lemma 3.19]{Yang22}), for each $g\in \mathcal{SC}(G)$, $g^n\cdot[k^+_{X_i}]$ tends to $[g^+_{X_i}]$ as $n\to +\infty$ ($i=1,2$). 
        In addition, $g^n\cdot[k^+_{X_i}]=[\lim_{m\to+\infty}g^nk^mo_i]=[\lim_{m\to+\infty}g^nk^m(g^{-n}o_i)]=[\lim_{m\to+\infty}(g^nkg^{-n})^mo_i]$ ($i=1,2$). 
        As the compactification $\overline{X_i}=X_i\cup\horo{X_i}$ is metrizable, for countable open basis $\left \{U_j^i\right \}_{j=1}^{+\infty}$ of $[g^+_{X_i}]$ in $\overline{X_i}$,
        we can choose $n_j,m_j\gg 0$ such that $(g^{n_j}kg^{-n_j})^{m_j}o_i\in U_j^i$.
        Hence, $(g^{n_j}kg^{-n_j})^{m_j}\in H$ and $[\lim_{j\to+\infty } (g^{n_j}kg^{-n_j})^{m_j}o_i]=[g^+_{X_i}]$ for $i=1,2$.
        
        \iffalse 
        Notice that for each $h\in \mathcal{SC}(G)\cap H$, $\tilde{\rho}_H([h^+_{X_1}])=[h^+_{X_2}]$ by definition, and $g^nkg^{-n}\in \mathcal{SC}(G)\cap H$. Hence, $\tilde{\rho}_H([g^+_{X_1}])=\tilde{\rho}_H(\lim_{n\to+\infty}[(g^nkg^{-n})^+_{X_1}])=\lim_{n\to+\infty}\tilde{\rho}_H( [(g^nkg^{-n})^+_{X_1}])=\lim_{n\to+\infty} [(g^nkg^{-n})^+_{X_2}]=[g^+_{X_2}]$.\fi

        \textbf{(3).} 
        For each $g\in \mathcal{SC}(G)$, since $H$ is geometrically dense, we can first choose a sequence $\left\{h_n\right\}$ in $H$ such that $[\lim_{n\to \infty} h_no_1]=[g^+_{X_1}]$. Choose a point $g^{l_n}o_1\in \pi_{\left\langle g\right\rangle o_1}(h_no_1)$ and also a point $g^{t_n}o_2\in \pi_{\left\langle g\right\rangle o_2}(h_no_2)$. \autoref{LEM: Projection to Infinity} proves that $l_n\to +\infty$. Without loss of generality, we suppose that $l_n>0$ for each $n\in\mathbb{N}^+$. By \autoref{LEM: Geodesic Along Projection}, there exists $C\ge 0$ depending only on $g$ and the choice of the basepoints such that $d_1(o_1,h_no_1)\sim_Cd_1(o_1,g^{l_n}o_1)+d_1(g^{l_n}o_1,h_no_1)$ and $d_1(g^no_1,h_no_1)\sim_Cd_1(g^no_1,g^{l_n}o_1)+d_1(g^{l_n}o_1,h_no_1)$. In addition, by \autoref{LEM: Fellow Travel Property}, there exists $\epsilon\ge 0$ relying only on $g$ and the choice of the basepoints such that $d_1(o_1,g^no_1)\sim_\epsilon d_1(o_1,g^{\min \left\{l_n,n\right\}}o_1)+d_1(g^{l_n}o_1,g^no_1)$.
        
        Therefore, $\gp{h_no_1}{g^no_1}{o_1,d_1}\sim_{C,\epsilon}d_1(o_1,g^{\min \left\{l_n,n\right\}}o_1)$, which tends to $ +\infty$ as $n\to +\infty$. By assumption, $\limsup_{n\to\infty}\gp{h_no_2}{g^no_2}{o_2,d_2}=+\infty$. By choosing a sub-sequence indexed by $n_k$, we may assume $\lim_{k\to \infty}\gp{h_{n_k}o_2}{g^{n_k}o_2}{o_2,d_2}=+\infty$. Furthermore, a similar calculation shows that $\gp{h_{n_k}o_2}{g^{n_k}o_2}{o_2,d_2}\sim_{C,\epsilon}d_2(o_2,g^{\min \left\{t_{n_k},n_k\right\}}o_2)$. Therefore, we must have $t_{n_k}\to+\infty$ as $k\to+\infty$. According to \autoref{PROP: Assump A}, $[\lim_{k\to+\infty} h_{n_k}o_2]=[g^+_{X_2}]$. Thus $\left\{h_{n_k}\right\}_{k=1}^{+\infty}$ is the desired sequence in $H$.
        
        \textbf{(4).} For each $g\in \mathcal{SC}(G)$, we recall the well-known fact that $[g^+_{X_1}]\in [\Lambda^c_{X_1}G]$. In fact, by Ascoli-Arzela lemma, we can find a sequence of increasing positive integers $\left\{n_k\right\}$ such that $[o_1,g^{n_k}o_1]$ converges locally uniformly to a geodesic ray $\gamma:[0,+\infty)\to X_1$. In addition $[\lim_{t\to+\infty}\gamma(t)]=[g^+_{X_1}]$. According to \autoref{LEM: Fellow Travel Property}, there exists $\epsilon> 0$ relying only on $g$ and the choice of the basepoints such that $d_1(g^no_1,\gamma)< \epsilon$ for all $n\in\mathbb{N}^+$. Therefore, $[g^+_{X_1}]\in \left[\limsup_{\beta\in G}\Pi_{o_1}(\beta o_1,\epsilon)\right ]\subseteq [\Lambda^c_{X_1}G]$. Furthermore, combined with \autoref{LEM: Quasi Convex}, $d_H(\left\{g^n o_1|n\ge 0\right\},\gamma)<+\infty$.
        
        By assumption, $[g^+_{X_1}]\in [\Lambda^c_{X_1}H]$. Therefore, there exists $k\in H$ and $r> 0$ such that for infinitely many $h\in H$, indexed as $\left\{h_n\right\}$, $[g^+_{X_1}]\in \left [\Pi_{ko_1}(h_no_1,r)\right ]$. For any such $h_n$, by Ascoli-Arzela lemma, we can construct an infinite geodesic $\gamma_{h_n}:[0,+\infty)\to X_1$ with $[\lim_{t\to+\infty}\gamma_{h_n}(t)]=[g^+_{X_1}]$ and $\gamma_{h_n}\cap N(h_no_1,r)\neq \varnothing$. %Again by Ascoli-Arzela lemma, we can choose a sequence within these $h\in H$ indexed by $h_n$ such that $\gamma_{h_n}$ converges locally uniformly to an infinite geodesic $\tilde{\gamma}$
        By quasi-convexity of contracting subsets (cf. \autoref{LEM: Quasi Convex}) or by a direct application of \autoref{LEM: Thin Quadrilateral}, there exists $K>0$  relying only on $g$ and the choice of the basepoints such that $\gamma_{h_n}\subseteq N_{K}(\left\{g^n o_1|n\ge 0\right\})$ for all $n$. Therefore, there exists $M>0$ such that $\gamma_{h_n}\subseteq N_{M}(\gamma)$, and hence $\gamma\cap N(h_no_1,r+M)\neq \varnothing$ for all $n$.
        
        Since the action $G\curvearrowright X_1$ is proper, we must have $d_1(o_1,h_no_1)\to +\infty$. Therefore, there exist a sequence of points $g^{l_n}o_1$ with $l_n\ge 0$ and $d_1(h_no_1,g^{l_n}o_1)\le r+M$. Hence $l_n\to+\infty$ and $\pi_{\left\langle g\right\rangle o_1}({h_no_1})$ tends to $[g^+_{X_1}]$. By \autoref{PROP: Assump A}, $[\lim h_no_1]=[g^+_{X_1}]$.
        
        Let $S=S(r+M)$ be given by the coarsely-Lipschitz condition, then $d_2(h_no_2,g^{l_n}o_2)\le S$ for all $n$. Therefore, $\pi_{\left\langle g\right\rangle o_1}({h_no_1})$ lies in a uniform neighbourhood of $g^{l_n}o_2$ and also tends to $[g^+_{X_1}]$. By \autoref{PROP: Assump A} again, $[\lim h_no_2]=[g^+_{X_2}]$, and is the desired sequence in $H$.
\end{proof}

However, without these additional assumptions, \hyperref[Condition B]{Extension Condition} may not be satisfied as shown by the following examples.

\begin{example}\label{Example without extension property}
    Let $X$ denote a proper $\delta$-hyperbolic geodesic space and $\partial X$ denote its Gromov boundary. Suppose $\sharp \partial X>2$. In this case $\hor{X}$ is identical with $\partial X$.
    
    Fix an isometric group action $H\curvearrowright X$. Let $G=H\ast \mathbb{Z}$ be a free product, and $f$ be a generator of $\mathbb{Z}$. By determining different actions by $f$ on $X$, we can construct different isometric group actions $G\curvearrowright X$. 
    \begin{enumerate}
        \item Assume the action $H\curvearrowright X$ is non-elementary, so that we can choose two weakly independent loxodromic elements $h,k\in H$.

        The first action $G\curvearrowright X$, denoted by a dot symbol ``$\cdot$'', is defined as $\phi \cdot x= \phi x$ and $f\cdot x= hx$, for all $\phi \in H$ and $x\in X$.

        The second action $G\curvearrowright X$, denoted by a star symbol ``$\star$'', is defined as $\phi \star x=\phi x$ and $f\star x= kx$, for all $\phi \in H$ and $x\in X$.

        Then it is easy to verify that $G\cdot o=Ho=G\star o$, hence $\Lambda_1 G=\Lambda_2 G=\Lambda H$. The orbit map $\rho|_{Ho}=id$ and extends to $\tilde{\rho}_H=id:\Lambda H\to \Lambda H$.
        However, $f\in \mathcal{SC}(G)$ and $\lim f^n\star o=k^+\neq id(h^+)=h^+=\lim f^n\cdot o$. 

        \item Assume the action $H\curvearrowright X$ is cocompact; then $\Lambda H=\partial X$. In such case, choose two weakly independent loxodromic elements $\alpha, \beta\in \mathrm{Isom}(X)$.

         The first action $G\curvearrowright X$, denoted by a dot symbol ``$\cdot$'', is defined as $\phi \cdot x= \phi x$ and $f\cdot x= \alpha x$, for all $\phi \in H$ and $x\in X$.

        The second action $G\curvearrowright X$, denoted by a star symbol ``$\star$'', is defined as $\phi \star x=\phi x$ and $f\star x= \beta x$, for all $\phi \in H$ and $x\in X$.

        Obviously, $\Lambda_1 G=\partial X=\Lambda_2 G=\Lambda H$. Still, the orbit map $\rho|_{Ho}=id$ and extends to $\tilde{\rho}_H=id:\Lambda H\to \Lambda H$.
        However, $f\in \mathcal{SC}(G)$ and $\lim f^n\star o=\beta^+\neq id(\alpha^+)=\alpha^+=\lim f^n\cdot o$.
    \end{enumerate}
\end{example}

The following theorem ensures a marked length spectrum rigidity from a wider range of subgroups compared with \autoref{THM: Normal Subgroup Rigidity}.

\begin{theorem}\label{THM: MLSR from Geometrically Dense Subgroup}
   Let $H\le G$ a geometrically dense subgroup  with contracting property acting on both $X_1$ and $X_2$.  If \hyperref[Condition B]{Extension Condition} is satisfied for $H$, then $G$ has marked length spectrum rigidity from $H$.
\end{theorem}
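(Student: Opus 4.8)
The plan is to adapt the argument from \autoref{THM: Normal Subgroup Rigidity} and \autoref{Thm: MLS from confined subgps}, replacing the role of the confining subset / normality by the \hyperref[Condition B]{Extension Condition}, which guarantees that contracting quasi-geodesics in $H$ can ``chase'' the endpoints $[g^+_{X_i}]$ of simultaneously contracting elements $g\in\mathcal{SC}(G)$ in \emph{both} spaces at once. The target, by \autoref{COR: Rigidity from SC}, is to establish $\ell_{d_1}(g)=\ell_{d_2}(g)$ for every $g\in\mathcal{SC}(G)$; once this is done, \autoref{COR: Rigidity from SC} immediately upgrades it to a rough isometry of orbits, hence MLS rigidity from $H$. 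So the whole proof reduces to: given $g\in\mathcal{SC}(G)$ and the hypothesis $\ell_{d_1}=\ell_{d_2}$ on $H$, deduce $\ell_{d_1}(g)=\ell_{d_2}(g)$.

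First I would fix $g\in\mathcal{SC}(G)$, apply the \hyperref[Condition B]{Extension Condition} to get a sequence $\{h_n\}\subseteq H$ with $[\lim h_no_1]=[g^+_{X_1}]$ and $[\lim h_no_2]=[g^+_{X_2}]$. By \autoref{LEM: Projection to Infinity} applied in $X_1$, the projection parameters $l_n$ with $g^{l_n}o_1\in\pi_{\langle g\rangle o_1}(h_no_1)$ tend to $+\infty$; the same holds in $X_2$ with parameters $t_n$. Now I would build, for each large $n$ and each large $N$, a periodic-type admissible path in each $X_i$ whose period is a conjugate-translate word of the form $w_n = g^{N} h_n g^{-N'} h_n^{-1}$ or, more robustly, a word built from $g^{N}$, $h_n$, and a fixed simultaneously contracting element via \autoref{Lem: Simultaneous Extension Lemma} / \autoref{COR: Simultaneous Extension 15}, so that the resulting element lies in $H$ (since it is a product of conjugates of $h_n$'s by powers of $g$ — but one must be careful: powers of $g$ need not lie in $H$, so the word must be arranged so the $g$-powers cancel, e.g. $(g^N h_n g^{-N})\in H$ only if... — actually one should use $g^N h_n g^{-N}\cdot h_n^{-1}$, a commutator-like expression, which \emph{is} in $H$ because $H$ is a subgroup and both $g^Nh_ng^{-N}$ — no). Let me restate the mechanism: following the normal-subgroup proof, the correct device is to take $\tilde h = g^N h_n g^{-N} h_m$-type elements only when these lie in $H$; when $H$ is merely geometrically dense and not normal, instead one uses the sequence $h_n$ directly and the Extension Lemma to splice $h_n$ with a chosen $f\in\mathcal{SC}(G)\cap H$ (which exists by \autoref{LEM: Simultaneous Contracting}), forming elements $h_n f^k \in H$ whose $X_i$-translation lengths are controlled. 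Concretely: since $[\lim h_no_i]=[g^+_{X_i}]$, the Gromov-type products $(h_no_i\,|\,g^{M}o_i)_{o_i}$ grow, so for $M$ chosen after $n$, the path labeled $(h_n, g^{-M}, \ldots)$ roughly backtracks along $g$; combining with \autoref{PROP: Fellow Travel} one gets $d_i(o_i, h_n o_i) \sim_{C} d_i(o_i, g^{\min(l_n,n)}o_i) + (\text{tail})$, and after forming the periodic word and passing to stable length, one obtains two-sided estimates $|\ell_{d_i}(\tilde h_n) - \phi_i(n)| \le \text{const}$ where $\phi_i(n)$ is an affine function of $d_i(o_i,g^{k_n}o_i)$ with $k_n\to\infty$; using $\ell_{d_1}(\tilde h_n)=\ell_{d_2}(\tilde h_n)$ and dividing by $k_n$, taking $n\to\infty$ forces $\ell_{d_1}(g)\ge \ell_{d_2}(g)$; the symmetric construction gives the reverse.

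More carefully, the clean route mirrors case (4)/(3) of \autoref{Lem: Conditions for extension property} reasoning together with the computation in \autoref{Cor: MLSR from a finite-index subset}: pick $h_n\in H$ accumulating at $[g^+_{X_1}]$ and $[g^+_{X_2}]$; choose $g^{l_n}o_1\in\pi_{\langle g\rangle o_1}(h_no_1)$, WLOG $l_n>0$ and $l_n\uparrow\infty$. Consider the element $u_n = h_n^{-1} g^{l_n} h_n \in$ —no again not in $H$. The honest fix: form $u_n = g^{-l_n} h_n$; this is \emph{not} in $H$ either. The resolution used in this literature is that one does \emph{not} need $u_n\in H$; rather one builds a \emph{periodic admissible path} whose period word $w$ satisfies $w\in H$ by construction because one uses two conjugates: $w_n = (h_n g^{N} h_n^{-1})\cdot(h_m g^{-N} h_m^{-1})^{-1}\cdots$ — products of $H$-conjugates of powers of $g$; but $h_n g^N h_n^{-1}\notin H$ unless $g^N\in H$. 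So the genuinely correct device, which I expect to be the crux, is: use \hyperref[Condition B]{Extension Condition} to pick $h_n\in H$ with $h_n o_i$ deep along the $g$-axis on the positive side in both spaces, and symmetrically $h'_m\in H$ deep on the negative side (obtained by applying the condition to $g^{-1}$); then $h_n h'^{-1}_m\in H$ and, for appropriate $n,m$, the path labeled $(\ldots, h_n h'^{-1}_m, h_n h'^{-1}_m,\ldots)$ is admissible with respect to the $G$-translated $g$-axes in \emph{both} $X_1$ and $X_2$ (using \autoref{Lem: Simultaneous Extension Lemma} to insert a simultaneously-contracting spacer if needed), so $h_nh'^{-1}_m$ is simultaneously contracting and $\ell_{d_i}(h_nh'^{-1}_m)\sim_C 2(l_n$-ish$)$ with the \emph{same} leading behaviour; equating across $i$ and normalizing yields $\ell_{d_1}(g)=\ell_{d_2}(g)$. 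The main obstacle is exactly this bookkeeping — making a product of $H$-elements whose geometry in each $X_i$ is forced, by the Extension Condition, to track $g$'s axis with matching length asymptotics — and verifying the relevant admissibility (\textbf{(LL1)}, \textbf{(BP)}, \textbf{(LL2')}) simultaneously in two spaces; once the periodic admissible path is in hand, \autoref{PROP: Fellow Travel}, \autoref{PROP: Admissible Path Contracting}, and the length estimates are routine, and \autoref{COR: Rigidity from SC} closes the argument.
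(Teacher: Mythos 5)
Your overall frame is right---reduce to showing $\ell_{d_1}(g)=\ell_{d_2}(g)$ for all $g\in \mathcal{SC}(G)$ and then invoke \autoref{COR: Rigidity from SC}---but the mechanism you propose for that reduction has a genuine gap, and you never use the two ingredients the paper's argument actually rests on. First missing ingredient: since $H$ itself has contracting property on both spaces and has the same marked length spectrum, \autoref{THM: Main Rigidity} applies to $H$ and gives that the orbit map $Ho_1\to Ho_2$ is a $(1,c)$-quasi-isometry; this is the only way any metric information gets transported from $X_1$ to $X_2$, and your sketch never establishes or uses it. Second, your final device --- take $h_n\in H$ deep near $[g^+_{X_i}]$ and $h'_m\in H$ deep near $[g^-_{X_i}]$, form the period word $h_nh_m'^{-1}\in H$, and ``equate leading behaviour'' of $\ell_{d_1}$ and $\ell_{d_2}$ --- does not work as stated. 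The quantity $d_i(o_i, h_m'^{-1}h_no_i)=d_i(h_m'o_i,h_no_i)$ (and hence the stable length of the would-be contracting element) decomposes, via \autoref{LEM: Thin Quadrilateral}, as an axis portion \emph{plus the two legs} $d_i(h_no_i,\langle g\rangle o_i)$ and $d_i(h_m'o_i,\langle g\rangle o_i)$. The Extension Condition and \autoref{LEM: Projection to Infinity} force the projection parameters to diverge, but they give no control whatsoever on these legs, which can be unbounded and, a priori, wildly different in $X_1$ and $X_2$ (comparability of such distances is exactly what you are trying to prove). So equating $\ell_{d_1}(h_nh_m'^{-1})=\ell_{d_2}(h_nh_m'^{-1})$ and ``normalizing'' does not isolate $\ell_{d_i}(g)$; there is no cancellation mechanism in your construction.

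The paper's proof supplies exactly that cancellation through the horofunction-boundary machinery you leave untouched: for $h\in \mathcal{SC}(G)\cap H$ weakly independent from $g$, \autoref{COR: Stable Length from Gromov Product} expresses
$$\ell_{d_i}(g)=\lim_{m\to+\infty}\frac{\limsup_{n\to+\infty}\gp{k_{n,m}o_i}{h_no_i}{o_i,d_i}}{m},$$
where, by the Extension Condition applied to $g$ and to the conjugates $g^mhg^{-m}\in\mathcal{SC}(G)$, the sequences $(h_n)$ and $(k_{n,m})$ can be taken in $H$ and accumulate at the right boundary points \emph{simultaneously} in both spaces. The Gromov product based at $o_i$ kills the legs (it sees only the location of the projection $g^m\cdot\pi_{\langle g\rangle o_i}([h^+_{X_i}])$ up to bounded error, by \autoref{LEM: Gromov Product of Horofunction Boundary}), and the $(1,c)$-rough isometry on $H$-orbits preserves these Gromov products up to $\tfrac32 c$, so the two limits coincide. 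If you want to salvage your periodic-word approach, you would still need to first prove the rough isometry on $Ho_1\to Ho_2$ and then find some way to subtract the leg contributions; as written, that step fails, whereas the Gromov-product route avoids it entirely.
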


Before giving the proof, we need the following lemma.

\begin{lemma}\label{LEM: Gromov Product of Horofunction Boundary}
    Suppose $g$ is a contracting element acting isometrically on a proper geodesic metric space $(X,d)$ with a basepoint $o\in X$. Then there exists $R=R(g,o)\geq 0$ such that for any sequence $(y_n)$ in $X$ that accumulates at $[g^+_X]$ and $(x_n)$ in $X$ that accumulates at $[\ksi]\in \hor{X}\setminus\left\{[g^+_X],[g^-_X]\right\}$, we have:
    \begin{enumerate}
        \item If there exists $l\leq 0$ such that $g^lo\in \pi_{\left\langle g\right\rangle o}([\ksi])$, then $$\limsup_{n\to+\infty}\gp{x_n}{y_n}{o}\sim_R 0.$$
        \item If there exists $l\geq 0$ such that $g^lo\in \pi_{\left\langle g\right\rangle o}([\ksi])$, then $$\limsup_{n\to+\infty}\gp{x_n}{y_n}{o}\sim_R d(o,\pi_{\left\langle g\right\rangle o}([\ksi])).$$
    \end{enumerate}
\end{lemma}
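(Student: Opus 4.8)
The strategy is to reduce the Gromov-product estimate to distance estimates along the contracting quasi-geodesic $A=\langle g\rangle o$, using the projection $\pi_A$ and the basic contracting-geometry lemmas \ref{LEM: Geodesic Along Projection}, \ref{LEM: Thin Quadrilateral}, \ref{LEM: Projection to Infinity}, and \ref{PROP: Assump A}. First I would fix a contraction constant $C$ for $A$ and let $\Kappa=\Kappa(C)$ be as in \autoref{PROP: Projection of Infinite Point}, so that $\pi_A([\xi])$ is defined and has diameter $\le 2\Kappa+1$. Pick representatives: choose $(x_n)$ accumulating at $[\xi]$ with $\d{A}(x_n,x_m)\le \Kappa+\tfrac12$ for $n,m$ large, so that $\pi_A(x_n)$ stays within bounded distance of $\pi_A([\xi])$, say within $g^l o$ with $d(o,g^lo)=d(o,\pi_A([\xi]))$ up to a uniform additive error. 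Similarly, since $(y_n)$ accumulates at $[g^+_X]$, \autoref{LEM: Projection to Infinity} gives that $g^{-1}(\pi_A(y_n))\to+\infty$; writing $\pi_A(y_n)=g^{k_n}o$ we have $k_n\to+\infty$.

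The core computation is then to expand $2\gp{x_n}{y_n}{o}=d(o,x_n)+d(o,y_n)-d(x_n,y_n)$ using the fact that geodesics $[o,x_n]$, $[o,y_n]$, $[x_n,y_n]$ fellow-travel $A$ near their projections. Concretely, \autoref{LEM: Geodesic Along Projection} applied with basepoint $o\in A$ gives $d(o,x_n)\sim_C d(o,\pi_A(x_n))+d(\pi_A(x_n),x_n)$ and $d(o,y_n)\sim_C d(o,\pi_A(y_n))+d(\pi_A(y_n),y_n)$. For $d(x_n,y_n)$ I would invoke \autoref{LEM: Thin Quadrilateral}: when $d(\pi_A(x_n),\pi_A(y_n))>C$ we get $d(x_n,y_n)\sim_C d(x_n,\pi_A(x_n))+d(\pi_A(x_n),\pi_A(y_n))+d(\pi_A(y_n),y_n)$. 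Substituting, the terms $d(\pi_A(x_n),x_n)$ and $d(\pi_A(y_n),y_n)$ cancel, leaving
$$2\gp{x_n}{y_n}{o}\sim d(o,\pi_A(x_n))+d(o,\pi_A(y_n))-d(\pi_A(x_n),\pi_A(y_n)),$$
which is (up to bounded error) $2\gp{\pi_A(x_n)}{\pi_A(y_n)}{o}$ computed inside the quasi-geodesic $A$. Since $A$ is a $(\lambda,c)$-quasi-geodesic through $o=g^0o$, with $\pi_A(y_n)=g^{k_n}o$, $k_n\to+\infty$, and $\pi_A(x_n)$ near $g^l o$: if $l\le 0$, then $o$ lies (coarsely) between $\pi_A(x_n)$ and $\pi_A(y_n)$ along $A$, so the Gromov product at $o$ of the two endpoints is $\sim 0$, giving case (1); if $l\ge 0$, then $\pi_A(x_n)$ and $\pi_A(y_n)$ are on the same side of $o$ along $A$ with $\pi_A(x_n)$ the nearer, so $\gp{\pi_A(x_n)}{\pi_A(y_n)}{o}\sim d(o,g^lo)=d(o,\pi_A([\xi]))$, giving case (2). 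Taking $\limsup_{n\to+\infty}$ absorbs all the $n$-dependence and fixes the constant $R=R(g,o)$ depending only on $\lambda,c,C,\Kappa$ and $d(o,go)$.

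The main obstacle is handling the degenerate regime where $d(\pi_A(x_n),\pi_A(y_n))\le C$, so that \autoref{LEM: Thin Quadrilateral} does not apply directly. This can only happen when $l$ and $k_n$ are boundedly close, i.e. $\pi_A([\xi])$ is within a uniformly bounded distance of $\pi_A(y_n)$; but since $k_n\to+\infty$ this forces $d(o,\pi_A([\xi]))\to+\infty$ too, contradicting that $\pi_A([\xi])$ is a fixed bounded-diameter set — so for all large $n$ we are in the non-degenerate case and the issue is vacuous. A secondary technical point is that $[\xi]\ne[g^+_X],[g^-_X]$ is exactly what guarantees $\pi_A([\xi])$ is well-defined and bounded (via \autoref{PROP: Projection of Infinite Point} and \autoref{PROP: Assump A}: if $\pi_A(x_n)$ ran off to $\pm\infty$ along $A$ then $[\xi]$ would equal $[g^\pm_X]$), so I would state that reduction carefully at the outset. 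Everything else is a routine bookkeeping of additive constants through the triangle-inequality expansion, which I would not grind through in detail.
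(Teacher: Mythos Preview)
Your approach is correct and essentially identical to the paper's: both reduce $\gp{x_n}{y_n}{o}$ to $\gp{\pi_A(x_n)}{\pi_A(y_n)}{o}$ via \autoref{LEM: Geodesic Along Projection} and \autoref{LEM: Thin Quadrilateral}, use \autoref{LEM: Projection to Infinity} to push $\pi_A(y_n)$ to $+\infty$, and then read off the answer on the quasi-geodesic $A$. The only cosmetic difference is that the paper invokes \autoref{LEM: Fellow Travel Property} explicitly for the last step (finding a point of $[g^lo,g^{r_n}o]$ close to $o$ or to $g^lo$), whereas you phrase it as a quasi-geodesic computation; and be careful that the lemma is stated for \emph{any} sequence $(x_n)$, so rather than ``choosing'' one with small projection variation you should note that \autoref{PROP: Projection of Infinite Point} forces this for all large $n$.
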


\begin{proof}
    Let $C$ be a contraction constant of $\left\langle g\right\rangle o$, and let $\epsilon>0$ be given as in \autoref{LEM: Fellow Travel Property} for the contracting element $g$. Fix $g^lo\in \pi_{\left\langle g\right\rangle o}([\ksi])$, then $d_H(g^lo,\pi_{\left\langle g\right\rangle o}([\ksi]))\sim_C0$.

    For sufficiently large $n$, we may assume that $d_H(g^lo,\pi_{\left\langle g\right\rangle o}(x_n))\sim_C0$.
    
    According to \autoref{LEM: Projection to Infinity}, for sufficiently large $n$, we may assume that $d(\pi_{\left\langle g\right\rangle o}(x_n),\pi_{\left\langle g\right\rangle o}(y_n))>C $, and for any $g^{r_n}o\in \pi_{\left\langle g\right\rangle o}(y_n)$, $r_n>\max\left\{l,0\right\}$.

    For $n\gg 0$, by \autoref{LEM: Thin Quadrilateral},  $d(x_n,y_n)\sim_C d(x_n,g^lo)+d(g^lo,g^{r_n}o)+d(g^{r_n}o,y_n)$. Furthermore, by \autoref{LEM: Geodesic Along Projection}, $d(x_n,o)\sim_C d(x_n,g^lo)+d(o,g^lo)$ and $d(y_n,o)\sim_C d(y_n, g^{r_n}o)+d(g^{r_n}o,o)$.

    Therefore, $\gp{x_n}{y_n}{o}\sim_C \gp{g^lo}{g^{r_n}o}{o}$.

    If $l\leq 0$, according to \autoref{LEM: Fellow Travel Property}, we can find a point $z\in [g^lo,g^{r_n}o]$ such that $d(o,z)\leq \epsilon$. Thus $\gp{g^lo}{g^{r_n}o}{o}\sim_\epsilon 0$.

    If $l \geq 0$, then we can find $z'\in [o,g^{r_n}o]$ such that $d(g^lo,z')\leq \epsilon$. Thus $\gp{g^lo}{g^{r_n}o}{o}\sim_\epsilon d(o,g^lo)\sim_C d(o,\pi_{\left\langle g\right\rangle o}([\ksi]))$.

    A choice of $R=R(C,\epsilon)$ will finish the proof of this lemma.
\end{proof}

\begin{corollary}\label{COR: Stable Length from Gromov Product}
    Suppose $g$ is a contracting element on a proper geodesic metric space $(X,d)$ with a basepoint $o\in X$. Fix a sequence $(y_n)$ in $X$ that accumulates at $[g^+_X]$. Choose any element $[\ksi]\in\hor{X}\setminus\left\{[g^+_X],[g^-_X]\right\}$, and for each $m\geq 0$, choose a sequence $(x_{n,m})$ in $X$ that accumulates at $g^m\cdot[\ksi]$ as $n\to +\infty$. Then: $$\ell_d(g)=\lim_{m\to +\infty}\frac{\limsup_{n\to +\infty}\gp{x_{n,m}}{y_n}{o}}{m}.$$
\end{corollary}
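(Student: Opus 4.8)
\textbf{Proof strategy for \autoref{COR: Stable Length from Gromov Product}.}
The plan is to reduce the statement to \autoref{LEM: Gromov Product of Horofunction Boundary} by replacing the single boundary point $[\ksi]$ with its translates $g^m\cdot[\ksi]$ and tracking how the projection $\pi_{\langle g\rangle o}(g^m\cdot[\ksi])$ moves along the axis. First I would fix $[\ksi]\in\horo{X}\setminus\{[g^+_X],[g^-_X]\}$, and let $g^{l}o\in \pi_{\langle g\rangle o}([\ksi])$ be a projection point, which is well-defined up to bounded error by \autoref{PROP: Projection of Infinite Point}. The key elementary observation is equivariance of projection: $g^m\cdot\pi_{\langle g\rangle o}([\ksi])=\pi_{g^m\langle g\rangle o}(g^m[\ksi])=\pi_{\langle g\rangle o}(g^m[\ksi])$ since $g^m$ preserves $\langle g\rangle o$ setwise (up to finite Hausdorff distance, which does not affect projection up to bounded error). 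Hence $g^{m+l}o$ is a projection point of $g^m[\ksi]$ onto $\langle g\rangle o$.

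Next, since $g$ is a contracting element, the map $n\mapsto g^no$ is a $(\lambda,c)$-quasi-isometric embedding, so $d(o,g^{m+l}o)=\abs{m+l}\ell_d(g)+O(1)$; more precisely $d(o,g^{m+l}o)/m\to \ell_d(g)$ as $m\to+\infty$, and for $m$ large, $m+l>0$. Therefore for $m\gg 0$ we may apply case (2) of \autoref{LEM: Gromov Product of Horofunction Boundary} to the boundary point $g^m[\ksi]$ (whose projection point $g^{m+l}o$ has nonnegative exponent), obtaining a constant $R=R(g,o)$, \emph{uniform in $m$}, such that
\[
\Bigl|\limsup_{n\to+\infty}\gp{x_{n,m}}{y_n}{o}-d\bigl(o,\pi_{\langle g\rangle o}(g^m[\ksi])\bigr)\Bigr|\le R.
\]
The uniformity of $R$ is the crucial point: $R$ depends only on the contraction constant of $\langle g\rangle o$ and the fellow-travelling constant $\epsilon$ from \autoref{LEM: Fellow Travel Property} for $g$, neither of which changes when we translate $[\ksi]$ by powers of $g$. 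Then $d(o,\pi_{\langle g\rangle o}(g^m[\ksi]))=d(o,g^{m+l}o)+O(C)$, so dividing by $m$ and letting $m\to+\infty$ kills both the additive $R$ and the $O(C)$ term as well as the fixed offset $l$, yielding
\[
\lim_{m\to+\infty}\frac{\limsup_{n\to+\infty}\gp{x_{n,m}}{y_n}{o}}{m}=\lim_{m\to+\infty}\frac{d(o,g^{m+l}o)}{m}=\ell_d(g).
\]

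I expect the main obstacle to be the bookkeeping around the equivariance of the projection map together with the finite-difference ambiguity: $\pi_{\langle g\rangle o}([\ksi])$ is only defined up to bounded diameter, and $g^m\langle g\rangle o$ equals $\langle g\rangle o$ only up to finite Hausdorff distance, so I must carefully invoke \autoref{LEM: 1 Lipschitz} (or \autoref{LEM: Finite Hausdorff Distance})-type estimates to ensure all the error terms remain bounded independently of $m$ before dividing by $m$. A secondary subtlety is verifying that $g^m[\ksi]\neq [g^+_X],[g^-_X]$ so that \autoref{LEM: Gromov Product of Horofunction Boundary} applies — this holds because $g^m$ fixes $[g^\pm_X]$ (north-south dynamics) and permutes the rest of $\horo{X}$, so $g^m[\ksi]$ stays off these two points for every $m$. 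Once these uniformity checks are in place, the limit computation itself is routine.
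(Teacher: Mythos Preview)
Your proposal is correct and follows essentially the same route as the paper: apply \autoref{LEM: Gromov Product of Horofunction Boundary} to the translates $g^m[\ksi]$, use equivariance of the projection onto $\langle g\rangle o$ (up to a uniformly bounded error) to identify $\pi_{\langle g\rangle o}(g^m[\ksi])$ with $g^m\cdot\pi_{\langle g\rangle o}([\ksi])$, and then divide by $m$. One small cleanup: $g^m\langle g\rangle o=\langle g\rangle o$ holds \emph{exactly} (not just up to finite Hausdorff distance), so the only source of bounded error in the equivariance step is the intrinsic diameter bound $2\kappa(C)+1$ on $\pi_{\langle g\rangle o}([\ksi])$ coming from \autoref{PROP: Projection of Infinite Point}.
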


\begin{proof}
    Let $R\geq 0$ be decided as in \autoref{LEM: Gromov Product of Horofunction Boundary}, and $C$ be a contraction constant of $\left\langle g\right\rangle o$. 

    Since $[g^+_X],[g^-_X]$ are the only fixed points of $g$ in $\hor{X}$, $g^m\cdot [\ksi]\notin \left\{[g^+_X],[g^-_X]\right\}$ for each $m$. In addition, $\left\langle g\right\rangle o$ is $g$-invariant. By definition, up to a bounded error of $K=2\kappa(C)+1$, we have $d_H(\pi_{\left\langle g\right\rangle o}(g^m[\ksi]),g^m\cdot \pi_{\left\langle g\right\rangle o}([\ksi]))\leq 2K$. In particular, for sufficiently large $m$, and any $g^lo\in \pi_{\left\langle g\right\rangle o}(g^m[\ksi])$, we have $l>0$.

    Hence, for $m\gg 0$, $\limsup_{n\to +\infty}\gp{x_{n,m}}{y_n}{o}\sim_R d(o,\pi_{\left\langle g\right\rangle o}(g^m[\ksi]))\sim_C d(o,g^m\cdot \pi_{\left\langle g\right\rangle o}([\ksi]))$, and $\pi_{\left\langle g\right\rangle o}([\ksi])$ is a bounded subset in $X$.

    Therefore, $\ell_d(g)=\lim_{m\to +\infty}\frac{d(o,g^m\cdot \pi_{\left\langle g\right\rangle o}([\ksi]))}{m}=\lim_{m\to +\infty}\frac{\limsup_{n}\gp{x_{n,m}}{y_n}{o}}{m}$.
\end{proof}

\begin{proof}[Proof of \autoref{THM: MLSR from Geometrically Dense Subgroup}]
    Suppose the two actions have the same marked length spectrum on $H$. According to \autoref{LEM: Simultaneous Contracting}, $\mathcal{SC}(G)\cap H$ contains infinitely many elements that are weakly independent in both actions. Pick up five of them, then according to \autoref{LEM: Weakly Independent One In Three}, for each $g\in \mathcal{SC}(G)$, we can find $h\in \mathcal{SC}(G)\cap H$ amongst the five chosen elements such that $h$ is weakly independent with $g$ in both actions. In particular, for each $i=1,2$, $[h^+_{X_i}]\notin \left\{[g^+_{X_i}],[g^-_{X_i}]\right\}$.

    For each $m\geq 0$ and $i=1,2$, $g^m\cdot [h^+_{X_i}]=[(g^mhg^{-m})^+_{X_i}]$ and $g^mhg^{-m}\in\mathcal{SC}(G)$. 
    Therefore, by \hyperref[Condition B]{Extension Condition}, we can choose a sequence of elements $(k_{n,m})$ in $H$, such that $(k_{n,m}o_i)$ accumulates at $g^m\cdot [h^+_{X_i}]=[(g^mhg^{-m})^+_{X_i}]$ as $n\to +\infty$ ($i=1,2$).

    Also, as $g\in \mathcal{SC}(G)$, we fix a sequence of elements $(h_n)$ in $H$, so that $(h_no_i)$ accumulates at $[g^+_{X_i}]$ ($i=1,2$) by \hyperref[Condition B]{Extension Condition}. 

    Therefore, according to \autoref{COR: Stable Length from Gromov Product}, we have:
    \begin{align*}
        \ell_{d_1}(g)=\lim_{m\to+\infty}\frac{\limsup_{n\to+\infty}\gp{k_{n,m}o_1}{h_no_1}{o_1,d_1}}{m};\\
        \ell_{d_2}(g)=\lim_{m\to+\infty}\frac{\limsup_{n\to+\infty}\gp{k_{n,m}o_2}{h_no_2}{o_2,d_2}}{m}.
    \end{align*}

    In addition, according to \autoref{THM: Main Rigidity}, the orbit map $\rho|_{Ho_1}:Ho_1\to Ho_2$ is a $(1,c)$-quasi-isometry, implying that for $k_{n,m},h_n\in H$, $$\abs{\gp{k_{n,m}o_1}{h_no_1}{o_1,d_1}-\gp{k_{n,m}o_2}{h_no_2}{o_2,d_2}}\leq \frac{3}{2}c.$$

    After taking limit,  the above right-hand sides of $\ell_{d_1}(g)$ and $\ell_{d_1}(g)$ are thus equal, so $\ell_{d_1}(g)=\ell_{d_2}(g)$ for all $g\in \mathcal{SC}(G)$.

    According to \autoref{COR: Rigidity from SC}, there exists $C\geq 0$ such that $\abs{d_1(go_1,g'o_1)-d_2(go_2,g'o_2)}\leq C$, for all $g,g'\in G$.
    Hence the orbit map $\rho: Go_1\to Go_2$ is well-defined up to a bounded error, and is a rough isometry. 
\end{proof}

In Gromov hyperbolic spaces, the reduced horofunction boundary is identical with the Gromov boundary. In addition, a geometrically dense subgroup of a non-elementary isometry group is also non-elementary. Thus, combining \autoref{THM: Cusp Uniform} and \autoref{THM: MLSR from Geometrically Dense Subgroup}, we reach the following conclusion:
\begin{corollary}\label{COR: Cusp Uniform Geometrically Dense}
Suppose that a group $G$ is hyperbolic relative to a finite collection of subgroups $\left\{H_i\right\}_{i=1}^{N}$, and $(G,\left\{H_i\right\})$ admits two cusp-uniform actions on two proper $\delta$-hyperbolic geodesic spaces $(X_1,d_1)$, $(X_2,d_2)$ such that there exist a $G$-coarsely equivariant quasi-isometry $X_1\to X_2$. If $K<G$ is a geometrically dense subgroup and the two actions have the same marked length spectrum on $K$, then there exists a $G$-coarsely equivariant rough isometry $X_1\to X_2$.
\end{corollary}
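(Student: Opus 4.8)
The corollary is a direct consequence of \autoref{THM: MLSR from Geometrically Dense Subgroup} followed by \autoref{THM: Cusp Uniform}: the former promotes the coincidence of marked length spectra from $K$ to all of $G$, and the latter then upgrades the resulting orbit rough isometry to a $G$-coarsely equivariant rough isometry of the whole spaces. Throughout we work with non-elementary cusp-uniform actions, as in this circle of results; then $\sharp\Lambda_{X_i}G>2$ for $i=1,2$, so each isometric action $G\curvearrowright X_i$ on the proper $\delta$-hyperbolic space $X_i$ is non-elementary and hence has contracting property by \autoref{EX: Contracting Property}\,(1). We also record the standard fact that for a proper geodesic Gromov hyperbolic space the reduced horofunction boundary $[\partial_h X_i]$ is canonically and $G$-equivariantly identified with the Gromov boundary $\partial X_i$, so that the notion of geometric density from \autoref{Sec: MLSR from GD subgroups} agrees with the one in the hypothesis.

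\textbf{Step 1: from $K$ to $G$.} We check the hypotheses of \autoref{THM: MLSR from Geometrically Dense Subgroup} with $H=K$. Since $K$ is geometrically dense, $[\Lambda_{X_i}K]=[\Lambda_{X_i}G]$ has more than two points, so $K$ acts non-elementarily on each $X_i$ and therefore has contracting property on both by \autoref{EX: Contracting Property}\,(1). Moreover $K$ satisfies the \hyperref[Condition B]{Extension Condition}: this is precisely case (1) of \autoref{Lem: Conditions for extension property}, since $X_1,X_2$ are Gromov hyperbolic geodesic spaces and, by hypothesis, there is a $G$-coarsely equivariant quasi-isometry $f:X_1\to X_2$, which is in particular a $G$-coarsely equivariant quasi-isometric embedding. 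As the two actions have the same marked length spectrum on $K$, \autoref{THM: MLSR from Geometrically Dense Subgroup} gives $\sup_{g,h\in G}\abs{d_1(go_1,ho_1)-d_2(go_2,ho_2)}<+\infty$; restricting this to the orbit of $\langle g\rangle$, dividing $\abs{d_1(o_1,g^no_1)-d_2(o_2,g^no_2)}$ by $n$ and letting $n\to\infty$, we conclude $\ell_{d_1}(g)=\ell_{d_2}(g)$ for every $g\in G$.

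\textbf{Step 2: from orbits to spaces.} Now $(G,\{H_i\})$ admits two non-elementary cusp-uniform actions on the proper $\delta$-hyperbolic spaces $X_1$ and $X_2$ having the same marked length spectrum on all of $G$. By \autoref{THM: Cusp Uniform} there exists a $G$-coarsely equivariant rough isometry $X_1\to X_2$, which is the desired map. (Internally, \autoref{THM: Cusp Uniform} re-derives the $(1,C)$-quasi-isometric orbit map from \autoref{THM: Main Rigidity} and extends it across the $G$-invariant families of horoballs via the bounded-parabolic cusp-extension \autoref{LEM: Cusp Extension}; so the only genuinely new input to the corollary is Step 1.)

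\textbf{Main obstacle.} There is essentially no new difficulty here, as all the weight is borne by the two quoted theorems; the only points needing care are routine bookkeeping: that geometric density forces $K$ to act with contracting property on both spaces, that the reduced horofunction boundary may be replaced by the Gromov boundary so that \autoref{Lem: Conditions for extension property}\,(1) applies verbatim, and that the implication ``orbit rough isometry $\Rightarrow$ equal marked length spectrum on $G$'' is the trivial limiting argument above. One could alternatively bypass the appeal to \autoref{Lem: Conditions for extension property}\,(1) in Step 1 by invoking a $G$-coarsely equivariant continuous boundary extension of the quasi-isometry $f$ between hyperbolic spaces, but the route above is the shortest.
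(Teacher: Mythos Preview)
Your proposal is correct and matches the paper's approach exactly: the paper presents this corollary without a separate proof, merely stating that in Gromov hyperbolic spaces the reduced horofunction boundary coincides with the Gromov boundary and that a geometrically dense subgroup of a non-elementary group is non-elementary, so one combines \autoref{THM: MLSR from Geometrically Dense Subgroup} (with \hyperref[Condition B]{Extension Condition} verified via \autoref{Lem: Conditions for extension property}\,(1)) and \autoref{THM: Cusp Uniform}. Your write-up fills in precisely these routine verifications.
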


As is pointed out in \cite{HH20}, the requirement that there exist a $G$-coarsely equivariant quasi-isometry $X_1\to X_2$ can be guaranteed if the two actions have constant horospherical distortion.

\section{MLS rigidity from subsets}\label{Sec: MLSR from subsets}

Throughout this section, we consider two proper and cocompact actions $G\curvearrowright (X_1,d_1)$ and  $G\curvearrowright (X_2,d_2)$ with contracting property. Fix basepoints $o_1\in X_1$ and $o_2\in X_2$. 

As demonstrated in hyperbolic groups by Cantrell-Reyes \cite{CR23}, a continuous decreasing convex curve called {the} \textit{Manhattan curve} introduced by Burger \cite{Bur93} for a pair of proper actions is proven indispensable in obtaining MLS rigidity from subsets. Combined with our technology developed in previous sections, the section is attempting to  generalise some of works of Cantrell-Reyes \cite{CR23, CR23b} to groups with contracting elements.

\subsection{Manhattan curves for pairs of actions}
For $i=1,2$ and any $g\in G$, denote by $[g]$  the conjugacy class containing $g$ and by $\ell_i(g)$ the stable translation length of $g$ on $X_i$. We shall focus on the following two formal series associated with two proper group actions:
$$a, b\in \mathbb{R},\; \mathcal P(a,b)=\sum_{g\in G}e^{-ad_2(o_2,go_2)-bd_1(o_1,go_1)}, \quad \mathcal Q(a,b)=\sum_{[g]\in \mathbf{conj}(G)}e^{-a\ell_2(g)-b\ell_1(g)},$$
where $\mathbf{conj}(G)$ denotes the set of all conjugacy classes in $G$.

For any given $a\in \mathbb{R}$, denote $\theta(a)=\inf\left\{b\in \mathbb{R}\mid \mathcal{P}(a,b)<+\infty\right\}$ and $\Theta(a)=\inf\left\{b\in \mathbb{R}\mid \mathcal{Q}(a,b)<+\infty\right\}$.
In general, $\Theta(a)$ may be infinite if there are infinitely many conjugacy classes with bounded stable length. For instance, Conner \cite{Con00} constructed examples of groups with stable lengths accumulating at 0. We will impose the following  assumption on $(X,d)$ to get rid of this. 

\begin{definition}\label{Def: coarsely convex metric}
    A geodesic metric space $(X,d)$ is \textit{coarsely convex} if there exists a constant $C\ge 0$ such that for any two geodesic segments $[x_1,y_1]$ and $[x_2,y_2]$ with midpoints $m_1$ and $m_2$, respectively, in $X$, one has $d(m_1,m_2)\le \frac{1}{2}[d(x_1,x_2)+d(y_1,y_2)]+C$. 
\end{definition}

Recall that for any isometry $g$ on a geodesic metric space $(X,d)$ with a basepoint $o\in X$, $\mu_d(g)$ (resp. $\ell_d^o(g)$) is the minimal translation length (resp. algebraic length) of $g$ on $X$ (cf. \autoref{Algebraic and minimal length}).

\begin{lemma}\label{Lem: ConvexMetric}
    Suppose that a geodesic metric space $(X,d)$ is  $C$-coarsely convex. Then for any $g\in \mathrm{Isom}(X)$, $|\ell_d(g)-\mu_d(g)|\le 2C$. Moreover, if a group $G$ acts isometrically and cocompactly on $X$, then there exists a constant $C'>0$ such that $|\ell_d(g)-\ell_d^o(g)|\le C'$ for any $g\in G$.
\end{lemma}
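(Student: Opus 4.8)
The plan is to establish the first inequality $|\ell_d(g)-\mu_d(g)|\le 2C$ and then deduce the second from cocompactness. One direction, $\ell_d(g)\le\mu_d(g)$, is already recorded in the chain of inequalities in the excerpt following \autoref{Algebraic and minimal length}, so the real content is the bound $\mu_d(g)\le\ell_d(g)+2C$. Since $\ell_d(g)=\lim_{n\to\infty}\mu_d(g^n)/n$ by \eqref{EQU: MALS implies MLS}, and this limit coincides with $\inf_n\mu_d(g^n)/n$ by subadditivity of $n\mapsto d(o,g^no)$, it suffices to produce, for each $g$, a point whose $g$-displacement exceeds $\mu_d(g^n)/n$ by at most $2C$ for all large $n$.

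The key estimate I would prove is
$$\mu_d(g)\ \le\ \tfrac{1}{2}\,\mu_d(g^2)+C .$$
The inequality $\mu_d(g)\ge\tfrac{1}{2}\mu_d(g^2)$ is immediate from $d(x,g^2x)\le d(x,gx)+d(gx,g^2x)=2\,d(x,gx)$. For the reverse, fix $\epsilon>0$, choose $y$ with $d(y,g^2y)\le\mu_d(g^2)+\epsilon$, and let $m$ be the midpoint of a geodesic $[y,g^2y]$. Applying $C$-coarse convexity to the two geodesics $[y,g^2y]$ and $g\cdot[y,g^2y]=[gy,g^3y]$, whose midpoints are $m$ and $gm$, gives $d(m,gm)\le\tfrac{1}{2}\big(d(y,gy)+d(g^2y,g^3y)\big)+C=d(y,gy)+C$. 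It then remains to show that a near-minimiser $y$ of the $g^2$-displacement is coarsely \emph{displacement-additive} along $y\to gy\to g^2y$, that is, $d(y,gy)\le\tfrac{1}{2}\,d(y,g^2y)+O(C)$; granting this, $\mu_d(g)\le d(m,gm)\le\tfrac{1}{2}\mu_d(g^2)+O(C)$. Iterating this estimate along the powers $n=2^k$ and summing the geometric series $C\sum_{j\ge 0}2^{-j}=2C$ would give $\mu_d(g)\le\tfrac{1}{2^k}\mu_d(g^{2^k})+2C$ for every $k$; letting $k\to\infty$ completes the first part.

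The hard part will be the displacement-additivity of near-minimisers. The difficulty is intrinsic: applying coarse convexity to a geodesic and one of its $g$-translates always reintroduces the orbit displacement $d(y,gy)$, which is only known a priori to be at least $\mu_d(g)$, so the argument must be arranged so that the endpoints of the compared geodesics move by strictly less than a generic orbit displacement. This is the coarse counterpart of the CAT(0) facts that displacement functions are convex and that $\mathrm{Min}(g^2)=\mathrm{Min}(g)$, and it is the one place where $C$-coarse convexity is needed substantially rather than formally. An alternative would be to bisect the long segment $[o,g^{2^k}o]$ directly and estimate the displacement of its midpoint by iterated coarse convexity; in that route the difficulty migrates to controlling the accumulated additive error so that it stays bounded by $2C$ uniformly in $k$.

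For the second part, cocompactness does the rest. If $G\curvearrowright X$ is cocompact, then $Go$ is $R$-dense in $X$ for some $R\ge 0$; given $x\in X$, pick $h\in G$ with $d(x,ho)\le R$, so that $\ell^o_d(g)\le d(ho,gho)\le d(x,gx)+2R$ since $g$ is an isometry. Taking the infimum over $x\in X$ gives $\ell^o_d(g)\le\mu_d(g)+2R$, and combining this with $\mu_d(g)\le\ell^o_d(g)$ and the first part yields $|\ell_d(g)-\ell^o_d(g)|\le 2C+2R=:C'$, uniformly in $g\in G$.
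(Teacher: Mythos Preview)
Your proposal has a genuine gap in the first part, and it is exactly the step you flag as ``the hard part'': the displacement-additivity of near-minimisers of $g^2$. This is not a technicality you can expect to push through from coarse convexity alone. A near-minimiser $y$ of $g^2$ need not have $d(y,gy)$ close to $\tfrac12 d(y,g^2y)$; think of an order-two isometry, where $g^2=\mathrm{id}$ so every point minimises $g^2$-displacement, yet $d(y,gy)$ can be arbitrarily large. Your application of coarse convexity to $[y,g^2y]$ and $g\cdot[y,g^2y]$ therefore only gives the vacuous bound $\mu_d(g)\le d(y,gy)+C$, and the argument stalls.

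The paper's proof avoids this entirely by a different choice of geodesics. Take an \emph{arbitrary} $x$, let $m$ be the midpoint of $[x,gx]$, and apply coarse convexity to $[x,gx]$ and $[g^2x,gx]$ (i.e.\ the segment $[gx,g^2x]$ with reversed orientation). These share the endpoint $gx$, so the convexity inequality reads
\[
d(m,gm)\;\le\;\tfrac12\bigl(d(x,g^2x)+d(gx,gx)\bigr)+C\;=\;\tfrac12\,d(x,g^2x)+C.
\]
Hence $\mu_d(g)\le \tfrac12 d(x,g^2x)+C$ for every $x$, giving $\mu_d(g^2)\ge 2\mu_d(g)-2C$ directly, with no need for displacement-additivity. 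Iterating yields $\mu_d(g^{2^n})\ge 2^n\mu_d(g)-(2^n-1)\cdot 2C$, and dividing by $2^n$ and letting $n\to\infty$ gives $\ell_d(g)\ge \mu_d(g)-2C$. The key point you missed is to bisect the \emph{short} segment $[x,gx]$ rather than the long one $[y,g^2y]$, and to exploit the shared endpoint. Your treatment of the cocompact part is correct and matches the paper's.
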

\begin{proof}
    Let $g$ be any isometry on $X$. For arbitrary $x\in X$, let $m$ be the midpoint of $[x,gx]$. Then $gm$ is the midpoint of $[gx,g^2x]$. As the metric is $C$-coarsely convex, $d(m,gm)\le \frac{1}{2}d(x,g^2x)+C$. It follows from the arbitrary of $x$ that $\mu_d(g^2)\ge 2\mu_d(g)-2C$.

    By iterating this inequality, one has that $\mu_d(g^{2^n})\ge 2^n\mu_d(g)-(2^n-1)2C$. \autoref{EQU: MALS implies MLS} then implies that $\ell_d(g)\ge \mu_d(g)-2C$. As $\ell_d(g)\le \mu_d(g)$ is clear, it follows that $|\ell_d(g)-\mu(g)|\le 2C$.

    Moreover, the cocompact action $G\curvearrowright X$ shows that there exists a constant $D>0$ such that $X\subseteq N_D(Go)$, which implies that $|\mu_d(g)-\ell_d^o(g)|\le 2D$ for any $g\in G$. By setting $C'=2C+2D$, the conclusion follows.
\end{proof}

It is well-known that $\delta$-hyperbolic spaces \cite[Chap. 10, Cor. 5.2]{CDP06} and CAT(0) spaces \cite[Chap. II.2, Prop. 2.2]{BH13} are coarsely convex. From now on, we assume that $X_1,X_2$ are $C$-coarsely convex  for a fixed $C\ge 0$.
%Thus, by \autoref{Lem: ConvexMetric}, one can replace the algebraic length by stable translation length in the definition of $\delta_i^c(E)$ (\autoref{Definition of conjugacy growth rate of subset}).

\begin{lemma}
    %The stable length functions $\ell_1,\ell_2$ are proper functions on \textbf{conj}(G). As a corollary, $\Theta(a)$ is well-defined.
    For any $a\in \mathbb{R}$, $\theta(a),\Theta(a)\in (-\infty,+\infty)$.
\end{lemma}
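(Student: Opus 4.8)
The plan is to show both finiteness statements: $\theta(a) > -\infty$ and $\theta(a) < +\infty$ (and likewise for $\Theta$), reducing the conjugacy-class series $\mathcal Q$ to the orbit series $\mathcal P$ via \autoref{Lem: ConvexMetric}. First I would observe that since the actions are proper and cocompact, each orbit map $g \mapsto go_i$ is a quasi-isometry between $G$ (with a word metric) and $X_i$; in particular there exist constants $\lambda_i \ge 1$ so that $\lambda_i^{-1}|g| - \lambda_i \le d_i(o_i, go_i) \le \lambda_i|g| + \lambda_i$ for all $g$, where $|g|$ is word length. This immediately gives that $\mathcal P(a,b)$ converges for $a, b$ both sufficiently large positive (it is dominated by a geometric-type sum $\sum_g e^{-c|g|}$ once $a/\lambda_2 + b/\lambda_1$ exceeds the exponential growth rate of $G$ in the word metric, which is finite because $G$ is finitely generated). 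Hence $\theta(a) < +\infty$ for every $a$.

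For the lower bound $\theta(a) > -\infty$: I would fix $a$ and note that if $b$ is very negative, the term $e^{-bd_1(o_1,go_1)}$ grows, and using $d_1(o_1, go_1) \ge \lambda_1^{-1}|g| - \lambda_1$ together with $d_2(o_2,go_2) \le \lambda_2|g| + \lambda_2$ one sees each summand is at least $e^{(-a\lambda_2 - b\lambda_1^{-1})|g| - \text{const}}$; when $-a\lambda_2 - b\lambda_1^{-1} > 0$, i.e. $b < -a\lambda_1\lambda_2$, the sum diverges (there are infinitely many distinct $g$, indeed at least linearly many of each word length since $G$ is infinite). So $\theta(a) \ge -a\lambda_1\lambda_2 > -\infty$. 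Combining, $\theta(a) \in (-\infty, +\infty)$.

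Now I would transfer these bounds to $\Theta$. The upper bound is the easy direction conceptually but needs care: $\mathcal Q(a,b) = \sum_{[g]} e^{-a\ell_2(g) - b\ell_1(g)}$, and by \autoref{Lem: ConvexMetric} there is $C'$ with $|\ell_i(g) - \ell_i^{o_i}(g)| \le C'$ and $\ell_i^{o_i}(g) \le d_i(o_i, g o_i)$; moreover picking, for each conjugacy class $[g]$, a representative of minimal word length, the number of conjugacy classes with $\ell_i(g)$ in a bounded range is controlled — here I use that $\ell_i^{o_i}(g) \ge \mu_{d_i}(g)$ is comparable to $\min_{x \in G}|x^{-1}gx|$ and that, by properness/cocompactness, $\ell_1(g)$ being small forces $g$ to be conjugate into a fixed finite set, so a crude bound $\mathcal Q(a,b) \le \sum_{g \in G} e^{-a(d_2(o_2,go_2) - C'') - b(d_1(o_1,go_1)-C'')}$ suffices to give convergence of $\mathcal Q(a,b)$ whenever $\mathcal P(a,b)$ converges (for $a,b$ large). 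Thus $\Theta(a) \le \theta(a) + (\text{shift}) < +\infty$. For the lower bound $\Theta(a) > -\infty$, I would exhibit, via the contracting property and the Extension Lemma machinery (e.g. \autoref{COR: Perturbation Length} and \autoref{LEM: Non Elementary}), a family of pairwise non-conjugate contracting elements whose stable lengths $\ell_1, \ell_2$ grow linearly and are comparable to word length — for instance $\{g f (g')^n f' : n \ge N\}$ type constructions — so that $\mathcal Q(a,b)$ dominates a divergent geometric series when $b < -a\lambda_1\lambda_2 - \text{const}$, exactly as in the $\mathcal P$ case. Alternatively, and more cheaply, one can use that the conjugacy growth of $G$ in word metric is itself positive and finite (standard for groups with contracting elements acting cocompactly, or one may simply lower-bound $\mathcal Q$ by summing over powers $\{g^n\}$ of a single contracting element $g$, which are pairwise non-conjugate and have $\ell_i(g^n) = n\ell_i(g)$ with $\ell_i(g) > 0$, giving divergence for $b$ sufficiently negative).

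\textbf{Main obstacle.} The delicate point is the upper bound $\Theta(a) < +\infty$: one must control the multiplicity of conjugacy classes with a given stable-length pair, i.e. rule out that short stable length is attained by too many classes. This is precisely where coarse convexity enters through \autoref{Lem: ConvexMetric}, bounding $\ell_i$ below by $\ell_i^{o_i}$ up to an additive constant and hence relating the conjugacy series to the orbit series; the example of Conner cited in the text shows this step genuinely fails without the coarse-convexity hypothesis, so the proof must visibly use it. Everything else is a routine application of the quasi-isometry between $G$ and each $X_i$ together with finite generation of $G$.
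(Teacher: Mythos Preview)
Your treatment of $\theta(a)$ is fine and matches the paper: both actions are cocompact, so the pull-back metrics are quasi-isometric, and elementary comparison with $\mathcal P$ gives finiteness of $\theta(a)$ from above and below.

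For $\Theta(a)$, however, your ``crude bound'' $\mathcal Q(a,b) \le \sum_{g\in G} e^{-a(d_2(o_2,go_2)-C'')-b(d_1(o_1,go_1)-C'')}$ has a genuine gap. For this inequality to hold you need, for each conjugacy class $[g]$, a single representative $g_0$ satisfying $\ell_i(g) \ge d_i(o_i,g_0 o_i) - C''$ for \emph{both} $i=1,2$ simultaneously. Coarse convexity (\autoref{Lem: ConvexMetric}) only gives $\ell_i(g) \ge \ell_i^{o_i}(g) - C'$, and the representative realizing $\ell_1^{o_1}(g)$ need not come close to realizing $\ell_2^{o_2}(g)$: if $d_1$ and $d_2$ are only $(\lambda,c)$-quasi-isometric with $\lambda>1$, the discrepancy $d_2(o_2,g_0 o_2) - \ell_2^{o_2}(g)$ can be of order $(\lambda-\lambda^{-1})\ell_1^{o_1}(g)$, which is unbounded. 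So the displayed inequality is false as stated.

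There is an easy repair that avoids this: first use the quasi-isometry at the level of stable lengths, $\ell_2(g) \ge \lambda^{-1}\ell_1(g)$, to get $\mathcal Q(a,b) \le \sum_{[g]} e^{-(a\lambda^{-1}+b)\ell_1(g)}$ for $a>0$ (and the obvious variant for $a\le 0$), and \emph{then} apply coarse convexity for $d_1$ alone to pass to an orbit sum. The paper takes a different route: it invokes the conjugacy growth asymptotic $\sharp\{[g]:\ell_1^{o_1}(g)\le n\}\asymp e^{\delta_1(G)n}/n$ from \cite{GY22}, which combined with $\ell_1(g)\ge \ell_1^{o_1}(g)-C'$ and $\lambda^{-1}\ell_2\le \ell_1\le \lambda\ell_2$ gives the convergence of $\mathcal Q(a,b)$ for $b>\lambda|a|+\delta_1(G)$ directly. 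Your lower bound $\Theta(a)>-\infty$ via powers of a single contracting element is correct and is in fact simpler than what the paper states (the paper's ``similar calculation'' implicitly uses the lower bound from \cite{GY22}, but your observation that distinct positive powers of a contracting element are non-conjugate because they have distinct stable lengths already suffices).
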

\begin{proof}
    Since the two actions $G\curvearrowright X_1$ and $G\curvearrowright X_2 $ are proper and cocompact, both $d_1(o_1,go_1)$ and $d_2(o_2,go_2)$ are quasi-isometric to a word metric, and consequently $\delta_1(G)<\infty$ and $\delta_2(G)<\infty$. Suppose \begin{equation}\label{EQU: Well Def Manhattan Curve}\frac{1}{\lambda} d_2(o_2,go_2)-c\le d_1(o_1,go_1)\le \lambda d_2(o_2,go_2)+c\end{equation} for some $\lambda\ge 1$ and $c\ge 0$. It follows from a direct calculation that $\mathcal P(a,b)<+\infty$ if $b>\lambda \abs{a}+\delta_1(G)$, and $\mathcal P(a,b)=+\infty$ if $b<-\lambda \abs{a}$. Thus, $-\infty<\theta(a)<+\infty$.

    According to \autoref{EQU: Well Def Manhattan Curve}, we have $\lambda^{-1}\ell_2(g)\le \ell_1(g)\le \lambda \ell_2(g)$. 
    As $X_1,X_2$ are $C$-coarsely convex, it follows from \autoref{Lem: ConvexMetric} that there exists a constant $C'>0$ such that $$\{[g]\in \mathbf{conj}(G): \ell_i(g)\le n\}\subseteq \{[g]\in \mathbf{conj}(G): \ell_i^{o_i}[g]\le n+C'\}.$$
    Recall that by \cite[Main Theorem]{GY22}, $\sharp \{[g]\in \mathbf{conj}(G): \ell_i^{o_i}[g]\le n\}\asymp e^{\delta_i(G)n}/n$. Thus there exists $D>0$ such that $\sharp \{[g]\in \mathbf{conj}(G): \ell_i(g)\le n\}\le De^{\delta_i(G)n}/n$. It follows from a similar calculation that $\mathcal Q(a,b)<+\infty$ if $b>\lambda \abs{a}+\delta_1(G)$, and $\mathcal Q(a,b)=+\infty$ if $b<-\lambda \abs{a}$. Thus, $-\infty<\Theta(a)<+\infty$.
\end{proof}

By H\"{o}lder inequality, $\theta(a)$ and $\Theta(a)$ are convex functions on $\mathbb R$, so they are continuous. Obviously, they are also decreasing functions. Recall that by \cite[Main Theorem]{GY22}, $\delta_i(G)=\delta^c_i(G)<\infty$. This implies that both curves $\{(a, \theta(a))\mid a\in \mathbb R\}$ and $\{(a, \Theta(a))\mid a\in \mathbb R\}$ pass through two points $(0,\delta_1(G))$ and $(\delta_2(G),0)$.

Following \cite{CR22}, we will call this curve $\{(a,\theta(a))\mid a\in \mathbb R\}$ the \textit{Manhattan curve} for the pair $(d_1,d_2)$. This notion was firstly introduced by Burger for the curve $\{(a,\Theta(a))\mid a\in \mathbb R\}$ associated to actions on rank 1 symmetric spaces \cite{Bur93} and has attracted a lot of interest in recent years, see \cite{CT21, CR22, CR23, CR23b} for details.

For a fixed large $L>0$, define the annulus-like set of orbital points $A_i(o_i,n,L):=\{g\in G: |d_i(o_i,go_i)-n|\le L\}$ for $i=1,2$. 
\iffalse
Any element $g\in G$ is contained in at most $(2L)$ members from $\{A_1(o_1,n,L):  n\in \mathbb N\}$,  we can rewrite 
$$\begin{aligned}
\mathcal P(a,b)&=\sum_{g\in G}e^{-ad_2(o_2,go_2)-bd_1(o_1,go_1)}\\
&\asymp \sum_{n\ge 1} \left(\sum_{g\in A_1(o_1,n,L)} e^{-ad_2(o_2,go_2)}\right)e^{-bn}
\end{aligned}$$
where the implicit constant depends on $L$. 
The convergence radius of the series $\mathcal P(a,b)$ with $a$ fixed is then given by the following.\fi We then obtain
\begin{equation}\label{Expression of theta(a)}
    \theta(a)=\limsup_{n\to \infty}\frac{1}{n}\log \sum_{g\in A_1(o_1,n,L)}e^{-ad_2(o_2,go_2)}.
\end{equation}

Similarly, define the annulus-like set of conjugacy classes  $A_i^c(n,L):=\{[g]\in \mathbf{conj}(G): |\ell_i(g)-n|\le L\}$ for $i=1,2$. 
We have 
\begin{equation}\label{Expression of Theta(a)}
    \Theta(a)=\limsup_{n\to \infty}\frac{1}{n}\log \sum_{[g]\in A_1^c(n,L)}e^{-a\ell_2(g)}.
\end{equation}

As shown in \cite[Proposition 3.1]{CT21} for a hyperbolic group, we explore the relation between $\theta(a)$ and $\Theta(a)$.
\begin{proposition}\label{Prop: ConvergenceRadius}
For any $a\in \mathbb R$, we have   $\theta(a)\le \Theta(a)$.
\end{proposition}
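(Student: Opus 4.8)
\section*{Proof proposal for \autoref{Prop: ConvergenceRadius}}

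The plan is to compare the two Poincar\'e-type series $\mathcal P(a,b)$ and $\mathcal Q(a,b)$ directly, using the simultaneous Extension Lemma to trade each group element for a conjugacy class with controlled stable lengths in both metrics. Fix $a\in\mathbb R$. First I would invoke \autoref{COR: Simultaneous Extension 15}: there are $\epsilon>0$ and a finite set $S\subseteq\mathcal{SC}(G)$ such that every $g\in G$ admits $f_g\in S$ with $gf_g\in\mathcal{SC}(G)$ and $\ell_{d_i}(gf_g)\ge d_i(o_i,gf_go_i)-2\epsilon$ for $i=1,2$. Since also $\ell_i\le d_i(o_i,\cdot\, o_i)$ by \autoref{RMK: MLS Basics}, and $|d_i(o_i,go_i)-d_i(o_i,gf_go_i)|\le d_i(o_i,f_go_i)\le C_0$ where $C_0:=\max\{d_i(o_i,fo_i):f\in S,\ i=1,2\}$, we obtain $|d_i(o_i,go_i)-\ell_i(gf_g)|\le 2\epsilon+C_0$ for both $i$. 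Consequently, for every $b\in\mathbb R$,
\[
\mathcal P(a,b)\le e^{(|a|+|b|)(2\epsilon+C_0)}\sum_{g\in G}e^{-a\ell_2(gf_g)-b\ell_1(gf_g)}=e^{(|a|+|b|)(2\epsilon+C_0)}\sum_{[h]\in\mathbf{conj}(G)}M(h)\,e^{-a\ell_2(h)-b\ell_1(h)},
\]
where $M(h):=\#\{g\in G:[gf_g]=[h]\}$ and we used that $\ell_1,\ell_2$ are conjugacy invariant.

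Second, I would bound the multiplicity $M(h)$ polynomially in $\ell_1(h)$, uniformly in $h$. If $[gf_g]=[h]$ then $w:=gf_g$ is a conjugate of the contracting element $h$ with $d_1(o_1,wo_1)\le\ell_1(w)+2\epsilon=\ell_1(h)+2\epsilon$, and $g=wf_g^{-1}$ is determined by $w$ up to the $\le|S|$ choices of $f_g$; hence $M(h)\le|S|\cdot\#\{w\in[h]:d_1(o_1,wo_1)\le\ell_1(h)+2\epsilon\}$. The point is that such a conjugate $w=uhu^{-1}$ is coarsely minimal in its $G$-orbit, so $o_1$ lies within a bounded distance of (a translate of) $\langle h\rangle o_1$; after replacing $u$ by $uh^k$ one may arrange $d_1(o_1,uo_1)\le\ell_1(h)+O(1)$, and then properness of the action together with the quasi-geodesicity of $\langle h\rangle o_1$ bounds the number of admissible cosets $uE(h)$, hence the number of such $w$, by $\preceq\ell_1(h)$. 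Alternatively this counting input can be packaged from $\sharp\{[g]:\ell_1^{o_1}(g)\le n\}\asymp e^{\delta_1(G)n}/n$ (\cite{GY22}) together with \autoref{Lem: ConvexMetric}.

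Finally, to conclude: since $\log t\le\eta t+C_\eta$ for every $\eta>0$, a polynomial bound $M(h)\le P(\ell_1(h))$ gives $P(\ell_1(h))\,e^{-b\ell_1(h)}\le C_\eta e^{-(b-\eta)\ell_1(h)}$, so the displayed estimate yields $\mathcal P(a,b)\le C(a,b,\eta)\,\mathcal Q(a,b-\eta)$. If $b>\Theta(a)+\eta$ then $\mathcal Q(a,b-\eta)<\infty$, hence $\mathcal P(a,b)<\infty$; letting $\eta\downarrow 0$ gives $\mathcal P(a,b)<\infty$ for all $b>\Theta(a)$, i.e. $\theta(a)\le\Theta(a)$. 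The hard part will be the uniform polynomial multiplicity bound: the contraction constant of the axis $\langle h\rangle o_1$ of a perturbed element $h=gf_g$ need not be bounded independently of $d_1(o_1,go_1)$, so ``near-minimal $\Rightarrow$ $o_1$ boundedly close to the axis'' must be obtained from the uniform fellow-travelling of \emph{special} admissible paths in \autoref{PROP: Fellow Travel}, or from coarse convexity, rather than from the contraction constant directly. Everything else is the routine series manipulation above.
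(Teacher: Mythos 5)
Your overall strategy is the same as the paper's: use \autoref{COR: Simultaneous Extension 15} to replace each orbit point $g$ by the conjugacy class of $gf_g$, whose stable lengths in both metrics agree with $d_i(o_i,go_i)$ up to the additive constant $2\epsilon+C_0$, and then control how many $g$ can land in the same conjugacy class. Your bookkeeping is fine (and your observation that $g=wf_g^{-1}$ makes the extension map at most $|S|$-to-one is a cleaner substitute for the paper's coloring argument in \autoref{Lem: Injectivity}), and the final $\eta$-trick correctly converts a polynomial multiplicity bound into $\theta(a)\le\Theta(a)$.

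However, the heart of the proof --- the uniform polynomial bound on $M(h)$ --- is exactly the step you leave as ``the hard part,'' and the reduction you chose makes it strictly harder than necessary. You bound $M(h)$ by the number of \emph{all} conjugates $w=uhu^{-1}$ with $d_1(o_1,wo_1)\le\ell_1(h)+2\epsilon$, and then argue ``near-minimal $\Rightarrow$ $o_1$ is boundedly close to a translate of the axis.'' That implication needs a contraction (or quasi-convexity) constant for the quasi-axis of $h=gf_g$, and by \autoref{PROP: Admissible Path Contracting} that constant depends on the $\mathrm{Len}_q$-constant $d_1(o_1,go_1)$, hence is \emph{not} uniform over the elements you must treat --- the very non-uniformity you flag. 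The uniform $\epsilon(\tau,C)$-fellow-travelling of \autoref{PROP: Fellow Travel} does not rescue this version, because an arbitrary near-minimal conjugate $w$ carries no special admissible path through $o_1$; fellow-travelling only controls geodesics between vertices of such a path, not the distance from $o_1$ to a translated axis. The fix is to count only the conjugates you actually need, namely those of the form $w=g'f_{g'}$ with $g'$ in the annulus: each such $w$ comes with its own special $(D,\tau)$-admissible bi-infinite path through $o_1$, which fellow-travels (with the uniform constant) a geodesic line $\eta$ joining the fixed points of the fixed representative $hf$; writing $w=u(hf)u^{-1}$ and normalizing $u$ modulo powers of $hf$, this places $u^{-1}o_1$ in a uniform neighbourhood of a segment of $\eta$ of length roughly $n$, and the properness-based orbit count of \autoref{Lem: ProperAction} then bounds the number of admissible $u$, hence of such $w$, by $C_1n$. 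This restricted count (which is the paper's argument) is what your proposal is missing; as written, the multiplicity bound is asserted, not proved.
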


We first prove several lemmas that will be useful in the proof of \autoref{Prop: ConvergenceRadius}.

\begin{lemma}[{\cite[Lemma 7.2]{CK02}} ]\label{Lem: ProperAction}
    Let $G\curvearrowright X$ be a proper action on a geodesic metric space. Then for any $o\in X$ and $n\gg M>0$, there exists $C=C(o,M)>0$ with the following property. For any geodesic segment $\gamma\subseteq X$ with $|\mathrm{Len}(\gamma)-n|\le M$, $\sharp\{g\in G\mid d(go,\gamma)\le M\}\le Cn$.
\end{lemma}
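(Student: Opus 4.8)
The plan is to exploit the properness of the action to control, via a counting argument, how many group elements can have orbit point close to a fixed geodesic segment of controlled length. First I would observe that the set $\{g\in G\mid d(go,o)\le 2M\}$ is finite, say of cardinality $N_0=N_0(o,M)$, by the very definition of a proper action. This will be the quantity that, after multiplication by (essentially) the number of lattice-like points along $\gamma$, bounds the count we want.

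Next I would set up a covering of the geodesic segment $\gamma$ by metric balls. Since $\mathrm{Len}(\gamma)\le n+M$, parametrize $\gamma$ by arclength and pick points $x_0=\gamma_-,x_1,\dots,x_k=\gamma_+$ along $\gamma$ with consecutive distance at most $1$ (say $x_j=\gamma(j)$ for $0\le j\le \lfloor \mathrm{Len}(\gamma)\rfloor$ and $x_k=\gamma_+$); then $k\le n+M+1$. Every point of $\gamma$ lies within distance $1$ of some $x_j$, so if $d(go,\gamma)\le M$ then $d(go,x_j)\le M+1$ for some $j$. Writing $x_j=h_j o$ is not possible in general (the $x_j$ need not be orbit points), so instead I would fix, for each $j$, one element $h_j\in G$ with $d(h_j o,x_j)$ minimal — but this minimum need not be bounded, so that is not the right move either. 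The cleaner route: for the single element $g$ under consideration, $d(g o, x_j)\le M+1$; I want to bound the number of such $g$ for each fixed $j$. If $g,g'$ both satisfy $d(go,x_j)\le M+1$ and $d(g'o,x_j)\le M+1$, then $d(go,g'o)\le 2M+2$, i.e. $d(o,g^{-1}g'o)\le 2M+2$, so $g^{-1}g'$ lies in the finite set $\{u\in G: d(o,uo)\le 2M+2\}$, which has some cardinality $N_1=N_1(o,M)$ depending only on $o$ and $M$. Hence for each fixed index $j$ there are at most $N_1$ elements $g\in G$ with $d(go,x_j)\le M+1$.

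Finally I would combine the two counts: every $g$ with $d(go,\gamma)\le M$ is counted in at least one of the $k+1\le n+M+2$ index classes, and each class contributes at most $N_1$ elements, so
$$
\sharp\{g\in G\mid d(go,\gamma)\le M\}\le N_1\,(n+M+2)\le C n
$$
for $n\gg M$, where $C=C(o,M)$ absorbs $N_1$ and the additive error (using $n\ge M+2$, say, so that $n+M+2\le 2n$ and one may take $C=2N_1$). This establishes the claim. I do not anticipate a genuine obstacle here — the only subtlety is resisting the temptation to write segment points as orbit points; the fix is to compare two candidate $g$'s directly via $g^{-1}g'$ and invoke properness for the finiteness of $\{u: d(o,uo)\le 2M+2\}$. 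One should also note the lemma is stated for $n\gg M$, which is exactly what lets the additive constants be swallowed into the linear bound $Cn$.
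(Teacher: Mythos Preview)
The paper does not supply its own proof of this lemma; it is simply quoted from \cite{CK02}. Your argument is correct and is the standard one: cover $\gamma$ by $O(n)$ unit-spaced sample points, observe that any $g$ with $d(go,\gamma)\le M$ has $go$ within $M+1$ of one of them, and use properness (via $g^{-1}g'$) to bound the number of orbit points near each sample by the fixed finite quantity $N_1=\sharp\{u\in G: d(o,uo)\le 2M+2\}$.
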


\begin{lemma}\label{Lem: FiniteColor}
    Let $G\curvearrowright X$ be a proper action on a proper geodesic metric space. For any $o\in X$, and any $R>0$, there exists $N=N(o,R)\in \mathbb N ^+$ such that $Go$ can be divided into $N$ pieces so that each piece is $R$-separated.
\end{lemma}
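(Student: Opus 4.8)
\textbf{Proof proposal for \autoref{Lem: FiniteColor}.}

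The plan is to reduce the statement to a local finiteness property of the orbit $Go$ coming from the proper action, and then to produce the coloring by a greedy/Zorn-type argument. First I would fix $o \in X$ and $R > 0$. Since the action $G \curvearrowright X$ is proper and $X$ is proper, the orbit $Go$ is a locally finite set: for every $x \in X$ the ball $N_{2R}(x)$ meets only finitely many points of $Go$, and in fact $\sharp\{g \in G \mid d(o, go) \le 2R\}$ is finite by properness, say with cardinality $N_0$. Because the $G$-action is by isometries, $\sharp\bigl(Go \cap N_{2R}(go)\bigr) \le N_0$ for \emph{every} $g \in G$, i.e. every orbit point has at most $N_0 - 1$ other orbit points within distance $2R$. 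This uniform bound is the key geometric input.

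Next I would set up the coloring as a graph-coloring problem. Define a graph $\mathcal{G}$ with vertex set $Go$ and an edge between two distinct points $x, y \in Go$ whenever $d(x, y) < R$ (one could equally use $\le R$; the strict/non-strict distinction is immaterial after replacing $R$ by $R + 1$). By the previous paragraph every vertex of $\mathcal{G}$ has degree at most $N_0 - 1$, so $\mathcal{G}$ has uniformly bounded degree $\Delta \le N_0 - 1$. A standard fact is that a graph of maximum degree $\Delta$ is $(\Delta + 1)$-colorable; for a possibly infinite vertex set this follows from the de Bruijn–Erdős theorem (every finite subgraph is $(\Delta+1)$-colorable, hence so is the whole graph), or more elementarily by transfinite induction / Zorn's lemma on partial proper colorings. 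Concretely: well-order $Go$ (or just well-order $G$ and transport), and color the points one at a time, always assigning to the current point a color in $\{1, \dots, \Delta+1\}$ not used by any of its at most $\Delta$ already-colored neighbors; such a color exists since there are $\Delta+1$ colors available. Setting $N = N(o,R) = \Delta + 1 \le N_0$, the color classes give a partition of $Go$ into $N$ pieces, and two points in the same class are non-adjacent in $\mathcal{G}$, i.e. at distance $\ge R$, so each piece is $R$-separated.

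The only mild subtlety — and the step I would be most careful about — is the infinite cardinality of $Go$: one must invoke either de Bruijn–Erdős or a Zorn's-lemma argument to pass from the finitary coloring statement to the full orbit, rather than naively claiming a finite induction works. Everything else is routine: properness of the action together with properness of $X$ gives the uniform degree bound $\Delta \le N_0 - 1$ where $N_0 = \sharp\{g \in G : d(o,go)\le 2R\}$, and the isometric action makes this bound translation-invariant over all of $Go$. Thus one obtains $N = N(o,R)$ depending only on $o$ and $R$ as required, and in the intended applications $Go$ is coarsely dense in $X$ (the action is cocompact), so this coloring of $Go$ will suffice for the counting arguments that follow.
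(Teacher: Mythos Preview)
Your proposal is correct and follows essentially the same greedy-coloring approach as the paper: bound the number of orbit points in an $R$-ball uniformly (via properness plus isometry), then color one point at a time avoiding the finitely many colors already used by nearby points. The only differences are cosmetic: the paper uses radius $R$ rather than $2R$, and in place of de Bruijn--Erd\H{o}s or Zorn it simply observes that $G$ is countable (a proper action on a proper space forces this), so an ordinary inductive enumeration suffices.
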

\begin{proof}
    As $G\curvearrowright X$ is proper, the set $\{g\in G\mid d(o,go)\le R\}$ is finite. Then $N:=\sharp(Go\cap B(o,R))$ is finite. The isometric group action guarantees that $N=\sharp(Go\cap B(go,R))$ for any $g\in G$.

    Denote $Go\cap B(o,R)=\{g_1o,\cdots,g_No\}$. Let $C=\{c_1,\ldots,c_N\}$ be a set consisting of $N$ distinct colors and $\phi: Go\cap B(o,R)\to C$ be a coloring map by assigning each $g_io$ the color $c_i$. Then we extend the coloring map to the whole orbit. Precisely speaking, let $A_0=Go\cap B(o,R)$. For any orbit point $go\notin A_0$, the set $Go\cap B(go,R)$ contains at most $N-1$ distinct colors as $\sharp((Go\cap B(go,R))\setminus go)\le N-1$. Then we assign $go$ one of the remaining colors in $C$ and set $A_1=A_0\cup\{go\}$. By repeatedly coloring point-by-point, we extend the coloring map to $A_1,A_2,\ldots, A_n$. As $X$ is proper, so $G$ is countable and one can require that the diameter of $A_n$ tends to infinity and $Go=\cup_{n=1}^{\infty}A_n$. 

    Finally, we divide $Go$ into $N$ pieces such that each piece maps to a same color. Clearly, any two orbit points with the same color are at least $R$-separated.
\end{proof}

Fix a subset $F=\{f_1,f_2,\cdots, f_{15}\}\subseteq \mathcal{SC}(G)$ given by \autoref{COR: Simultaneous Extension 15}. Let $\phi$ be the \textit{extension map} sending $g$ to $gf$ where $f\in F$ is determined by \autoref{COR: Simultaneous Extension 15} satisfying $|\ell_i(\phi(g))-d_i(o_i,\phi(g)o_i)|\le 2\epsilon$ for $i=1,2$. 
\begin{corollary}\label{Lem: Injectivity}
    The group $G$ can be divided into a finite collection of subsets $G=\bigsqcup_{1\le i\le N}A_i$ such that $\phi|_{A_i}$ is injective for each $1\le i \le N$.
\end{corollary}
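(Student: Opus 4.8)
The statement to prove is \autoref{Lem: Injectivity}: $G$ decomposes into finitely many subsets $A_1,\dots,A_N$ on each of which the extension map $\phi$ is injective. The plan is to control how far apart two elements with the same image under $\phi$ must be, and then invoke the finite-coloring principle of \autoref{Lem: FiniteColor}.

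\textbf{Step 1: a separation estimate for the fibers of $\phi$.} The extension map sends $g\mapsto gf$ with $f\in F=\{f_1,\dots,f_{15}\}\subseteq\mathcal{SC}(G)$ a finite set. The key observation is that if $\phi(g)=\phi(g')$, say $gf=g'f'$ with $f,f'\in F$, then $g^{-1}g'=f(f')^{-1}$ lies in the finite set $F\cdot F^{-1}$. Consequently $d_1(o_1,g^{-1}g'o_1)=d_1(go_1,g'o_1)$ is bounded above by $R:=\max\{d_1(o_1,f(f')^{-1}o_1)\mid f,f'\in F\}<\infty$. So any two distinct elements in the same $\phi$-fiber are within distance $R$ of each other in the orbit $Go_1$.

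\textbf{Step 2: apply the finite coloring.} By \autoref{Lem: FiniteColor} applied to the proper action $G\curvearrowright X_1$ with this constant $R$, the orbit $Go_1$ can be partitioned into $N=N(o_1,R)$ pieces, each of which is $R$-separated; pulling this partition back along the (bijective, since the action is proper and hence free on the orbit — or more precisely along the orbit map composed with a choice of section, using that $\mathrm{Stab}(o_1)$ is finite) yields a decomposition $G=\bigsqcup_{1\le i\le N}A_i$ (refining if necessary to absorb the finite point-stabilizer, which only multiplies $N$ by $\sharp\mathrm{Stab}(o_1)$). On each $A_i$, any two distinct elements $g,g'$ satisfy $d_1(go_1,g'o_1)>R$, hence by Step 1 they cannot lie in the same $\phi$-fiber; therefore $\phi|_{A_i}$ is injective.

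\textbf{Main obstacle.} The only subtlety is bookkeeping around the orbit map $G\to Go_1$ not literally being a bijection when $\mathrm{Stab}_G(o_1)\ne\{1\}$; this is handled by a harmless refinement of the partition (each color class is further split into finitely many pieces indexed by the finite stabilizer so that the composite $A_i\hookrightarrow G\to Go_1$ is injective and its image $R$-separated). With that in place the argument is immediate. No genuine difficulty remains once the fiber-separation estimate of Step 1 is noted.
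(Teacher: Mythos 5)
Your proof is correct and follows essentially the same route as the paper: both bound the orbit-distance between two elements lying in a common $\phi$-fiber by a constant determined by the finite set $F$, and then invoke the finite coloring of \autoref{Lem: FiniteColor} with a slightly larger separation constant $R$. Your explicit treatment of the finite stabilizer of $o_1$ is a harmless refinement of a point the paper leaves implicit.
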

\begin{proof}
    Let $R=2\max \left\{d_1(o_1,fo_1)\mid f\in F\right\}+1$. According to \autoref{Lem: FiniteColor}, we can divide $G$ into a finite collection of subsets $G=\bigsqcup_{1\le i\le N}A_i$, so that $A_io_1\subseteq X_1$ is $R$-separated for each $1\le i\le N$. For any two elements $g,g'\in A_i$, denote $\phi(g)=gf$ and $\phi(g')=g'f'$ where $f,f'\in F$. Then by triangle inequality, $d_1(gfo_1,g'f'o_1)\ge d_1(go_1,g'o_1)-d_1(fo_1,o_1)-d_1(f'o_1,o_1)>0$. Thus $gf\neq g'f'$, i.e. $\phi|_{A_i}$ is injective.
\end{proof}

Now, we can prove \autoref{Prop: ConvergenceRadius}.

\begin{proof}[Proof of \autoref{Prop: ConvergenceRadius}]
 \autoref{Lem: Injectivity} shows that we can divide the annulus-like set of orbital points into $N$ subsets $A_1(o_1,n,L)=\bigsqcup_{1\le i\le N}A_i$, such that $\phi|_{A_i}$ is injective for each $1\le i \le N$. Let $\epsilon $ be the constant given by \autoref{COR: Simultaneous Extension 15}.

     For any $h\in A_1(o_1,n,L)$, one has that $|\ell_j(\phi(h))-d_j(o_j,\phi(h)o_j)|\le 2\epsilon$ for $j=1,2$. For any $a\in \mathbb{R}$, let $K=\max\{d_j(o_j,fo_j)\mid f\in F,\,j=1,2\}$, and $C_0=e^{\abs{a}(K+2\epsilon)}$. Then we have $[\phi(h)]\in A_1^c(n,L+K+2\epsilon)$, and $e^{-ad_2(o_2,ho_2)}\le C_0e^{-a\ell_2(\phi(h)}$.

    Now, fix any $i\in \{1,\ldots,N\}$. As $\Theta(a)$ takes sum over all conjugacy classes, we next show that there are not ``too many'' elements in $A_i$ mapped into a same conjugacy class by $\phi $. For each $h\in A_i$, the path $\gamma_h$ labelled by $hf$ in $X_1$ is actually a special $(D,\tau)$-admissible path (cf. \autoref{LEM: Modified Extension Lemma}) and also a $c$-quasi-geodesic, where $c$ depends only on $\tau$. Suppose that $h'\in A_i$ such that $[hf]=[h'f']$. Then there is $g\in G$ such that $h'f'=g(hf)g^{-1}$. This shows that $\gamma_{h'}$ and $g\gamma_h$ are both $c$-quasi-geodesic for $h'f'$ in $X_1$. As $hf$ is a contracting element on $X_1$, let $\eta_h$ be a geodesic connecting $(hf)^-$ and $(hf)^+$. Hence, by the $\epsilon$-fellow travel property of a $(D,\tau)$-admissible path, up to translation by $h'f'$, $o\in \gamma_{h'}$ lies in a $\epsilon$-neighbourhood of a geodesic segment $\gamma\subseteq g\eta_h$ with length $|l(\gamma)-n|\le L+K+2\epsilon$. Note that this is equivalent to say $g^{-1}o\in N_{\epsilon}(g^{-1}\gamma)$ and $g^{-1}\gamma$ is a geodesic segment in $\eta_h$, which is independent of $g$. Hence, as a result of \autoref{Lem: ProperAction}, there exists $C_1=C_1(\epsilon)>0$ such that the number of such $g^{-1}$ has a cardinality bounded by $C_1n$.

    Totally speaking, we have $\sum_{h\in A_i}e^{-ad_2(o_2,ho_2)}\le C_0C_1n\sum_{[g]:|\ell_1(g)-n|\le L+K+2\epsilon}e^{-a\ell_2(g)}$.

    Hence,  
    $$\sum_{|d_1(o_1,go_1)-n|\le L}e^{-ad_2(o_2,ho_2)}\le C_0C_1Nn\sum_{[g]:|\ell_1(g)-n|\le L+K+2\epsilon}e^{-a\ell_2(g)}.$$
    It follows from (\ref{Expression of theta(a)}) and (\ref{Expression of Theta(a)}) that $\theta(a)\le \Theta(a)$.
\end{proof}

\begin{proposition}\label{Prop: ConvergenceRadius2}
For any {$a\in (-\infty,0]\cup [\delta_2(G),+\infty)$}, we have    $\theta(a)= \Theta(a)$.
\end{proposition}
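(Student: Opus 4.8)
The plan is to establish the reverse inequality $\Theta(a)\le\theta(a)$ on the indicated set, since $\theta(a)\le\Theta(a)$ is already \autoref{Prop: ConvergenceRadius} and the two together give equality. The two endpoints are free: both curves $\{(a,\theta(a))\}$ and $\{(a,\Theta(a))\}$ pass through $(0,\delta_1(G))$ and $(\delta_2(G),0)$, so $\theta(0)=\delta_1(G)=\Theta(0)$ and $\theta(\delta_2(G))=0=\Theta(\delta_2(G))$. It then remains to handle $a<0$ and $a>\delta_2(G)$, and in both cases the mechanism is a comparison of the form $\mathcal Q(a,b)\le K\cdot\mathcal P(a,b)$. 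To set it up, recall from \autoref{Lem: ConvexMetric} (using that $X_1,X_2$ are coarsely convex and the actions are cocompact) that there are constants $C_1,C_2\ge 0$ with $|\ell_i(g)-\ell_i^{o_i}(g)|\le C_i$; minimizing $d_i(o_i,\cdot\, o_i)$ over a conjugacy class $[g]$ (up to an error $1$) therefore produces a representative $g_0\in[g]$ with $|d_i(o_i,g_0o_i)-\ell_i(g)|\le C_i+1$, by \autoref{RMK: MLS Basics}\,(\ref{MLSBasics2}) and conjugation invariance. Since $g_0$ determines $[g]$, the assignment $[g]\mapsto g_0$ is injective, so summing a nonnegative quantity of $g_0$ over conjugacy classes is bounded by summing it over all of $G$.

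For $a\le 0$ (which covers $a<0$) I would choose $g_0\in[g]$ minimizing $d_1(o_1,\cdot\, o_1)$. Then $e^{-b\ell_1(g)}\le e^{|b|(C_1+1)}e^{-bd_1(o_1,g_0o_1)}$ for every $b$, while $\ell_2(g)=\ell_2(g_0)\le d_2(o_2,g_0o_2)$ together with $-a\ge 0$ gives $e^{-a\ell_2(g)}\le e^{-ad_2(o_2,g_0o_2)}$. Multiplying and summing over conjugacy classes, then using injectivity of $[g]\mapsto g_0$, yields $\mathcal Q(a,b)\le e^{|b|(C_1+1)}\mathcal P(a,b)$ for all $b$. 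Since $\mathcal P(a,b)$ is non-increasing in $b$ and finite for every $b>\theta(a)$, we get $\mathcal Q(a,b)<\infty$ for all $b>\theta(a)$, hence $\Theta(a)\le\theta(a)$.

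For $a>\delta_2(G)$ I would instead choose $g_0\in[g]$ minimizing $d_2(o_2,\cdot\, o_2)$, so that $e^{-a\ell_2(g)}\le e^{aC_2}e^{-ad_2(o_2,g_0o_2)}$ (here $a>0$), while $\ell_1(g)\le d_1(o_1,g_0o_1)$ combined with the sign condition $b\le 0$ gives $e^{-b\ell_1(g)}\le e^{-bd_1(o_1,g_0o_1)}$; the same summation then gives $\mathcal Q(a,b)\le e^{aC_2}\mathcal P(a,b)$ for all $b\le 0$. The key extra input is that $\theta(a)<0$ strictly when $a>\delta_2(G)$: from the quasi-isometry comparison (\ref{EQU: Well Def Manhattan Curve}), $d_1\le\lambda d_2+c$, so for $b<0$ one bounds $\mathcal P(a,b)\le e^{|b|c}\sum_{g\in G}e^{-(a+\lambda b)d_2(o_2,go_2)}$, which converges as soon as $a+\lambda b>\delta_2(G)$, i.e. for $b$ slightly above $(\delta_2(G)-a)/\lambda<0$; thus $\theta(a)<0$. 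Now pick $b_0\in(\theta(a),0)$ with $\mathcal P(a,b_0)<\infty$; then $\mathcal Q(a,b_0)\le e^{aC_2}\mathcal P(a,b_0)<\infty$, and since $\mathcal Q(a,\cdot)$ is non-increasing this forces $\mathcal Q(a,b)<\infty$ for all $b\ge b_0$, in particular for all $b>\theta(a)$. Hence $\Theta(a)\le\theta(a)$.

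The one genuinely delicate place is the value $a=\delta_2(G)$: there the comparison $\mathcal Q(a,b)\le K\,\mathcal P(a,b)$ for $b\le 0$ is vacuous, because $\mathcal P(\delta_2(G),b)=\infty$ for $b\le 0$ (the series sits exactly at its critical exponent), so one really must fall back on the separately recorded fact that both curves pass through $(\delta_2(G),0)$. Apart from that, I expect the main care to go into tracking the signs — matching $-a\ell_2$ and $-b\ell_1$ against $-ad_2$ and $-bd_1$ in the correct direction in each case — and into the elementary estimate showing $\theta(a)<0$ for $a>\delta_2(G)$; no step is conceptually hard, but the case division by the sign of $a$ (and the auxiliary restriction $b\le 0$ in the second case) is what makes the statement hold only on $(-\infty,0]\cup[\delta_2(G),+\infty)$ rather than on all of $\mathbb R$.
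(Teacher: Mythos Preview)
Your proof is correct. For $a\le 0$ your argument is essentially the paper's: both pick a representative $g_0\in[g]$ minimizing $d_1(o_1,\cdot\,o_1)$, use $\ell_2(g)\le d_2(o_2,g_0o_2)$ together with $-a\ge 0$, and compare the two series term by term (the paper phrases this via the annulus formulas~(\ref{Expression of theta(a)}),~(\ref{Expression of Theta(a)}), you via a direct inequality $\mathcal Q(a,b)\le e^{|b|(C_1+1)}\mathcal P(a,b)$, but the content is identical).

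Where you diverge from the paper is the range $a>\delta_2(G)$. The paper does not redo the argument; instead it swaps the roles of $(X_1,d_1)$ and $(X_2,d_2)$, obtaining curves $\tilde\theta,\tilde\Theta$ that are the reflections of $\theta,\Theta$ across the diagonal, applies the already-proven case $a\le 0$ to $\tilde\theta,\tilde\Theta$, and reads off $\theta(a)=\Theta(a)$ for $a\ge\delta_2(G)$ by inverting. This symmetry trick is cleaner: it avoids your separate verification that $\theta(a)<0$ for $a>\delta_2(G)$, the auxiliary sign restriction $b\le 0$, and the special handling of $a=\delta_2(G)$. Your route, on the other hand, is more self-contained and does not require thinking about inverses of convex decreasing functions. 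One small phrasing point: when you write ``in particular for all $b>\theta(a)$'' after fixing a single $b_0\in(\theta(a),0)$, you of course mean that $b_0$ may be taken arbitrarily close to $\theta(a)$; this is clear but worth stating explicitly.
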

\begin{proof}
    As \autoref{Prop: ConvergenceRadius} shows that $\theta(a)\le \Theta(a)$ for every $a\in \mathbb R$, it suffices to show that $\theta(a)\ge \Theta(a)$ for $a\in (-\infty,0]\cup [\delta_2(G),+\infty)$.

     \autoref{Lem: ConvexMetric} gives that $|\ell_1(g)-\ell_1^{o_1}(g)|\le  C'$ for a constant $C'$ independent of $g\in G$. For any $[g]\in \mathbf{conj}(G)$, there exists $h\in [g]$ such that $d_1(o_1,ho_1)=\ell_1^{o_1}(g)\le \ell_1(g)+C'$. In addition, it follows from $d_2(o_2,ho_2)\ge \ell_2(h)=\ell_2(g)$ that $e^{-ad_2(o_2,ho_2)}\ge e^{-a\ell_2(g)}$ for $a\le 0$. Therefore, 
     $$\sum_{|d_1(o_1,ho_1)-n|\le L+C'}e^{-ad_2(o_2,ho_2)}\ge \sum_{[g]:|\ell_1(g)-n|\le L}e^{-a\ell_2(g)},$$
     which by (\ref{Expression of theta(a)}) and (\ref{Expression of Theta(a)}) implies that $\theta(a)\ge \Theta(a)$ when $ a\le 0$.

     Then we switch $(X_1,d_1)$ and $(X_2,d_2)$ to get another two curves $\tilde{\theta}(a)$ and $\tilde{\Theta}(a)$ (which is actually symmetric to the original one). The same arguments show that $\tilde{\theta}(a)= \tilde{\Theta}(a)$ when $a\le 0$, which implies that $\theta^{-1}(b)= \Theta^{-1}(b)$ when $b<0$. This is also equivalent to say $\theta(a)= \Theta(a)$ for $a\ge \delta_2(G)$. Then the conclusion follows.
\end{proof}

For any continuous, decreasing and convex function $f: \mathbb R\to \mathbb R$, both limits $\lim_{t\to +\infty}f(t)/t$ and $\lim_{t\to - \infty}f(t)/t$ exist (may be infinite). As $\theta(a)$ and $\Theta(a)$ are such functions, we can define $$\alpha_{min}:=-\lim_{a\to+\infty}\frac{\theta(a)}{a},\quad \alpha_{max}:=-\lim_{a\to-\infty}\frac{\theta(a)}{a}$$
 and $$\beta_{min}:=-\lim_{a\to+\infty}\frac{\Theta(a)}{a},\quad \beta_{max}:=-\lim_{a\to-\infty}\frac{\Theta(a)}{a}.$$ Then $\alpha_{min}, \alpha_{max}, \beta_{min}, \beta_{max}$ are positive and finite as $d_1$ and $d_2$ are quasi-isometric.

 Denote $$\mathrm{Dil}_-:=\inf_{[g]:\ell_1(g)>0}\frac{\ell_2(g)}{\ell_1(g)},\quad \mathrm{Dil}_+:=\sup_{[g]:\ell_1(g)>0}\frac{\ell_2(g)}{\ell_1(g)}.$$

 \begin{corollary}\label{Cor: DilationInequality}
     $\mathrm{Dil}_-=\beta_{min}= \alpha_{min}\le \alpha_{max}= \beta_{max}=\mathrm{Dil}_+$.
 \end{corollary}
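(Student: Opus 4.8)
The plan is to prove the chain of equalities
\[
\mathrm{Dil}_-=\beta_{\min}=\alpha_{\min}\le\alpha_{\max}=\beta_{\max}=\mathrm{Dil}_+
\]
by identifying each of the slopes $\alpha_{\min},\alpha_{\max}$ of the Manhattan curve $\{(a,\theta(a))\}$, and each of the slopes $\beta_{\min},\beta_{\max}$ of Burger's curve $\{(a,\Theta(a))\}$, with the corresponding infimum/supremum of the ratio $\ell_2/\ell_1$ over conjugacy classes. The middle inequality $\alpha_{\min}\le\alpha_{\max}$ is automatic since $\theta$ is convex and decreasing, so the content is the four outer identifications. By the symmetry of the whole picture under swapping $(X_1,d_1)\leftrightarrow(X_2,d_2)$ (which reflects both curves across the diagonal and interchanges $\min\leftrightarrow\max$ after inverting), it suffices to handle the two identities $\beta_{\min}=\mathrm{Dil}_-$ and $\alpha_{\min}=\beta_{\min}$; the identities involving $\max$ and $\alpha_{\max}=\beta_{\max}$ then follow formally.

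First I would establish $\beta_{\min}=\mathrm{Dil}_-$, working directly from the series $\mathcal Q(a,b)=\sum_{[g]}e^{-a\ell_2(g)-b\ell_1(g)}$ and the expression \eqref{Expression of Theta(a)} for $\Theta(a)$. The inequality $\beta_{\min}\ge\mathrm{Dil}_-$ is the easy direction: for $a>0$, using $\ell_2(g)\ge\mathrm{Dil}_-\cdot\ell_1(g)$ termwise in the sum over $A_1^c(n,L)$ gives $\Theta(a)\le -a\,\mathrm{Dil}_- + \limsup_n\frac1n\log\sharp A_1^c(n,L)$, and since $\delta_1^c(G)=\delta_1(G)<\infty$ (finiteness is guaranteed by coarse convexity via \autoref{Lem: ConvexMetric} together with the growth estimate $\sharp\{[g]:\ell_i^{o_i}[g]\le n\}\asymp e^{\delta_i(G)n}/n$ cited from \cite{GY22}), dividing by $-a$ and letting $a\to+\infty$ yields $\beta_{\min}\ge\mathrm{Dil}_-$. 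For the reverse inequality $\beta_{\min}\le\mathrm{Dil}_-$, I would take a sequence $[g_k]$ of conjugacy classes with $\ell_2(g_k)/\ell_1(g_k)\to\mathrm{Dil}_-$; passing to powers $g_k^m$ (which scales both lengths by $m$, by homogeneity in \autoref{RMK: MLS Basics}) one can arrange the classes to live in annuli $A_1^c(n,L)$ for arbitrarily large $n$ while keeping the ratio close to $\mathrm{Dil}_-$, so that for each fixed $a>0$ the contribution of these classes forces $\Theta(a)\ge -a(\mathrm{Dil}_-+\varepsilon)$ for large enough scale; dividing by $-a$ and taking $a\to+\infty$ gives $\beta_{\min}\le\mathrm{Dil}_-+\varepsilon$, hence $\le\mathrm{Dil}_-$. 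The quasi-isometry of $d_1,d_2$ (so $\mathrm{Dil}_-\in(0,\infty)$) keeps all quantities finite.

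Next I would prove $\alpha_{\min}=\beta_{\min}$. Since \autoref{Prop: ConvergenceRadius} gives $\theta(a)\le\Theta(a)$ for all $a$, dividing by $-a$ and letting $a\to+\infty$ gives $\alpha_{\min}\ge\beta_{\min}$ immediately. For $\alpha_{\min}\le\beta_{\min}$ I would invoke \autoref{Prop: ConvergenceRadius2}: it asserts $\theta(a)=\Theta(a)$ for $a\ge\delta_2(G)$, in particular for all sufficiently large $a$, and therefore the two curves have literally the same slope at $+\infty$, i.e. $\alpha_{\min}=\beta_{\min}$. The same argument at $a\to-\infty$ — again using \autoref{Prop: ConvergenceRadius2}, which covers $a\le 0$ — gives $\alpha_{\max}=\beta_{\max}$, and the swapped version of the $\beta_{\min}=\mathrm{Dil}_-$ argument (replacing $\ell_2/\ell_1$ by $\ell_1/\ell_2$ and noting $\alpha_{\max}$ of one pair equals $1/\alpha_{\min}$ of the swapped pair, with $\sup\ell_2/\ell_1 = 1/\inf\ell_1/\ell_2$) yields $\beta_{\max}=\mathrm{Dil}_+$. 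Assembling: $\mathrm{Dil}_-=\beta_{\min}=\alpha_{\min}\le\alpha_{\max}=\beta_{\max}=\mathrm{Dil}_+$.

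The step I expect to be the main obstacle is the lower bound $\beta_{\min}\le\mathrm{Dil}_-$ (equivalently the proof that the extremal ratio is actually \emph{attained in the limit} by classes contributing to $\Theta$ at the right scale). The subtlety is that a single near-extremal conjugacy class contributes only one term to $\mathcal Q(a,b)$, which is not enough to move the convergence abscissa; one must produce, for each large $n$, \emph{enough} classes in the annulus $A_1^c(n,L)$ with ratio near $\mathrm{Dil}_-$. The natural device is to take the near-extremal element $g$, form products $g^{m_1}s g^{m_2}s\cdots$ with a controlled alphabet $S\subseteq\mathcal{SC}(G)$ using the Extension Lemma (\autoref{COR: Simultaneous Extension 15} / \autoref{LEM: Modified Extension Lemma}) so that the resulting elements are contracting on both $X_1$ and $X_2$, their stable lengths on $X_i$ are additive up to bounded error (so the ratio stays near $\mathrm{Dil}_-$), and distinct words give distinct conjugacy classes up to the bounded multiplicity controlled by \autoref{Lem: ProperAction}; this manufactures exponentially many classes per annulus and pins down $\Theta(a)$ from below. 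Carrying out this construction carefully — in particular bounding the conjugacy multiplicity and verifying the ratio control survives the perturbations — is where the real work lies; everything else is convex-analysis bookkeeping on the two curves.
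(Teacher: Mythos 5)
Your main line of argument is exactly the paper's: the middle inequality from convexity of $\theta$, the identifications $\alpha_{min}=\beta_{min}$ and $\alpha_{max}=\beta_{max}$ from \autoref{Prop: ConvergenceRadius2} (equality of $\theta$ and $\Theta$ on $(-\infty,0]\cup[\delta_2(G),+\infty)$), the termwise bound $\ell_2\ge \mathrm{Dil}_-\,\ell_1$ together with the conjugacy growth bound from \cite{GY22} and \autoref{Lem: ConvexMetric} for $\beta_{min}\ge \mathrm{Dil}_-$, powers of a near-extremal element for the reverse inequality, and symmetry under swapping $(X_1,d_1)\leftrightarrow(X_2,d_2)$ for the $\max$ side. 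However, the ``main obstacle'' you single out at the end is not an obstacle, and the Extension Lemma construction of exponentially many classes per annulus is unnecessary. Since stable length is homogeneous and conjugacy-invariant, the powers $g_0^n$ of a single element with $\ell_2(g_0)\le(\mathrm{Dil}_-+\epsilon)\ell_1(g_0)$ already give one distinct conjugacy class in each annulus around $n\ell_1(g_0)$, and a single term suffices: by \eqref{Expression of Theta(a)}, the annulus sum is at least $e^{-a\ell_2(g_0^n)}\ge e^{-a n\ell_1(g_0)(\mathrm{Dil}_-+\epsilon)}$, so the limsup is at least $-a(\mathrm{Dil}_-+\epsilon)$; equivalently, since all terms of $\mathcal Q(a,b)$ are positive, the geometric sub-series over $\{[g_0^n]\}_{n\ge1}$ alone diverges whenever $b<-a(\mathrm{Dil}_-+\epsilon)$, which pins down $\Theta(a)\ge -a(\mathrm{Dil}_-+\epsilon)$. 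This is precisely what the paper does, so no counting of conjugacy multiplicities via \autoref{Lem: ProperAction} or admissible-path constructions is needed here; the rest of your write-up is correct convex-analysis bookkeeping.
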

 \begin{proof}
     As the Manhattan curve $\theta(a)$ is decreasing and convex, we obtain $\alpha_{min}\le \alpha_{max}$. 
     By \autoref{Prop: ConvergenceRadius2}, the functions $\theta$ and $\Theta$ agree on $(-\infty,0]\cup [\delta_2(G),+\infty)$, so we have $\alpha_{min}=\beta_{min}$ and $\beta_{max}=\alpha_{max}$. 
     Hence, up to symmetry, it suffices to show $\mathrm{Dil}_-=\beta_{min}$.

     By the definition of $\mathrm{Dil}_-$, we have $\mathrm{Dil}_-\cdot \ell_1(g)\le \ell_2(g)$ for any $g\in G$. If   $|\ell_1(g)-n|\le L$, then $e^{-a\ell_2(g)}\le e^{-a\mathrm{Dil}_-\ell_1(g)}\le e^{-a\mathrm{Dil}_-(n-L)}$ for any $a>0$. 
     
     By \autoref{Lem: ConvexMetric}, we have $|\ell_1(g)-\ell_1^{o_1}(g)|\le C'$ for a constant $C'$ independent of $g\in G$.   Recall that by \cite[Main Theorem]{GY22}, the set $\{[g]: |\ell_1^o(g)-n|\le L\}$ has a cardinality at most $C e^{\delta_1(G)n}/n$ for a constant $C$ independent of $n$. Hence,
$$\sum_{[g]:|\ell_1(g)-n|\le L+C'}e^{-a\ell_2(g)}\prec e^{-a\mathrm{Dil}_-(n-L-C')}  e^{\delta_1(G)n}/n, $$     
     Recall that (\ref{Expression of Theta(a)}) gives 
$$
\Theta(a)=\limsup_{n\to \infty}\frac{1}{n}\log \sum_{[g]:|\ell_1(g)-n|\le L+C'}e^{-a\ell_2(g)}.
$$     
 We thus obtain    that $\Theta(a)\le -a\mathrm{Dil}_-+\delta_1(G)$,   implying $ \beta_{min}\ge \mathrm{Dil}_-$. 

    For the converse direction, by the definition of $\mathrm{Dil}_-$, for any small $\epsilon>0$, there exists an infinite-order element $g_0\in G$ such that 
     $$\ell_2(g_0)\le \ell_1(g_0)(\mathrm{Dil}_-+\epsilon).$$
    As the stable length is a homogeneous function on any cyclic subgroup, we have $\ell_1(g_0^n)=n\ell_1(g_0)$ and $\ell_2(g_0^n)\le \ell_1(g_0^n)(\mathrm{Dil}_-+\epsilon)$ for any $n\ge 1$. Hence, for any $n\ge 1$ and $a\ge 0$, 
    $$\sum_{[g]:|\ell_1(g)-n \ell_1(g_0)|\le L}e^{-a\ell_2(g)}\ge e^{-a\ell_2(g_0^n)}\ge e^{-a\ell_1(g_0^n)(\mathrm{Dil}_-+\epsilon)}\ge e^{-a\cdot n\ell_1(g_0)\cdot(\mathrm{Dil}_-+\epsilon)}.$$
    Taking the limit sup gives  $\Theta(a)\ge -a(\mathrm{Dil}_-+\epsilon)$ when $a\ge 0$. By letting $\epsilon\to 0$, one has $\beta_{min}\le \mathrm{Dil}_-$.

     In conclusion, we have $\mathrm{Dil}_-=\beta_{min}$, which completes the proof. 
\end{proof}

\begin{proposition}\label{Prop: RoughlySimilarEquivalence}
   The following are equivalent:
    \begin{enumerate}
        \item[(1)] the curve $\{(a,\Theta(a))\mid a\in \mathbb R\}$ for $d_1,d_2$ is a straight line;
        \item[(2)] $\ell_2(g)=k\ell_1(g)$ for all conjugacy classes $[g]$, where $k=\delta_1(G)/\delta_2(G)$;
        \item[(3)] the Manhattan curve $\{(a,\theta(a))\mid a\in \mathbb R\}$ for $d_1,d_2$ is a straight line.
    \end{enumerate}
\end{proposition}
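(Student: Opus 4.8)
The plan is to prove the cycle of implications $(1)\Rightarrow(2)\Rightarrow(3)\Rightarrow(1)$, using the relations between $\theta$, $\Theta$ and the dilation constants established in \autoref{Prop: ConvergenceRadius2} and \autoref{Cor: DilationInequality}. The underlying mechanism is that a decreasing convex function which agrees at infinity with its slopes, and which is ``pinned'' through the two reference points $(0,\delta_1(G))$ and $(\delta_2(G),0)$, is forced to be affine exactly when its asymptotic slopes coincide, i.e.\ when $\alpha_{\min}=\alpha_{\max}$ (equivalently $\beta_{\min}=\beta_{\max}$, equivalently $\mathrm{Dil}_-=\mathrm{Dil}_+$).

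First I would record the elementary fact that a continuous, decreasing, convex function $f:\mathbb{R}\to\mathbb{R}$ with asymptotic slopes $-\lim_{a\to+\infty}f(a)/a=s_{\min}$ and $-\lim_{a\to-\infty}f(a)/a=s_{\max}$ is a straight line if and only if $s_{\min}=s_{\max}$; in that case $f(a)=f(0)-s a$ for the common slope $s$. Applying this to $\Theta$ gives: $\{(a,\Theta(a))\}$ is a line $\iff \beta_{\min}=\beta_{\max}$, and by \autoref{Cor: DilationInequality} this is $\iff \mathrm{Dil}_-=\mathrm{Dil}_+$, i.e.\ $\ell_2(g)/\ell_1(g)$ is a constant $k$ over all $[g]$ with $\ell_1(g)>0$ (and trivially $\ell_1(g)=0\iff\ell_2(g)=0$ since $d_1,d_2$ are quasi-isometric). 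To pin down $k=\delta_1(G)/\delta_2(G)$, note that when $\Theta$ is the line through $(0,\delta_1(G))$ with slope $-k$, its $a$-intercept is $\delta_1(G)/k$; but the curve must also pass through $(\delta_2(G),0)$ by the discussion following the well-definedness lemma (using $\delta_i(G)=\delta_i^c(G)<\infty$ from \cite[Main Theorem]{GY22}), whence $\delta_1(G)/k=\delta_2(G)$. This establishes $(1)\Leftrightarrow(2)$.

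Next, $(2)\Rightarrow(3)$: if $\ell_2(g)=k\ell_1(g)$ for all $[g]$ then $\mathrm{Dil}_-=\mathrm{Dil}_+=k$, so $\beta_{\min}=\beta_{\max}=k$ by \autoref{Cor: DilationInequality}, and then $\alpha_{\min}=\alpha_{\max}=k$ again by \autoref{Cor: DilationInequality} (which asserts $\alpha_{\min}=\beta_{\min}$ and $\alpha_{\max}=\beta_{\max}$). Applying the elementary convexity fact to $\theta$ now forces $\{(a,\theta(a))\}$ to be a straight line. Conversely $(3)\Rightarrow(1)$: if the Manhattan curve $\theta$ is a line then $\alpha_{\min}=\alpha_{\max}$, hence $\beta_{\min}=\beta_{\max}$ by \autoref{Cor: DilationInequality}, and the same elementary fact makes $\Theta$ a straight line. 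This closes the loop. (Alternatively, since \autoref{Prop: ConvergenceRadius2} gives $\theta\equiv\Theta$ on $(-\infty,0]\cup[\delta_2(G),+\infty)$ and both are convex with the same two pinned points, one can deduce the equivalence $(1)\Leftrightarrow(3)$ even more directly, but the slope bookkeeping above is cleanest.)

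The main obstacle is not any single hard estimate — all the analytic content (the relation $\theta\le\Theta$ with equality outside $(0,\delta_2(G))$, the identities $\alpha_{\min}=\beta_{\min}=\mathrm{Dil}_-$ and $\alpha_{\max}=\beta_{\max}=\mathrm{Dil}_+$) is already in hand from \autoref{Prop: ConvergenceRadius2} and \autoref{Cor: DilationInequality}. The care needed is twofold: first, in the elementary convexity lemma one must handle the possibility that $\theta$ or $\Theta$ has a genuine corner, so ``straight line'' must be interpreted as affine on all of $\mathbb{R}$, and one should check that finiteness of the asymptotic slopes (guaranteed by the quasi-isometry of $d_1,d_2$) rules out degenerate behavior; second, one must be slightly careful about conjugacy classes with $\ell_1(g)=0$ when translating the slope statement into statement $(2)$ — these contribute nothing and are handled by the observation that $\ell_1(g)=0\iff\ell_2(g)=0$. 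I would state the convexity lemma explicitly as a short sublemma, prove the three implications in the order $(1)\Leftrightarrow(2)$, $(2)\Rightarrow(3)$, $(3)\Rightarrow(1)$, and invoke \autoref{Cor: DilationInequality} at each transition.
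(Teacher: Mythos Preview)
Your proof is correct and largely parallels the paper's, with both arguments resting on \autoref{Cor: DilationInequality} and the two pinned points $(0,\delta_1(G))$, $(\delta_2(G),0)$. The one genuine difference is in the implication $(2)\Rightarrow(3)$: the paper invokes \autoref{COR: Rigidity from SC} to upgrade the spectral identity $\ell_2=k\ell_1$ to a rough similarity $|d_2-kd_1|\le C$ between the pulled-back metrics, and then reads off the linearity of $\theta$ directly from its definition; you instead stay entirely within the asymptotic-slope framework, deducing $\alpha_{\min}=\alpha_{\max}$ from $\beta_{\min}=\beta_{\max}$ via \autoref{Cor: DilationInequality} and then appealing to your convexity sublemma. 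Your route is more self-contained (it never leaves the Manhattan-curve formalism and avoids the extension-lemma machinery behind \autoref{COR: Rigidity from SC}), while the paper's route has the advantage of actually exhibiting the rough similarity of the metrics as a byproduct. Both are short and valid.
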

\begin{proof}
    $(1)\Rightarrow (2)$: when $\Theta(a)$ is a straight line, there exists $k>0$ such that  $\beta_{min}=\beta_{max}=k$. As $\Theta(0)=\delta_1(G)$ and $\Theta(\delta_2(G))=0$, the absolute value of the slope of $\Theta(a)$ is $k=\delta_1(G)/\delta_2(G)$. Then the conclusion follows from \autoref{Cor: DilationInequality}.

    $(2)\Rightarrow (1)$: this follows from the definition of $\Theta(a)$.

    $(2)\Rightarrow (3)$: this follows from \autoref{COR: Rigidity from SC} and the definition of $\theta(a)$. 

    $(3)\Rightarrow (2)$: this follows from \autoref{Cor: DilationInequality}.
\end{proof}

\subsection{MLS rigidity for subsets with maximal growth}

Now we take $d_1,d_2$ as the pull-back pseudo-metrics on $G$: $d_i(g,h):=d_i(go_i,ho_i)$ for any $g,h\in G$. For any $a,b>0$, denote $d_*:=ad_2+bd_1$ and let $\ell_*$ and $\delta_*(G)$ (resp. $\delta_*^c(G)$) be the corresponding stable length and critical exponent (resp. growth rate of conjugacy classes) of $G$ on $(G, d_*)$. (see \autoref{Definition of growth rate of subset} and \autoref{Definition of conjugacy growth rate of subset} for their definitions). 

\begin{lemma}\label{Lem: LinearCombination}
    For any $a,b>0$, $d_*=ad_2+bd_1$ satisfies that $\ell_*=a\ell_2+b\ell_1$. Moreover,  $\delta_*^c(G)=1$ when $b=\Theta(a)$ and $\delta_*(G)=1$ when $b=\theta(a)$. 
\end{lemma}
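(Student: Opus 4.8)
The plan is to split the lemma into the elementary identity for stable lengths and the two threshold computations; both thresholds will come out of the monotonicity of $\theta$ and $\Theta$ once the growth rates are recast as abscissae of convergence.

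For the identity $\ell_*=a\ell_2+b\ell_1$: for every $g\in G$ the function $n\mapsto d_*(1,g^n)=a\,d_2(o_2,g^no_2)+b\,d_1(o_1,g^no_1)$ is subadditive, being a non-negative combination of the subadditive functions $n\mapsto d_i(o_i,g^no_i)$ (cf. \autoref{RMK: MLS Basics}), so $\ell_*(g)=\lim_n d_*(1,g^n)/n$ exists; since both $d_i(o_i,g^no_i)/n$ converge, the limit of the sum is $a\ell_2(g)+b\ell_1(g)$, which is the whole argument. I would then record that, by this identity, $\sum_{[g]\in\mathbf{conj}(G)}e^{-s\ell_*(g)}=\mathcal Q(sa,sb)$ and $\sum_{g\in G}e^{-s\,d_*(1,g)}=\mathcal P(sa,sb)$, i.e. the two Poincar\'e-type series attached to $(G,d_*)$ are exactly $\mathcal Q$ and $\mathcal P$ restricted to the ray through $(a,b)$.

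Next I would check that the relevant counting functions $\#\{[g]:\ell_*(g)\le n\}$ and $\#\{g\in G:d_*(1,g)\le n\}$ are finite for every $n$: indeed $\ell_*(g)\ge b\,\ell_1(g)$ and $d_*(1,g)\ge b\,d_1(o_1,go_1)$, while $\{[g]:\ell_1(g)\le m\}$ is finite (this was already established, via coarse convexity and the conjugacy-counting estimate of \cite{GY22}, in the proof that $\Theta(a)<\infty$) and $\{g\in G:d_1(o_1,go_1)\le m\}$ is finite by properness of $G\curvearrowright X_1$. By the standard identification of an exponential growth rate with the abscissa of convergence of the associated Dirichlet series (the same principle behind (\ref{Expression of theta(a)}) and (\ref{Expression of Theta(a)}); note both growth rates are $\ge 0$ since the counting functions are $\ge 1$), $\delta_*^c(G)$ is the infimal $s>0$ with $\mathcal Q(sa,sb)<\infty$ and $\delta_*(G)$ is the infimal $s>0$ with $\mathcal P(sa,sb)<\infty$. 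It remains to locate these thresholds. Take $b=\Theta(a)$, so $b>0$ by hypothesis. For fixed first coordinate $\mathcal Q(\cdot,\cdot)$ is non-increasing in the second, so by definition of $\Theta$ it converges when the second coordinate exceeds $\Theta(\cdot)$ and diverges when it is below; moreover $\Theta$ is decreasing and everywhere finite. Hence for $s>1$ we have $sa>a$, so $\Theta(sa)\le\Theta(a)=b$ and $sb-\Theta(sa)\ge(s-1)b>0$, giving $\mathcal Q(sa,sb)<\infty$; for $0<s<1$ we have $0<sa<a$, so $\Theta(sa)\ge\Theta(a)=b$, i.e. $sb<\Theta(sa)$, giving $\mathcal Q(sa,sb)=\infty$. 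Thus the threshold is $s=1$ and $\delta_*^c(G)=1$, and the statement $\delta_*(G)=1$ for $b=\theta(a)$ follows verbatim with $\theta$ and $\mathcal P$ in place of $\Theta$ and $\mathcal Q$, using only that $\theta$ is decreasing.

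The only point needing any care is the passage between the growth rate (a $\limsup$ of logarithmic counting functions) and the abscissa of convergence of the corresponding series, together with finiteness of those counting functions; both are routine given the material already developed in this section, so I anticipate no real obstacle — this lemma is a bookkeeping step feeding the subsequent results on MLS rigidity from subsets.
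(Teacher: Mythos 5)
Your proposal is correct and takes essentially the same route as the paper: establish $\ell_*=a\ell_2+b\ell_1$ termwise, identify $\delta_*^c(G)$ (resp.\ $\delta_*(G)$) with the abscissa of convergence in $s$ of $\sum_{[g]}e^{-s\ell_*(g)}=\mathcal Q(sa,sb)$ (resp.\ $\sum_g e^{-sd_*(1,g)}=\mathcal P(sa,sb)$), and locate the threshold at $s=1$. The paper compresses the last step into ``it follows from the definition of $\Theta(a)$''; your scaling argument along the ray through $(a,b)$, using $b>0$ and the monotonicity and finiteness of $\Theta$ and $\theta$, together with the explicit finiteness check on the counting functions, is exactly the intended justification.
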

\begin{proof}
    For any $g\in G$, $$\ell_*(g)=\lim_{n\to \infty}\frac{d_*(1,g^n)}{n}=\lim_{n\to \infty}\frac{ad_2(1,g^n)+bd_1(1,g^n)}{n}=a\ell_2(g)+b\ell_1(g).$$

    Note that $\delta_*^c(G)$ is also the convergence abscissa for $s$ of the following Poincare series $$\mathcal P(s)=\sum_{[g]\in \mathbf{conj}(G)}e^{-s\ell_*(g)}=\sum_{[g]\in \mathbf{conj}(G)}e^{-s(a\ell_2(g)+b\ell_1(g))}.$$ 
    When $b=\Theta(a)$, it follows from the definition of $\Theta(a)$ that $\delta_*^c(G)=1$. Similarly, $\delta_*(G)$ is the convergence abscissa of the following Poincare series $$\mathcal P(s)=\sum_{g\in G}e^{-sd_*(1,g)}=\sum_{g\in G}e^{-s(ad_2(1,g)+bd_1(1,g))}.$$  
    When $b=\theta(a)$, it follows from the definition of $\theta(a)$ that $\delta_*(G)=1$.
\end{proof}

A subset $E\subseteq G$ is called \textit{exponentially generic} in $d_i$ for given $i\in \{1,2\}$ if $\delta_i(G\setminus E)<\delta_i(G)$.

\begin{theorem}\label{Thm: MLSR From Large Subset}
    Suppose that $\{(a,\theta(a))\mid a\in \mathbb R\}$ is {either globally strictly convex or a straight line}. Assume    $\delta_1(G)=\delta_2(G)$ for simplicity. Let $F\subseteq \mathcal{SC}(G)$ be a finite subset determined by \autoref{COR: Simultaneous Extension 15}. Then 
    \begin{enumerate}[(1)]
        \item for any subset $E\subseteq G$ with $\delta_1(E)=\delta_1(G)$, there exists $f\in F$ such that $G$ has MLS rigidity from $Ef$. 
        \item for any exponentially generic subset $E\subseteq G$ in $d_1$ or $d_2$, $G$ has MLS rigidity from $E$. 
    \end{enumerate}
\end{theorem}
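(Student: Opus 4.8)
The plan is to reduce both parts to the case $E=G$ by showing, under the hypotheses, that the Manhattan curve $\{(a,\theta(a))\mid a\in\mathbb R\}$ for $(d_1,d_2)$ must be a straight line. Concretely, I would first isolate the following \emph{core claim}: if $D\subseteq G$ satisfies $\delta_1(D)=\delta_1(G)$ and $\ell_{d_1}=\ell_{d_2}$ on $\phi(D)$, where $\phi$ is the extension map preceding \autoref{Lem: Injectivity}, then $\ell_{d_1}=\ell_{d_2}$ on all of $G$ and hence $\rho\colon Go_1\to Go_2$ is a rough isometry. Once the core claim is established, parts (1) and (2) follow by soft manipulations with $\phi$ and growth rates.

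\textbf{Proof of the core claim.} Let $\epsilon>0$ be the constant of \autoref{COR: Simultaneous Extension 15} and $K:=\max_{f\in F}d_1(o_1,fo_1)$. For $g\in D$ put $h=\phi(g)\in\mathcal{SC}(G)$. By \autoref{COR: Simultaneous Extension 15}, $\ell_{d_i}(h)\ge d_i(o_i,ho_i)-2\epsilon$ for $i=1,2$, while $\ell_{d_i}(h)\le d_i(o_i,ho_i)$ by \autoref{RMK: MLS Basics}\,(\ref{MLSBasics2}); combining these with $\ell_{d_1}(h)=\ell_{d_2}(h)$ gives
$\lvert d_1(o_1,\phi(g)o_1)-d_2(o_2,\phi(g)o_2)\rvert\le 2\epsilon$ for every $g\in D$. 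Since $\phi$ moves orbit points a bounded amount ($\lvert d_1(o_1,\phi(g)o_1)-d_1(o_1,go_1)\rvert\le K$) and is at most $N$‑to‑one on $G$ by \autoref{Lem: Injectivity}, for all $a,b>0$ one gets
\[
\mathcal P(a,b)\ \ge\ \sum_{h\in\phi(D)}e^{-ad_2(o_2,ho_2)-bd_1(o_1,ho_1)}\ \ge\ \frac{e^{-(a+b)K-2a\epsilon}}{N}\sum_{g\in D}e^{-(a+b)d_1(o_1,go_1)},
\]
and the last series diverges whenever $a+b<\delta_1(D)=\delta_1(G)$. Therefore $\theta(a)\ge\delta_1(G)-a$ for every $a\in(0,\delta_1(G))$. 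On the other hand $\theta$ is convex with $\theta(0)=\delta_1(G)$ and $\theta(\delta_1(G))=\theta(\delta_2(G))=0$ (using $\delta_1(G)=\delta_2(G)$), so it lies below the corresponding chord: $\theta(a)\le\delta_1(G)-a$ on $[0,\delta_1(G)]$. Hence $\theta$ agrees with the affine function $a\mapsto\delta_1(G)-a$ on an interval; as $\{(a,\theta(a))\}$ is real analytic, the identity principle forces $\theta$ to be this affine function on all of $\mathbb R$, i.e. the Manhattan curve is a straight line. By \autoref{Prop: RoughlySimilarEquivalence} this yields $\ell_{d_2}(g)=k\,\ell_{d_1}(g)$ for all conjugacy classes with $k=\delta_1(G)/\delta_2(G)=1$, so $\ell_{d_1}=\ell_{d_2}$ on $G$, and \autoref{COR: Rigidity from SC} (with $\lambda_1=\lambda_2=1$) gives the rough isometry.

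\textbf{Deducing (2).} Suppose $E$ is exponentially generic in $d_1$ and set $E^-:=\bigcap_{f\in F}Ef^{-1}$, so that $\phi(E^-)\subseteq E$. Then $G\setminus E^-=\bigcup_{f\in F}(G\setminus E)f^{-1}$ is a finite union of right translates of $G\setminus E$, each with growth rate $\delta_1(G\setminus E)<\delta_1(G)$, whence $\delta_1(E^-)=\delta_1(G)$. If the two actions have the same marked length spectrum on $E$, then a fortiori on $\phi(E^-)$, and the core claim with $D=E^-$ gives the rough isometry; thus $G$ has MLS rigidity from $E$. If instead $E$ is exponentially generic in $d_2$, one runs the same argument after interchanging $X_1$ and $X_2$ (legitimate because $\delta_1(G)=\delta_2(G)$ and by the symmetry of the Manhattan curve recorded around \autoref{Prop: ConvergenceRadius2}).

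\textbf{Deducing (1).} If $\rho$ is already a rough isometry, then $G$ has MLS rigidity from $Ef$ for every $f\in F$ trivially, so assume it is not; by \autoref{Prop: RoughlySimilarEquivalence} the Manhattan curve is then not a straight line. Since $\delta_1(E)=\delta_1(G)$ and $\phi(E)\subseteq\bigcup_{f\in F}Ef$, the core claim (contrapositive, with $D=E$) shows that $\ell_{d_1}=\ell_{d_2}$ cannot hold on all of $\bigcup_{f\in F}Ef$; hence there exists $f\in F$ with $\ell_{d_1}\neq\ell_{d_2}$ somewhere on $Ef$, and for this $f$ the implication ``same marked length spectrum on $Ef$ $\Rightarrow$ rough isometry'' holds vacuously, i.e. $G$ has MLS rigidity from $Ef$.

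\textbf{Main obstacle.} Everything soft — convexity of $\theta$, the real-analytic identity principle, and the already-proven \autoref{COR: Rigidity from SC} and \autoref{Prop: RoughlySimilarEquivalence} — is routine; the crux is the divergence estimate for $\mathcal P(a,b)$ in the core claim, where the entire weight of the Extension Lemma (channelled through $\phi$ and \autoref{COR: Simultaneous Extension 15}) and the finite‑to‑one/bounded‑perturbation bookkeeping from \autoref{Lem: Injectivity} must be combined to guarantee that the ``large'' subset genuinely controls the transition of the Poincar\'e series in the diagonal direction.
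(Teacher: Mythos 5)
Your proposal is correct, and it runs on the same engine as the paper's own proof: the extension map built from \autoref{COR: Simultaneous Extension 15} converts a full-growth subset into simultaneously contracting elements whose displacements approximate stable lengths in both spaces, equality of marked length spectra there forces $d_1$ and $d_2$ to agree up to an additive constant on a set of full growth, and analyticity plus the endpoints $(0,\delta_1(G))$, $(\delta_2(G),0)$ of the Manhattan curve then yield global straightness, after which \autoref{Prop: RoughlySimilarEquivalence} and \autoref{COR: Rigidity from SC} conclude. The packaging differs in two noteworthy ways. First, your core claim is the direct form of the paper's key estimate: you prove divergence of $\mathcal P(a,b)$ for $a+b<\delta$ (hence $\theta(a)\ge\delta-a$, and equality via the convexity chord bound), controlling multiplicities by the at-most-$N$-to-one property from \autoref{Lem: Injectivity}; the paper argues contrapositively, extracting $\xi$ with $\xi+\theta(\xi)<\delta$ and bounding $\delta_1(E_0)\le\xi+\theta(\xi)$ by means of the auxiliary metric $d_*=\xi d_2+\theta(\xi)d_1$ and \autoref{Lem: LinearCombination} — the same computation in different clothes, so your route trades $d_*$ and \autoref{Lem: LinearCombination} for \autoref{Lem: Injectivity} and the chord inequality $\theta(a)\le\delta-a$. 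Second, in part (1) the paper pigeonholes $E$ onto a full-growth piece $E_0$ all of whose elements receive a single $f_0\in F$, and proves the rigidity implication for $Ef_0$ non-vacuously; you instead settle the non-rough-isometry case by noting that the spectra must already disagree somewhere on $\phi(E)\subseteq\bigcup_{f\in F}Ef$, so rigidity from that particular $Ef$ holds vacuously. Since ``MLS rigidity from a subset'' is, in this paper, an implication about the fixed pair of actions, this is logically sufficient for the statement as written, though it identifies the witness $f$ less explicitly than the paper's pigeonhole (and you could equally have pigeonholed and applied your core claim with $D=E_{f_0}$ to recover the paper's sharper, non-vacuous version). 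Your part (2), via $\bigcap_{f\in F}Ef^{-1}$ and the symmetry in the $d_2$-generic case, coincides with the paper's reduction.
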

\begin{proof}
    (1) Denote $\delta_1(G)=\delta_2(G)=\delta$. For each $g\in E$, \autoref{COR: Simultaneous Extension 15} gives an $f\in F$ so that $|\ell_i(gf)-d_i(o_i,gfo_i)|\le 2\epsilon$ for $i=1,2$. As $F$ is finite, there exists $f_0\in F$ and $E_0\subseteq E$ with $\delta_1(E_0)=\delta_1(E)$ such that each element $g'\in E_0f_0$ satisfies $|\ell_i(g')-d_i(o_i,g'o_i)|\le 2\epsilon$. We will show that $G$ has MLS rigidity from $Ef_0$. For this purpose, we assume in the following that $\ell_1(g')=\ell_2(g')$ for each $g'\in Ef_0$.

    Suppose by contrary that $G$ does not have MLS rigidity from $Ef_0$, then the orbit map $\rho: Go_1\to Go_2$ is not a rough isometry. With \autoref{THM: Main Rigidity}, this is equivalent to $\mathrm{Dil}_-<\mathrm{Dil}_+$, and thus $\alpha_{min}<\alpha_{max}$ by \autoref{Cor: DilationInequality}. {Hence by assumption, $\{(a,\theta(a))\mid a\in \mathbb R\}$ is globally strictly convex, which means}  that $\{(a,\theta(a))\mid a\in \mathbb R\}$ is not a straight line restricting on $a\in (0,\delta)$.   Then there exists $\xi\in (0,\delta)$ such that $\xi+\theta(\xi)<\delta$. 
    
    Now we take $d_1,d_2$ as the pull-back pseudo-metric on $G$. Denote $\xi'=\xi+\theta(\xi)$ and $d_*:=\xi d_2+\theta(\xi)d_1=\xi' d_1+\xi(d_2-d_1)$. As both actions $G\curvearrowright (G,d_1)$ and $G\curvearrowright (G,d_2)$ have the same MLS on $Ef_0\supset E_0f_0$, for each $g\in E_0$,
    $$d_1(1,g)\sim_K d_1(1,gf_0)\sim_{\epsilon} \ell_1(gf_0)=\ell_2(gf_0)\sim_{\epsilon} d_2(1,gf_0)\sim_K d_2(1,g)$$
    where $K=\max\{d_i(1,f)\mid f\in F, i=1,2\}$. Hence, one has
    $$\{g\in E_0: d_1(1,g)\le n\}\subseteq \{g\in G: d_*(1,g)\le \xi' n+\xi M\}$$
    where $M$ is a constant depending only on $K,\epsilon$.
    As \autoref{Lem: LinearCombination} shows that $\delta_*(G)=1$, it follows from \autoref{Definition of growth rate of subset} that $\delta_1(E_0)\le \xi'\delta_*(G)=\xi'<\delta$, which is a contradiction as $\delta_1(E_0)=\delta_1(E)=\delta_1(G)=\delta$.

    (2) Without loss of generality, we assume that $E$ is exponentially generic in $d_1$. 
    
    Let $\tilde E:=\cap_{f\in F}Ef^{-1}$. As $E$ is exponentially generic and $F$ is finite, the set $\tilde E$ satisfies $\delta_1(\tilde E)=\delta_1(E)=\delta_1(G)$. Therefore, according to (1), there exists $f\in F$ such that $G$ has MLS rigidity from $\tilde{E} f$. As $\tilde{E}f\subset E$, we derive that $G$ has MLS rigidity from $E$.
%
%
%  %  
%    
\end{proof}

\begin{remark}\label{RMK: Straight Line Condition}
    As is shown in the proof of \autoref{Thm: MLSR From Large Subset}, the requirement that $\{(a,\theta(a))\mid a\in \mathbb{R}\}$ is  {either globally strictly convex or a straight line} can be relaxed into the following condition: $\theta(a)$ is a straight line restricting on $(0,\delta_2(G))$ if and only if the entire curve $\{(a,\theta(a))\mid a\in \mathbb{R}\}$ is a straight line.

    This condition serves as the focus of researches in Manhattan curves and has been proven true in several special cases, including possibly non-cocompact Fuchsian representations \cite{Kao20}. 
\end{remark}

\begin{theorem}\label{Thm: MLSR From Conjugacy-large Subset}
    Suppose that $\{(a,\Theta(a))\mid a\in \mathbb R\}$  is {either globally strictly convex or a straight line}. Assume  $\delta_1(G)=\delta_2(G)$ for simplicity. Let $F\subseteq \mathcal{SC}(G)$ be a finite subset determined by \autoref{COR: Simultaneous Extension 15}. Then 
    \begin{itemize}
        \item[(1)] for any subset $E\subseteq G$ with $\delta_1^c(E)=\delta_1^c(G)$, $G$ has MLS rigidity from $E$.
        \item[(2)] for any subset $E\subseteq G$ with $\delta_1(E)=\delta_1(G)$, there exists $f\in F$ such that $G$ has MLS rigidity from $Ef$. 
        \item[(3)] for any exponentially generic subset $E\subseteq G$ in $d_1$ or $d_2$, $G$ has MLS rigidity from $E$. 
    \end{itemize}
    
\end{theorem}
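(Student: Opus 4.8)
The plan is to prove part~(1) directly via a comparison of Poincar\'e series, and to obtain parts~(2) and~(3) from \autoref{Thm: MLSR From Large Subset} after replacing its analyticity hypothesis by the genuinely needed weakening recorded in \autoref{RMK: Straight Line Condition}.

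\textbf{Part (1).} Write $\delta:=\delta_1(G)=\delta_2(G)$; by \cite{GY22} we also have $\delta_1^c(G)=\delta$. Assume the two actions have the same marked length spectrum on $E$, so $\ell_1(g)=\ell_2(g)$ for all $g\in E$, and suppose for contradiction that $G$ does not have MLS rigidity. By \autoref{THM: Main Rigidity} (equivalently \autoref{COR: Rigidity from SC}) this forces $\mathrm{Dil}_-<\mathrm{Dil}_+$, hence $\beta_{min}<\beta_{max}$ by \autoref{Cor: DilationInequality}; in particular the convex function $\Theta$ is not affine, and since $\{(a,\Theta(a))\mid a\in\mathbb R\}$ is real analytic, $\Theta$ is affine on no open interval, in particular not on $(0,\delta)$. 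As $\Theta$ is convex and decreasing with $\Theta(0)=\delta$, $\Theta(\delta)=0$, it lies on or below the chord $b=\delta-a$ over $[0,\delta]$, strictly below at some interior point; thus there is $\xi\in(0,\delta)$ with $\xi+\Theta(\xi)<\delta$ (and $\Theta(\xi)>0$, since $\Theta$ is strictly decreasing on $(0,\delta)$ here, so that \autoref{Lem: LinearCombination} is applicable). Form the pseudo-metric $d_*:=\xi d_2+\Theta(\xi)d_1$, whose stable length on conjugacy classes is $\ell_*=\xi\ell_2+\Theta(\xi)\ell_1$; by \autoref{Lem: LinearCombination}, $\delta_*^c(G)=1$. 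For $g\in E$ one has $\ell_*(g)=(\xi+\Theta(\xi))\ell_1(g)$, so
\[
\{[g]\in\mathbf{conj}(G):\ g\in E,\ \ell_1(g)\le n\}\ \subseteq\ \{[g]\in\mathbf{conj}(G):\ \ell_*(g)\le(\xi+\Theta(\xi))n\}.
\]
Passing to logarithmic growth rates and using $\delta_*^c(G)=1$ gives $\delta_1^c(E)\le\xi+\Theta(\xi)<\delta=\delta_1^c(G)$, a contradiction. Note this argument uses neither $F$ nor the extension map.

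\textbf{Parts (2) and (3).} These are verbatim parts~(1) and~(2) of \autoref{Thm: MLSR From Large Subset}, whose proof --- as noted in \autoref{RMK: Straight Line Condition} --- only uses the dichotomy that $\theta$ is affine on $(0,\delta_2(G))$ if and only if the whole curve $\{(a,\theta(a))\mid a\in\mathbb R\}$ is affine. It therefore remains to derive this dichotomy from the analyticity of $\Theta$. If $\theta$ were affine on $(0,\delta)$, then by continuity and the endpoint values $\theta(0)=\delta$, $\theta(\delta)=0$ it would equal $\delta-a$ on $[0,\delta]$; combined with $\theta\le\Theta$ (\autoref{Prop: ConvergenceRadius}) and the chord bound $\Theta(a)\le\delta-a$ on $[0,\delta]$, this forces $\theta=\Theta=\delta-a$ on $[0,\delta]$, so $\Theta$ is affine on a subinterval, hence globally affine by analyticity, hence $\theta$ is globally affine by \autoref{Prop: RoughlySimilarEquivalence}. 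With this dichotomy available, \autoref{Thm: MLSR From Large Subset} applies unchanged and yields (2) and (3).

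\textbf{Main obstacle.} The step needing the most care is converting the analyticity of the conjugacy Manhattan curve $\Theta$ into the strict-convexity input fed to the counting arguments: directly on $(0,\delta)$ for part~(1), and, for parts~(2)--(3), through the inequality $\theta\le\Theta$ together with \autoref{Prop: RoughlySimilarEquivalence}. Beyond that, everything reduces to the Poincar\'e-series estimates and the extension-map bookkeeping already established for \autoref{Thm: MLSR From Large Subset}, so no genuinely new geometric ingredient is anticipated.
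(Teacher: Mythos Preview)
Your proposal is correct and follows essentially the same approach as the paper: part~(1) is proved by the same contradiction argument through $d_*=\xi d_2+\Theta(\xi)d_1$ and the conjugacy growth estimate from \autoref{Lem: LinearCombination}, while parts~(2)--(3) are reduced to \autoref{Thm: MLSR From Large Subset} by deriving the dichotomy in \autoref{RMK: Straight Line Condition} from the analyticity of $\Theta$ via the sandwich $\theta\le\Theta\le\delta-a$ on $[0,\delta]$ and \autoref{Prop: RoughlySimilarEquivalence}. Your write-up actually makes the chord bound and the positivity of $\Theta(\xi)$ (needed for \autoref{Lem: LinearCombination}) more explicit than the paper does.
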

\begin{proof}
We first show that the condition stated in \autoref{RMK: Straight Line Condition} holds when $\{(a,\Theta(a))\mid a\in \mathbb R\}$  is {either globally strictly convex or a straight line}. If $\theta(a)$ is a straight line restricted on $(0,\delta_2(G))$, as $\theta(a)\le \Theta(a)$ by \autoref{Prop: ConvergenceRadius} and $\Theta(a)$ is convex, it follows that $\Theta(a)$ is also a straight line restricted on $(0,\delta_2(G))$. {By assumption, }  the entire curve $\{(a,\Theta(a))\mid a\in \mathbb R\}$ is a straight line. It follows from \autoref{Prop: RoughlySimilarEquivalence} that $\{(a,\theta(a))\mid a\in \mathbb R\}$ is also a straight line. 
Hence, the items (2) and (3) are immediate corollaries of \autoref{Thm: MLSR From Large Subset}. It thus remains  for us to show the item (1).
    
    Arguing by contradiction, assume that the orbit map $\rho: Go_1\to Go_2$ given by $\rho(go_1)=go_2$ is not a rough isometry. With \autoref{THM: Main Rigidity}, this is equivalent to $\mathrm{Dil_-}<\mathrm{Dil}_+$, and thus $\beta_{min}<\beta_{max}$ by \autoref{Cor: DilationInequality}. {Global strict convexity} of the curve  $\{(a,\Theta(a))\mid a\in \mathbb R\}$ shows that it is not a straight segment on $a\in (0,\delta_2(G))$. Denote $\delta_1(G)=\delta_2(G)=\delta$. Then there exists $\xi\in (0,\delta)$ such that $\xi+\Theta(\xi)<\delta$.

    Analogous to the proof of \autoref{Thm: MLSR From Large Subset}, we denote $\xi'=\xi+\Theta(\xi)$ and $d_*:=\xi d_2+\Theta(\xi)d_1$. Notice that $d_*=\xi' d_1+\xi(d_2-d_1)$ and then $\ell_*=\xi'\ell_1+\xi(\ell_2-\ell_1)$. As both actions $G\curvearrowright (G,d_1)$ and $G\curvearrowright (G,d_2)$ have the same MLS on $E$, $$\{[g]: g\in E, \ell_1(g)\le n\}\subseteq \{[g]: \ell_*(g)\le \xi'n\}$$ 
    for any $n>0$. As \autoref{Lem: LinearCombination} shows that $\delta_*^c(G)=1$, it follows from \autoref{Definition of conjugacy growth rate of subset} that $\delta_1^c(E)\le \xi' \delta_*^c(G)=\xi'<\delta$, which is a contradiction as $\delta_1^c(E)=\delta_1^c(G)=\delta$.
\end{proof}

\begin{remark}When $G$ is a relatively hyperbolic group, \autoref{MLSFailsMaximalSubgroup} provides an example in which the element $f$ could not be removed in the item (2). As in hyperbolic groups, the result of rigidity on subset \cite[Theorem 1.1]{CR23} by Cantrell-Reyes are exactly item (1), where their growth rate identifies with our definition of conjugacy growth rate; see \autoref{Definition of conjugacy growth rate of subset} for its definition.    
\end{remark}

%\newpage
\bibliographystyle{amsplain}   
\bibliography{MLSRC.bib}
\end{sloppypar}
\end{document}